\theoremstyle{plain}
\newtheorem{thm}{Theorem}[section]
\newtheorem{lem}[thm]{Lemma}
\newtheorem{prop}[thm]{Proposition}
\newtheorem{cor}[thm]{Corollary}
\newtheorem*{thm*}{Theorem}
\newtheorem*{cor*}{Corollary}
\newtheorem{thmintro}{Theorem}
\newtheorem{corintro}[thmintro]{Corollary}
\newtheorem{propintro}[thmintro]{Proposition}
\theoremstyle{definition}
\newtheorem{defn}[thm]{Definition}
\newtheorem*{dfn*}{Definition}
\newtheorem{ex}[thm]{Example}
\newtheorem{rmk}[thm]{Remark}
\newtheorem*{rmk*}{Remarks}
\newtheorem*{quest*}{Question}
\renewcommand{\o}{\circ}
\newcommand{\wt}{\widetilde}
\newcommand{\R}{\mathbb{R}}
\newcommand{\Z}{\mathbb{Z}}
\newcommand{\N}{\mathbb{N}}
\newcommand{\Q}{\mathbb{Q}}
\renewcommand{\H}{\mathbb{H}}
\newcommand{\s}{\sigma}
\newcommand{\ra}{\rightarrow}
\newcommand{\cu}{\subseteq}
\newcommand{\G}{\Gamma}
\newcommand{\mbb}{\mathbb}
\newcommand{\mc}{\mathcal}
\newcommand{\mf}{\mathfrak}
\newcommand{\x}{\times}
\newcommand{\eps}{\epsilon}
\newcommand{\Om}{\Omega}
\newcommand{\om}{\omega}
\newcommand{\id}{\text{id}}
\newcommand{\acts}{\curvearrowright}
\newcommand{\wh}{\widehat}
\newcommand{\mscr}{\mathscr}
\begin{document}

\title{Superrigidity of actions on finite rank median spaces}
\author{Elia Fioravanti}

\begin{abstract}
Finite rank median spaces are a simultaneous generalisation of finite dimensional {\rm CAT}(0) cube complexes and real trees. If $\G$ is an irreducible lattice in a product of rank-one simple Lie groups, we show that every action of $\G$ on a complete, finite rank median space has a global fixed point. This is in sharp contrast with the behaviour of actions on infinite rank median spaces, where even proper cocompact actions can arise.

The fixed point property is obtained as corollary to a superrigidity result; the latter holds for irreducible lattices in arbitrary products of compactly generated topological groups.

We exploit \emph{Roller compactifications} of median spaces; these were introduced in \cite{Fioravanti1} and generalise a well-known construction in the case of cube complexes. We show that the Haagerup cocycle provides a reduced $1$-cohomology class that detects group actions with a finite orbit in the Roller compactification. This is new even for {\rm CAT}(0) cube complexes and has interesting consequences involving Shalom's property $H_{FD}$. For instance, in Gromov's density model, random groups at low density do not have $H_{FD}$. 
\end{abstract}

\maketitle

\tableofcontents

\section{Introduction.}

A \emph{median space} is a metric space $(X,d)$ such that, for any three points $x_1,x_2,x_3\in X$, there exists a unique point $m=m(x_1,x_2,x_3)\in X$ with the property that $d(x_i,m)+d(m,x_j)=d(x_i,x_j)$ for all $1\leq i<j\leq 3$. Simple examples are provided by real trees and by $\R^n$ with the $\ell^1$ metric. 

We remark that, to each connected median space $X$ of finite topological dimension, there corresponds a canonical {\rm CAT}(0) space $\wh{X}$, which is bi-Lip\-schitz equivalent to $X$ \cite{Bow4}. For instance, to $\R^n$ with the $\ell^1$ metric we associate $\R^n$ with its euclidean distance. 

More elaborate examples of median spaces are provided by Guirardel cores \cite{Guirardel} and by simply connected cube complexes satisfying Gromov's link condition \cite{HypGps}. In the latter case, we obtain a median space $X$ by endowing each cube with the $\ell^1$ metric, while $\wh{X}$ is the corresponding {\rm CAT}(0) cube complex. Median spaces generally display wilder features than cube complexes, as, like real trees, they can be essentially non-discrete objects. 

A clear illustration of this is the fact that, unlike cube complexes, the class of median spaces is closed under ultralimits. In particular, any asymptotic cone of any hierarchically hyperbolic group \cite{HHS} is described by a median space \cite{Behrstock-Drutu-Sapir2,Behrstock-Drutu-Sapir,Bow1,Zeidler}. Furthermore, we have observed in \cite{Fioravanti2} that general group actions on median spaces need not yield codimension-one subgroups; this is in contrast with a well-known key property of cube complexes \cite{Sageev,Gerasimov}.

These phenomena already appear among (connected) median spaces of finite \emph{rank}, i.e.\,those median spaces where the topological dimensions of locally compact subsets are uniformly bounded. Finite rank median spaces will be the main focus of the present paper and, indeed, all the examples listed above have finite rank.

Finite rank median spaces also retain many of the good combinatorial properties of cube complexes. In addition to the existence of a {\rm CAT}(0) metric, their horoboundaries are compatible with the median property \cite{Fioravanti1} and many groups of isometries contain free non-abelian subgroups \cite{Fioravanti2}. We would expect many known results for {\rm CAT}(0) cube complexes to extend to finite rank median spaces without significant complications, for instance \cite{BCGNW,Guentner-Higson,Nevo-Sageev,CFI,Kar-Sageev,Fernos,Fernos-Lecureux-Matheus} to name a few. 

All the above similarities between cube complexes and more general median spaces can be ascribed to the existence of a collection $\mscr{W}$ of \emph{walls}. These encode the geometry of the space in the same way as hyperplanes do in {\rm CAT}(0) cube complexes. The set $\mscr{W}$ should not be thought of as discrete and, in fact, it needs to be endowed with a measure $\mu$ that expresses the ``thickness'' of sets of walls \cite{CDH,Fioravanti1}. Indeed, the concept of median space is dual to the notion of \emph{space with measured walls} \cite{Cherix-Martin-Valette,Cornulier-Tessera-Valette,CDH} --- similar to \emph{spaces with walls} \cite{Haglund-Paulin} being the dual viewpoint on {\rm CAT}(0) cube complexes \cite{Sageev,Nica,Chatterji-Niblo}. 

Our main theorem is a superrigidity result for irreducible lattices $\G$ in products $G=G_1\x ... \x G_{\ell}$ of locally compact, compactly generated groups. Namely, under weak assumptions of non-elementarity, every action of $\G$ on a finite rank median space $X$ essentially arises from \emph{continuous} isometric actions ${G_i\acts Y_i}$ on median spaces of lower rank. For cube complexes, this was known due to \cite{CFI}.

In the more general context of {\rm CAT}(0) spaces, similar results were obtained long ago in \cite{Monod,Caprace-Monod(A)}. Unfortunately, applying these to a median space $X$ only provides actions of the factors $G_i$ on {\rm CAT}(0) subspaces $Z_i\cu\wh{X}$; these subspaces might bear no relation to the median structure on $X$. This might seem like an irrelevant subtlety, but it is, on the contrary, key to the fixed point properties that this paper provides.

As an illustration of this, consider a uniform irreducible lattice $\G$ in the product $SL_2(\R)\x SL_2(\R)$. It acts properly and cocompactly on the {\rm CAT}(0) space $\H^2\x\H^2$. In particular, it is quasi-isometric to a product of surface groups, hence coarse median of finite rank (in the sense of \cite{Bow1}). It was moreover shown in \cite{Chatterji-Drutu} that $\G$ acts properly and \emph{cocompactly} on a median space of infinite rank. It should thus appear particularly striking that every action of $\G$ on a complete, connected, \emph{finite rank} median space fixes a point; this follows from our results, see Corollary~\ref{D} below. 

The proof of our superrigidity theorem follows a very similar outline to Monod's \cite{Monod}. This is mostly hidden in our application of Shalom's superrigidity (Theorem~4.1 in \cite{Shalom}), thus we believe it is important to highlight the analogy here. We have already mentioned that, to each finite rank median space $X$, one can associate a \emph{finite dimensional} {\rm CAT}(0) space $\wh{X}$. However, one can also consider the \emph{infinite dimensional} {\rm CAT}(0) (in fact, Hilbert) space $L^2(\mscr{W},\mu)$.

Retracing the proof of Monod's superrigidity in our context, we would start with an action $\G\acts X$ and induce a continuous action of $G$ on the space $L^2(G/\G,\wh{X})$ (see \cite{Monod} for a definition). Then, we would prove that a subspace of $L^2(G/\G,\wh{X})$ splits as a product $Z_1\x ... \x Z_k$, where the action of $G$ on the $i$-th factor only depends on the projection to $G_i$. Finally, we would carry back to $\G\acts\wh{X}$ the information gained about $\G\acts L^2(G/\G,\wh{X})$.

Our application of Shalom's machinery instead constructs a continuous action of $G$ on the space $L^2(G/\G,L^2(\mscr{W},\mu))$. Again, one then proves a splitting theorem for this action and carries the gained insight back to the action $\G\acts L^2(\mscr{W},\mu)$. Our main contribution lies in transferring information back and forth between the actions ${\G\acts L^2(\mscr{W},\mu)}$ and $\G\acts X$. In particular, Shalom's machinery can only be set in motion once we have a nonvanishing reduced cohomology class for $\G\acts L^2(\mscr{W},\mu)$. We provide this by a careful study of the Haagerup cocycle, which is entirely new even in the context of  {\rm CAT}(0) cube complexes and has many interesting consequences regarding Shalom's property $H_{FD}$ (see Theorem~\ref{Z} and Corollaries~\ref{G} and~\ref{H} below).

We now describe our results in greater detail.

\medskip
\noindent
1.1. {\bf A cohomological characterisation of elementary actions.}
Each median space $X$ has a distinguished collection $\mscr{H}$ of subsets called \emph{halfspaces}; every wall gives rise to two halfspaces and each halfspace arises from a wall. The collection $\mscr{H}$ is equipped with a measure $\wh{\nu}$, see \cite{Fioravanti1}. In the case of cube complexes, one recovers the usual notion of halfspace and $\wh{\nu}$ is simply the counting measure.

Given a topological group $G$ and an isometric action $G\acts X$, one naturally obtains a unitary representation $\rho\colon G\ra\mscr{U}(L^2(\mscr{H},\wh{\nu}))$ and a cocycle $b\colon G\ra L^2(\mscr{H},\wh{\nu})$ --- the \emph{Haagerup cocycle}. This construction is well-known and appears for instance in \cite{Cherix-Martin-Valette, Cornulier-Tessera-Valette, CDH, Fernos-Valette}. If $G\acts X$ has continuous orbits, $\rho$ and $b$ are continuous; thus $b$ induces a reduced continuous cohomology class $\overline{[b]}\in\overline{H^1_c}(G,\rho)$.

In \cite{Fioravanti2}, we introduced a notion of elementarity for actions on median spaces; namely, we say that $G\acts X$ is \emph{Roller elementary} if $G$ has at least one finite orbit within the Busemann compactification $\overline X$; the latter is also known as \emph{Roller compactification}. Roller elementarity implies --- but is in general strictly stronger than --- the existence of a finite orbit in the visual compactification of the {\rm CAT}(0) space $\wh{X}$. 

If $G\acts X$ is an isometric action with continuous orbits, Roller elementarity can be described in terms of the Haagerup class $\overline{[b]}$.

\begin{thmintro}\label{A}
Let $X$ be a complete, finite rank median space. The Haagerup class $\overline{[b]}\in\overline{H^1_c}(G,\rho)$ vanishes if and only if $G\acts X$ is Roller elementary.
\end{thmintro}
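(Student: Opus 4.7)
The plan is to relate vanishing of $\overline{[b]}$ to the existence of almost $G$-invariant ``virtual orientations'' on the wallspace $(\mscr{W},\mu)$, then match these with honest orientations, i.e.\ points of $\overline{X}$. Fix a basepoint $x_0\in X$ and recall $\sigma_x(h)=\chi_h(x)$ and $b(g)=\sigma_{gx_0}-\sigma_{x_0}$, with $\rho(g)\sigma_x=\sigma_{gx}$. By Guichardet's standard characterisation, $\overline{[b]}=0$ if and only if there exist $\xi_n \in L^2(\mscr{H},\wh{\nu})$ such that $\|b(g)+\rho(g)\xi_n-\xi_n\|_2 \to 0$ uniformly on compact subsets of $G$; equivalently, the affine perturbations $\eta_n := \sigma_{x_0}+\xi_n$ are almost $G$-invariant in $L^2$-sense. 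It is natural to restrict to $\xi_n$ that are antisymmetric under the halfspace involution $h\mapsto h^*$, so that $\eta_n(h)+\eta_n(h^*)=1$, the defining feature of an orientation.

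For the implication Roller elementary $\Ra\ \overline{[b]}=0$: given a finite $G$-orbit $\{\sigma_1,\ldots,\sigma_k\}\subset\overline{X}$, set $\bar{\sigma}=\frac{1}{k}\sum_i\sigma_i$, a $G$-invariant $[0,1]$-valued function on $\mscr{H}$. The formal difference $\xi:=\bar{\sigma}-\sigma_{x_0}$ satisfies $\rho(g)\xi-\xi=-b(g)$ pointwise, but is typically not in $L^2$. I would fix this by truncating: let $B_n\subset\mscr{H}$ be an exhaustion by finite-$\wh{\nu}$-measure subsets, and set $\xi_n:=\xi\cdot\chi_{B_n}$. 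For $g$ in a fixed compact set $K$, the support of $b(g)$ lies in a fixed bounded region of $\mscr{H}$, hence inside $B_n$ for $n$ large; then $b(g)+\rho(g)\xi_n-\xi_n$ is supported on $B_n\triangle g^{-1}B_n$, which one arranges to have $\wh{\nu}$-measure tending to zero as $n\to\infty$, uniformly for $g\in K$.

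For the converse, given the almost $G$-invariant $\eta_n=\sigma_{x_0}+\xi_n$, I want to extract a finite $G$-orbit in $\overline{X}$. The strategy is to convert each $\eta_n$ into an admissible orientation by a rounding procedure compatible with the wall/halfspace structure. A natural choice is layer-cake: for $t\in(0,1)$, let $\td{\eta}_n^t(h)=1$ if $\eta_n(h)\geq t$; the antisymmetry $\eta_n(h)+\eta_n(h^*)=1$ forces $\td{\eta}_n^t$ to pick exactly one of $h, h^*$, giving an orientation. Admissibility (the consistency axioms identifying $\overline{X}$ inside $\{0,1\}^{\mscr{H}}$) would follow for a.e.\ $t$ if $\eta_n$ satisfies suitable monotonicity along chains of halfspaces; if not, one repairs non-admissible instances using the median on $\overline{X}$, which is $L^2$-contracting. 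Weak-$*$ compactness of $\{0,1\}^{\mscr{H}}\supset\overline{X}$ then yields a subsequential limit $\sigma_\infty\in\overline{X}$, and almost $G$-invariance of $\eta_n$ passes to honest $G$-invariance of $\sigma_\infty$.

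The hard part is the rounding step: $L^2$-approximate invariance of real-valued $\eta_n$ must be converted into a genuinely invariant admissible orientation, with the non-convex admissibility condition preserved. A more robust alternative I would try is to represent each $\eta_n$ as the barycenter in $[0,1]^{\mscr{H}}$ of a probability measure $\mu_n$ supported on $\overline{X}$ --- using e.g.\ the layer-cake above as a measurable family of orientations --- then extract a weak-$*$ limit $\mu_\infty$, which is $G$-invariant on $\overline{X}$. Finite rank of $X$ should then force an invariant probability measure on $\overline{X}$ to be carried by a finite orbit, via a rank-induction argument exploiting the wall-defined halving of $\overline{X}$; this yields the desired finite orbit and completes the proof.
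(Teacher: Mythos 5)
The proposal's treatment of the implication ``Roller elementary $\Rightarrow\overline{[b]}=0$'' has a genuine gap: the truncation $\xi_n=\xi\cdot\chi_{B_n}$ does not work. After truncation, the error term $b(g)+\rho(g)\xi_n-\xi_n$ contains $\xi\cdot(\chi_{gB_n}-\chi_{B_n})$, and since $\xi=\bar\sigma-\sigma_{x_0}$ takes values of modulus comparable to $1$ on an unbounded part of $\mscr{H}$, you need $\wh{\nu}(B_n\triangle gB_n)\to 0$ for the $L^2$-norm to vanish. That F\o lner-type requirement on the measure-preserving action $G\acts(\mscr{H},\wh{\nu})$ cannot be arranged in general. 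The most basic test case already fails: take $X=\R$, $G=\Z$ acting by unit translation, $\xi$ the fixed point $+\infty$. Here $\mscr{H}$ is essentially $\R\sqcup\R$ with Lebesgue measure, and any finite-measure exhaustion $B_n$ satisfies $\wh{\nu}(B_n\triangle 1\cdot B_n)\geq 2$, so the truncation error has $L^2$-norm bounded below independently of $n$. What does work is a \emph{tapered} cutoff of the form $-(1-\alpha_\Om/c)\cdot\mathds{1}_{\Om_c}$, where the slow linear decay converts the unit-height jump into an $O(1/c)$ discrepancy spread over a set of measure $O(c)$; this is precisely Proposition~\ref{functions along UBS's} in the paper, and building it requires the whole UBS apparatus of Appendix~\ref{structure of UBS's} (strongly reduced minimal UBS's, the transfer character $\chi_\Om$, the auxiliary function $\alpha_\Om$, and the constants $C_1$--$C_5$) together with the reduction through $\overline{\mc U}(\xi)$ and the open finite-index subgroup $K_\xi$ from Proposition~\ref{chi_xi continuous}. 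Simply averaging the finite orbit and cutting off sharply will not deliver the uniform $L^2$ convergence you need.

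For the converse ``$\overline{[b]}=0\Rightarrow$ Roller elementary,'' your route (round the almost-invariant $\eta_n$ via layer-cake to a probability measure on $\overline X$, extract a weak-$*$ limit, argue a rank induction forces it onto a finite orbit) is genuinely different from the paper's. The paper instead proves the contrapositive directly: by rank induction one reduces to a Roller minimal action on an irreducible $X$, then Lemma~\ref{no AIV's} produces a free subgroup $H\leq G$ with unbounded orbits and no $H$-almost-invariant vectors in $\rho$, whence Guichardet's criterion gives $i^*\overline{[b]}\neq 0$. Your measure-theoretic route is in the spirit of \cite{CFI}'s boundary-measure methods and could in principle be made to work, but as written there are two unaddressed obstructions which you yourself flag: (i) the functions $\eta_n=\sigma_{x_0}+\xi_n$ provided by Guichardet's characterisation are not $[0,1]$-valued nor monotone along chains of halfspaces, so the level sets $\{\eta_n\geq t\}$ are generally not ultrafilters and the ``repair via the median'' step is nontrivial to make precise in $L^2$; and (ii) passing from a $G$-invariant probability measure on $\overline X$ to a finite orbit requires its own induction on rank and careful use of the component structure. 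Since both steps are left as placeholders, this direction is also incomplete, though the overall idea has more promise than the truncation in the other half.
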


Theorem~\ref{A} extends various known results. In the case of simplicial trees, it appears in \cite{Fernos-Valette}. For {\rm CAT}(0) cube complexes, the implication ``\emph{Roller nonelementary $\Rightarrow$ $\overline{[b]}\neq 0$}'' is implicit in \cite{Delzant-Py}. In \cite{CFI}, the authors construct a family of bounded cohomology classes detecting Roller elementarity in {\rm CAT}(0) cube complexes. 

We remark that Theorem~\ref{A} equally holds if we replace $L^2(\mscr{H},\wh{\nu})$ with any $L^p(\mscr{H},\wh{\nu})$, $1\leq p<+\infty$, although it is slightly simpler to exploit the richer structure of Hilbert spaces in its proof.

Our superrigidity result only relies on the implication of Theorem~\ref{A} that yields $\overline{[b]}\neq 0$, but we believe the full statement of Theorem~\ref{A} to be of independent interest. The proof of the other implication turns out to be quite technical and requires a careful study of the structure of \emph{unidirectional boundary sets (UBS's)} in median spaces; these are a generalisation of the simplices in Hagen's simplicial boundary of a {\rm CAT}(0) cube complex \cite{Hagen}. Most of these details will be relegated to the appendix.
 
\medskip
\noindent
1.2. {\bf Superrigidity of actions.}
Once we have a nontrivial reduced cohomology class, as provided by Theorem~\ref{A}, we can apply well-established machinery (namely Theorem~4.1 in \cite{Shalom}) to obtain superrigidity results.

Let $X$ be a complete, finite rank median space. Its Roller compactification $\overline X$ is partitioned into \emph{components} \cite{Fioravanti1}. The subset $X\cu\overline X$ forms a whole component and every other component is itself a complete median space of strictly lower rank. This aspect of $\overline X$ shares many similarities with refined boundaries of {\rm CAT}(0) spaces \cite{Leeb-fine,Caprace-fine} and with Satake compactifications of symmetric spaces \cite{Borel-Ji,Borel-Ji2}.

Given a component $Z\cu\overline X$, a \emph{median subalgebra} of $Z$ is a subset $Y\cu Z$ that is itself a median space with the restriction of the median metric of $Z$; equivalently, the median map $m\colon Z^3\ra Z$ takes $Y^3$ into $Y$. We are now ready to state our main superrigidity result.

\begin{thmintro}\label{B}
Let $X$ be a complete, finite rank median space. Let $\G$ be a uniform, irreducible lattice in a product ${G=G_1\x ... \x G_{\ell}}$ of compactly generated, locally compact groups with $\ell\geq 2$. Suppose $\G\acts X$ is a Roller non\-e\-le\-men\-ta\-ry action. There exist a finite index subgroup $\G_0\leq\G$, a $\G_0$-invariant component $Z\cu\overline X$ and a $\G_0$-invariant closed median subalgebra $Y\cu Z$ where the action $\G_0\acts Y$ extends to a continuous action $G_0\acts Y$, for some open finite index subgroup $G_0\leq G$.
\end{thmintro}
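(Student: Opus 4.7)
My plan is to exploit Theorem~\ref{A} to meet the hypotheses of Shalom's cohomological superrigidity, and then to translate the resulting continuous Hilbert-space action back into a geometric action on a median subalgebra of $\overline{X}$. Since $\G\acts X$ is Roller nonelementary by hypothesis, Theorem~\ref{A} immediately yields a nontrivial reduced class $\overline{[b]}\neq 0$ in $\overline{H^1_c}(\G,\rho)$, where $b\colon\G\ra L^2(\mscr{H},\wh{\nu})$ is the Haagerup cocycle and $\rho$ the associated unitary representation. This is the entry point into Theorem~4.1 of \cite{Shalom}.

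Invoking Shalom's machinery on $(\rho,b)$ proceeds in the standard way: induce $\rho$ to a continuous unitary $G$-representation on $L^2(G/\G,L^2(\mscr{H},\wh{\nu}))$, apply Shalom's splitting theorem in the induced setting, and transfer the result back to $\G$. After passing to an open finite index subgroup $G_0\leq G$ and to $\G_0:=\G\cap G_0$, the upshot is a nontrivial $\G_0$-invariant closed affine subspace $V\cu L^2(\mscr{H},\wh{\nu})$, extracted from $(\rho,b)$, on which the affine isometric $\G_0$-action extends to a continuous affine isometric action $G_0\acts V$. Equivalently, $b|_{\G_0}$ extends to a continuous cocycle $\widetilde{b}\colon G_0\ra V$ with the analogous equivariance.

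The core and most delicate step is translating this abstract Hilbert-space extension into a genuine action on $\overline{X}$. I would exploit three structural facts: (i) points of $\overline{X}$ correspond to coherent orientations of the halfspace system $\mscr{H}$; (ii) Haagerup vectors $b(\gamma)=\chi_{\s(x_0,\gamma x_0)}$ encode how such orientations differ; (iii) the median operation on $\overline{X}$ corresponds to the pointwise lattice operations on $L^2(\mscr{H},\wh{\nu})$. The plan is to build a continuous $G_0$-equivariant map $\Phi\colon G_0\ra\overline{X}$ extending the $\G_0$-orbit map, by arguing that the extended cocycle $\widetilde{b}(g)$ must remain a $\{0,1\}$-valued function of halfspaces, hence encode a coherent orientation, hence a well-defined point $\Phi(g)\in\overline{X}$. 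The finite rank assumption should enter crucially here, providing the local control on halfspaces needed to keep $\widetilde{b}(g)$ inside the non-closed locus of indicator functions of $L^2(\mscr{H},\wh{\nu})$. If this direct extension argument proves too delicate, an alternative route is to construct $\Phi$ as an essentially $G_0$-equivariant Borel map out of a Poisson-type boundary of $G_0$ into the compact space $\overline{X}$, and use the continuous extension on $V$ to match the two actions.

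Finally, continuity of $\Phi$ together with the rigidity of the component partition of $\overline{X}$ forces the image $\Phi(G_0)$ to lie in a single component $Z\cu\overline{X}$; let $Y\cu Z$ be the closed median subalgebra generated by $\Phi(G_0)$. Compatibility of the median map with the $G_0$-action on orientations makes $Y$ both $G_0$-invariant and a locus of continuous $G_0$-action, and by construction it restricts to the original $\G_0\acts Y$. The hard part is unambiguously the translation step above: Shalom's theorem is a purely $L^2$-analytic result, and one must show that its output preserves the combinatorial/indicator-function structure characteristic of points of the Roller compactification.
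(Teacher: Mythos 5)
Your high-level skeleton is correct: Theorem~\ref{A} provides the nonvanishing class $\overline{[b]}\neq 0$, and Shalom's Theorem~4.1 is the right machinery to bring in (after reducing, as you implicitly acknowledge, to an irreducible $X$ with $\G\acts X$ Roller minimal). But the ``core and most delicate step'' is where the proposal goes wrong, in two ways.

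First, a technical point about what Shalom's theorem actually delivers. It provides $\G$-invariant closed subspaces $\mc{H}_i\leq L^2(\mscr{H},\wh{\nu})$ on which the \emph{unitary representation} extends continuously to $G$ (factoring through $G_i$), together with $\G$-cocycles $c_i\colon\G\ra\mc{H}_i$ such that $b$ and $c_1+\dots+c_\ell$ agree in \emph{reduced} cohomology. The cocycle itself is not extended to $G_0$; indeed the $c_i$ are only $\G$-cocycles, and they differ from $b$ by something that is merely a limit of coboundaries. So there is no continuous $\widetilde{b}\colon G_0\ra V$ to work with.

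Second, and more fundamentally, even if such an extension existed there is no reason for $\widetilde{b}(g)$ to take values in $\{-1,0,1\}$-valued functions. That set is far from closed or convex in $L^2$, and nothing in the finite rank hypothesis forces a continuous one-parameter-like extension to land in it; the argument as sketched simply has a gap at the claim ``$\widetilde{b}(g)$ must remain a $\{0,1\}$-valued function of halfspaces.'' The paper avoids this entirely. Instead of trying to recover points of $\overline X$ from cocycle values, it uses the extended $G$-representation on $\mc{H}_i$ only as a \emph{continuity criterion} for sequences in $\G$: if $p_i(g_n)\ra\id$, then $(g_n)$ acts almost trivially on any $f\in\mc{H}_i$, hence $(g_n)\in\mscr{S}(f)$ in the notation of Lemma~\ref{main lemma for SR}. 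That lemma --- a purely geometric statement proved via strongly separated halfspaces, facing triples, and Lemma~\ref{strongly separated closed convex sets 2} --- then produces a concrete point $z\in X$ eventually fixed by every such sequence. The set $Y$ of all points with this property is a nonempty, closed, $\G$-invariant median subalgebra of $X$, and Shalom's Proposition~4.3 in \cite{Shalom} then supplies the continuous extension of $\G\acts Y$ to $G$. In short: the paper never reconstructs $\overline X$ from $L^2$; it uses $L^2$-convergence to certify pointwise convergence in $X$, and Lemma~\ref{main lemma for SR} is precisely the bridge your proposal is missing.
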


\begin{rmk*}
\begin{enumerate}
\item Theorem~\ref{B} also applies to nonuniform lattices, as long as they are \emph{square-integrable}; this is a well-known technical condition that implies finite generation and ensures that Theorem~4.1 in \cite{Shalom} still holds. 

All irreducible lattices in $G_1\x ...\x G_{\ell}$ are square-integrable if each $G_i$ is the group of $k_i$-rational points of a semisimple, almost $k_i$-simple, $k_i$-isotropic linear algebraic group defined over some local field $k_i$ \cite{Shalom}. Further examples of nonuniform square-in\-te\-grable lattices include minimal Kac-Moody groups over sufficiently large finite ground fields; these can be regarded as irreducible lattices in the product of the closed automorphism groups of the associated buildings \cite{Remy99,Remy05}.

\item Theorem~\ref{B} should be compared to Shalom's superrigidity result for actions on simplicial trees, Theorem~0.7 in \cite{Shalom}. If $X$ is a simplicial tree, $Y$ is always a subcomplex of $X$ and $\G_0=\G$, $G_0=G$. The complications in the statement of Theorem~\ref{B} reflect phenomena that do not happen in the world of trees. 

However, as soon as we leave the context of rank-one median spaces (i.e.\,real trees), our result is optimal even if one restricts to {\rm CAT}(0) square complexes; see Examples~\ref{need Y} and~\ref{need G_0}. We remark that, when $X$ is a general {\rm CAT}(0) square complex, the median algebra $Y$ might not be a \emph{subcomplex} of $X$ or $Z$.

\item We can take $\G_0=\G$, $G_0=G$ and $Z=X$ as long as $X$ is irreducible, $G$ has no finite orbits in the visual compactification of $\wh{X}$ and $G$ leaves invariant no proper closed convex subset of $X$; see Theorem~\ref{SR 1} below. However, even in this case, the action in general extends only to a \emph{proper} median subalgebra of $X$.

\item For {\rm CAT}(0) cube complexes, the superrigidity result of \cite{CFI} is slightly more general than Theorem~\ref{B} as it applies to \emph{all} nonuniform lattices. This is due to their use of bounded cohomology (namely Theorem~16 in \cite{Burger-Monod}), rather than reduced cohomology. In the setting of {\rm CAT}(0) cube complexes, our strategy of proof was instead hinted at on page~9 of \cite{Shalom}.
\end{enumerate}
\end{rmk*}

\medskip
\noindent
1.3. {\bf Fixed point properties for lattices.}
Unlike automorphism groups of {\rm CAT}(0) cube complexes, the isometry group of a median space needs not be totally disconnected. Still, it is possible to exploit Theorem~\ref{B} to derive a fixed point property for irreducible lattices in connected groups.

Given a locally compact topological group $Q$, we denote the connected component of the identity by $Q^0$. We say that $Q$ satisfies \emph{condition $(\ast)$} if $Q/Q^0$ is amenable or has Shalom's property $H_{FD}$ (see Section~1.5 for a definition). In particular, all almost-connected and connected-by-(T) groups satisfy condition $(\ast)$. 

\begin{thmintro}\label{C}
Let $X$ be a complete, finite rank median space. Let $\G$ be a square-integrable, irreducible lattice in a product $G_1\x ... \x G_{\ell}$ with $\ell\geq 2$. Suppose that every $G_i$ is compactly generated and satisfies condition $(*)$. 
\begin{enumerate}
\item Every action $\G\acts X$ is Roller elementary. 
\item If $\G$ does not virtually map onto $\Z$, every action $\G\acts X$ has a finite orbit within $X$. If moreover $X$ is connected, every action $\G\acts X$ has a global fixed point.
\end{enumerate}
\end{thmintro}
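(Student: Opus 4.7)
The plan is to use Theorem~\ref{B} together with condition $(*)$ to establish (1), and then derive (2) via a Busemann function argument.

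For part (1), I argue by contradiction. Suppose $\G \acts X$ is Roller non-elementary. Invoking Theorem~\ref{B} gives a finite index subgroup $\G_0 \leq \G$, a $\G_0$-invariant component $Z \cu \overline{X}$, a $\G_0$-invariant closed median subalgebra $Y \cu Z$, and an open finite index subgroup $G_0 \leq G$ such that $\G_0 \acts Y$ extends continuously to $G_0 \acts Y$. A straightforward argument (tracing through the component structure of $\overline{X}$ and passing to finite index) shows that Roller non-elementarity of $\G \acts X$ implies Roller non-elementarity of $G_0 \acts Y$, reducing the problem to producing a finite orbit in $\overline{Y}$ for the continuous action $G_0 \acts Y$.

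This is the principal obstacle. By Theorem~\ref{A}, it amounts to showing that the continuous Haagerup class $\overline{[b]} \in \overline{H^1_c}(G_0, \rho)$ vanishes. Up to finite index, $G_0$ contains a product $H_1 \x \cdots \x H_\ell$ where each $H_i$ is an open finite index subgroup of $G_i$ and hence inherits condition $(*)$. On the connected identity component $H_i^0$, a continuity argument (connected groups act trivially on the ``discrete-like'' collection of halfspaces induced by the median structure) forces the restriction of $\overline{[b]}$ to vanish. Condition $(*)$ on the quotient $H_i/H_i^0$---either amenability (so $\overline{H^1_c} = H^1_c$ and nontrivial classes reduce to homomorphisms incompatible with the lattice structure) or Shalom's property $H_{FD}$ (handled via the framework of Theorem~4.1 in \cite{Shalom})---then lifts this vanishing to all of $H_i$. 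A product argument combines these to yield $\overline{[b]} = 0$ on $G_0$, the required contradiction.

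For part (2), part (1) supplies a finite $\G$-orbit in $\overline{X}$. Passing to a finite index subgroup $\G' \leq \G$, we may assume $\G'$ fixes a point $\xi \in \overline{X}$. If $\xi \in X$, the original $\G$-orbit of $\xi$ is finite in $X$. Otherwise $\xi \in \overline{X} \setminus X$, and the Busemann function at $\xi$ yields a $\G'$-invariant cocycle whose associated homomorphism $c \colon \G' \to \R$ has finitely generated, hence free abelian, image; nontriviality would give a virtual surjection $\G \to \Z$, contradicting our hypothesis. Triviality of $c$ then forces, after iterating the argument within the lower-rank boundary component containing $\xi$, a finite $\G$-orbit within $X$ itself. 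Finally, when $X$ is connected, the bi-Lipschitz equivalence with the CAT(0) metric $\wh{X}$ allows one to take the (unique) CAT(0) circumcenter of the finite orbit, producing a global $\G$-fixed point in $X$.
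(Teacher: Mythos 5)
Your overall strategy — invoke superrigidity, use condition $(*)$ on a factor, and conclude via the Haagerup class characterisation — is aligned with the paper, but the step where you dispose of the identity component contains a genuine error, and your handling of the amenable case of condition $(*)$ is incorrect.

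\textbf{The identity component step.} You claim that $H_i^0$ acts trivially on halfspaces because ``connected groups act trivially on the discrete-like collection of halfspaces induced by the median structure.'' This is false for general median spaces: the halfspaces of $\R$ or of a real tree form a continuum, and connected groups act on them nontrivially all the time (think of $\R$ translating itself). The reason the identity component $G_i^0$ acts trivially in the paper is much subtler and depends on the specific subalgebra $Y$ produced by the superrigidity machinery. Lemma~\ref{main lemma for SR} actually produces a point $z$ with an \emph{open} stabiliser under the sequences $(g_n)$ with $p_i(g_n)\to\id$, and Remark~\ref{the other topology} records that the dense subset
\[Y_0:=\left\{x\in X\mid\forall (g_n)\in\G^{\N}\text{ s.t. }p_i(g_n)\ra\id, \exists N\text{ s.t. } g_nx=x, \forall n\geq N\right\}\]
has $G_i$-open stabilisers. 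It is precisely this openness, not any discreteness of $\mscr{H}$, that forces $G_i^0$ to fix $Y_0$ pointwise and hence (by density) to act trivially on $Y$. Your continuity argument cannot succeed without this specific input.

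\textbf{The amenable case.} You assert that amenability of $H_i/H_i^0$ gives $\overline{H^1_c} = H^1_c$, so nontrivial classes reduce to homomorphisms. This is wrong: reduced cohomology is emphatically not the same as ordinary cohomology for amenable groups (already $\Z$ has nonvanishing $\overline{H^1}$ for its regular representation). In the paper, the amenable case is handled by a separate structural result (Corollary~6.5 of \cite{Fioravanti2}, stating that amenable groups act Roller elementarily on complete finite rank median spaces), not by any identification of reduced and unreduced cohomology. Without that input your reduction does not close.

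\textbf{Minor points.} You quote Theorem~\ref{B} directly, whereas the paper first reduces (as in the proof of Theorem~\ref{Z}) to $X$ irreducible with $\G\acts X$ Roller minimal and then applies the sharper Theorem~\ref{SR 1}; this is cleaner because it avoids tracking components of $\overline{X}$ and multiple finite index subgroups. Your Part~2 is in the right spirit but vaguer than necessary: the paper derives Part~2 purely from Part~1 and Proposition~\ref{chi_xi continuous}, whose transfer homomorphism $\chi_\xi\colon K_\xi\to\R^k$ is exactly the ``Busemann cocycle'' you gesture at, and which already contains the finite-orbit and fixed-point conclusions; there is no need to invoke a CAT(0) circumcentre.
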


When $X$ is a real tree, Theorem~\ref{C} also follows from Theorem~6 in \cite{Monod}. We remark that, contrary to part~2 of Theorem~\ref{C}, every group that virtually maps onto $\Z$ admits a Roller elementary action on $\R^n$ with unbounded orbits.

\begin{corintro}\label{D}
Let $X$ be a complete, connected, finite rank median space. Let $\G$ be any irreducible lattice in a connected, higher rank, semisimple Lie group $G$. Every action $\G\acts X$ fixes a point.
\end{corintro}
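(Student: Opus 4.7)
The plan is to reduce to Theorem~\ref{C}. Decompose $G = G_1 \x \dots \x G_\ell$ into its connected simple factors; each $G_i$ is compactly generated and almost-connected, hence satisfies condition~$(*)$. By the classical result of Shalom quoted in the first remark after Theorem~\ref{B}, $\G$ is square-integrable, and Margulis' normal subgroup theorem implies that it has finite abelianization, so $\G$ does not virtually map onto $\Z$.

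If $\ell \geq 2$, Theorem~\ref{C}(2) applies verbatim and produces a global fixed point, using that $X$ is connected. If instead $\ell = 1$, then $G$ is simple of real rank at least two, so by Kazhdan's theorem both $G$ and $\G$ have property~(T); all reduced $1$-cohomology of $\G$ with unitary coefficients vanishes. In particular the Haagerup class $\overline{[b]} \in \overline{H^1_c}(\G,\rho)$ is trivial, and Theorem~\ref{A} then shows that $\G \acts X$ is Roller elementary. One then mirrors the final step of the proof of Theorem~\ref{C}(2) --- which uses only Roller elementarity, connectedness of $X$, and the absence of a virtual surjection $\G \to \Z$ --- to upgrade the finite orbit in $\overline X$ to an actual fixed point in $X$.

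The main obstacle I anticipate is precisely this last upgrade in the $\ell = 1$ case: one must verify that the argument for Theorem~\ref{C}(2), beyond the initial invocation of Theorem~\ref{B}, is genuinely insensitive to whether $G$ already splits nontrivially. The structure theory of components of $\overline X$ developed in \cite{Fioravanti1} should make this possible by allowing one to pass to the finite-index stabiliser of a component containing the finite orbit --- still a lattice with property~(T) and finite abelianization --- and induct on the rank of the component. At each stage the hypotheses persist on the smaller median space, and the induction terminates when the invariant component is $X$ itself, where connectedness forces the finite orbit to be a single point.
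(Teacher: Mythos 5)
Your two-case split is the right structure. The $\ell \geq 2$ branch agrees with the paper: each connected $G_i$ trivially satisfies condition~$(\ast)$, Margulis' normal subgroup theorem rules out virtual surjections onto~$\Z$ from any finite-index subgroup, square-integrability is covered by Shalom's result, and Theorem~\ref{C}(2) together with connectedness of $X$ gives the fixed point. For $\ell = 1$ your route differs from the paper's: the remark following Corollary~\ref{D} indicates that when every simple factor has rank at least two one simply quotes Theorem~1.2 of \cite{CDH}, which says that property (T) forces bounded orbits for isometric actions on median spaces, after which a circumcenter argument in $\wh{X}$ finishes. You instead observe that (T) kills $\overline{H^1_c}$ (Delorme), so the Haagerup class vanishes, Theorem~\ref{A} gives Roller elementarity, and Proposition~\ref{chi_xi continuous} does the rest. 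Both are valid; yours keeps the argument internal to this paper at the cost of re-deriving a special case of the CDH theorem through heavier machinery. A marginally shorter internal route: (T) trivially implies $H_{FD}$, so Theorem~\ref{Z} gives Roller elementarity at once.

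The obstacle you anticipate in the last paragraph is a non-issue, and the inductive scheme you sketch is the wrong mechanism and would not obviously close. The implication from Theorem~\ref{C}(1) to Theorem~\ref{C}(2) is a single application of Proposition~\ref{chi_xi continuous}, and it never looks at $G$ or its factorisation. Explicitly: given a finite $\G$-orbit in $\overline X$, pass to a finite-index $\G_0$ fixing some $\xi$. If $\xi \in X$, take the circumcenter of the finite orbit in $\wh{X}$. If $\xi \in \partial X$, then $\G_0 \cap K_\xi$ is still finite index in $\G$ and finitely generated; $\chi_\xi$ maps it into the torsion-free group $\R^k$, but its abelianisation is finite (Margulis again), so the image is trivial, and Proposition~\ref{chi_xi continuous} produces a point of $X$ fixed by $\G_0 \cap K_\xi$; a circumcenter of its finite $\G$-orbit is then $\G$-fixed. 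No descent through components of $\overline X$ is needed, and it would be delicate anyway: boundary components need not be connected, so the ``connectedness forces a fixed point'' step you propose would only apply if the invariant component were $X$ itself, which there is no reason to expect during such a descent.
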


The assumption that $X$ have finite rank is essential for Corollary~\ref{D} to hold. If at least one simple factor $G_i<G$ is locally isomorphic to $O(n,1)$ or $U(n,1)$, $n\geq 2$, the lattice $\G$ admits an action on an infinite rank median space with unbounded orbits \cite{CDH}. On the other hand, if each simple factor of $G$ has rank at least two, then $\G$ has property (T) and Corollary~\ref{D} follows from Theorem~1.2 in \cite{CDH}.  

An analogue of Corollary~\ref{D} for {\rm CAT}(0) cube complexes was proved in \cite{CFI}. We remark however that the full strength of Corollary~\ref{D} reveals a surprising new phenomenon. If all $G_i$'s are locally isomorphic to $O(n,1)$ and $\G$ is uniform, it was shown in \cite{Chatterji-Drutu} that $\G$ admits a proper and \emph{cocompact} action on an infinite rank median space. Corollary~\ref{D} shows such actions cannot be reduced to actions on finite rank median spaces. By contrast, any cocompact {\rm CAT}(0) cube complex is finite dimensional.

\medskip
\noindent
1.4. {\bf Homomorphisms to coarse median groups.} 
Coarse median spaces were introduced in \cite{Bow1} as an attempt to formulate a coarse notion of nonpositive curvature. They have recently received a lot of attention \cite{Bow6,Haettel1,Zeidler,Spakula-Wright,NWZ,NWZ2,ANWZ} and proved instrumental to striking results such as \cite{Haettel2,HHS3}. 

A finitely generated group is said to be \emph{coarse median} if its Cayley graphs are coarse median spaces; this property is independent of the chosen finite generating set. Examples of coarse median groups of finite rank include hyperbolic groups, mapping class groups, uniform lattices in products of rank-one simple Lie groups, fundamental groups of closed irreducible 3-manifolds not modelled on Nil or Sol and all cubulated groups --- in particular, right-angled Coxeter and Artin groups. More generally, if a group is a \emph{hierarchically hyperbolic space (HHS)}, it is coarse median of finite rank \cite{HHS,HHS2,Bow3}.

We will be mainly interested in \emph{equivariantly coarse median} groups. If we view coarse median groups as a generalisation of groups that are HHS, \emph{equivariantly} coarse median groups generalise \emph{hierarchically hyperbolic groups (HHG)}. In particular, all the examples above are also equivariantly coarse median of finite rank (with the exception of some graph manifold groups).

More precisely, we say that a group $H$ is \emph{equivariantly coarse median} if it is equipped with a finite generating set $S\cu H$ and a coarse median ${\mu\colon H^3\ra H}$ \cite{Bow1} such that $d_S(\mu(ha,hb,hc),h\mu(a,b,c))\leq C<+\infty$ for all elements $h,a,b,c\in H$; here $d_S$ denotes the word metric induced by $S$. Note that this definition does not depend on the choice of $S$. Equivariantly coarse median groups have already been considered in \cite{Zeidler} under the name of ``uniformly left-invariant coarse median structures''. 

If $H$ is a coarse median group of finite rank, every asymptotic cone of $H$ is endowed with a canonical, bi-Lipschitz equivalent median metric \cite{Zeidler}. As an example, in asymptotic cones of mapping class groups the median geodesics are limits of hierarchy paths \cite{Behrstock-Drutu-Sapir2,Behrstock-Drutu-Sapir}. When $H$ is \emph{equivariantly} coarse median, the median metric on each asymptotic cone is preserved by the action of the ultrapower of $H$.

Given a group $\G$ and an infinite sequence of pairwise non-conjugate homomorphisms $\G\ra H$, we can apply the Bestvina-Paulin construction \cite{Bestvina,Paulin}. The result is an isometric action on a median space $X$ with unbounded orbits; this is obtained as the canonical median space bi-Lipschitz equivalent to an asymptotic cone of $H$. Along with Theorem~\ref{C}, this implies:

\begin{corintro}\label{E}
Let $H$ be an equivariantly coarse median group of finite rank. Let $\G$ be as in the second part of Theorem~\ref{C}. There exist only finitely many pairwise non-conjugate homomorphisms $\G\ra H$. 
\end{corintro}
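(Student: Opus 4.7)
The plan is to proceed by contradiction. Suppose we are given an infinite sequence $(\rho_n)_{n\in\N}$ of pairwise non-conjugate homomorphisms $\G\ra H$. Fix a finite generating set $S\cu\G$ (recall that $\G$ is finitely generated as a square-integrable lattice) and set
\[ \lambda_n := \inf_{h\in H}\ \max_{s\in S}\ d_H\bigl(\rho_n(s)h,\,h\bigr), \]
where $d_H$ is the word metric on $H$ associated to its coarse median generating set. The quantity $\lambda_n$ depends only on the conjugacy class of $\rho_n$. A standard pigeonhole argument, using the left-invariance of $d_H$ (so that $d_H(\rho_n(s)h,h)$ equals the word length of the conjugate $h^{-1}\rho_n(s)h$) together with the local finiteness of the Cayley graph of $H$, forces $\lambda_n\ra\infty$: otherwise, after conjugating, infinitely many $\rho_n$ would send $S$ into a common finite set, so by pigeonhole two of them would literally coincide.

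Next, I would carry out the Bestvina--Paulin construction. Choose basepoints $h_n\in H$ approximately realising $\lambda_n$ and form the rescaled pointed ultralimit
\[ X := \lim_{\om}\bigl(H,\ \tfrac{1}{\lambda_n}d_H,\ h_n\bigr), \]
which is an asymptotic cone of $H$. The left multiplication actions $\gamma\cdot h:=\rho_n(\gamma)h$ are isometric for $d_H$, hence pass under the ultralimit to a well-defined isometric action $\G\acts X$. By the infimum defining $\lambda_n$, for every $x\in X$ realised as $\omega$-$\lim x_n$ one has $\max_{s\in S} d_H(\rho_n(s)x_n,x_n)\geq\lambda_n$, so $\max_{s\in S} d_X(s\cdot x,x)\geq 1$; in particular, the $\G$-action on $X$ has no global fixed point.

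Because $H$ is \emph{equivariantly} coarse median of finite rank, the results of \cite{Zeidler} endow $X$ with a canonical median operator that is preserved by the $\G$-action, together with a bi-Lipschitz equivalent median metric; the $\G$-action is isometric in both metrics and $X$ becomes a finite rank median space. Asymptotic cones are complete and geodesic, so $X$ is also complete and connected. Applying Theorem~\ref{C}(2) to $\G\acts X$, we obtain a global fixed point, contradicting the previous paragraph. Hence only finitely many pairwise non-conjugate homomorphisms $\G\ra H$ exist.

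The main obstacle is the insistence on the \emph{equivariantly} coarse median hypothesis on $H$. Without it, the coarse median on $H$ would be preserved by left multiplications only up to a bounded error, and this error, while negligible at any fixed scale, need not disappear after rescaling by $\lambda_n\ra\infty$; the limiting $\G$-action on the asymptotic cone could then fail to preserve the median operator, obstructing the application of Theorem~\ref{C}. A secondary point of care is the Bestvina--Paulin normalisation itself, in particular the verification that pairwise non-conjugacy genuinely implies $\lambda_n\ra\infty$ and that the chosen basepoints produce a limiting action with the required displacement lower bound.
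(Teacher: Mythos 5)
Your argument is correct and follows essentially the same route as the paper: a Bestvina--Paulin ultralimit construction, the results of Zeidler making the asymptotic cone a complete, connected, finite rank median space with a $\G$-invariant median metric (which is exactly where equivariance of the coarse median on $H$ enters), and a final contradiction with Theorem~\ref{C}. The only difference is presentational: the paper compresses the normalisation into a citation of the Bestvina--Paulin construction (``modifying each $\phi_n$ within its conjugacy class''), whereas you spell out the choice of scaling constants $\lambda_n$, the conjugation-invariance, the pigeonhole step forcing $\lambda_n\to\infty$, and the displacement lower bound on the cone; these are exactly the standard details being invoked.
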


When $\G$ is a uniform irreducible lattice in a product $G$ of rank-one simple Lie groups, we can take $\G=H$ and Corollary~\ref{E} provides an alternative proof that $\mathrm{Out}(\G)$ is finite, without relying on Margulis' superrigidity, nor on the structure of the normaliser of $\G$ in $G$ (Lemma II.6.3 in \cite{Margulis}).

If instead $\G$ is a lattice in a product of higher-rank, simple Lie groups, compare Theorem~5.6 in \cite{Zeidler} and Corollary~D in \cite{Haettel2}. Also see Theorem~4.1 in \cite{Bader-Furman}.  

We remark that Corollary~\ref{E} can be strengthened significantly if $H$ acts freely on a complete, finite rank median space. Indeed, the following is an immediate consequence of Theorem~F in \cite{Fioravanti2}.

\begin{propintro}\label{F}
Let $H$ be a group admitting a free action on a complete, finite rank median space $X$. Suppose that every action $\G\acts X$ is Roller elementary. Then every homomorphism $\G\ra H$ factors through a virtually abelian subgroup of $H$.
\end{propintro}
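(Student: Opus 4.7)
The plan is to reduce the statement directly to Theorem~F of \cite{Fioravanti2}, which asserts that any group admitting a free, Roller elementary isometric action on a complete, finite rank median space must be virtually abelian.

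Given a homomorphism $\phi\colon\G\to H$, I would first compose it with the given free action $H\acts X$ to obtain an isometric action $\G\acts X$. By the standing hypothesis of the proposition, this action is Roller elementary, so it admits a finite $\G$-orbit $F\cu\overline{X}$. Since the $\G$-action factors through the subgroup $H_0:=\phi(\G)\leq H$, the same set $F$ is also a finite $H_0$-orbit in $\overline{X}$; hence the induced action $H_0\acts X$ is itself Roller elementary. Moreover, $H_0\acts X$ is free, because it is obtained by restricting the free action $H\acts X$ to a subgroup.

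At this point I would apply Theorem~F of \cite{Fioravanti2} to the group $H_0$ and its free, Roller elementary action on the complete, finite rank median space $X$, concluding that $H_0$ is virtually abelian. Since $\phi$ factors as $\G\twoheadrightarrow H_0\hookrightarrow H$, this exhibits $\phi$ as factoring through a virtually abelian subgroup of $H$, which is exactly what is required.

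I do not anticipate any serious obstacle here: beyond the citation of \cite{Fioravanti2}, the only content is the transparent observation that both Roller elementarity and freeness descend from $\G\acts X$ (respectively, from $H\acts X$) to the action of the image subgroup $H_0\acts X$. This is why the author announces the proposition as an immediate consequence of the earlier work.
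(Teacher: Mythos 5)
Your argument is correct and is exactly the reduction the paper intends: compose $\phi$ with the free action $H\acts X$ to get a Roller elementary $\G$-action, observe that the $\G$-orbits in $\overline X$ coincide with the $H_0$-orbits for $H_0:=\phi(\G)$ (so $H_0\acts X$ is Roller elementary and, being a restriction of $H\acts X$, free), and then invoke Theorem~F of \cite{Fioravanti2} to conclude $H_0$ is virtually abelian. The paper gives no separate proof, stating only that the proposition is "an immediate consequence of Theorem~F in \cite{Fioravanti2}," and your writeup supplies precisely the two transparent observations (transfer of Roller elementarity and freeness to the image subgroup) that make the consequence immediate.

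One small caveat worth being aware of: Theorem~F of \cite{Fioravanti2} is stated as a structural/Tits-alternative result (for a Roller elementary action there is a finite-index subgroup $\G_0$ with a normal subgroup $N\lhd\G_0$ consisting of elliptic elements and $\G_0/N$ abelian), not literally as "free $+$ Roller elementary $\Rightarrow$ virtually abelian." The extra step, which the present paper itself spells out in the proof of Corollary~\ref{G}, is that freeness forces $N$ to be trivial (an elliptic element has a bounded orbit, hence a fixed point in the complete median space, hence must be the identity). You have folded this step into your paraphrase of Theorem~F; that is fine as long as you are aware it is there.
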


Proposition~\ref{F} applies for instance to the case when $\G$ has no non-abelian free subgroups, has property $H_{FD}$ (see Theorem~\ref{Z} below) or satisfies the hypotheses of the first part of Theorem~\ref{C}. In particular, if $\G$ is an irreducible lattice in a connected, higher rank, semisimple Lie group, every homomorphism $\G\ra H$ has finite image.

This should motivate a certain interest in groups acting freely on complete, finite rank median spaces. If a group acts freely on a finite dimensional {\rm CAT}(0) cube complex, it clearly falls into this class; however, it is unclear at this stage whether these are the only finitely generated examples. See \cite{CasalsRuiz-Kazachkov} for partial results in this direction.

Note that the \emph{infinitely generated} group $\Q$ admits a proper action on a rank two median space, namely the product of a simplicial tree and the real line (Example~II.7.13 in \cite{BH}). However, since $\Q$ is a divisible group, all its elements must act elliptically on any (possibly infinite-dimensional) {\rm CAT}(0) cube complex \cite{Haglund}.

Even within finitely generated groups, actions on median spaces tend to be more flexible than actions on {\rm CAT}(0) cube complexes. For every group $H$, we can consider $\dim_{fm}H$, i.e.\,the minimum rank of a complete {\bf m}edian space $X$ admitting a {\bf f}ree action of $H$; if $H$ does not act freely on any complete median space, we set $\dim_{fm}H:=-1$. Restricting to {\rm CAT}(0) {\bf c}ube complexes, we can similarly define $\dim_{fc}H$ and, if we only consider (metrically) {\bf p}roper actions, we obtain $\dim_{pm}H$ and $\dim_{pc}H$. Thus, $\dim_{pc}\Q=\dim_{fc}\Q=-1$, while $\dim_{pm}\Q=2$ and $\dim_{fm}\Q=1$ (the latter since $\Q<\R$).

We remark that $1\leq\dim_{fm}H<\dim_{cm}H$ and $1\leq\dim_{pm}H<\dim_{pm}H$ for many finitely generated groups $H$. For instance, $\dim_{fc}H=1$ if and only if $H$ is free. On the other hand, by work of E. Rips, $\dim_{fm}H=1$ if and only if $H$ is a free product of free abelian and surface groups (excluding a few nonorientable surfaces); see e.g.\,Theorem~9.8 in \cite{Bestvina-Feighn}. One can use the same observation to construct free actions of various right-angled Artin groups on median spaces of rank strictly lower than the dimension of the Salvetti complex.

Considering more general actions, we mention that there exist finitely generated groups admitting actions on real trees with unbounded orbits, but whose actions on finite dimensional {\rm CAT}(0) cube complexes must all have global fixed points. An example is provided by the group $L$ in Section~2 of \cite{Minasyan}; the fixed point property for actions on finite dimensional cube complexes can easily be derived from the same property of Thompson's group $V$ \cite{Gen-V,Motoko}.

\medskip
\noindent
1.5. {\bf Shalom's property $H_{FD}$ and random groups.}
Theorem~\ref{A} also allows us to prove that various (non-amenable) groups do not have property $H_{FD}$. The latter was introduced in \cite{Shalom2}: a topological group $G$ has \emph{property $H_{FD}$} if every unitary representation $\pi$ with $\overline{H^1_c}(G,\pi)\neq 0$ has a finite dimensional subrepresentation. 

Property $H_{FD}$ is trivially satisfied by every locally compact group with property (T) \cite{Delorme}, but also, at the opposite end of the universe of groups, by a large class of amenable groups. This includes polycyclic groups, lamplighter groups and all connected, locally compact, a\-me\-nable groups \cite{Shalom2, Martin, Erschler-Ozawa}. An example of an amenable group without $H_{FD}$ is provided by the wreath product $\Z\wr\Z$ \cite{Shalom2}. We prove the following:

\begin{thmintro}\label{Z}
If $G$ has property $H_{FD}$, every isometric action of $G$ on a complete, finite rank median space is Roller elementary. 
\end{thmintro}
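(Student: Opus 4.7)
I proceed by contradiction: assume $G$ has property $H_{FD}$ and $G\acts X$ is not Roller elementary. By Theorem~\ref{A}, $\overline{[b]}\neq 0$ in $\overline{H^1_c}(G,\rho)$, where $\rho$ is the unitary representation on $L^2(\mscr{H},\wh{\nu})$ and $b$ is the Haagerup cocycle. Property $H_{FD}$ then yields a nonzero finite-dimensional $G$-invariant subspace $V\cu L^2(\mscr{H},\wh{\nu})$. The task is to use $V$ to exhibit a finite $G$-orbit in the Roller compactification $\overline X$, contradicting the starting hypothesis.

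\emph{Sum-of-squares.} Fix an orthonormal basis $\phi_1,\ldots,\phi_n$ of $V$ and set $F := \sum_{i=1}^n |\phi_i|^2 \in L^1(\mscr{H},\wh{\nu})$. Writing $\rho(g)\phi_i = \sum_j U_{ji}(g)\phi_j$ with $U(g)=(U_{ji}(g))$ a unitary matrix, unitarity gives
\[
F(g^{-1}h) = \sum_i \left|\sum_j U_{ji}(g)\phi_j(h)\right|^2 = \sum_j |\phi_j(h)|^2 = F(h)
\]
pointwise on $\mscr{H}$. Thus $F$ is $G$-invariant, and $\|F\|_{L^1}=\dim V<+\infty$ forces some superlevel set $A:=\{F>t\}$ to be a $G$-invariant measurable subset of $\mscr{H}$ with $0<\wh{\nu}(A)<+\infty$.

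\emph{Extracting a finite orbit.} The restricted measure $\wh{\nu}|_A$ defines a $G$-invariant bounded wall pseudometric $d_A$ of diameter at most $\wh{\nu}(A)/2$ on $X$. The median quotient of $(X,d_A)$, followed by metric completion, yields a complete, bounded, finite-rank median space $Y_A$ carrying an isometric $G$-action. Invoking the circumcenter fixed point property from \cite{Fioravanti1} for bounded actions on such spaces produces a $G$-fixed point $y\in Y_A$. Using the halfspace description of $\overline X$ from \cite{Fioravanti1}, the point $y$ corresponds to a finite $G$-orbit in $\overline X$, giving the desired contradiction.

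\emph{Main obstacle.} The last identification is the delicate point: the auxiliary space $Y_A$ only reflects the walls in $A$, while $\overline X$ encodes the entire measured-wall structure $(\mscr{H},\wh{\nu})$. Transferring a $G$-fixed point from $Y_A$ back into a finite orbit in $\overline X$ thus requires a careful analysis --- via the wall-theoretic framework of \cite{Fioravanti1} --- of how $A$-coherent orientations in $Y_A$ correspond to Roller boundary data of $X$, possibly iterating the construction along the finite-rank hierarchy.
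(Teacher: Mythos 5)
Your first half is correct and arguably slicker than the paper's: the sum-of-squares $F=\sum_i|\phi_i|^2$ is pointwise $G$-invariant (a.e.), integrable, and a superlevel set gives a $G$-invariant measurable $A\subseteq\mscr{H}$ with $0<\wh{\nu}(A)<+\infty$. The paper achieves the same thing slightly less directly (it takes $E_c$ to be the set of halfspaces where some unit vector in $V$ exceeds $c$ in absolute value, checking measurability via a countable dense subset of $\mathbb{S}^{k-1}$).

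The gap is in what you do with $A$. You try to use $A$ \emph{constructively} to manufacture a finite orbit in $\overline X$: quotient by the wall pseudometric $d_A$, find a finite orbit in the bounded median space $Y_A$, and transfer it back. This does not go through, for two reasons you partially acknowledge. First, a finite $G$-orbit (or fixed point) in $Y_A$ only gives a coherent $G$-quasi-invariant orientation of the walls in $A$; it says nothing about how to orient the walls in $\mscr{H}\setminus A$, and a point of $\overline X$ requires an ultrafilter on \emph{all} of $\mscr{H}$. There is no canonical way to extend, and the ``iterate along the finite-rank hierarchy'' suggestion is not an argument. Second, the ``circumcenter fixed point property for bounded actions'' you cite from \cite{Fioravanti1} is not available in the form you need; $Y_A$ need not be connected, and in general bounded actions on complete finite rank median spaces only produce finite orbits, not fixed points. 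You flag the transfer step as ``the delicate point'' --- indeed, it is a genuine hole, not a detail.

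The paper's strategy is to use $A$ \emph{destructively}. Before invoking Theorem~\ref{A} and $H_{FD}$, one reduces --- by taking $X$ of minimal rank among complete median spaces carrying a Roller nonelementary $G$-action, then applying Proposition~\ref{Roller elementary vs strongly so} --- to the case where $X$ is irreducible and $G\acts X$ is Roller minimal. This reduction is exactly what your proof is missing. Once in that situation, Lemma~\ref{invariant sets} (an ergodicity statement proved via facing $n$-tuples from Proposition~\ref{strong separation} and double skewering from Proposition~\ref{double skewering}) shows that every essentially $G$-invariant measurable subset of $\mscr{H}$ has measure $0$ or $+\infty$. This contradicts $0<\wh{\nu}(A)<+\infty$ outright, with no need to build any boundary point. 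So: keep your $F$ and $A$, but replace the entire ``Extracting a finite orbit'' paragraph with the rank-minimality / Roller minimality reduction and the appeal to Lemma~\ref{invariant sets}.
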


\begin{corintro}\label{G}
Let $\G$ be a discrete group with property $H_{FD}$. If $\G$ acts freely and cocompactly on a {\rm CAT}(0) cube complex $X$, then $\G$ is virtually abelian.
\end{corintro}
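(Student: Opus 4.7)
The plan is to combine Theorem~\ref{Z} with a structural analysis of Roller elementary actions on cube complexes. Since $\G$ acts cocompactly on $X$, the complex $X$ is finite-dimensional and locally finite, so $(X,d_{\ell^1})$ is a complete finite-rank median space. By Theorem~\ref{Z} the action $\G\acts X$ is Roller elementary, so some finite-index subgroup $\G_0\leq\G$ fixes a point $\xi\in\overline{X}$.

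If $\xi\in X$, then $\G_0$ acts by cubical automorphisms and must fix the unique minimal cube containing $\xi$, hence a vertex; freeness forces $\G_0=\{1\}$ and $\G$ is finite. Otherwise $\xi$ lies in the Roller boundary and corresponds to a $\G_0$-invariant orientation of a consistent family of halfspaces. The associated combinatorial Busemann function $\beta_\xi\colon X\to\Z$ is well-defined up to an additive constant, and since $\G_0$ fixes $\xi$ the assignment $\chi(g):=\beta_\xi(gx_0)-\beta_\xi(x_0)$ is a genuine homomorphism $\chi\colon\G_0\to\Z$. Cocompactness forces $\chi\neq 0$: otherwise every orbit $\G_0 x_0$ would remain at bounded $\ell^1$-distance from a single level set of $\beta_\xi$, preventing $\xi$ from being an accumulation point of $\G_0 x_0$ in $\overline{X}$.

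Iterating, the kernel $\ker\chi$ acts freely and cocompactly on a CAT(0) cube complex of strictly smaller dimension (obtained as a level set of $\beta_\xi$, or by restricting to a stabilised hyperplane transverse to the $\xi$-direction) and inherits a Roller-boundary fixed point. After at most $\dim X$ steps one obtains a chain of normal subgroups with infinite cyclic quotients, so $\G_0$ is polycyclic; in particular $\G_0$ contains no non-abelian free subgroup. A cocompactly cubulated group with no non-abelian free subgroup is virtually abelian: by Caprace--Sageev's rank rigidity together with the product decomposition theorem, any irreducible factor of $X$ that is not a line must contain a contracting element and hence generate a free subgroup. Therefore $\G_0$, and a fortiori $\G$, is virtually abelian.

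The principal difficulty lies in the iterative descent: at each stage one must produce a smaller CAT(0) cube complex on which $\ker\chi$ acts freely and cocompactly while inheriting a Roller-boundary fixed point. The natural candidates (a level set $\{\beta_\xi=c\}$ or a hyperplane stabilised in the $\xi$-direction) require some care to equip with a canonical cube-complex structure and to verify essentiality. An alternative, more direct route would be to invoke a ``rank rigidity''-type theorem identifying cocompactly cubulated groups with finite Roller-boundary orbits as precisely the virtually abelian ones, thereby bypassing the induction altogether.
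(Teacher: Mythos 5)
Your outline has a genuine gap at the iterative step. After extracting a nonzero homomorphism $\chi\colon\G_0\to\Z$ from a fixed boundary point $\xi$, you claim that $\ker\chi$ acts freely and cocompactly on a cube complex of strictly smaller dimension, obtained either as a level set $\{\beta_\xi = c\}$ or as a stabilised hyperplane ``transverse to the $\xi$-direction.'' Neither candidate works as stated: a level set of a combinatorial Busemann function is not a subcomplex and carries no natural cube-complex structure (for a regular tree such a level set is just a discrete horocycle), and there is no canonical hyperplane transverse to $\xi$, nor any reason that $\ker\chi$ should stabilise one or act cocompactly on it. You would also need the restricted action to remain free and to inherit a Roller-boundary fixed point, neither of which is justified. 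These are precisely the points you flag as difficulties yourself; the proposal does not resolve them. A secondary quibble: the reason $\chi\neq 0$ is that cocompactness together with $\chi=0$ would force $\beta_\xi$ to be bounded on all of $X$, contradicting $\xi\in\partial X$; the orbit need not accumulate at $\xi$, so that is not the right reason.

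The paper sidesteps the induction entirely. Proposition~\ref{chi_xi continuous} packages the translation data in all asymptotic directions simultaneously into a single transfer homomorphism $\chi_\xi\colon K_\xi\to\R^k$ on a finite-index subgroup $K_\xi$ of the stabiliser of $\xi$, with the property that every finitely generated subgroup of $\ker\chi_\xi$ fixes a point of $X$. Combined with freeness, this makes $\ker\chi_\xi\cap\G_0$ trivial and embeds $\G_0$ into $\R^k$ directly, so $\G_0$ is abelian with no further machinery. The ``rank rigidity''-type statement you wish you could cite is essentially this proposition together with freeness; and your polycyclic route would additionally require the Caprace--Sageev Tits alternative as an external input, whereas the paper's argument is self-contained within the tools it has already built.
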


Property $H_{FD}$ has been studied almost exclusively within the class of amenable groups, where it happens to be a quasi-isometry invariant \cite{Shalom2}. It was a key ingredient (implicitly, or explicitly) in recent more elementary proofs of Gromov's theorem on groups of polynomial growth \cite{Kleiner, Ozawa}. It has moreover interesting applications to the study of quasi-isometric embeddings into Hilbert spaces \cite{CTV}.

Property $H_{FD}$ is inherited by uniform lattices and is stable under direct products and central extensions \cite{Shalom2}. Being satisfied by groups that fall into two extremely different classes, namely amenable and Kazhdan groups, it is reasonable to expect a wide variety of groups with property $H_{FD}$. However, it seems that no answer is known to the following question. 

\begin{quest*}
Does every finitely generated group with property $H_{FD}$ virtually split as a direct product of an amenable group and finitely many groups with property (T)? Does every word hyperbolic group with property $H_{FD}$ also satisfy property (T)?
\end{quest*}

Corollary~\ref{G} and the results of \cite{Ollivier-Wise} imply that random groups at low density do not satisfy $H_{FD}$.

\begin{corintro}\label{H}
With overwhelming probability, random groups at density ${d<\frac{1}{6}}$ in Gromov's density model do not have property $H_{FD}$.
\end{corintro}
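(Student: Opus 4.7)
The plan is to deduce Corollary~\ref{H} by contraposing Corollary~\ref{G}. Specifically, I intend to show that with overwhelming probability, a random group $\G$ at density $d<\frac{1}{6}$ simultaneously (i) admits a free, cocompact action on a {\rm CAT}(0) cube complex, and (ii) fails to be virtually abelian. Once both hold, Corollary~\ref{G} directly rules out property $H_{FD}$.

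For (i), I would invoke the main theorem of Ollivier--Wise \cite{Ollivier-Wise}: at density $d<\frac{1}{6}$, a random group in Gromov's density model is, with overwhelming probability, cubulated in the strong sense that it acts properly and cocompactly on a {\rm CAT}(0) cube complex. Since such random groups are also torsion-free with overwhelming probability (by the standard local combinatorics of the classifying presentations at $d<\frac{1}{2}$), the proper cocompact action can be upgraded to a \emph{free} cocompact action, which is exactly the hypothesis of Corollary~\ref{G}.

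For (ii), I would use Gromov's classical analysis of the density model: at any density $d<\frac{1}{2}$, a random group is, with overwhelming probability, infinite, non-elementary, and word-hyperbolic (with arbitrarily large hyperbolicity constant relative to generator length). In particular it contains non-abelian free subgroups and so is very far from virtually abelian.

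Combining (i) and (ii) with Corollary~\ref{G}, the conclusion is forced: such a random group cannot have property $H_{FD}$, for otherwise Corollary~\ref{G} would make it virtually abelian, contradicting (ii). I do not expect any substantive obstacle here; the statement is essentially a one-line application of Corollary~\ref{G} once the Ollivier--Wise cubulation and the standard non-elementarity/torsion-freeness statements for random groups are quoted. The only mild care required is to confirm that the ``overwhelming probability'' events of cubulation, torsion-freeness, and non-elementarity can be intersected (which is immediate, as each occurs with probability tending to $1$), so that the resulting event on which $H_{FD}$ fails also has overwhelming probability.
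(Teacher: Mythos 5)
Your proposal is correct and follows essentially the same route as the paper: cite Ollivier--Wise (Theorem~10.4 of \cite{Ollivier-Wise}) for the free cocompact cubulation at $d<\frac{1}{6}$, cite the fact that random groups at $d<\frac{1}{2}$ are nonelementary hyperbolic (hence not virtually abelian) with overwhelming probability, intersect the two overwhelming-probability events, and apply Corollary~\ref{G}. The paper's proof is just a more compressed version of exactly this argument.
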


Note however that, at density $d>\frac{1}{3}$, random groups are Kazhdan \cite{Zuk, Kotowski-Kotowski}, hence satisfy property $H_{FD}$. 

\medskip
\noindent
{\bf Acknowledgements.} The author warmly thanks Brian Bowditch, Pier\-re-Em\-ma\-nuel Caprace, Indira Chatterji, Yves Cornulier, Thomas Delzant, Mark Hagen, Masato Mimura, Narutaka O\-za\-wa, Romain Tessera, Pierre Pansu, Alain Valette for helpful conversations. The author expresses special gratitude to Cornelia Dru\c tu and Talia Fern\'os for their interest and encouragement throughout the writing of this paper. The author also wishes to thank the anonymous referee for many valuable comments.

This work was undertaken at the Mathematical Sciences Research Institute in Berkeley during the Fall 2016 program in Geometric Group Theory, where the author was supported by the National Science Foundation under Grant no.\,DMS-1440140 and by the GEAR Network. Part of this work was also carried out at the Isaac Newton Institute for Mathematical Sciences, Cambridge, during the programme ``Non-positive curvature, group actions and cohomology'' and was supported by EPSRC grant no.\,EP/K032208/1. The author was also supported by the Clarendon Fund and the Merton Moussouris Scholarship.

\section{Preliminaries.}

\subsection{Median spaces and median algebras.}\label{prelims}

Let $X$ be a metric space. Given points $x,y\in X$, the \emph{interval} $I(x,y)$ is the set of points $z\in X$ that lie between $x$ and $y$, i.e.\,that satisfy $d(x,y)=d(x,z)+d(z,y)$. We say that $X$ is a \emph{median space} if for all $x,y,z\in X$ there exists a unique point $m(x,y,z)$ that lies in $I(x,y)\cap I(y,z)\cap I(z,x)$. The \emph{median map} $m\colon X^3\ra X$ that we obtain this way endows $X$ with a structure of \emph{median algebra} (see \textsection1.1 in \cite{Roller}). Most definitions in the theory of median spaces can also be given for arbitrary median algebras; we will follow this approach in introducing the necessary notions. The reader can consult e.g.\,\cite{Roller,Nica-thesis,CDH,Bow1,Bow4,Fioravanti1,Fioravanti2} for more background on median spaces and algebras. 

In a median space, $I(x,y)=\{z\in I(x,y)\mid z=m(x,y,z)\}$; this can be taken as a definition of intervals in general median algebras. If $(M,m)$ is a median algebra, we say that a subset $C\cu M$ is \emph{convex} if $I(x,y)\cu C$ whenever $x,y\in C$. The intersection of a finite family of pairwise intersecting convex sets is always nonempty; this is known as Helly's Theorem, see Theorem~2.2 in \cite{Roller}.

A subset $\mf{h}\cu M$ is a \emph{halfspace} if both $\mf{h}$ and $\mf{h}^*:=M\setminus\mf{h}$ are convex; we will denote the set of halfspaces of $M$ by $\mscr{H}(M)$, or simply by $\mscr{H}$ when there is no ambiguity. Halfspaces $\mf{h},\mf{k}$ are said to be \emph{transverse} if no two distinct elements of the set $\{\mf{h},\mf{h}^*,\mf{k},\mf{k}^*\}$ are comparable in the poset $(\mscr{H},\cu)$. Equivalently, the intersections $\mf{h}\cap\mf{k}$, $\mf{h}\cap\mf{k}^*$, $\mf{h}^*\cap\mf{k}$, $\mf{h}^*\cap\mf{k}^*$ are all nonempty.

Given $A\cu\mscr{H}$, we write $A^*$ for $\{\mf{h}\in\mscr{H}\mid\mf{h}^*\in A\}$. A subset $\s\cu\mscr{H}$ is said to be an \emph{ultrafilter} if any two halfspaces in $\s$ intersect and $\mscr{H}=\s\sqcup\s^*$. For instance, for each $x\in M$ the set $\s_x:=\{\mf{h}\in\mscr{H}\mid x\in\mf{h}\}$ is an ultrafilter. 

Given subsets $A,B\cu M$, we write ${\mscr{H}(A|B):=\{\mf{h}\in\mscr{H}\mid B\cu\mf{h},~A\cu\mf{h}^*\}}$ and $\s_A:=\mscr{H}(\emptyset|A)$; we refer to sets of the form $\mscr{H}(x|y)$, $x,y\in M$, as \emph{halfspace intervals}. If $C,C'\cu M$ are disjoint and convex, the set $\mscr{H}(C|C')$ is nonempty, see Theorem~2.7 in \cite{Roller}. In particular $\s_x=\s_y$ if and only if the points $x,y\in M$ coincide. 

A subset $\Om\cu\mscr{H}$ is \emph{inseparable} if, whenever $\mf{j}\in\mscr{H}$ satisfies $\mf{h}\cu\mf{j}\cu\mf{k}$ for $\mf{h},\mf{k}\in\Om$, we have $\mf{j}\in\Om$. Given a subset $A\cu\mscr{H}$, its \emph{inseparable closure} is the smallest inseparable subset of $\mscr{H}$ that contains $A$; it coincides with the union of the sets $\mscr{H}(\mf{k}^*|\mf{h})$, for $\mf{h},\mf{k}\in A$.

A \emph{wall} is a set of the form $\mf{w}=\{\mf{h},\mf{h}^*\}$, with $\mf{h}\in\mscr{H}$; we say that $\mf{h}$ and $\mf{h}^*$ are the \emph{sides} of $\mf{w}$. The wall $\mf{w}$ \emph{separates} subsets $A,B\cu M$ if either $\mf{h}$ or $\mf{h}^*$ lies in $\mscr{H}(A|B)$; we denote by $\mscr{W}(A|B)=\mscr{W}(B|A)$ the set of walls separating $A$ and $B$ and by $\mscr{W}(M)$, or simply $\mscr{W}$, the set of all walls of the median algebra $M$. A wall is \emph{contained} in a halfspace $\mf{k}$ if one of its sides is; a wall $\mf{w}$ is contained in disjoint halfspaces $\mf{k}_1,\mf{k}_2$ if and only if $\mf{k}_2=\mf{k}_1^*$ and $\mf{w}=\{\mf{k}_1,\mf{k}_2\}$. If a side of the wall $\mf{w}_1$ is transverse to a side of the wall $\mf{w}_2$, we say that $\mf{w}_1$ and $\mf{w}_2$ are \emph{transverse}. 

The \emph{rank} of the median algebra $M$ is the maximum cardinality of a set of pairwise transverse walls; various alternative (and equivalent) definitions of the rank can be found in Proposition~6.2 of \cite{Bow1}. We remark that $M$ has rank zero if and only if it consists of a single point. 

If $X$ is a connected median space, its rank coincides with the supremum of the topological dimensions of its locally compact subsets --- even when either of the two quantities is infinite. See Theorem~2.2 and Lem\-ma~7.6 in \cite{Bow1} for one inequality and Proposition~5.6 in \cite{Bow4} for the other. 

When $X$ is complete, connected and finite rank, $X$ is bi-Lipschitz equivalent to a canonical {\rm CAT}(0) space $\wh{X}$ \cite{Bow4}. The visual boundary of $\wh{X}$ is finite dimensional by Proposition~2.1 in \cite{CL}. Every isometry of $X$ extends to an isometry of $\wh{X}$ yielding a homomorphism $\text{Isom}~X\hookrightarrow\text{Isom}~\wh{X}$. Every convex subset of $X$ is also convex in $\wh{X}$; the converse is not true: the euclidean convex hull of the points $(1,0,0)$, $(0,1,0)$ and $(1,1,1)$ in the cube $[0,1]^3$ is not even a median subalgebra.  

Halfspaces in finite rank median spaces are fairly well-behaved. See Corollary~2.23 and Proposition~2.26 in \cite{Fioravanti1} for a proof of the following:

\begin{prop}\label{trivial chains of halfspaces}
Let $X$ be a complete median space of finite rank $r$. Every halfspace is either open or closed (possibly both). Moreover, if $\mf{h}_1\supsetneq ... \supsetneq\mf{h}_k$ is a chain of halfspaces with $\overline{\mf{h}_1^*}\cap\overline{\mf{h}_k}\neq\emptyset$, we have $k\leq 2r$.
\end{prop}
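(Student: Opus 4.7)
The two statements are closely related; I would prove the chain bound first and then derive the open/closed dichotomy from it.

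\textbf{Chain bound.} Suppose $\mf{h}_1\supsetneq\ldots\supsetneq\mf{h}_k$ with $x\in\overline{\mf{h}_1^*}\cap\overline{\mf{h}_k}$. By monotonicity of the chain, $\mf{h}_1^*\subseteq\mf{h}_i^*$ and $\mf{h}_k\subseteq\mf{h}_i$ for every $i$, so in fact $x\in\overline{\mf{h}_i}\cap\overline{\mf{h}_i^*}$ for every $i$: each wall $\mf{w}_i$ of the chain ``passes through $x$'' in the closure sense. The aim is to extract from the chain a family of $\lceil k/2\rceil$ pairwise transverse walls of $X$; the rank hypothesis will then give $\lceil k/2\rceil\leq r$, hence $k\leq 2r$.

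For the extraction, pick $y_i\in\mf{h}_i\setminus\mf{h}_{i+1}$ for each $i$ (with the convention $\mf{h}_{k+1}:=\emptyset$), and sequences $a_n\to x$ in $\mf{h}_1^*$ and $b_n\to x$ in $\mf{h}_k$. Pair the chain walls as $(\mf{w}_{2j-1},\mf{w}_{2j})$ and form the medians $z_j^{\pm}(n):=m(a_n,y_{2j-1+\epsilon},b_n)$ for $\epsilon\in\{0,1\}$. By the majority rule, the wall-signatures of these medians along the chain are prescribed, and all of them converge to $x$. A careful analysis of the halfspace intervals $\mscr{H}(z_j^-|z_{j'}^+)$ separating medians coming from distinct pairs of indices, via Theorem~2.7 of \cite{Roller}, should yield walls separating them that are pairwise transverse in $X$; the transversality comes from the fact that pairs $(2j-1,2j)$ for distinct $j$ contribute genuinely independent ``directions at $x$'' in the median-algebraic sense.

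\textbf{Open/closed dichotomy.} Assume by contradiction that a halfspace $\mf{h}$ is neither open nor closed, producing points $p\in\mf{h}\cap\overline{\mf{h}^*}$ and $q\in\mf{h}^*\cap\overline{\mf{h}}$ with approximating sequences $p_n\to p$ in $\mf{h}^*$ and $q_n\to q$ in $\mf{h}$. Both $\overline{\mf{h}}$ and $\overline{\mf{h}^*}$ are convex, so $\overline{\mf{h}}\cap\overline{\mf{h}^*}$ contains the interval $I(p,q)$ and is itself convex and nonempty. The plan is then to use iterated medians of triples drawn from $\{p,q,p_n,q_n\}$ and from points along $I(p,q)$ to produce, for arbitrarily large $N$, a strictly nested chain of halfspaces of length $N$ whose closures all contain a fixed common point (inheriting the pathology of $\mf{h}$). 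For $N>2r$ this contradicts the chain bound established in the first step.

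\textbf{Main obstacle.} The technical heart is the transversality construction in the chain bound. A chain tautologically consists of pairwise non-transverse walls, and the only extra data at our disposal is the closure point $x$ and the approximating sequences $a_n,b_n,y_i$; producing genuinely transverse walls from this input requires a delicate combinatorial exploitation of the majority rule for medians and of the fine structure of halfspace intervals. The factor of $2$ in the bound $2r$ reflects the phenomenon that two distinct halfspaces of the chain may contribute the ``same direction at $x$'' --- one coming from a closed and one from an open version of a limit wall --- and teasing these contributions apart in a way independent of any ambient geometry is the real difficulty.
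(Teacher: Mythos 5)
The paper does not prove this proposition locally; it cites Corollary~2.23 and Proposition~2.26 of \cite{Fioravanti1}, so there is no in-text argument to compare against. Evaluated on its own, your proposal is an outline with two acknowledged gaps, and the crucial step in the chain bound looks not merely unfinished but unsound.

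On the chain bound: you propose to extract $\lceil k/2\rceil$ pairwise transverse walls by examining the medians $z_j^{\pm}(n)=m(a_n,y_{2j-1+\epsilon},b_n)$. By the majority rule, a wall $\mf{w}$ separates $z_j^{-}(n)$ from $z_{j'}^{+}(n)$ if and only if it separates $a_n$ from $b_n$ \emph{and} separates $y_{2j-1}$ from $y_{2j'}$. The chain walls $\mf{w}_{2j},\dots,\mf{w}_{2j'}$ satisfy both conditions for every $n$, so $\mscr{H}(z_j^-|z_{j'}^+)$ is indeed nonempty --- but precisely because it contains the pairwise \emph{nested} chain walls. Nothing in the construction forces any wall \emph{outside} the chain to separate these medians, and since $z_j^{\pm}(n)\ra x$ for all $j$, the measure $\wh{\nu}(\mscr{H}(z_j^-(n)|z_{j'}^+(n)))\leq d(z_j^-(n),z_{j'}^+(n))$ tends to $0$, so whatever non-chain walls appear form a vanishing family. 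The assertion that the pairs $(2j-1,2j)$ contribute ``genuinely independent directions at $x$'' is exactly what a proof must establish, and there is no mechanism in the proposal that does so; in fact the construction as written naturally produces only the already-nested walls. A valid argument must convert the nested chain near $x$ into a rank obstruction by some other route.

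On the open/closed dichotomy: the reduction to the chain bound is also only gestured at. From a single halfspace $\mf{h}$ that is neither open nor closed, together with $p\in\mf{h}\cap\overline{\mf{h}^*}$, $q\in\mf{h}^*\cap\overline{\mf{h}}$ and $I(p,q)$, it is not explained how to manufacture a strictly nested chain of \emph{new} halfspaces of unbounded length whose closures all meet. Iterated medians of finitely many points do not by themselves produce new walls, so this step needs a genuinely different idea (the paper's cited source in fact treats this part via the structure of gate-convex closures and the measure $\wh{\nu}$, not by reduction to the chain bound).
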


The following is a simple but extremely useful observation: given ultrafilters ${\s_1,\s_2\cu\mscr{H}(M)}$ and $\mf{h},\mf{k}\in\s_1\setminus\s_2$, we either have $\mf{h}\cu\mf{k}$, or $\mf{k}\cu\mf{h}$, or $\mf{h}$ and $\mf{k}$ are transverse. Along with Dilworth's Theorem \cite{Dilworth} this yields the following.

\begin{lem}\label{Dilworth for differences}
Let $M$ be a median algebra of finite rank $r$ and let $\s_1,\s_2\cu\mscr{H}$ be ultrafilters. 
\begin{enumerate}
\item We can decompose $\s_1\setminus\s_2=\mc{C}_1\sqcup ... \sqcup\mc{C}_k$, where $k\leq r$ and each $\mc{C}_i$ is nonempty and totally ordered by inclusion.
\item Every infinite subset of $\s_1\setminus\s_2$ contains an infinite subset that is totally ordered by inclusion.
\end{enumerate}
\end{lem}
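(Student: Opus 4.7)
The argument would hinge on the dichotomy recalled just before the statement: if $\mf{h},\mf{k}\in\s_1\setminus\s_2$, then because $\s_1$ is an ultrafilter containing both $\mf{h}$ and $\mf{k}$ we have $\mf{h}\cap\mf{k}\neq\emptyset$, and symmetrically $\mf{h}^*\cap\mf{k}^*\neq\emptyset$ since $\s_2$ contains both $\mf{h}^*$ and $\mf{k}^*$. If one of the remaining intersections $\mf{h}\cap\mf{k}^*$ or $\mf{h}^*\cap\mf{k}$ is empty then $\mf{h}$ and $\mf{k}$ are nested; otherwise all four intersections are nonempty and $\mf{h}$ and $\mf{k}$ are transverse.

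To establish part (1), I would regard $\s_1\setminus\s_2$ as a poset under inclusion. The dichotomy above says that any antichain in this poset is a collection of pairwise transverse halfspaces, so by the definition of rank it has cardinality at most $r$. An appeal to Dilworth's theorem \cite{Dilworth}, in the form valid for arbitrary posets whose antichains are uniformly bounded, then yields a decomposition of $\s_1\setminus\s_2$ into at most $r$ pairwise disjoint chains; discarding the empty pieces gives the required $\mc{C}_1,\ldots,\mc{C}_k$.

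Part (2) would then be a one-line pigeonhole consequence: given an infinite subset $A\cu\s_1\setminus\s_2$, intersecting $A$ with the chains $\mc{C}_1,\ldots,\mc{C}_k$ from part (1) partitions $A$ into at most $r$ totally ordered subsets, at least one of which must be infinite since $A$ is infinite and $r$ is finite.

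The only step where I would need to be a little careful is the invocation of Dilworth's theorem for infinite posets, as its classical statement is for the finite case; however, the extension to arbitrary posets with a finite bound on antichain width is well known and follows via a standard Zorn's lemma argument. Beyond this, both parts are essentially immediate from the rank-versus-antichain translation supplied by the ultrafilter dichotomy.
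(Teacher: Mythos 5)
Your proposal is correct and follows exactly the route the paper indicates: the paper states the nested-or-transverse dichotomy for elements of $\s_1\setminus\s_2$ immediately before the lemma and then simply cites Dilworth's theorem, which is precisely your antichain-width argument. Your derivation of part (2) from part (1) by pigeonhole is the natural one, and your caveat about invoking the infinite (finite-width) form of Dilworth's theorem is appropriate; the cited reference \cite{Dilworth} does cover that case.
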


If $C\cu M$ is a subset and $x\in M$, a \emph{gate} for $(x,C)$ is a point $y\in C$ such that $y\in I(x,z)$ for every $z\in C$; gates are unique when they exist. If a gate exists for every point of $M$, we say that $C$ is \emph{gate-convex}; in this case we can define a \emph{gate-projection} $\pi_C\colon M\ra C$ by associating to each point of $M$ the unique gate. The gate-projection to $C$ is a morphism of median algebras and satisfies $\mscr{W}(x|C)=\mscr{W}(x|\pi_C(x))$ for every $x\in M$; see Proposition~2.1 and Lemma~2.4 in \cite{Fioravanti1}.

Gate-convex subsets are always convex, but the converse is not always true. For every $y,z\in M$, the interval $I(y,z)$ is gate-convex with gate-projection $x\mapsto m(x,y,z)$. The following is a summary of Lemma~2.2, Proposition~2.3 and Lemma~2.4 in \cite{Fioravanti1}.

\begin{prop}\label{all about gates}
Let $C,C'\cu M$ be gate-convex.
\begin{enumerate}
\item The sets $\{\mf{h}\in\mscr{H}(M)\mid\mf{h}\cap C\neq\emptyset,~\mf{h}^*\cap C\neq\emptyset\}$, ${\{\pi_C^{-1}(\mf{h})\mid\mf{h}\in\mscr{H}(C)\}}$ and $\mscr{H}(C)$ are all naturally in bijection.
\item There exists a \emph{pair of gates}, i.e.\,a pair $(x,x')$ of points $x\in C$ and $x'\in C'$ such that $\pi_C(x')=x$ and $\pi_{C'}(x)=x'$. In particular, we have ${\mscr{H}(x|x')=\mscr{H}(C|C')}$.
\item The set $\pi_C(C')$ is gate-convex with gate-projection $\pi_C\o\pi_{C'}$. Moreover, $\pi_C\o\pi_{C'}\o\pi_C=\pi_{C}\o\pi_{C'}$.
\item If $C\cap C'\neq\emptyset$, we have $\pi_C(C')=C\cap C'$ and $\pi_C\o\pi_{C'}=\pi_{C'}\o\pi_C$. In particular, if $C'\cu C$, we have $\pi_{C'}=\pi_{C'}\o\pi_C$.
\end{enumerate}
\end{prop}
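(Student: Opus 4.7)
The plan is to verify each of the four parts via the halfspace characterisation of gate-projection. The central tool is the identity $\mscr{W}(x|C)=\mscr{W}(x|\pi_C(x))$ recalled earlier, which says precisely that for every halfspace $\mf{h}\in\mscr{H}(M)$ crossing $C$, a point $z\in M$ and its gate $\pi_C(z)$ lie on the same side of $\mf{h}$.

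For Part~1, the restriction $\mf{h}\mapsto\mf{h}\cap C$ sends a halfspace of $M$ with both sides meeting $C$ to a halfspace of $C$, since $\mf{h}\cap C$ and its complement in $C$ are both convex. The inverse is $\mf{k}\mapsto\pi_C^{-1}(\mf{k})$: as $\pi_C$ is a median morphism, the preimages of $\mf{k}$ and $C\setminus\mf{k}$ are convex and partition $M$. Mutual inverseness is exactly the equivalence $z\in\mf{h}\Leftrightarrow\pi_C(z)\in\mf{h}$ for every halfspace $\mf{h}$ crossing $C$, which is the content of the central identity above.

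For Part~2, first reformulate: $(x,x')\in C\x C'$ is a pair of gates if and only if $x$ and $x'$ lie on the same side of every halfspace of $M$ crossing $C$ or crossing $C'$. To produce such a pair, pick $c_0\in C$ and iterate $c_{n+1}=\pi_C\o\pi_{C'}(c_n)$, $c'_n=\pi_{C'}(c_n)$. Every halfspace newly separating $c_n$ from $c_{n+1}$ must cross both $C$ and $C'$; this ``doubly crossing'' set of halfspaces shrinks with each iteration, and gate-convexity together with an ultrafilter-compactness (or metric completeness) argument yields a stable limit $(x,x')$. Once produced, $\mscr{H}(x|x')=\mscr{H}(C|C')$ follows from the reformulation: any halfspace separating $x$ from $x'$ cannot cross $C$ or $C'$, and so it separates $C$ from $C'$. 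I expect the construction step to be the main obstacle, since gates must be produced in an abstract median algebra without direct recourse to metric contraction.

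For Part~3, the identity $\pi_C\o\pi_{C'}\o\pi_C=\pi_C\o\pi_{C'}$ is checked by showing that $\pi_{C'}(z)$ and $\pi_{C'}(\pi_C(z))$ lie on the same side of every halfspace crossing $C$: by Part~1, $z$ and $\pi_C(z)$ already agree on such halfspaces, and $\pi_{C'}$ preserves this agreement by a case split on whether the halfspace crosses $C'$ or contains it. Setting $D=\pi_C(C')$, this identity implies $\pi_C\o\pi_{C'}$ is a retraction onto $D$, and a halfspace-level verification of the gate-property confirms it is the gate-projection. For Part~4, if $p\in C\cap C'$ and $c'\in C'$, then $\pi_C(c')\in I(c',p)\cu C'$ by the gate-property and convexity of $C'$, so $\pi_C(C')\cu C\cap C'$; the reverse inclusion is immediate since $\pi_C$ fixes $C$ pointwise. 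Finally, $\pi_C\o\pi_{C'}=\pi_{C'}\o\pi_C$ follows because, by Part~3 applied to each ordering, both maps equal the gate-projection onto $C\cap C'$.
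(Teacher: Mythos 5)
The paper does not prove this proposition directly; it cites Lemma~2.2, Proposition~2.3 and Lemma~2.4 of \cite{Fioravanti1}, so there is no ``paper's own proof'' here to compare against. Evaluating your proposal on its own terms: parts~1, 3 and~4 are essentially correct (and the reduction of the commutation $\pi_C\o\pi_{C'}=\pi_{C'}\o\pi_C$ to uniqueness of gate-projections onto $C\cap C'$ is clean), but part~2 contains a genuine gap. You flag the construction of a pair of gates as ``the main obstacle'' and propose to resolve it by iterating $\pi_C\o\pi_{C'}$ and invoking ``ultrafilter-compactness or metric completeness''. Neither is available: $M$ is an abstract median algebra, with no topology, no metric, and no compactness assumption, so ``the doubly-crossing set shrinks, hence stabilises'' does not follow.

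In fact no iteration is needed: three projections already produce a pair of gates. Take $c_0\in C$ arbitrary, set $c'_0:=\pi_{C'}(c_0)$ and $c_1:=\pi_C(c'_0)$. Then $(c_1,c'_0)$ is a pair of gates: one has $\pi_C(c'_0)=c_1$ by construction, and to see $\pi_{C'}(c_1)=c'_0$ one must show no wall $\mf{w}$ separates $c'_0$ from both $c_1$ and an arbitrary $z'\in C'$. If such a $\mf{w}$ existed, then $\mf{w}\in\mscr{W}(c'_0|c_1)=\mscr{W}(c'_0|C)$ (using $c_1=\pi_C(c'_0)$), so $\mf{w}$ would separate $c'_0$ from $c_0\in C$; but $c'_0\in I(c_0,z')$ (since $c'_0=\pi_{C'}(c_0)$ and $z'\in C'$) forbids any wall from separating $c'_0$ from both $c_0$ and $z'$, a contradiction. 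Thus the pair of gates exists by a purely combinatorial, finite argument, and the rest of your part~2 (the reformulation in terms of walls crossing $C$ or $C'$, and the deduction $\mscr{H}(x|x')=\mscr{H}(C|C')$) then goes through as you wrote it.
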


A median algebra $(M,m)$ endowed with a Hausdorff topology is said to be a \emph{topological median algebra} if the median map $m\colon M^3\ra M$ is continuous; here we equip $M^3$ with the product topology. Median spaces always provide topological median algebras; indeed, the median map $m$ is $1$-Lipschitz in that case (see Corollary~2.15 in \cite{CDH}). 

In compact median algebras and complete median spaces, a subset is gate-convex if and only if it is closed and convex; moreover, gate-projections are continuous. In median spaces, gate-projections are even $1$-Lipschitz. See Lemmas~2.6 and~2.7 in \cite{Fioravanti1} and Lemma~2.13 in \cite{CDH} for details.

Let now $X$ be a complete, finite rank median space. In Section~3 of \cite{Fioravanti1}, we endowed the set $\mscr{H}$ with a $\s$-algebra $\wh{\mscr{B}}$ and a measure $\wh{\nu}_X$ (usually denoted just $\wh{\nu}$). Unlike \cite{Fioravanti1}, here we simply refer to the elements of $\wh{\mscr{B}}$ as \emph{measurable sets}. 

Note that in general $\wh{\mscr{B}}$ and $\wh{\nu}$ differ from their counterparts in \cite{CDH}, in that $\wh{\mscr{B}}$ contains more measurable and null sets. More precisely, a subset $E\cu\mscr{H}$ is measurable/null if and only if every intersection with a halfspace interval is measurable/null. This results in the following useful properties.

The map $*\colon\mscr{H}\ra\mscr{H}$ sending each halfspace to its complement is measure preserving. Every inseparable subset of $\mscr{H}$ is measurable (Lemma~3.9 in \cite{Fioravanti1}); in particular, all ultrafilters are measurable and, for all ${x,y\in X}$, we have $\wh{\nu}(\s_x\triangle\s_y)=d(x,y)$. Almost every halfspace $\mf{h}\in\mscr{H}$ is \emph{thick}, i.e.\,both $\mf{h}$ and $\mf{h}^*$ have nonempty interior (Corollary~3.7 in \cite{Fioravanti1}). The next result, moreover, is Corollary~3.11 in \cite{Fioravanti1}.

\begin{prop}\label{M(X)=X}
Let $X$ be a complete, finite rank median space and $\s\cu\mscr{H}$ an ultrafilter such that $\wh{\nu}(\s\triangle\s_x)<+\infty$ for some $x\in X$. There exists $y\in X$ such that $\wh{\nu}(\s\triangle\s_y)=0$.
\end{prop}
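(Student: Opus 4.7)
The plan is to construct a Cauchy sequence $(y_n)$ in $X$ whose limit $y$ realises $\sigma$ modulo a $\widehat{\nu}$-null set. Since $d(a,b)=\widehat{\nu}(\sigma_a\triangle\sigma_b)$ for all $a,b\in X$, it suffices to produce points $y_n\in X$ with $\widehat{\nu}(\sigma\triangle\sigma_{y_n})\to 0$: the triangle inequality for symmetric differences forces $(y_n)$ to be Cauchy, hence to converge to some $y\in X$ by completeness, and then $\widehat{\nu}(\sigma\triangle\sigma_y)\leq\widehat{\nu}(\sigma\triangle\sigma_{y_n})+d(y_n,y)\to 0$.

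To construct the $y_n$, I would first apply Lemma~\ref{Dilworth for differences} to write $\sigma\setminus\sigma_x=\mathcal{C}_1\sqcup\ldots\sqcup\mathcal{C}_p$ and $\sigma_x\setminus\sigma=\mathcal{D}_1\sqcup\ldots\sqcup\mathcal{D}_q$ as a disjoint union of at most $2r$ chains, each totally ordered by inclusion. The hypothesis $\widehat{\nu}(\sigma\triangle\sigma_x)<+\infty$ means each chain has finite measure, so for every $n$ I can select a halfspace $\mathfrak{h}_i^{(n)}$ deep in $\mathcal{C}_i$ and $\mathfrak{k}_j^{(n)}$ deep in $\mathcal{D}_j$ so that the ``tails'' of the chains strictly past these selected halfspaces have total $\widehat{\nu}$-mass at most $(p+q)/n$. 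All of $\mathfrak{h}_i^{(n)}$ and $(\mathfrak{k}_j^{(n)})^*$ belong to the ultrafilter $\sigma$, hence pairwise intersect; taking closures gives pairwise intersecting closed convex, hence gate-convex, subsets of $X$, and Helly's Theorem (Theorem~2.2 in~\cite{Roller}) produces a point in the common intersection $C_n=\bigcap_i\overline{\mathfrak{h}_i^{(n)}}\cap\bigcap_j\overline{(\mathfrak{k}_j^{(n)})^*}$. I then set $y_n:=\pi_{C_n}(x)$.

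The remaining task is to bound $\widehat{\nu}(\sigma\triangle\sigma_{y_n})$. Using the identity $\mathscr{W}(x|y_n)=\mathscr{W}(x|C_n)$ given by Proposition~\ref{all about gates}, every halfspace separating $x$ from $y_n$ must contain some $\mathfrak{h}_i^{(n)}$ (or be contained in some $\mathfrak{k}_j^{(n)}$); an ultrafilter argument then forces any such halfspace to lie in $\sigma\triangle\sigma_x$, so taking the gate-projection $y_n$ only ``edits'' the original symmetric difference rather than enlarging it. Combined with the fact that $y_n\in C_n$ lies inside each selected $\mathfrak{h}_i^{(n)}$ and outside each $\mathfrak{k}_j^{(n)}$, this traps the halfspaces in $\sigma\triangle\sigma_{y_n}$ (up to a null set) inside the tails past the selected halfspaces, yielding $\widehat{\nu}(\sigma\triangle\sigma_{y_n})\leq(p+q)/n$.

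The anticipated main obstacle is a subtle boundary issue in the last step: $y_n\in\overline{\mathfrak{h}_i^{(n)}}$ does not literally guarantee $y_n\in\mathfrak{h}_i^{(n)}$ when the halfspace is open. I would circumvent this by refining the choice of $\mathfrak{h}_i^{(n)}$ to a \emph{thick} halfspace within its chain, exploiting that thick halfspaces form a conull subset of $\mathscr{H}$ (Corollary~3.7 in~\cite{Fioravanti1}) and that every halfspace is either open or closed (Proposition~\ref{trivial chains of halfspaces}); the residual discrepancy on topological boundaries is itself $\widehat{\nu}$-null and therefore absorbed in the final estimate.
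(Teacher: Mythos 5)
The Cauchy-sequence reduction and the construction via Dilworth chains, Helly's Theorem, and gate-projection are a sensible route (the paper does not contain its own proof of this statement but cites it as Corollary~3.11 of \cite{Fioravanti1}, so there is no in-text argument to compare against). However, your final measure estimate has a genuine gap, and the fix you sketch does not close it. The claim that the gate-projection ``only edits rather than enlarges'' $\sigma\triangle\sigma_x$ is false: a halfspace $\mathfrak{h}\in\sigma\cap\sigma_x$, lying \emph{outside} $\sigma\triangle\sigma_x$, may still satisfy $y_n\notin\mathfrak{h}$, in which case both $\mathfrak{h}$ and $\mathfrak{h}^*$ are added to $\sigma\triangle\sigma_{y_n}$. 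The ultrafilter argument you invoke does not rule this out: applying Helly to your family of closures enlarged by $\overline{\mathfrak{h}}$ only yields $\overline{\mathfrak{h}}\cap C_n\neq\emptyset$, and since $\overline{\mathfrak{h}}$ is convex and contains $x$ together with a point of $C_n$, convexity forces $y_n\in\overline{\mathfrak{h}}\setminus\mathfrak{h}$ --- no contradiction. The same phenomenon occurs for halfspaces of $\mathcal{C}_i$ above $\mathfrak{h}_i^{(n)}$ that $y_n$ escapes. Your proposed repair, choosing the $\mathfrak{h}_i^{(n)}$ thick, does not address this: a thick halfspace can perfectly well be open and not closed, and the gate-projection can land precisely on its frontier. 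Thickness, Corollary~3.7 of \cite{Fioravanti1}, and the ``open or closed'' dichotomy do not, on their own, make the boundary discrepancy null.

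What does close the gap are two ingredients you did not invoke. First, an atom of $\widehat{\nu}$ is a halfspace at positive distance from its complement, hence both open and closed; so any $\mathfrak{h}$ with $y_n\in\overline{\mathfrak{h}}\setminus\mathfrak{h}$ is a non-atom and satisfies $\widehat{\nu}(\{\mathfrak{h}\})=0$. Second, the part of Proposition~\ref{trivial chains of halfspaces} you left unused --- the bound $k\leq 2r$ on chains $\mathfrak{h}_1\supsetneq\ldots\supsetneq\mathfrak{h}_k$ with $\overline{\mathfrak{h}_1^*}\cap\overline{\mathfrak{h}_k}\neq\emptyset$ --- applies here because $y_n$ lies in exactly that intersection for any chain of problematic halfspaces; combined with Lemma~\ref{Dilworth for differences}, there are at most $O(r^2)$ of them. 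Being finitely many non-atoms, they form a $\widehat{\nu}$-null set, and only then does the estimate $\widehat{\nu}(\sigma\triangle\sigma_{y_n})\to 0$ actually hold. Without this mechanism, the proof as written does not go through.
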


As a consequence, $X$ can be equivalently described as the collection of all ultrafilters on $\mscr{H}$ that satisfy $\wh{\nu}(\s\triangle\s_x)<+\infty$ for some $x\in X$; we identify ultrafilters whose symmetric difference is $\wh{\nu}$-null. Considering instead the space of \emph{all} ultrafilters on $\mscr{H}$ we obtain a set $\overline X$ in which $X$ embeds. A structure of median algebra can be defined on $\overline X$ by setting
\[m(\s_1,\s_2,\s_3):=(\s_1\cap\s_2)\cup(\s_2\cap\s_3)\cup(\s_3\cap\s_1).\]
We endow $\overline X$ with a topology such that ultrafilters $\s_n\cu\mscr{H}$ converge to ${\s\cu\mscr{H}}$ if and only if  $\limsup(\s_n\triangle\s)$ is $\wh{\nu}$-null; see Section~4.2 and, in particular, Lemma~4.16 in \cite{Fioravanti1}. We refer to $\overline X$ as the \emph{Roller compactification}. 

Note that the identity map of $X$ canonically extends to a homeomorphism between $\overline X$ and the Busemann compactification of $X$ (Proposition~4.21 in \cite{Fioravanti1}). The difference in terminology is motivated by the additional median structure. The following is part of Proposition~4.14 in \cite{Fioravanti1}.

\begin{prop}
The Roller compactification $\overline X$ is a compact topological median algebra. The inclusion $X\hookrightarrow\overline X$ is a continuous morphism of median algebras with dense, convex image.
\end{prop}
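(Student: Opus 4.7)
The plan is to verify, in order, that $\overline X$ carries a well-defined median algebra structure, that it is compact, that its median operation is continuous, and that $x \mapsto \s_x$ is a continuous median morphism with dense, convex image. The combinatorial check that $m(\s_1,\s_2,\s_3) = (\s_1\cap\s_2)\cup(\s_2\cap\s_3)\cup(\s_3\cap\s_1)$ yields an ultrafilter is routine: any two halfspaces in the union both lie in some common $\s_i$ by pigeonhole and hence intersect, while for each wall $\{\mf{h},\mf{h}^*\}$ the side shared by a majority of the $\s_i$ is precisely the one that belongs to $m(\s_1,\s_2,\s_3)$. The median axioms on $\overline X$ then follow by elementary Boolean manipulations.

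For compactness, I would embed $\overline X$ into a Tychonoff product of weakly compact $L^2$-balls. Every ultrafilter $\s$ is determined by the collection of intersections $\s\cap\mscr{H}(x|y)$ as $(x,y)$ ranges over $X^2$, and each such intersection yields an indicator in the norm-$d(x,y)^{1/2}$ ball of $L^2(\mscr{H}(x|y),\wh{\nu})$, which is weakly compact. One then checks that the image of $\overline X$ is closed and that the induced topology matches the convergence criterion $\limsup(\s_n \triangle \s) = 0$ almost everywhere. Continuity of the median map is immediate from the set-theoretic containment
\[
m(\s_n^1,\s_n^2,\s_n^3)\,\triangle\,m(\s^1,\s^2,\s^3) \;\cu\; \bigcup_{i=1}^{3}\big(\s_n^i\triangle\s^i\big),
\]
which propagates null limsups factor by factor.

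The map $x\mapsto\s_x$ is a median morphism because the median in $X$ is determined halfspace-by-halfspace, and is continuous because $\wh{\nu}(\s_{x_n}\triangle\s_x) = d(x_n,x)$. Convexity of the image is quick: if $\s\in I(\s_x,\s_y)$ inside $\overline X$, a direct unpacking of the median formula gives $\s_x\cap\s_y\cu\s\cu\s_x\cup\s_y$, so $\s\triangle\s_x\cu\s_x\triangle\s_y$ has finite $\wh{\nu}$-measure $d(x,y)$, and Proposition~\ref{M(X)=X} produces $z\in X$ with $\s=\s_z$. Density is subtler, since a generic $\s\in\overline X$ has $\wh{\nu}(\s\triangle\s_x)=+\infty$ for every $x\in X$, so one cannot hope to approximate $\s$ in measure. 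I would instead gate-project $\s$ onto an exhausting family of closed convex subsets $C_n\cu X$ (via Proposition~\ref{all about gates}), obtaining principal ultrafilters $\s_{x_n}$ for which $\s_{x_n}\triangle\s$ is contained, up to a small measurable piece, in $\mscr{H}\setminus\mscr{H}(C_n)$; this shrinks to a null set as $C_n$ exhausts $X$.

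I expect the density step to be the main obstacle, since it requires reconciling the set-theoretic $\limsup$ defining the topology with more familiar measure-theoretic convergence. The same issue reappears in the compactness argument: the weak $L^2$-topology on each $\mscr{H}(x|y)$ only records measure convergence, whereas the target statement is a pointwise-almost-everywhere condition after taking limsups, likely forcing one to pass to subsequences and invoke Egorov-type reasoning.
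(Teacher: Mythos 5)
The paper does not prove this statement; it is quoted verbatim from \cite{Fioravanti1} (the surrounding text says ``The following is part of Proposition~4.14 in \cite{Fioravanti1}''), so there is no in-paper argument to compare against. I will therefore assess your proposal on its own terms.

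Your checks that $m(\s_1,\s_2,\s_3)$ is an ultrafilter, that $x\mapsto\s_x$ is a median morphism, and that the image is convex are all correct and essentially the only sensible way to do them; the convexity argument in particular is exactly right, reducing to Proposition~\ref{M(X)=X} after observing that $\s\triangle\s_x\cu\s_x\triangle\s_y$. The containment $m(\s_n^1,\s_n^2,\s_n^3)\triangle m(\s^1,\s^2,\s^3)\cu\bigcup_i(\s_n^i\triangle\s^i)$ is also correct and is the right tool for continuity of the median, \emph{provided} one knows that the topology on $\overline X$ is determined by the $\limsup$-convergence criterion for nets, not just for sequences.

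The genuine gap is the one you yourself flag: your compactness argument does not produce the right topology, and Egorov-type reasoning cannot repair it. Weak convergence of indicator functions $\mathds{1}_{\s_n\cap\mscr H(x|y)}$ in $L^2(\mscr H(x|y),\wh\nu)$ is strictly weaker than $\wh\nu\bigl(\limsup(\s_n\triangle\s)\cap\mscr H(x|y)\bigr)=0$: a typewriter-type sequence of halfspace sets converges weakly to $0$ while its $\limsup$ has full measure, and this cannot be fixed by passing to a subsequence (weak convergence does not give a.e.-convergent subsequences, so Egorov never gets off the ground). Consequently, the map into the Tychonoff product of weak $L^2$-balls is not a homeomorphism onto its image, and compactness of the target does not transfer. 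The structure that actually rescues the argument in the finite-rank setting --- and which your proposal never invokes --- is Lemma~\ref{Dilworth for differences}: the difference $\s_n\triangle\s$ inside a halfspace interval decomposes into at most $r$ chains, so $\s_n\cap\mscr H(x|y)$ is (up to a null set) determined by the gate-projection of $\s_n$ to the interval $I(x,y)$; the topology on $\overline X$ should be built from these projections into intervals, not from the ambient weak $L^2$-topology, and one then needs compactness of the intervals themselves, which is a separate nontrivial fact about complete finite-rank median spaces. Without this, your density sketch (which gate-projects onto an exhaustion by convex sets, a correct idea) also has nothing to converge in, because it is not specified against a concrete topology.
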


If $X$ is connected and locally compact, $X$ is open in $\overline X$ and the inclusion $X\hookrightarrow\overline X$ is a homeomorphism onto its image (Proposition~4.20 in \cite{Fioravanti1}). This however fails in general. The \emph{Roller boundary} is defined as $\partial X:=\overline X\setminus X$. 

A point of $\overline X$ can often be represented by several distinct ultrafilters with null symmetric differences. However, for each $\xi\in\overline X$ there is a unique \emph{preferred} ultrafilter $\s_{\xi}$ representing $\xi$ (Lemma~4.15 in \cite{Fioravanti1}). This should be seen as a generalisation of the ultrafilters $\s_x$ when $x\in X$. 

We can extend each halfspace $\mf{h}$ of $X$ to a halfspace $\wt{\mf{h}}$ of $\overline X$ such that $\wt{\mf{h}}\cap X=\mf{h}$; indeed, it suffices to define $\wt{\mf{h}}:=\{\xi\in\overline X\mid\mf{h}\in\s_{\xi}\}$. When $\xi,\eta\in\overline X$, we save the notation $\mscr{H}(\xi|\eta)$ for the set $\s_{\eta}\setminus\s_{\xi}\cu\mscr{H}(X)$, instead of the analogous subset of $\mscr{H}(\overline X)$.

If $Y\cu X$ is a closed median subalgebra, the restriction of the metric of $X$ turns $Y$ into a complete median space with $\text{rank}(Y)\leq\text{rank}(X)$; moreover:

\begin{lem}\label{Roller of subalgebras}
There is a canonical morphism of median algebras $\iota_Y\colon\overline Y\hookrightarrow\overline X$. 
\end{lem}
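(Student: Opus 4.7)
\emph{Proof plan.} The plan is to define $\iota_Y$ by restriction of halfspaces. For any $\mf{h} \in \mscr{H}(X)$, the intersections $\mf{h} \cap Y$ and $\mf{h}^* \cap Y$ are both convex subsets of $Y$; thus $\mf{h} \cap Y$ is either a proper halfspace of $Y$, or equal to $\emptyset$, or equal to $Y$. Given any ultrafilter $\s \cu \mscr{H}(Y)$, I would set
\[\wt\s := \{\mf{h} \in \mscr{H}(X) \mid Y \cu \mf{h}\} \,\cup\, \{\mf{h} \in \mscr{H}(X) \mid \mf{h} \cap Y \in \s\},\]
where the condition ``$\mf{h} \cap Y \in \s$'' is understood to require $\mf{h} \cap Y \in \mscr{H}(Y)$. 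A direct verification shows that $\wt\s$ is an ultrafilter on $\mscr{H}(X)$: for each $\mf{h} \in \mscr{H}(X)$ exactly one of $\mf{h}, \mf{h}^*$ lies in $\wt\s$ (by a case analysis on $Y \cu \mf{h}$, $Y \cu \mf{h}^*$, or $\mf{h} \cap Y \in \mscr{H}(Y)$), and any two elements of $\wt\s$ intersect --- trivially when one of them contains $Y$, and otherwise via the ultrafilter property of $\s$ in $Y$. Then define $\iota_Y(\s) := \wt\s$.

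For $\iota_Y$ to descend to a well-defined, injective map $\overline Y \to \overline X$, I would need the equivalence $\wh\nu_Y(\s_1 \triangle \s_2) = 0$ if and only if $\wh\nu_X(\wt\s_1 \triangle \wt\s_2) = 0$. The symmetric difference $\wt\s_1 \triangle \wt\s_2$ consists precisely of those $\mf{h} \in \mscr{H}(X)$ with $\mf{h} \cap Y \in \s_1 \triangle \s_2$; it is the pullback of $\s_1 \triangle \s_2$ along the restriction map $\mf{h} \mapsto \mf{h} \cap Y$. The needed null-preserving property of this pullback would reduce, via the characterisation of null sets through halfspace intervals and Proposition~\ref{M(X)=X}, to the identities $\wh\nu_X(\mscr{H}(y_1|y_2)) = d_X(y_1,y_2) = d_Y(y_1,y_2) = \wh\nu_Y(\mscr{H}(y_1|y_2))$ for $y_1,y_2 \in Y$, taken in $X$ and in $Y$ respectively.

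Finally, $\iota_Y$ is a morphism of median algebras by direct computation using the formula $m(\s_1,\s_2,\s_3) = (\s_1 \cap \s_2) \cup (\s_2 \cap \s_3) \cup (\s_3 \cap \s_1)$. Since membership of $\mf{h}$ in $\wt\s_i$ is controlled purely by whether $\mf{h} \cap Y \in \s_i$, or by the condition $\mf{h} \supseteq Y$ which is independent of $i$, both $\iota_Y(m(\s_1,\s_2,\s_3))$ and $m(\iota_Y(\s_1),\iota_Y(\s_2),\iota_Y(\s_3))$ consist of exactly those $\mf{h} \in \mscr{H}(X)$ with $\mf{h} \supseteq Y$, or such that $\mf{h} \cap Y$ belongs to at least two of the $\s_i$.

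The principal obstacle is the measure-theoretic compatibility of $\wh\nu_X$ and $\wh\nu_Y$ under halfspace restriction claimed in the second paragraph. While the pointwise distance identity makes the claim plausible, rigorously transferring null sets requires a careful appeal to the construction of $\wh\nu$ carried out in \cite{Fioravanti1}, most likely via its characterisation on halfspace intervals followed by an extension argument.
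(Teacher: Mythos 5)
Your construction is the same as the paper's: the set you call $\wt\s$ is exactly the unique ultrafilter $\s'$ on $\mscr{H}(X)$ with $\s_Y\cu\s'$ and $p(\s'\cap\mscr{H}_Y)=\s$, where $p\colon\mscr{H}_Y\ra\mscr{H}(Y)$ denotes restriction to $Y$. Two differences are worth flagging. First, the paper invokes Lemma~6.5 of \cite{Bow1} to show that $p$ is \emph{surjective}, a point you do not raise; although your $\wt\s$ is a well-defined ultrafilter without it, surjectivity is essential for injectivity of $\iota_Y$, since a halfspace of $Y$ not arising as $\mf{h}\cap Y$ for any $\mf{h}\in\mscr{H}(X)$ would be invisible to your pullback and distinct ultrafilters differing only on such halfspaces would collapse. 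Second, the paper defines $\iota_Y$ by applying the extension to \emph{preferred} ultrafilters $\s_\xi$, which produces a canonical representative for each point of $\overline Y$ and thereby bypasses the explicit ``descent to the quotient'' step in your plan. This sidestep does not, however, remove the underlying measure-theoretic requirement you single out --- that $p^{-1}$ carry $\wh\nu_Y$-null sets to $\wh\nu_X$-null sets and conversely --- since injectivity and the median-morphism property still rest on it (for the latter, $\iota_Y$ of the median point is by convention formed from the \emph{preferred} ultrafilter of the median, not from $m(\s_{\xi_1},\s_{\xi_2},\s_{\xi_3})$ itself, so one still compares two ultrafilters with null symmetric difference in $Y$). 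You correctly identify this compatibility as the crux; the paper's own proof is in fact terser than your sketch on precisely this point.
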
 
\begin{proof}
We write $\mscr{H}_Y:=\{\mf{h}\in\mscr{H}(X)\mid\mf{h}\cap Y\neq\emptyset,~\mf{h}^*\cap Y\neq\emptyset\}$; intersecting with $Y$ gives a map $p\colon\mscr{H}_Y\ra\mscr{H}(Y)$. Lemma~6.5 in \cite{Bow1} implies that $p$ is surjective. Thus, for every ultrafilter $\s\cu\mscr{H}(Y)$, there is a unique ultrafilter $\s'\cu\mscr{H}(X)$ such that $\s_Y\cu\s'$ and $p(\s'\cap\mscr{H}_Y)=\s$. Applying this to preferred ultrafilters yields the required embedding.
\end{proof}

Given ultrafilters $\s_1,\s_2\cu\mscr{H}$, we set $d(\s_1,\s_2):=\wh{\nu}(\s_1\triangle\s_2)$. We refer to $d$ as the \emph{extended metric} on $\overline X$ as it satisfies all the axioms of a metric, even though the value $+\infty$ is allowed. Note that, for points of $X$, this is the same as the original median metric on $X$. 

A \emph{component} $Z\cu\overline X$ is a maximal set of points having pairwise finite distances. Components are convex subsets of $\overline X$ (Proposition~4.19 in \cite{Fioravanti1}). One component always coincides with $X\cu\overline X$; all other components are contained in $\partial X$. For the following, see Propositions~4.23 and~4.29 in \cite{Fioravanti1}.

\begin{prop}\label{pi_Z 2}
Let $X$ be a complete median space of finite rank $r$. Let $Z\cu\partial X$ be a component and let $d$ denote the extended metric on $\overline X$.
\begin{enumerate}
\item The metric space $(Z,d)$ is a complete median space of rank at most $r-1$. 
\item Every thick halfspace of $Z$ is of the form $\wt{\mf{h}}\cap Z$ for a unique $\mf{h}\in\mscr{H}$.
\end{enumerate}
\end{prop}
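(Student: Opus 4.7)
The plan rests on the ultrafilter description of $\overline X$ together with the chain-control tools in Lemma~\ref{Dilworth for differences} and Proposition~\ref{trivial chains of halfspaces}.

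For part (1), the median structure on $Z$ is inherited from $\overline X$: convexity of $Z$ gives closure under the median map, and the identity $d(\xi_1,\xi_2)=\wh{\nu}(\s_{\xi_1}\triangle\s_{\xi_2})$ converts the interval equalities defining the median into a purely set-theoretic statement about symmetric differences of preferred ultrafilters. For completeness, I would take a Cauchy sequence $\xi_n\in Z$, extract a subsequence converging in the compact topology of $\overline X$ to some $\xi\in\overline X$, and combine the Cauchy bound on $\wh{\nu}(\s_{\xi_n}\triangle\s_{\xi_m})$ with the topological convergence statement that $\limsup(\s_{\xi_n}\triangle\s_\xi)$ is $\wh{\nu}$-null; a Fatou-type estimate then yields $\wh{\nu}(\s_{\xi_n}\triangle\s_\xi)\to 0$, placing $\xi$ in the same component as the $\xi_n$, namely $Z$.

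For the rank bound I argue by contradiction. Given $r$ pairwise transverse thick walls in $Z$, part (2) lifts them to pairwise transverse halfspaces $\mf{h}_1,\dots,\mf{h}_r$ of $X$; by Helly's theorem all $2^r$ quadrants are nonempty in $X$, so fix a representative in each, together with $\xi\in Z$ and any $x\in X$. Since $d(x,\xi)=+\infty$, Lemma~\ref{Dilworth for differences} produces an infinite chain $\mf{k}_1\supsetneq\mf{k}_2\supsetneq\dots$ inside $\mscr{H}(x|\xi)$. Proposition~\ref{trivial chains of halfspaces} forbids long chains trapped in bounded regions, so sufficiently far along the chain $\mf{k}_n$ is incomparable with each $\mf{h}_i$; combined with $\xi\in\mf{k}_n\cap\mf{h}_i$ and the quadrant representative on the opposite side of $\mf{h}_i$ (which for $n$ large lies in $\mf{k}_n^*$), this forces $\mf{k}_n$ to be transverse to every $\mf{h}_i$, producing $r+1$ pairwise transverse walls in $X$ and contradicting $\text{rank}(X)=r$.

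For part (2), the thickness of $\mf{k}$ supplies points $\xi_1\in\mf{k}$ and $\xi_2\in\mf{k}^*$ with $Z$-neighbourhoods lying inside their respective sides. The nonempty set $\mscr{H}(\xi_2|\xi_1)\cu\mscr{H}(X)$ decomposes into finitely many chains by Lemma~\ref{Dilworth for differences}, and a halfspace $\mf{h}\in\mscr{H}(\xi_2|\xi_1)$ chosen to separate the two $Z$-neighbourhoods witnessing thickness will, by convexity of $\wt{\mf{h}}$ in $\overline X$, satisfy $\wt{\mf{h}}\cap Z=\mf{k}$. Uniqueness follows by combining thickness with the chain bound: two distinct lifts $\mf{h},\mf{h}'$ inducing the same partition of $Z$ would sandwich an inseparable family of halfspaces disjoint from $Z$ but accumulating on both sides of $\mf{k}$ in the closure, contradicting Proposition~\ref{trivial chains of halfspaces}. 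The main obstacle I expect is the rank bound: pinning down the transversality of the tail $\mf{k}_n$ with every $\mf{h}_i$ requires careful use of the quadrant representatives and of finite rank, and the uniqueness step in (2) is similarly delicate, as one must exploit thickness and the structure of preferred ultrafilters simultaneously.
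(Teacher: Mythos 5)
The paper does not actually prove this proposition; it is imported from Propositions~4.23 and~4.29 of \cite{Fioravanti1}. So there is no proof in this paper to compare against, and your attempt must stand on its own. It has the right flavour — working with preferred ultrafilters, Dilworth chains and Proposition~\ref{trivial chains of halfspaces} is appropriate — but there are genuine gaps in both parts.

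In the rank bound, the transversality of $\mf{k}_n$ with each $\mf{h}_i$ is not established. You cover $\mf{k}_n\cap\mf{h}_i\neq\emptyset$ (both lie in $\s_\xi$) and $\mf{k}_n^*\cap\mf{h}_i^*\neq\emptyset$ (a fixed quadrant representative $q\in\mf{h}_i^*\cu X$ eventually escapes $\mf{k}_n$ because $d(x,\mf{k}_n)\to\infty$), and the same escape argument rules out $\mf{h}_i\cu\mf{k}_n$. But you never address the case $\mf{k}_n\cu\mf{h}_i$, i.e.\ $\mf{k}_n\cap\mf{h}_i^*=\emptyset$, and this is precisely where the hypothesis that $Z$ is a \emph{component} must enter: you need a point $\eta\in Z$ with $\mf{h}_i^*\in\s_\eta$ and the finiteness of $\wh{\nu}(\s_\xi\triangle\s_\eta)$, because if $\mf{k}_n\cu\mf{h}_i$ for infinitely many $n$ then the diverging chain $(\mf{k}_n)$ sits inside $\s_\xi\setminus\s_\eta$ and forces that set to have infinite measure, a contradiction. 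Without this, the proof of part~1 works equally well for any $\xi\in\partial X$ with $d(x,\xi)=\infty$, which cannot be right — the statement would then say $\mathrm{rank}\,Z\leq r-1$ for \emph{every} set of thick walls hit by some diverging chain, which is false in general.

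In part~(2) both halves are shaky. For uniqueness, the claim that two lifts $\mf{h},\mf{h}'$ inducing the same partition would ``sandwich an inseparable family contradicting Proposition~\ref{trivial chains of halfspaces}'' does not survive inspection: the two lifts need not be nested, and even if they are, the halfspaces in between all \emph{do} cut $Z$ (into the same pair), so they are not ``disjoint from $Z$,'' and no violation of the chain bound materialises. In fact uniqueness is much easier than you make it: $Z$ is gate-convex in $\overline X$, and Proposition~\ref{all about gates}(1) gives a genuine bijection between halfspaces of $\overline X$ cutting $Z$ and halfspaces of $Z$ via $\mf{j}\mapsto\mf{j}\cap Z$ with inverse $\pi_Z^{-1}$; so $\wt{\mf{h}}\cap Z=\wt{\mf{h}'}\cap Z$ already forces $\wt{\mf{h}}=\wt{\mf{h}'}$, hence $\mf{h}=\mf{h}'$, with no thickness needed. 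The real work is in existence, and there your argument is not tight: picking a halfspace $\mf{h}\in\mscr{H}(\xi_2|\xi_1)$ that ``separates the two $Z$-neighbourhoods'' only guarantees $U_1\cu\wt{\mf{h}}\cap Z$ and $U_2\cu\wt{\mf{h}}^*\cap Z$, not that $\wt{\mf{h}}\cap Z$ equals the target halfspace $\mf{k}$; a priori it could be a strict subset. One needs to compare the $\overline X$-halfspace $\pi_Z^{-1}(\mf{k})$ (which exists by the bijection above) with halfspaces of the form $\wt{\mf{h}}$, and this is where thickness of $\mf{k}$ and the measure structure on $\mscr{H}$ must do some work; that step is missing. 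Finally, note the mild circularity: you invoke part~(2) to prove the rank bound in part~(1), so the logical order must be flipped.
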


If $C\cu X$ is closed and convex, the closure of $C$ in $\overline X$ is gate-convex and naturally identified with the Roller compactification of $C$ (see Proposition~4.14 in \cite{Fioravanti1}); thus the notation $\overline C$ is not ambiguous. We denote by $\pi_C\colon\overline X\ra\overline C$ the corresponding gate-projection; it extends the usual gate-projection $X\ra C$. If $\s\cu\mscr{H}(X)$ is an ultrafilter representing the point $\xi\in\overline X$, the set $\s\cap\mscr{H}(C)$ is an ultrafilter on $\mscr{H}(C)$ and represents $\pi_C(\xi)$. 

Similarly, if $Z\cu\partial X$ is a component, the closure of $Z$ in $\overline X$ is gate-convex and naturally identified with the Roller compactification $\overline Z$ (Proposition~4.28 in \cite{Fioravanti1}). The gate-projection $\pi_Z\colon\overline X\ra\overline Z$ satisfies $\pi_Z(X)\cu Z$. In terms of ultrafilters, $\pi_Z$ takes the point of $\overline X$ represented by $\s\cu\mscr{H}(X)$ to the point of $\overline Z$ represented by $\s\cap\mscr{H}(Z)\cu\mscr{H}(Z)$. The intersection makes sense as, by part~2 of Proposition~\ref{pi_Z 2}, almost every halfspace of $Z$ arises from a halfspace of $X$.

Let now $\G$ be a group and $\G\acts X$ an action by isometries. 

\begin{defn}
We say that $\G\acts X$ is \emph{Roller elementary} if there exists a finite orbit within $\overline X$. The action is \emph{Roller minimal} if $\text{rank}(X)\geq 1$ and $\G$ does not preserve any proper, closed, convex subset $C\cu\overline X$. 
\end{defn}

Roller elementarity implies -- but is in general much stronger than -- the existence of a finite orbit in the visual compactification of $\wh{X}$. When $X$ is a {\rm CAT}(0) cube complex, an action is Roller minimal if and only if, in the terminology of \cite{CS}, it is essential and does not fix any point in the visual boundary of $\wh{X}$. For these statements, see Proposition~5.2 in \cite{Fioravanti2}.

Neither Roller elementarity, nor Roller minimality implies the other one. However, Roller minimal actions naturally arise from Roller nonelementary ones (Proposition~5.5 in \cite{Fioravanti2}): 

\begin{prop}\label{Roller elementary vs strongly so}
Let $X$ be a complete, finite rank median space with an isometric action $\G\acts X$. Either $\G\acts\overline X$ fixes a point or there exist a $\G$-invariant component $Z\cu\overline X$ and a $\G$-invariant, closed, convex subset ${C\cu Z}$ such that $\G\acts C$ is Roller minimal.
\end{prop}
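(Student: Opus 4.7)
The plan is to combine Zorn's lemma in the compact space $\overline X$ with an induction on $r = \mathrm{rank}(X)$. Apply Zorn to the family $\mc{F}$ of nonempty, closed, convex, $\G$-invariant subsets of $\overline X$, ordered by reverse inclusion: compactness of $\overline X$ (cf. the cited Proposition~4.14 of \cite{Fioravanti1}) ensures that every chain has nonempty intersection via the finite intersection property, and closedness, convexity and $\G$-invariance pass to intersections. Let $C \in \mc{F}$ be a minimal element. If $|C|=1$, the singleton is a $\G$-fixed point in $\overline X$ and we are in the first alternative of the dichotomy.

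Assume instead $|C|\geq 2$. Since $\G$ preserves the extended metric on $\overline X$, it permutes the components of $\overline X$; the crux is to localize $C$ inside a single $\G$-invariant component $Z$. When some component $Z_0$ meeting $C$ is already $\G$-invariant, gate-convexity of $\overline{Z_0}\cu\overline X$ turns $C \cap \overline{Z_0}$ into a nonempty, closed, convex, $\G$-invariant subset of $C$, and minimality forces $C \cu \overline{Z_0}$. In the general case, one must exploit the $\G$-permutation of the components meeting $C$ together with the continuous, median-preserving gate-projections $\pi_{\overline Z}$ to produce a $\G$-invariant target whose closure contains $C$. This is the principal technical difficulty: without a priori $\G$-invariance of a component, gate-projection is not equivariant and one must navigate the orbit structure carefully.

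Once $C \cu \overline Z$ for a $\G$-invariant component $Z$, we conclude by induction on $r$. If $C \cu Z$, then $C$ is compact (closed in the compact $\overline X$) and a short argument shows $C$ is bounded in the median metric of $Z$: an unbounded sequence in $C$ would, by compactness of $\overline X$, accumulate on a point of $C$ lying outside $Z$, contradicting $C\cu Z$. Consequently, Proposition~\ref{M(X)=X} identifies the Roller compactification $\overline C$ with $C$, and any proper closed convex $\G$-invariant subset of $\overline C = C$ would violate the minimality of $C$ in $\mc{F}$; hence $\G \acts C$ is Roller minimal, as required. Otherwise $C$ meets $\partial Z$, whose components have rank at most $\mathrm{rank}(Z) - 1$ by Proposition~\ref{pi_Z 2}; applying the inductive hypothesis to the $\G$-action on a lower-rank component encountering $C$ yields the desired pair, a potential fixed point produced by induction being automatically a fixed point in $\overline X$. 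Since the rank is a non-negative integer, the recursion terminates.
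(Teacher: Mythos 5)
Your global strategy (Zorn's lemma for a minimal closed convex $\G$-invariant $C \subseteq \overline X$, then induction on the rank) is reasonable, but the proposal has three genuine gaps that prevent it from being a proof.

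First, the step you flag as ``the principal technical difficulty'' --- producing a $\G$-invariant component $Z$ meeting $C$ --- is never actually carried out. You describe what would need to be done, but give no argument. This is precisely what Corollary~4.31 of \cite{Fioravanti1} supplies (it is invoked elsewhere in this very paper, in the proof of Lemma~\ref{RM for X'}); without it, or an equivalent argument, the proof is simply missing its central input.

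Second, the boundedness argument in the $C\subseteq Z$ branch is false. You claim that an unbounded sequence $(\xi_n)$ in $C\subseteq Z$ must accumulate outside $Z$, but the Roller topology on $\overline X$ is strictly coarser than the metric topology in general: for a non--locally-compact median space such as the wedge $\bigvee_{n\ge 1}[0,n]$ of intervals of unbounded length at a common vertex $o$, the endpoint sequence satisfies $d(o,\xi_n)=n\to\infty$ yet $\s_{\xi_n}\triangle\s_o$ form a pairwise disjoint family, so $\limsup(\s_{\xi_n}\triangle\s_o)=\emptyset$ and $\xi_n\to o\in X$ in $\overline X$. Thus unboundedness of $C$ does not force accumulation in $\partial Z$. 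In fact you do not need boundedness at all: since $C\cap Z\neq\emptyset$, the closure $\overline{C\cap Z}$ in $\overline X$ is a nonempty closed convex $\G$-invariant subset of $C$, so by minimality it equals $C$; identifying $\overline{C\cap Z}$ with the Roller compactification of the complete median space $C\cap Z$ (Proposition~4.14/4.28 in \cite{Fioravanti1}), minimality of $C$ then directly gives that $\G$ preserves no proper closed convex subset of $\overline{C\cap Z}$. Either $C\cap Z$ is a point (a fixed point of $\G$ in $\overline X$), or $\mathrm{rank}(C\cap Z)\ge 1$ and $\G\acts C\cap Z$ is Roller minimal.

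Third, the ``otherwise'' branch applies the inductive hypothesis to ``a lower-rank component encountering $C$,'' but a component of $\partial Z$ meeting $C$ need not be $\G$-invariant, so the $\G$-action does not restrict to it and the inductive hypothesis does not apply. The induction should instead be applied to $Z$ itself: if the $\G$-invariant component $Z$ meeting $C$ is distinct from $X$, then $\mathrm{rank}(Z)\le r-1$ by Proposition~\ref{pi_Z 2}, and the inductive hypothesis for $\G\acts Z$ yields either a fixed point in $\overline Z\subseteq\overline X$, or a $\G$-invariant component of $\overline Z$ (hence of $\overline X$) carrying a Roller minimal action. If $Z=X$, the second paragraph above concludes directly, with no boundedness needed.
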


If $Z\cu\overline X$ is a component, any $\G$-invariant, closed, convex subset $C\cu Z$ gives rise to a measurable decomposition $\mscr{H}=\mscr{H}_C\sqcup(\s_C\cup\s_C^*)$. Here, we have introduced the sets ${\mscr{H}_C:=\{\mf{h}\in\mscr{H}\mid C\cap\wt{\mf{h}}\neq\emptyset,~C\cap\wt{\mf{h}}^*\neq\emptyset\}}$ and ${\s_C:=\{\mf{h}\in\mscr{H}\mid C\cu\wt{\mf{h}}\}}$. Note that, by part~2 of Proposition~\ref{pi_Z 2}, the measure spaces $(\mscr{H}_C,\wh{\nu}_X)$ and $(\mscr{H}(C),\wh{\nu}_C)$ are isomorphic.

We say that an action $\G\acts X$ is \emph{without wall inversions} if there do not exist $g\in\G$ and $\mf{h}\in\mscr{H}$ such that $g\mf{h}=\mf{h}^*$. By Proposition~\ref{trivial chains of halfspaces}, any action on a \emph{connected}, complete, finite rank median space is without wall inversions. The following appears as Corollary~5.4 in \cite{Fioravanti2}; compare with \cite{CS}.

\begin{prop}\label{double skewering}
Let $X$ be a complete, finite rank median space with thick halfspaces $\mf{h}\cu\mf{k}$. Let $\G\acts X$ be a Roller minimal action without wall inversions. 
\begin{enumerate}
\item There exists $g\in\G$ such that $g\mf{h}^*\subsetneq\mf{h}$ and ${d\left(g\mf{h}^*,\mf{h}^*\right)>0}$.
\item There exists $g\in\G$ such that $g\mf{k}\subsetneq\mf{h}\cu\mf{k}$ and $d(g\mf{k},\mf{h}^*)>0$.
\end{enumerate}
\end{prop}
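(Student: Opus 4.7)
I would prove Part~1 first and deduce Part~2 by combining two applications of Part~1 with the nesting $\mf{h}\cu\mf{k}$, following the spirit of Caprace--Sageev's double skewering for {\rm CAT}(0) cube complexes \cite{CS} adapted to the median setting.

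For Part~1, the plan has three stages. First, I would produce a \emph{pushing} element $\gamma\in\G$ with $\gamma\mf{h}\subsetneq\mf{h}$. Since $\mf{h}$ is thick, $\wt{\mf{h}}\subsetneq\overline X$ is a proper, nonempty, closed, convex subset, so the $\G$-invariant intersection $\bigcap_{\gamma\in\G}\gamma\wt{\mf{h}}$ is $\G$-invariant, closed, convex, and properly contained in $\wt{\mf{h}}$; it is therefore empty by Roller minimality (valid since $\text{rank}(X)\geq 1$, which thickness forces). Compactness of $\overline X$ yields $\gamma_1,\ldots,\gamma_n\in\G$ with $\bigcap_i\gamma_i\wt{\mf{h}}=\emptyset$, and unpacking the resulting covering $\bigcup_i\gamma_i\wt{\mf{h}}^*=\overline X$ together with the no-wall-inversion hypothesis forces some $\gamma\in\G$ to satisfy $\gamma\mf{h}\subsetneq\mf{h}$. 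Second, iteration gives a descending chain $\mf{h}\supsetneq\gamma\mf{h}\supsetneq\gamma^2\mf{h}\supsetneq\ldots$, and Proposition~\ref{trivial chains of halfspaces} bounds by $2r$ (where $r=\text{rank}(X)$) the length of any chain whose extremal closures meet; hence $\overline{\mf{h}^*}\cap\overline{\gamma^n\mf{h}}=\emptyset$ and thus $d(\gamma^n\mf{h},\mf{h}^*)>0$ for $n>2r$.

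Third, I would convert the pushing $\tilde g\mf{h}\subsetneq\mf{h}$ into the required flip $g\mf{h}^*\subsetneq\mf{h}$. Roller minimality, combined with no wall inversions, rules out the possibility that the $\G$-orbit of $\mf{h}$ is totally ordered by inclusion: otherwise the extremal elements of maximal nested sub-chains (or their limits in $\overline X$) would produce a $\G$-invariant proper closed convex subset, a contradiction. Thus there exists $h\in\G$ with $h\mf{h}$ either transverse to $\mf{h}$ or nested inside $\mf{h}^*$; composing $h$ with a sufficiently high power of the pushing element $\tilde g$ produces the desired $g$, and the positive separation $d(g\mf{h}^*,\mf{h}^*)>0$ is secured by another application of Proposition~\ref{trivial chains of halfspaces}. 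For Part~2, applying Part~1 to both $\mf{h}$ and $\mf{k}$ produces $g_1,g_2\in\G$ with $g_1\mf{h}^*\subsetneq\mf{h}$ and $g_2\mf{k}^*\subsetneq\mf{k}$, each at positive distance; exploiting $\mf{k}^*\cu\mf{h}^*$, a suitable word in $g_1^{\pm 1},g_2^{\pm 1}$---heuristically a high power of $g_2$ to shrink $\mf{k}$ deep inside itself, followed by $g_1$ to flip across the wall of $\mf{h}$---produces $g$ with $g\mf{k}\subsetneq\mf{h}\cu\mf{k}$, and the required positive distance from $\mf{h}^*$ follows by isometry together with the finite-rank chain bound.

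\emph{Main obstacle.} The crux is the third stage of Part~1: transitioning from pushing dynamics ($\gamma\mf{h}\subsetneq\mf{h}$) to flipping dynamics ($g\mf{h}^*\subsetneq\mf{h}$). These are qualitatively distinct---the latter requires a ``folding'' kind of isometry rather than a ``translation''---and the existence of flipping elements requires a richer use of Roller minimality, essentially ruling out degenerate configurations where the orbit of $\mf{h}$ is too well-ordered. Once this step is secured, all positive-distance conclusions throughout are uniform applications of Proposition~\ref{trivial chains of halfspaces}.
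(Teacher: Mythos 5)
The paper does not give a proof of Proposition~\ref{double skewering}: the statement is quoted from Corollary~5.4 of \cite{Fioravanti2} (``compare with \cite{CS}''), so there is no in-paper argument to compare your proposal against. I can still assess the proposal on its own terms.

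Your main misstep is that Stage~1 is aimed at the wrong object. You set out to produce a \emph{pushing} element with $\gamma\mf{h}\subsetneq\mf{h}$ and then struggle (Stage~3) to convert it to the \emph{flipping} element $g\mf{h}^*\subsetneq\mf{h}$ that Part~1 actually asks for. But the Helly/compactness argument you invoke yields the flip directly, and with the positive-distance clause built in, if you run it on closures in $\overline X$ of the orbit of $\mf{h}^*$ rather than of $\mf{h}$. By Roller minimality, $\bigcap_{\gamma\in\G}\overline{\gamma\wt{\mf{h}^*}}=\emptyset$ (it is a $\G$-invariant, closed, convex subset lying inside the proper subset $\overline{\wt{\mf{h}^*}}$). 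Compactness of $\overline X$ extracts a finite subfamily with empty intersection, and Helly's theorem (Theorem~2.2 in \cite{Roller}) then forces two members $\overline{\gamma_i\wt{\mf{h}^*}}, \overline{\gamma_j\wt{\mf{h}^*}}$ to be disjoint. Setting $g=\gamma_j^{-1}\gamma_i$ gives $g\mf{h}^*\cu\mf{h}$; the no-wall-inversion hypothesis makes the inclusion strict; and disjointness of the closures already yields $d(g\mf{h}^*,\mf{h}^*)>0$ via gate-convexity. Note, by contrast, that what your Stage~1 actually extracts from ``the resulting covering'' is not a push: Helly gives disjointness of two translates, and disjointness of $\gamma_i\mf{h}$ and $\gamma_j\mf{h}$ is a flipping relation, not a nesting $\gamma\mf{h}\subsetneq\mf{h}$. (Also, $\wt{\mf{h}}$ need not be closed in $\overline X$; one has to pass to its closure before invoking compactness.)

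With the flip obtained directly, your Stage~3 becomes unnecessary, which is fortunate because as written it is not a proof. The claim that a totally ordered $\G$-orbit of $\mf{h}$ would produce a $\G$-invariant proper closed convex subset via ``extremal elements of maximal nested sub-chains'' is a vague heuristic that is hard to make precise in rank greater than one: descending intersections are already empty by the Stage~1 argument, and ascending unions may have closure equal to all of $\overline X$. You have correctly identified this as the crux of your plan, and that is exactly the sign that the plan should be restructured.

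Two residual issues even if Stage~1 is corrected. Your Stage~2 iteration $\mf{h}\supsetneq\gamma\mf{h}\supsetneq\gamma^2\mf{h}\supsetneq\cdots$ presupposes a pushing element of infinite order; but a flipping element may well have order two (think of a reflection of $\R$), so one cannot simply take powers of $g$. If one wanted to obtain the positive distance by iteration and Proposition~\ref{trivial chains of halfspaces} rather than by the disjoint-closures argument above, one would have to re-apply the flipping lemma to the new halfspace $g_1\mf{h}^*$ to produce a fresh $g_2$, and so on, building a chain $g_k\cdots g_1\mf{h}^*\subsetneq\cdots\subsetneq g_1\mf{h}^*\subsetneq\mf{h}$. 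The same confusion of flip and push reappears in your sketch of Part~2: ``a high power of $g_2$ to shrink $\mf{k}$ deep inside itself'' again treats a flipping element as a push. The correct mechanism for Part~2 is a composition of two \emph{flips}: from $g_1\mf{h}^*\subsetneq\mf{h}\cu\mf{k}$ and $\mf{k}^*\cu\mf{h}^*$ one gets $g_1\mf{k}^*\cu\mf{h}$; applying Part~1 again to the thick halfspace $g_1\mf{k}^*$ produces $g_2$ with $g_2g_1\mf{k}\subsetneq g_1\mf{k}^*\cu\mf{h}\cu\mf{k}$, and the distance clause is handled as in Part~1.
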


When $G$ is a topological group, all isometric actions $G\acts X$ will be implicitly required to have continuous orbit maps. Equivalently, the homomorphism $G\ra\text{Isom}~X$ is continuous, where we endow $\text{Isom}~X$ with the topology of pointwise convergence. We remark that $\text{Isom}~X$ is a Hausdorff, sequentially complete topological group as soon as $X$ is complete.

If $(M_1,m_1)$ and $(M_2,m_2)$ are median algebras, the \emph{product median algebra} is defined as $(M_1\x M_2,m)$, where $m=(m_1\o p_1,m_2\o p_2)$; here $p_i$ denotes the projection onto the $i$-th factor. If $(X_1,d_1)$ and $(X_2,d_2)$ are median spaces, we endow the product $X_1\x X_2$ with the $\ell^1$ metric, namely $d=d_1\o p_1+d_2\o p_2$. The median algebra associated to the median space $(X_1\x X_2, d)$ is just the product median algebra arising from $X_1$ and $X_2$. 

A median space $X$ is said to be \emph{irreducible} if it is not isometric to any nontrivial product $X_1\x X_2$ of median spaces. The structure of reducible median spaces is described by the following result. 

\begin{prop}\label{products}
Let $X_1,...,X_k$ be irreducible, complete, finite rank median spaces; consider the product $X=X_1\x...\x X_k$. 
\begin{enumerate}
\item We have a measurable partition $\mscr{H}(X)=\mscr{H}_1\sqcup...\sqcup\mscr{H}_k$, where each $\mscr{H}_i$ is canonically identified with $\mscr{H}(X_i)$. If $\mf{h}\in\mscr{H}_i$ and $\mf{k}\in\mscr{H}_j$ with $i\neq j$, the halfspaces $\mf{h}$ and $\mf{k}$ are transverse. 
\item Every isometry of $X$ permutes the members of the partition. The product $\text{Isom}~X_1\x...\x\text{Isom}~X_k$ sits inside $\text{Isom}~X$ as the open, finite index subgroup preserving the splitting $X=X_1\x...\x X_k$.
\item Every closed, convex subset $C\cu X$ is of the form $C_1\x...\x C_k$, where each $C_i$ is a closed convex subset of $X_i$.
\item The Roller compactification $\overline X$ is naturally identified with the product median algebra $\overline{X_1}\x...\x\overline{X_k}$, endowed with the product topology.
\end{enumerate}
\end{prop}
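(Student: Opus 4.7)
Both the structural parts $(1)$, $(3)$, $(4)$ and the rigidity part $(2)$ flow from a partition of $\mscr{H}(X)$ reflecting the product structure. I would prove the statements in the order $(1),(3),(4),(2)$.

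\emph{Part (1).} Define $\mscr{H}_i:=\{\mf{h}\x\prod_{j\neq i}X_j\mid\mf{h}\in\mscr{H}(X_i)\}$, with canonical bijection $\iota_i\colon\mscr{H}(X_i)\to\mscr{H}_i$. Halfspaces drawn from distinct $\mscr{H}_i,\mscr{H}_j$ are transverse, since each of the four side-combinations is a nonempty product. To show that $\mscr{H}(X)\setminus\bigsqcup_i\mscr{H}_i$ is null, introduce the candidate measure $\mu:=\sum_i(\iota_i)_*\wh{\nu}_{X_i}$ on $\mscr{H}(X)$, supported on $\bigsqcup_i\mscr{H}_i$. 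Then $\mu(\mscr{H}_X(x|y))=\sum_id_i(x_i,y_i)=d_X(x,y)=\wh{\nu}_X(\mscr{H}_X(x|y))$ for all $x,y\in X$. Uniqueness of the canonical wall measure from \cite{Fioravanti1}, characterised by its values on halfspace intervals, forces $\mu=\wh{\nu}_X$; in particular the mixed halfspaces form a null set, and the bijections $\iota_i$ are measure-preserving.

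\emph{Parts (3) and (4).} For (3), induct on $k$. If $C\cu X_1\x X_2$ is closed convex, set $C_i:=\overline{p_i(C)}$; the inclusion $C\cu C_1\x C_2$ is immediate, while the identity $m((a_1,b_2),(b_1,a_2),(a_1,a_2))=(a_1,a_2)$ shows $(a_1,a_2)\in I((a_1,b_2),(b_1,a_2))\cu C$ whenever $(a_1,b_2),(b_1,a_2)\in C$, by convexity. For (4), by (1) every ultrafilter $\s\cu\mscr{H}(X)$ decomposes modulo null as $\bigsqcup_i(\s\cap\mscr{H}_i)$ with each $\s\cap\mscr{H}_i$ an ultrafilter on $\mscr{H}(X_i)$ via $\iota_i$; conversely, tuples of ultrafilters assemble to an ultrafilter on $\mscr{H}(X)$, because pairwise intersection across blocks is automatic by transversality and within each block follows from the ultrafilter property on $X_i$. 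The formula $(\s_1\cap\s_2)\cup(\s_2\cap\s_3)\cup(\s_3\cap\s_1)$ and the convergence criterion (``$\limsup$ of symmetric differences is null'') both decompose across the partition, yielding the required homeomorphism $\overline X\cong\overline{X_1}\x\dots\x\overline{X_k}$ of topological median algebras.

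\emph{Part (2).} Every $\phi\in\text{Isom}~X$ acts on $\mscr{H}(X)$ preserving measure, the order, and transversality. The partition from (1) is unique up to reordering, among partitions of $\mscr{H}(X)$ (modulo null) into non-null blocks that are pairwise cross-transverse and admit no further non-trivial splitting of the same kind: each $X_i$ being irreducible, $\mscr{H}(X_i)$ is itself irreducible in that sense. Hence $\phi$ permutes the blocks $\mscr{H}_i$ modulo null, determining a permutation $\pi\in S_k$. Applying (4), this forces $\phi(x_1,\dots,x_k)=(y_1,\dots,y_k)$ with each $y_{\pi(i)}$ depending only on $x_i$, i.e.\ $\phi$ acts componentwise with permutation $\pi$. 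The kernel of $\phi\mapsto\pi$ is precisely $\prod_i\text{Isom}~X_i$ acting diagonally, of index at most $k!$; it is closed for the topology of pointwise convergence (a pointwise limit of componentwise isometries is componentwise, the coordinates converging independently), hence, being of finite index, open.

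\emph{Main obstacle.} The delicate step is the measure argument in (1): one must avoid the circular identification of $\wh{\nu}_X|_{\mscr{H}_i}$ with $\wh{\nu}_{X_i}$. Introducing the candidate measure $\mu$ first and invoking the uniqueness of $\wh{\nu}_X$ on halfspace intervals circumvents the circularity and simultaneously establishes that each $\iota_i$ is measure-preserving. A subsidiary difficulty is the uniqueness of the partition from (1), which is needed in (2) and rests crucially on the irreducibility of each factor $X_i$ to rule out further refinements.
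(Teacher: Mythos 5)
Your route to part (2) is genuinely different from the paper's, and it is the most interesting divergence. The paper proves openness of $H:=\prod_i\text{Isom}~X_i$ directly: it fixes a point $x$ and the points $z_i$ obtained by changing one coordinate, and shows that any isometry moving $x$ and each $z_i$ by less than a small $\eps$ cannot swap two blocks $\mscr{H}_i$, $\mscr{H}_j$ (swapping would contradict the triangle inequality after projecting). You instead argue that $H$ is \emph{closed} under pointwise convergence — because the $i$-th coordinate of a pointwise limit still depends only on the $i$-th input — and then invoke the standard fact that a closed subgroup of finite index in a topological group is open (the complement is a finite union of closed cosets). Both arguments are correct; yours is shorter and more conceptual, though it requires you to already know the index is finite, while the paper's direct estimate is self-contained for the openness claim alone.

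For parts (1) and (4), the paper does not actually supply proofs: it cites Propositions~2.10 and Lemma~2.11 of \cite{Fioravanti2}. Your proposed measure argument for (1) — comparing the pushed-forward measure $\mu=\sum_i(\iota_i)_*\wh{\nu}_{X_i}$ with $\wh{\nu}_X$ on halfspace intervals and invoking uniqueness — is in the right spirit and is essentially what underlies the cited result, but as written it has a gap: halfspace intervals do not obviously form a $\pi$-system, and the $\s$-algebra $\wh{\mscr{B}}$ of the paper is the extension described in \textsection2.1 (a set is measurable iff every intersection with a halfspace interval is), so equality of $\mu$ and $\wh{\nu}$ on halfspace intervals does not by itself propagate to all of $\wh{\mscr{B}}$ without further work. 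You should either cite \cite{Fioravanti2} here (as the paper does) or replace the appeal to ``uniqueness'' with the actual uniqueness statement from \cite{CDH}, then explain why it extends to $\wh{\mscr{B}}$.

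Similarly, in part (2) the key claim that the partition $\mscr{H}(X)=\mscr{H}_1\sqcup\cdots\sqcup\mscr{H}_k$ is unique up to reordering among pairwise cross-transverse, unrefinable partitions is asserted but not proved. You correctly identify that this is where irreducibility of the $X_i$ must enter, but showing that a non-trivial cross-transverse splitting of $\mscr{H}(X_i)$ would produce a product decomposition of $X_i$ (the converse of part (1)) is itself a substantive result. The paper outsources exactly this to Proposition~2.12 of \cite{Fioravanti2}, and you should do the same unless you wish to reprove it.

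Part (3) is essentially identical to the paper's argument, modulo the cosmetic point that once $C=p_1(C)\times p_2(C)$ is established the closures are automatic and need not be taken separately.
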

\begin{proof}
Let $p_i\colon X\ra X_i$ denote the projection onto the $i$-th factor. Given $\mf{h}\in\mscr{H}(X_i)$, we have $p_i^{-1}(\mf{h})\in\mscr{H}(X)$; let $\mscr{H}_i$ be the set of halfspaces of $X$ that arise this way. With this notation, part~1 is Proposition~2.10 in \cite{Fioravanti2}. 

Regarding part~2, Proposition~2.12 in \cite{Fioravanti2} shows that each $g\in\text{Isom}~X$ permutes the sets $\mscr{H}_i$, or equivalently the factors $X_i$. Moreover, the subgroup of isometries preserving each $\mscr{H}_i$ (call it $H$) has finite index in $\text{Isom}~X$ and is naturally identified with the product $\text{Isom}~X_1\x...\x\text{Isom}~X_k$. We are left to prove that $H$ is open in $\text{Isom}~X$.

Choose distinct points $x_i,y_i\in X_i$ and a real number $\eps>0$ such that $2\eps< d(x_i,y_i)$ for all $i$. Let $x\in X$ be the point with coordinates $x_i$; we also introduce the points $z_i\in X$ such that $p_j(z_i)=x_j$ for all $i\neq j$ and $p_i(z_i)=y_i$. We will show that an isometry $F\in\text{Isom}~X$ lies in $H$ as soon as $d(F(x),x)<\eps$ and $d(F(z_i),z_i)<\eps$ for all $i$. This will yield that $H$ is open.

Suppose for the sake of contradiction that such an isometry $F$ does not lie in $H$; in particular, there exist indices $i\neq j$ such that $F(\mscr{H}_i)=\mscr{H}_j$. Since $X_i$ can be realised via ultrafilters on $\mscr{H}_i=\mscr{H}(X_i)$, this naturally induces an isometry $f\colon X_i\ra X_j$; cf.\,Corollary~3.13 in \cite{Fioravanti1}. Observe that $p_j\o F=f\o p_i$. Thus, we have $f(x_i)=fp_i(x)=p_jF(x)$ and
\[d(x_j,f(x_i))=d(p_j(x),p_jF(x))\leq d(x,F(x))<\eps.\]
Similarly, $d(x_j,f(y_i))=d(p_j(z_i),p_jF(z_i))\leq d(z_i,F(z_i))<\eps$. We conclude that $d(x_i,y_i)=d(f(x_i),f(y_i))<2\eps$, a contradiction.

We now prove part~3. Irreducibility of the factors plays no role here, so it suffices to consider the case $k=2$. Let $C_1$ and $C_2$ be the projections of $C$ to $X_1$ and $X_2$. If $x\in C_1$ and $y\in C_2$, there exist $u\in X_1$ and $v\in X_2$ such that the points $\overline x:=(x,v)$ and $\overline y:=(u,y)$ lie in $C$. It is immediate to observe that the point $(x,y)$ lies in $I(\overline x,\overline y)\cu C$. Finally, part~4 is Lemma~2.11 in \cite{Fioravanti2}.
\end{proof}

Given $n\geq 2$, halfspaces $\mf{h}_1,...,\mf{h}_n$ form a \emph{facing $n$-tuple} if they are pairwise disjoint. We say that $\mf{h},\mf{k}\in\mscr{H}$ are \emph{strongly separated} if $\overline{\mf{h}}\cap\overline{\mf{k}}=\emptyset$ and no $\mf{j}\in\mscr{H}$ is transverse to both $\mf{h}$ and $\mf{k}$. See Theorem~5.9 and Lemma~6.3 in \cite{Fioravanti2} for the following.

\begin{prop}\label{strong separation}
Let $X$ be an irreducible, complete, finite rank median space; let $\mf{h}$ be a thick halfspace.
\begin{enumerate}
\item If $X$ admits a Roller minimal action without wall inversions, there exist thick halfspaces $\mf{h}'\cu\mf{h}\cu\mf{h}''$ such that $\mf{h}'$ and $\mf{h}''^*$ are strongly separated.
\item If $X$ admits a Roller nonelementary, Roller minimal action without walls inversions, $\mf{h}$ is part of a facing $n$-tuple of thick halfspaces for every $n\geq 2$.
\end{enumerate}
\end{prop}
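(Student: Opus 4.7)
The plan is to prove part~1 by contradiction and then bootstrap to part~2 using Roller nonelementarity.

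For part~1, assume no pair of thick halfspaces $\mf{h}'\cu\mf{h}\cu\mf{h}''$ has $\mf{h}'$ and $\mf{h}''^*$ strongly separated. Iterated application of Proposition~\ref{double skewering} to $\mf{h}$ produces a strictly descending chain of thick halfspaces $\mf{h}=\mf{h}_0\supsetneq\mf{h}_1\supsetneq\mf{h}_2\supsetneq\dots$ with pairwise positively separated closures, obtained as images of $\mf{h}$ and $\mf{h}^*$ under a suitably chosen sequence of group elements. The hypothesis then forces, for each $n\geq 1$, a halfspace $\mf{k}_n\in\mscr{H}$ transverse to both $\mf{h}_n$ and $\mf{h}_0^*$. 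Up to swapping sides, every such $\mf{k}_n$ lies in the set of walls separating a pair of gate-projected basepoints across the slab $\overline{\mf{h}_0}\setminus\overline{\mf{h}_n^*}$, so Lemma~\ref{Dilworth for differences} combined with finite rank yields a monotone infinite subsequence. Combining this with the shift action of the constructing elements on transverse halfspaces, I would obtain a nonempty measurable family $\mc{T}\cu\mscr{H}$ every element of which is transverse to \emph{every} $\mf{h}_n$. A $\G$-saturation of $\mc{T}$ then yields a measurable partition $\mscr{H}=\mscr{H}_1\sqcup\mscr{H}_2$, with each part of positive measure and every halfspace of one part transverse to every halfspace of the other. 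By Proposition~\ref{products}(1) this gives a nontrivial product decomposition of $X$, contradicting irreducibility.

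For part~2, invoke part~1 to obtain thick halfspaces $\mf{h}'\cu\mf{h}\cu\mf{h}''$ with $\mf{h}'$ and $\mf{h}''^*$ strongly separated. Proposition~\ref{double skewering}(2) applied to the nested pair $\mf{h}'\cu\mf{h}''$ yields $g\in\G$ with $g\mf{h}''\subsetneq\mf{h}'$; this $g$ acts as a contracting isometry with attracting endpoint $\xi^+\in\overline X$ contained in $g^n\mf{h}''$ for every $n\geq 0$ (hence inside $\mf{h}'$), and with repelling endpoint $\xi^-\in\overline X$ contained in $g^{-n}\mf{h}''^*$ for every $n\geq 0$ (hence inside $\mf{h}''^*$). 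Roller nonelementarity precludes any finite $\G$-orbit in $\overline X$, so I can choose $h_1,\dots,h_{n-1}\in\G$ with the $h_i\xi^+$ pairwise distinct and distinct from both $\xi^\pm$; the conjugates $g_i:=h_igh_i^{-1}$ are independent contracting isometries with pairwise different fixed points. Strong separation of the original pair propagates to all translates, preventing any halfspace from being transverse to two disjoint translates of $\mf{h}''^*$; a ping-pong argument using high powers of the $g_i$ then produces pairwise disjoint thick halfspaces $\mf{a}_1,\dots,\mf{a}_{n-1}$, each a translate of $\mf{h}''^*$ contained in $\mf{h}^*$. Together with $\mf{h}$, these form the required facing $n$-tuple.

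The main obstacle is the limiting argument in part~1: extracting from the one-level-transverse halfspaces $(\mf{k}_n)$ a single halfspace, or measurable family, transverse to the entire chain $(\mf{h}_n)$. This hinges on Lemma~\ref{Dilworth for differences} and finite rank, together with a careful tracking of the group elements constructing the chain so that the transverse family survives $\G$-saturation. Once $\mc{T}$ is secured, the product splitting via Proposition~\ref{products} is routine, and the ping-pong step for part~2 is standard once strong separation is in hand.
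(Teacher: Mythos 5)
The paper does not prove this proposition: it cites it as Theorem~5.9 and Lemma~6.3 of \cite{Fioravanti2}, so there is no internal argument to compare against. Your overall strategy for part~1 --- contradiction, build a descending chain via double skewering, use failure of strong separation to extract transversals, derive a product splitting via Proposition~\ref{products}(1) --- is the right shape and does resemble the Caprace--Sageev argument that \cite{Fioravanti2} generalises. However, both of the steps you flag as ``the main obstacle'' and as ``routine'' contain genuine gaps.

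First, the Dilworth step is not justified. Lemma~\ref{Dilworth for differences} applies to subsets of $\s_1\setminus\s_2$ for ultrafilters $\s_1,\s_2$; for the $\mf{k}_n$ to fall under its hypotheses you would need any two of them (after consistently swapping sides) to be either nested or transverse, never disjoint nor complement-nested. Being transverse to $\mf{h}_0$ and to $\mf{h}_n$ does not force $\mf{k}_n$ to separate any fixed pair of points: two such transversals of the slab $\mf{h}_0\cap\mf{h}_n^*$ can perfectly well form a facing pair. Your phrase ``lies in the set of walls separating a pair of gate-projected basepoints'' does not follow --- a wall transverse to $\mf{h}_0$ and $\mf{h}_n$ has both of its sides meeting both $\overline{\mf{h}_0^*}$ and $\overline{\mf{h}_n}$, so no choice of basepoints is guaranteed to be separated by it. Without this, no monotone subsequence, and no limit family $\mc{T}$.

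Second, even granting a nonempty family $\mc{T}$ of halfspaces transverse to every $\mf{h}_n$, the passage to a measurable $\G$-invariant partition $\mscr{H}=\mscr{H}_1\sqcup\mscr{H}_2$ with mutual transversality and both pieces of positive measure is the crux of the whole proposition, not a routine consequence. Saturating $\mc{T}$ under $\G$ does not a priori preserve transversality to the chain (the translates $g\mf{h}_n$ populate new directions), nor does it hand you the complementary block $\mscr{H}_2$, nor positivity of measure on both sides. This is exactly where Roller minimality and finite rank must be used again in an essential way, and it needs to be argued, not asserted. Part~2 is more plausible: once part~1 is in hand, a North--South argument with conjugates of a skewering element is the standard route, but the pairwise disjointness of the translates $\mf{a}_i$ (not just their distinctness) must be extracted carefully from the ping-pong using the strong separation of the $g_i$-axes, and the inclusion of the original $\mf{h}$ in the tuple requires all $\mf{a}_i\cu\mf{h}^*$, which your sketch asserts without argument.
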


Every complete, finite rank median space can be isometrically embedded into its \emph{barycentric subdivision} $X'$. This is a complete median space of the same rank; see Section~2.3 in \cite{Fioravanti2}. When $X$ is the $0$-skeleton of a {\rm CAT}(0) cube complex with the $\ell^1$ metric, the space $X'$ is given by the $0$-skeleton of the customary barycentric subdivision. 

We have a natural homomorphism $\text{Isom}~X\hookrightarrow\text{Isom}~X'$. Given any isometric action $\G\acts X$, the induced action $\G\acts X'$ is without wall inversions. We write $(\mscr{H}',\nu')$ instead of $(\mscr{H}(X'),\nu_{X'})$. There is an inclusion preserving map $p\colon\mscr{H}'\ra\mscr{H}$; it is surjective, $(\text{Isom}~X)$-equivariant and its fibres have cardinality at most two. We have $\#(p^{-1}(\mf{h}))=2$ if and only if $\{\mf{h}\}$ is an atom for $\wh{\nu}$; in this case, we refer to each element of $p^{-1}(\mf{h})$ as a \emph{hemiatom}. The following is Lemma~5.7 in \cite{Fioravanti2}.

\begin{lem}\label{RNE for X'}
Let $X$ be a complete, finite rank median space. An action $\G\acts X$ is Roller elementary if and only if the induced action $\G\acts X'$ is.
\end{lem}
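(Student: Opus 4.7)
I will prove both implications of the lemma.

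\emph{Forward direction.} The isometric embedding $X\hookrightarrow X'$ makes $X$ into a $\G$-invariant closed median subalgebra of $X'$. By Lemma~\ref{Roller of subalgebras}, this induces a canonical, $\G$-equivariant morphism of median algebras $\iota_X\colon\overline X\hookrightarrow\overline{X'}$. A finite $\G$-orbit in $\overline X$ is then sent to a finite $\G$-orbit in $\overline{X'}$, proving that $\G\acts X'$ is Roller elementary.

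\emph{Reverse direction: setup.} Suppose $\G\acts X'$ has a finite orbit in $\overline{X'}$, and choose a finite-index subgroup $\G_0\leq\G$ fixing a point $\xi'\in\overline{X'}$. The preferred ultrafilter $\s':=\s_{\xi'}\cu\mscr{H}'$ is $\G_0$-invariant. Using the $\G$-equivariant surjection $p\colon\mscr{H}'\ra\mscr{H}$, define
\[\tau:=\{\mf{h}\in\mscr{H} : p^{-1}(\mf{h})\cu\s'\},\qquad T:=\mscr{H}\setminus(\tau\sqcup\tau^*).\]
Both $\tau$ and $T$ are $\G_0$-invariant; moreover $T$ is $*$-closed and consists solely of atomic halfspaces of $X$ whose two hemiatoms split between $\s'$ and $\s'^*$ (the ``midstrip'' configuration at $\xi'$). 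The plan is to extend $\tau$ to a $\G_0$-invariant ultrafilter on $\mscr{H}$ by selecting, equivariantly, one element of each pair $\{\mf{h},\mf{h}^*\}\cu T$; the resulting ultrafilter represents a $\G_0$-fixed point of $\overline X$, which gives the desired finite $\G$-orbit.

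\emph{Main obstacle.} The core difficulty is performing this equivariant side selection on $T$. Two ingredients make it feasible. First, the finite-rank hypothesis forces $T$ to be finite: atoms in $T$ correspond to halfspaces of $X$ to which $\xi'$ is ``tangent'', and the transverse/chain structure of such halfspaces at a single point is bounded in terms of the rank $r$, using Lemma~\ref{Dilworth for differences} and Proposition~\ref{trivial chains of halfspaces}. Second, although $\G_0$ may invert walls in $T$ viewed as subsets of $X$, any such inversion cannot invert any hemiatom in $\mscr{H}'$ (because $\G\acts X'$ is without wall inversions); it must instead act by the non-inverting transposition $\mf{h}'_1\leftrightarrow\mf{h}'^*_2$. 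Since $T$ is finite, this defect can be eliminated by descending to a further finite-index subgroup $\G_1\leq\G_0$ of index at most $2^{|T|}$ that acts on $T$ without inversions. A $\G_1$-invariant side selection orbit by orbit then extends $\tau$ to a $\G_1$-invariant subset $\s\cu\mscr{H}$, and one verifies that $\s$ is a genuine ultrafilter (its members intersect pairwise in $X$, not merely in $X'$) by tracing intersections back to the ultrafilter property of $\s'$ and the compatibility of $p$ with $X\hookrightarrow X'$. This yields a $\G_1$-fixed point of $\overline X$, and hence a finite $\G$-orbit in $\overline X$, completing the proof. The step I expect to be most delicate is the finiteness of $T$, which is where the finite rank of $X$ plays a decisive role.
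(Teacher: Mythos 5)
The paper does not actually prove this lemma; it cites Lemma~5.7 of \cite{Fioravanti2}. Your argument is therefore an independent, self-contained proof, and I can only assess it on its own terms.

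Your forward direction is clean and correct: the embedding $\iota_X\colon\overline X\hookrightarrow\overline{X'}$ from Lemma~\ref{Roller of subalgebras} is canonical, hence $\G$-equivariant, and carries a finite orbit to a finite invariant set. For the reverse direction, the structure is sound and the right reduction is made: pass to a stabiliser $\G_0$ of a point $\xi'\in\overline{X'}$ with preferred ultrafilter $\s'$, pull back along $p$ to get the $\G_0$-invariant partial ultrafilter $\tau\cu\mscr{H}$, and handle the exceptional set $T=\mscr{H}\setminus(\tau\sqcup\tau^*)$ separately. The gap is in your justification of the finiteness of $T$. You invoke Lemma~\ref{Dilworth for differences} and Proposition~\ref{trivial chains of halfspaces}, but $T$ is $*$-invariant and so is not contained in any $\s_1\setminus\s_2$, nor does a chain in $T$ obviously have overlapping closures; neither tool applies as stated. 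The fact you actually need is that distinct walls in $T$ are pairwise transverse. This follows from the nesting of hemiatoms ($\mf{h}'_2\subsetneq\mf{h}'_1$): if $\mf{h}\in T$ then necessarily $\mf{h}'_1\in\s'$ and $\mf{h}'_2\notin\s'$ (the other split is impossible since ultrafilters are upward closed), and if $\mf{h}\subsetneq\mf{k}$ with $\mf{h},\mf{k}\in T$ then the outer hemiatom of $\mf{h}$ sits inside the inner hemiatom of $\mf{k}$, giving $\mf{h}'_1\cu\mf{k}'_2$ and forcing $\mf{k}'_2\in\s'$ --- a contradiction. Pairwise transversality then gives $|T|\leq 2r$ directly.

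This transversality is doing double duty in your proof: besides the finiteness of $T$, it is what guarantees that your side selection on $T$ actually yields an ultrafilter, since elements of $\tau$ automatically meet every side of every wall in $T$ (by the inclusion $\mf{t}'_1\cu(\mf{h}'_2)^*$ implied by disjointness, applied to the ultrafilter axiom for $\s'$), but two nested walls in $T$ could support incompatible selections. You gloss over this in the final verification. You should also note that passing to the kernel of $\G_0\to\mathrm{Sym}(T)$ (index at most $|T|!$) is simpler than the $2^{|T|}$ bookkeeping and makes the orbit-by-orbit selection trivial. None of these are fatal, but the proof as written leans on an unproved claim at exactly the point you yourself flag as most delicate; making the pairwise transversality of $T$ explicit, with the hemiatom argument above, would close the gap.
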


The sets $\{-1,1\}$ and $\{-1,0,1\}$ inherit a median algebra structure from the median space $\R$; in particular, we can consider the product median algebras $\{-1,1\}^k$ and $\{-1,0,1\}^k$ for every $k\geq 0$. For every point $x\in X'\setminus X$, there exists a canonical, gate-convex subset $\wh{C}(x)\cu X'$; it is isomorphic to $\{-1,0,1\}^k$, for some $k\geq 1$, via an isomorphism that takes $x$ to the centre $(0,...,0)$. The intersection $C(x):=\wh{C}(x)\cap X$ is gate-convex in $X$ and corresponds to the subset $\{-1,1\}^k\cu\{-1,0,1\}^k$. For $x\in X$, we set $\wh{C}(x)=C(x)=\{x\}$. See Lemma~2.13 in \cite{Fioravanti2} for more details.

\begin{lem}\label{unbounded convex in X'}
Let $X$ be a complete, finite rank median space. Every infinite, convex subset $E\cu X'$ intersects $X$.
\end{lem}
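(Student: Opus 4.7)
By contradiction, assume $E\subseteq X'$ is a convex, infinite subset with $E\cap X=\emptyset$. The heart of the argument is a reduction step showing that $E$ is forced into a ``middle stratum'' of some atomic halfspace of $X$, combined with an iterative rank reduction.

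Pick any $x_0\in E$. Since $x_0\notin X$, the canonical gate-convex subset $\widehat{C}(x_0)\cong\{-1,0,1\}^{k_0}$ is finite with $k_0\geq 1$, and its corners $C(x_0)=\widehat{C}(x_0)\cap X$ lie in $X$. For the gate-projection $\pi:=\pi_{\widehat{C}(x_0)}\colon X'\to\widehat{C}(x_0)$, every $y\in E$ gives $\pi(y)\in I(x_0,y)\subseteq E$, hence $\pi(E)\subseteq E\cap\widehat{C}(x_0)\subseteq\widehat{C}(x_0)\setminus C(x_0)$. A direct combinatorial inspection of $\{-1,0,1\}^{k_0}$ shows that any convex subset avoiding the corner set $\{-1,1\}^{k_0}$ is contained in some coordinate hyperplane $\{x_i=0\}$: indeed, if $E$ attained all three values $-1,0,1$ in each coordinate then convexity would force the inclusion of a corner (by iterated intervals). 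Translating through the canonical isomorphism, this yields an atomic halfspace $\mathfrak{h}\in\mathscr{H}(X)$ whose two hemiatoms $\mathfrak{h}^{\pm}\in\mathscr{H}'$ satisfy $E\subseteq\mathfrak{h}^+\setminus\mathfrak{h}^-$, i.e.\ all of $E$ lies in the middle stratum of~$\mathfrak{h}$.

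This middle stratum is convex in $X'$ and has rank strictly less than $\mathrm{rank}(X)$, so the same reduction iterates inside it: at each step one picks a new basepoint, projects onto its local cube, and identifies a fresh atom whose middle stratum captures $E$. By the rank bound in Proposition~\ref{trivial chains of halfspaces}, the process terminates after at most $r=\mathrm{rank}(X)$ iterations, at which point $E$ sits inside a rank-zero median space --- a single point --- contradicting the infinitude of $E$.

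The main technical obstacle is making the iterative step rigorous: at each stage one must verify that within the current middle stratum the cubes $\widehat{C}(\cdot)$ and their corner sets still satisfy the combinatorial properties used above, so that the induction strictly decreases the rank. Concretely, one must check that if $x_j$ lies in the intersection of previously identified middle strata $\mathfrak{h}_1^+\setminus\mathfrak{h}_1^-,\dots,\mathfrak{h}_j^+\setminus\mathfrak{h}_j^-$, then the coordinates of $\widehat{C}(x_j)$ corresponding to $\mathfrak{h}_1,\dots,\mathfrak{h}_j$ are already fixed to $0$ inside this intersection, producing a sub-cube of strictly smaller dimension whose ``corners'' play the role of $C(x_0)$ in the reduction.
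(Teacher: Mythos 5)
Your use of gate-projection onto the local cube $\wh{C}(\cdot)$ is exactly the right geometric ingredient (it is what the paper uses too), but the way you run the descent --- iteratively trapping $E$ in nested ``middle strata'' --- has a genuine gap, which you have in fact flagged yourself. The first reduction step is essentially sound: the claim that a convex subset of $\{-1,0,1\}^k$ disjoint from $\{-1,1\}^k$ lies in some coordinate hyperplane $\{x_i=0\}$ is correct, though the quickest justification is via part~3 of Proposition~\ref{products} (a convex subset of $\{-1,0,1\}^k$ is a product $I_1\times\cdots\times I_k$ of subintervals; if no $I_j=\{0\}$, every $I_j$ meets $\{\pm 1\}$ and a corner appears) rather than the ``iterated intervals'' heuristic; also note that to fail to lie in any $\{x_i=0\}$ it is enough that each coordinate take \emph{some} nonzero value, not ``all three values''. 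The real problem is the iteration: once $E$ is confined to the middle stratum of $\mf{h}_1$ and you take a fresh basepoint $x_1\in E$, nothing forces the combinatorial lemma applied to $\wh{C}(x_1)$ to return a coordinate other than the one corresponding to $\mf{h}_1$, so there is no guarantee a \emph{new} atom is found. Repairing this would require showing that the middle stratum is itself the barycentric subdivision of a complete, lower-rank median space disjoint from $X$ and then running a genuine induction on rank --- none of which is carried out, as you acknowledge.

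The paper sidesteps the iteration entirely by a minimality argument. Choose $x\in E$ minimising $R:=\text{rank}(C(x))$. If $R=0$, then $C(x)=\{x\}$ and $x\in E\cap X$. If $R\geq 1$, then $\wh{C}(x)$ is finite while $E$ is infinite, so some $y\in E$ lies outside $\wh{C}(x)$; its gate-projection $z:=\pi_{\wh{C}(x)}(y)$ lies in $I(x,y)\subseteq E$ and in $\wh{C}(x)\setminus\{x\}$, hence $C(z)$ sits inside a proper face of $C(x)$ and $\text{rank}(C(z))<R$, contradicting minimality. One application of the projection step suffices, and the bookkeeping of nested strata --- together with the question of whether they inherit a usable barycentric structure --- disappears.
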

\begin{proof}
Let $x\in E$ be a point minimising $R:=\text{rank}(C(x))$; if $R=0$ we have $x\in E\cap X$. Otherwise, there exists a point $y\in E$ that does not lie in the finite set $\wh{C}(x)$; the gate-projection $z$ of $y$ to $\wh{C}(x)$ lies in $\wh{C}(x)\setminus\{x\}$. In particular $C(z)$ is contained in a face of $C(x)$ and has strictly lower rank, a contradiction since $z\in I(x,y)\cu E$.
\end{proof}

In particular, we obtain the following extension of Lemma~\ref{RNE for X'}.

\begin{lem}\label{RM for X'}
If $\G\acts X$ is Roller nonelementary and Roller minimal, so is the action $\G\acts X'$.
\end{lem}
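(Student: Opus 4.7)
The Roller nonelementarity of $\G \acts X'$ will be immediate from Lemma~\ref{RNE for X'}, so the content of the statement lies in Roller minimality. My plan is to argue by contradiction using the canonical embedding $\iota_X \colon \overline{X} \hookrightarrow \overline{X'}$ from Lemma~\ref{Roller of subalgebras}. I would suppose $C' \cu \overline{X'}$ is a nonempty, proper, $\G$-invariant, closed, convex subset, set $C := \iota_X^{-1}(C') \cu \overline{X}$, and note that $C$ is automatically closed, convex and $\G$-invariant. Roller minimality of $\G \acts X$ would then force $C = \emptyset$ or $C = \overline{X}$, and I aim to rule out both alternatives.

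If $C = \overline{X}$, then $\iota_X(X) = X \cu C'$. The crucial observation will be that $X$ generates $X'$ as a convex subset: by the structure of the barycentric subdivision, each point $x \in X' \setminus X$ is the centre of the cube $\wh{C}(x) \cong \{-1,0,1\}^k$, hence lies in the interval between two opposite vertices of $C(x) \cu X$. Consequently $X' \cu C'$; closedness of $C'$ together with density of $X'$ in $\overline{X'}$ then forces $C' = \overline{X'}$, contradicting properness.

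If instead $C = \emptyset$, then $C' \cap \iota_X(\overline{X}) = \emptyset$, so $C'$ lies in the complement $\overline{X'} \setminus \iota_X(\overline{X}) = \bigcup_{\mf{h}} M_{\mf{h}}$, where $\mf{h}$ ranges over atomic halfspaces of $X$ and the ``midpoint region'' $M_{\mf{h}} := \wt{\mf{h}_+} \cap \wt{\mf{h}_-^*}$ is clopen in $\overline{X'}$ (each $\wt{\mf{h}'}$ being clopen by the Roller topology). Compactness of $C'$ yields a finite subcover, and combining the $\G$-invariance of $C'$ with the absence of wall inversions for $\G \acts X'$ should allow me to extract a finite $\G$-orbit of atomic halfspaces $\{\mf{h}_1, \dots, \mf{h}_n\} \cu \mscr{H}$. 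The corresponding extensions $\wt{\mf{h}_i} \cu \overline{X}$ would then produce, via the finite-index stabiliser of any one of them in $\G$ and an averaging/intersection argument, a proper $\G$-invariant closed convex subset of $\overline{X}$, contradicting Roller minimality of $\G \acts X$.

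The principal obstacle will be the case $C = \emptyset$: producing from the finite cover by midpoint regions a genuinely $\G$-invariant finite collection of atomic halfspaces requires care, because a single point of $\overline{X'}$ may lie simultaneously in the midpoint regions of several distinct atomic halfspaces (for example, the centre of a higher-dimensional cube in $X'$). The argument will therefore hinge on choosing a minimal cover and performing a careful combinatorial analysis of the $\G$-action on these covers, using that $\G \acts X'$ is without wall inversions to preserve the canonical ``large/small'' hemiatom distinction throughout.
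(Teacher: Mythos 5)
Your strategy of pulling $C'$ back along $\iota_X$ handles the case $C := \iota_X^{-1}(C') = \overline X$ correctly (the observation that each non-vertex of $X'$ is the midpoint of a diagonal of its cube $C(x) \cu X$ does give $X' \cu C'$, whence $C' = \overline{X'}$ by density and closedness). But the case $C = \emptyset$ is a genuine gap, and you acknowledge as much: the claim $\overline{X'}\setminus\iota_X(\overline X)=\bigcup_{\mf h}M_{\mf h}$ is not established, and even granting it, the step from a finite subcover to a finite $\G$-orbit of atomic halfspaces and then to a proper invariant convex subset of $\overline X$ is left as an unverified sketch. There is also a smaller unaddressed point at the start: Lemma~\ref{Roller of subalgebras} gives $\iota_X$ only as a morphism of median algebras, so continuity (hence closedness of $C$) needs a word of justification before the pullback step is legitimate.

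The paper sidesteps all of this by never comparing $C'$ with $\iota_X(\overline X)$ directly. Instead it finds a $\G$-invariant \emph{component} $W\cu\overline{X'}$ meeting $C'$ (Corollary~4.31 in \cite{Fioravanti1}), notes that $C'\cap W$ is unbounded because $\G\acts X'$ is Roller nonelementary (Corollary~2.17 in \cite{Fioravanti2}), identifies $W$ with the barycentric subdivision of a component $Z\cu\overline X$, and then invokes Lemma~\ref{unbounded convex in X'} to get $C'\cap Z\neq\emptyset$. Roller minimality of $\G\acts X$ then forces $Z=X$ and $C'\cap X=X$, from which $X'\cu C'$ follows by Proposition~2.15 of \cite{Fioravanti2}. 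Lemma~\ref{unbounded convex in X'} --- every infinite convex subset of $X'$ meets $X$ --- is precisely the tool that makes your $C=\emptyset$ case impossible, so your argument would be fixed by replacing the midpoint-region analysis with an appeal to that lemma via the component structure of $\overline{X'}$.
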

\begin{proof}
Suppose for the sake of contradiction that $C\cu\overline{X'}$ is a nonempty, $\G$-invariant, closed, convex subset. By Corollary~4.31 in \cite{Fioravanti1} there exists a $\G$-invariant component $W\cu\overline{X'}$ with $W\cap C\neq\emptyset$. Note that $C\cap W$ is unbounded by Corollary~2.17 in \cite{Fioravanti2}, since $\G\acts X'$ is Roller nonelementary by Lemma~\ref{RNE for X'}. The component $W$ is the barycentric subdivision of a component $Z\cu\overline X$ and Lemma~\ref{unbounded convex in X'} implies that $C\cap Z\neq\emptyset$. Since $\G\acts X$ is Roller minimal, we must have $Z=X$ and $C\cap X=X$; hence $X'\cu C$ by part~2 of Proposition~2.15 in \cite{Fioravanti2}. We conclude that $C=\overline{X'}$. 
\end{proof}

Let us now fix a basepoint $x\in X$, where $X$ is a complete finite rank median space; the following discussion is independent of our choice of $x$. A \emph{diverging chain of halfspaces} is a sequence $(\mf{h}_n)_{n\geq 0}$ such that ${d(x,\mf{h}_n)\ra +\infty}$ and $\mf{h}_{n+1}\cu\mf{h}_n$ for each $n\geq 0$; we use the same terminology for the set $\{\mf{h}_n\mid n\geq 0\}$. The following notion will be key to our proof of Theorem~\ref{A} and to the discussion in Appendix~\ref{structure of UBS's}. It was first introduced by \cite{Hagen} in the context of {\rm CAT}(0) cube complexes. 

\begin{defn}\label{UBS defn}
Given $\xi\in\partial X$, a \emph{unidirectional boundary set (UBS)} for $\xi$ is an inseparable subset $\Om\cu\s_{\xi}\setminus\s_x$ containing a diverging chain of halfspaces. 
\end{defn}

Given UBS's $\Om_1,\Om_2\cu\s_{\xi}\setminus\s_x$, we say that $\Om_1$ is \emph{almost contained} in $\Om_2$ if the halfspaces in $\Om_1\setminus\Om_2$ lie at uniformly bounded distance from $x$; this is denoted by $\Om_1\preceq\Om_2$. If $\Om_1\preceq\Om_2$ and $\Om_2\preceq\Om_1$, the UBS's are \emph{equivalent} and we write $\Om_1\sim\Om_2$. We denote the equivalence class of $\Om\cu\mscr{H}$ by $[\Om]$ and the set of all equivalence classes of UBS's for $\xi$ by $\overline{\mc{U}}(\xi)$; the relation $\preceq$ descends to a partial order on $\overline{\mc{U}}(\xi)$. A UBS $\Om$ is said to be \emph{minimal} if $[\Om]$ is a minimal element of $(\overline{\mc{U}}(\xi),\preceq)$. A minimal UBS is equivalent to the inseparable closure of any diverging chain that it contains. 

We define a directed graph $\mc{G}(\xi)$ as follows; also cf.\,\cite{corrigendum,Fioravanti2}. The vertex set of $\mc{G}(\xi)$ is identified with the set of minimal elements of $(\overline{\mc{U}}(\xi),\preceq)$. Given diverging chains $(\mf{h}_m)_{m\geq 0}$ and $(\mf{k}_n)_{n\geq 0}$ in $\Om_1$ and $\Om_2$ respectively, we draw an oriented edge from $[\Om_1]$ to $[\Om_2]$ if almost every $\mf{h}_m$ is transverse to almost every $\mf{k}_n$, but not vice versa; this is independent of the choices involved. A subset $A\cu\mc{G}(\xi)^{(0)}$ is said to be \emph{inseparable} if every directed path between vertices in $A$ only crosses vertices in $A$. For the following, see Lemma~4.5 and Proposition~4.7 in \cite{Fioravanti2}.

\begin{prop}\label{main prop on UBS's}
Let $X$ be a complete median space of finite rank $r$ and $\xi\in\partial X$.
\begin{enumerate}
\item The graph $\mc{G}(\xi)$ has at most $r$ vertices and contains no directed cycles.
\item The poset $(\overline{\mc{U}}(\xi),\preceq)$ is isomorphic to the poset of inseparable subsets of $\mc{G}(\xi)^{(0)}$, ordered by inclusion. The isomorphism maps $[\Om]\in\overline{\mc{U}}(\xi)$ to the set of equivalence classes of minimal UBS's almost contained in $\Om$. In particular, the set $\overline{\mc{U}}(\xi)$ is finite.
\item Given a UBS $\Om$ and a set $\{\Om_1,...,\Om_k\}$ of representatives of all equivalence classes of minimal UBS's almost contained in $\Om$, we have
\[\sup_{\mf{h}\in\Om\triangle\left(\Om_1\cup ... \cup\Om_k\right)} d(x,\mf{h})<+\infty.\]
\end{enumerate}
\end{prop}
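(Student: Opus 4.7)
The plan is to use Lemma~\ref{Dilworth for differences} --- any subset of $\s_\xi\setminus\s_x$ decomposes into at most $r$ chains totally ordered by inclusion --- as the main tool, and to tackle the three parts in the order 1, 3, 2, with part~3 doing most of the work. For part~1, I would first bound the number of vertices. Given $k$ pairwise non-equivalent classes $[\Om_1],\dots,[\Om_k]$ of minimal UBS's, the idea is to show inductively that one can choose $\mf{h}_i\in\Om_i$ so that $\mf{h}_1,\dots,\mf{h}_k$ are pairwise transverse, which together with the definition of rank forces $k\leq r$. All halfspaces involved lie in $\s_\xi$, hence any two are either comparable or transverse, so a failure to construct a transverse family would mean that, for some $i\ne j$, almost every sufficiently deep halfspace of $\Om_i$ is comparable to almost every sufficiently deep halfspace of $\Om_j$; using that halfspaces containing a fixed $\mf{h}$ lie at bounded distance from $x$, one would then interleave diverging chains drawn from $\Om_i$ and $\Om_j$ into a single diverging chain whose inseparable closure is equivalent to both $\Om_i$ and $\Om_j$, contradicting distinctness. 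Acyclicity of $\mc{G}(\xi)$ is forced by the antisymmetry built into the edge definition (the ``but not vice versa'' clause rules out 2-cycles), and longer cycles are excluded by promoting the edge relation to a strict partial order on $\mc{G}(\xi)^{(0)}$, using transitivity of the transversality patterns that define edges.

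For part~3, fix a UBS $\Om$ and let $\Om_1,\dots,\Om_k$ be representatives of all minimal classes $\preceq[\Om]$, finite in number by part~1. The contribution to $\Om\triangle\bigcup_i\Om_i$ coming from $\bigcup_i\Om_i\setminus\Om$ is bounded by definition of $\preceq$, so the real issue is $E:=\Om\setminus\bigcup_i\Om_i$. Suppose for contradiction that $E$ contains halfspaces arbitrarily far from $x$. By Lemma~\ref{Dilworth for differences}(2), $E$ contains a diverging chain $(\mf{g}_n)$, whose inseparable closure $\Om'\cu\Om$ is a UBS. By part~1 and descent in the finite poset $\overline{\mc{U}}(\xi)$, $\Om'$ almost contains some minimal UBS $\Om^{\flat}$; by maximality of our list we must have $[\Om^{\flat}]=[\Om_i]$ for some $i$. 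To close the argument, I would pull the equivalence $\Om^{\flat}\sim\Om_i$ back to the level of chains: a diverging chain representing $[\Om_i]$ and the chain $(\mf{g}_n)$ both lie in $\Om'$, so they interleave inside the inseparable closure, eventually forcing $\mf{g}_n$ to sit between two consecutive halfspaces from the $\Om_i$-chain --- placing $\mf{g}_n$ itself inside $\Om_i$ (as $\Om_i$ is inseparable) and contradicting $\mf{g}_n\in E$.

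For part~2, I would define
\[\Phi\colon\overline{\mc{U}}(\xi)\longrightarrow\{\text{inseparable subsets of }\mc{G}(\xi)^{(0)}\},\qquad[\Om]\mapsto\{[\Om']\in\mc{G}(\xi)^{(0)}\mid[\Om']\preceq[\Om]\},\]
show the image consists of inseparable subsets (by transitivity of $\preceq$ and the edge definition), and verify $\Phi$ is an order isomorphism. Injectivity and order-reflection are immediate from part~3, since it pins down $[\Om]$ by its collection of minimal sub-classes. Surjectivity follows by associating to an inseparable $A\cu\mc{G}(\xi)^{(0)}$ the union $\Om_A:=\bigcup_{[\Om']\in A}\Om'$ of representatives, which inherits inseparability as a subset of $\mscr{H}$ from inseparability of $A$ inside $\mc{G}(\xi)^{(0)}$. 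The hardest step throughout is the conclusion of part~3: arguing that the chain $(\mf{g}_n)\cu E$ \emph{itself} eventually lies inside $\Om_i$ given only that its inseparable closure is equivalent to $\Om_i$, since inseparable closures can absorb halfspaces not present in the generating chain, so one must track the interleaving carefully to place the $\mf{g}_n$'s, not merely their inseparable closure, inside $\Om_i$.
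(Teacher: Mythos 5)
The paper does not prove this proposition; it cites Lemma~4.5 and Proposition~4.7 of \cite{Fioravanti2}, so there is no in-paper argument to compare against. Judging the proposal on its own terms, the architecture is reasonable but two steps would fail as written. First, for acyclicity in part~1, you want to ``promote the edge relation to a strict partial order \dots using transitivity of the transversality patterns.'' The edge condition involves nested for-almost-all quantifiers (for almost all $m$, for almost all $n$: transverse; but not with the quantifiers reversed), and such patterns do not compose: there is no evident reason why $[\Om_1]\to[\Om_2]$ and $[\Om_2]\to[\Om_3]$ should force any edge between $[\Om_1]$ and $[\Om_3]$, let alone $[\Om_1]\to[\Om_3]$. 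Acyclicity does not require transitivity, but it does require a mechanism (say, a strictly monotone invariant along directed edges) that you have not supplied. Second, the Ramsey-type reduction for the vertex bound --- ``a failure to construct a transverse family would mean that, for some $i\ne j$, almost every sufficiently deep halfspace of $\Om_i$ is comparable to almost every sufficiently deep halfspace of $\Om_j$'' --- is not the negation of ``there exists a pairwise transverse $k$-tuple.'' What you actually need is a pairwise eventual-transversality statement for inequivalent minimal classes (so that a diagonal choice works), which is true but requires an argument you have not given.

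For part~3, your own worry about the ``hardest step'' is well founded: the chain $(\mf{g}_n)$ and the $\Om_i$-chain $(\mf{j}_p)$ need not interleave. Since both lie in $\s_\xi\setminus\s_x$, any two of their members are comparable or transverse, and it is entirely possible for $\mf{j}_p\cu\mf{g}_{M_0}$ to hold for a fixed $M_0$ and all $p$ while $\mf{j}_p$ is transverse to every $\mf{g}_n$ with $n>M_0$; then no $\mf{g}_n$ ever lands between two $\mf{j}_p$'s. The route that closes cleanly is to show that the inseparable closure $\Om'$ of a single diverging chain is itself a \emph{minimal} UBS; once this is in hand, $[\Om^\flat]\preceq[\Om']$ with both minimal forces $\Om'\sim\Om^\flat\sim\Om_i$, hence $\Om'\setminus\Om_i$ is bounded, contradicting $\mf{g}_n\in\Om'\setminus\Om_i$ for all $n$. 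Your interleaving attempt is trying to re-derive a consequence of this missing lemma without proving it. Similarly, the surjectivity step in part~2 claims that $\Om_A=\bigcup_{[\Om']\in A}\Om'$ inherits inseparability in $\mscr{H}$ from inseparability of $A$ in $\mc{G}(\xi)^{(0)}$; this needs the fact that if $\mf{h}\in\Om'_a\cu\mf{j}\cu\mf{k}\in\Om'_b$ then the minimal class containing $\mf{j}$ (which exists up to bounded distance from $x$, by part~3) sits on a directed path between $[\Om'_a]$ and $[\Om'_b]$, a link between halfspace containment and the edge structure of $\mc{G}(\xi)$ that you assert but do not justify.
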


If $\Om$ is a minimal UBS, we denote by $\Om'$ the subset of halfspaces $\mf{k}\in\Om$ that are not transverse to any diverging chain of halfspaces in $\Om$. We say that $\Om$ is \emph{reduced} if $\Om=\Om'$. We say that $\Om$ is \emph{strongly reduced} if we can write $\Om=\mc{C}_1\sqcup ... \sqcup\mc{C}_k$ for some $k\geq 1$, where each $\mc{C}_i$ is totally ordered by inclusion and contains a diverging chain of halfspaces.

\begin{figure}
\centering
\includegraphics[width=2.75in]{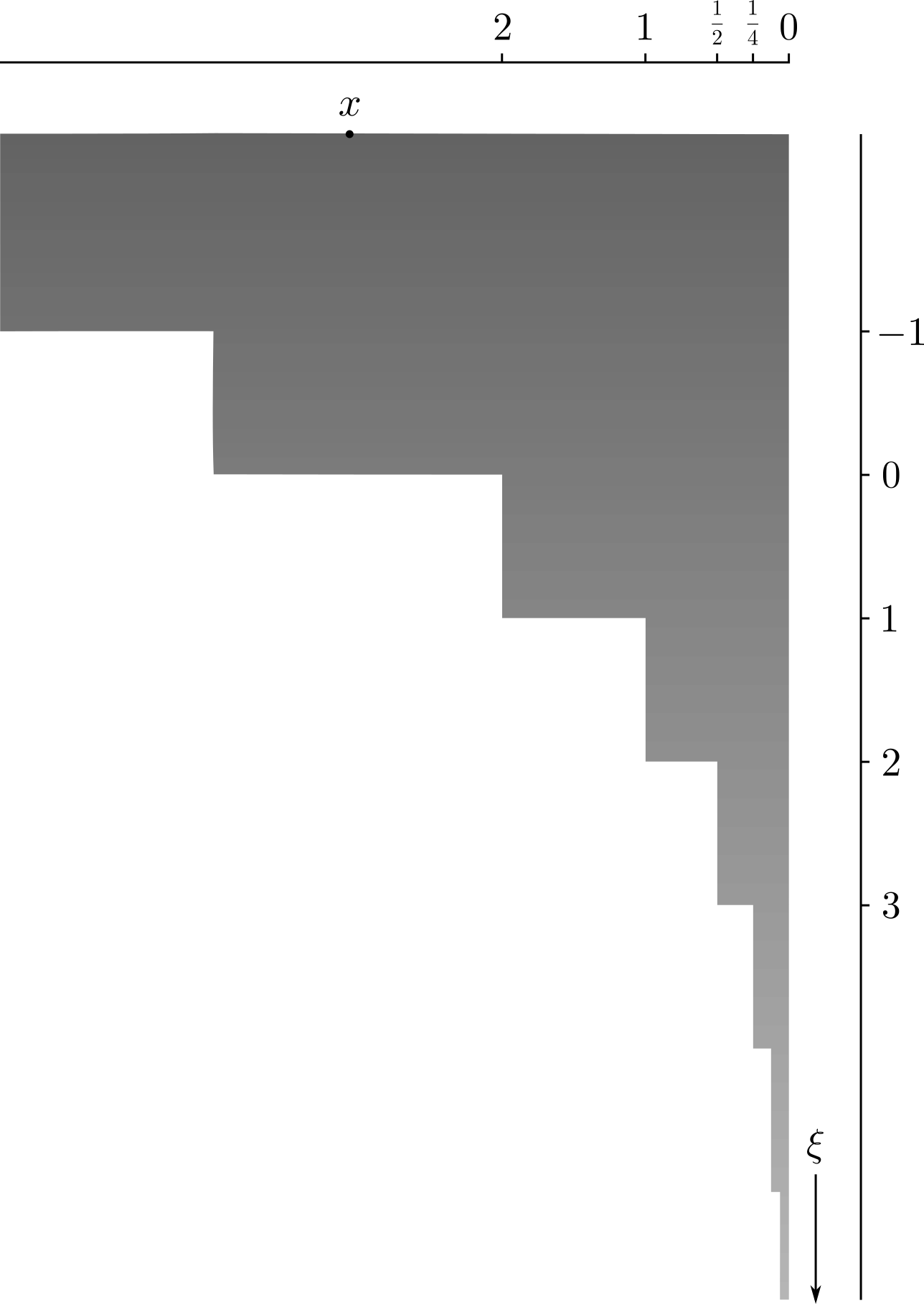}
\caption{}
\label{not strongly reduced}
\centering
\includegraphics[width=2.75in]{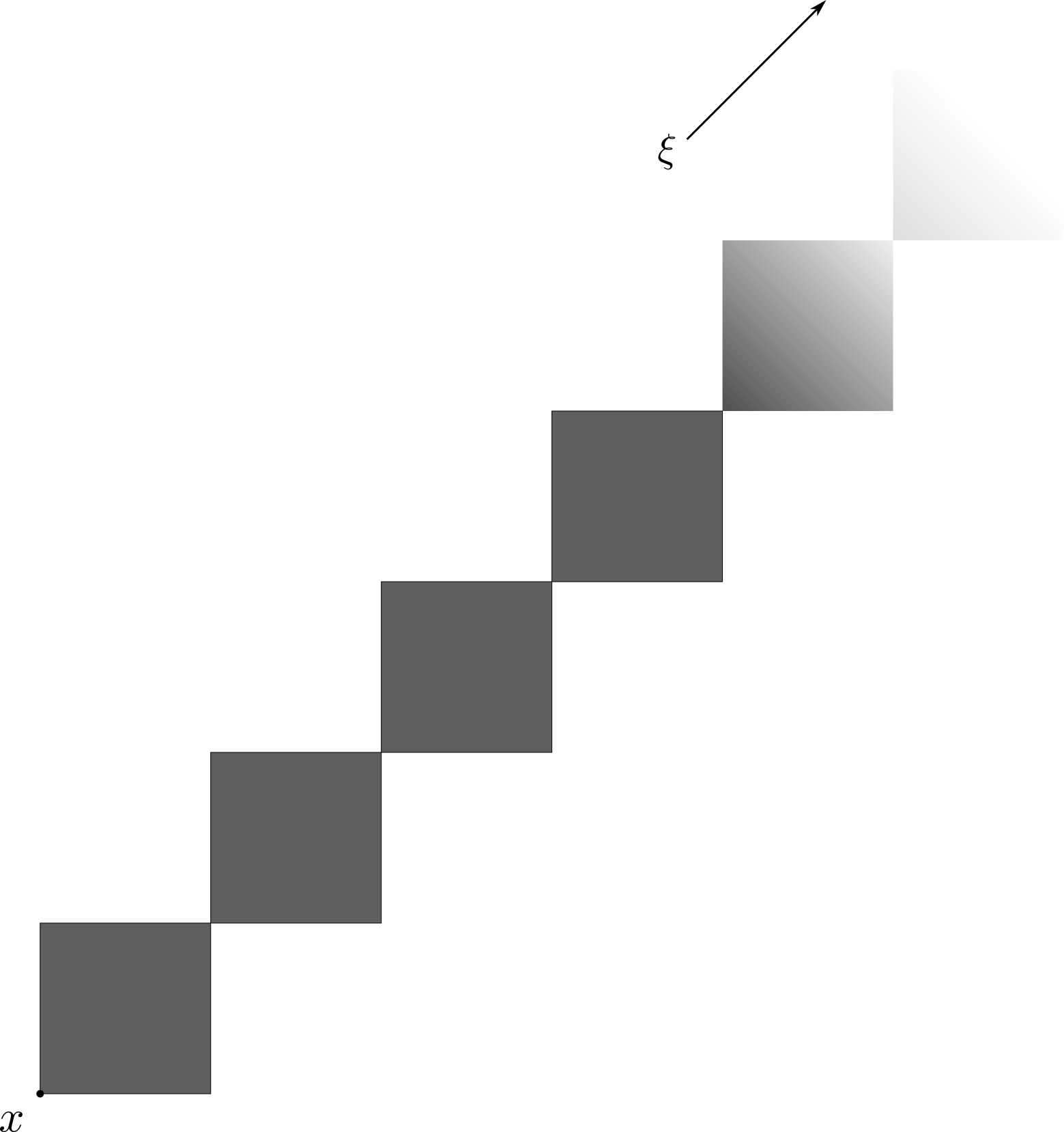}
\caption{}
\label{sr k=2}
\end{figure}

Consider the median spaces in Figures~\ref{not strongly reduced} and~\ref{sr k=2}; both are subsets of $\R^2$ with the restriction of the $\ell^1$ metric. In both cases, the UBS $\Om=\s_{\xi}\setminus\s_x$ is minimal. Figure~\ref{not strongly reduced} shows that $\Om$ can be reduced, but not strongly reduced. In Figure~\ref{sr k=2} the UBS $\Om$ is strongly reduced and exhibits how the decomposition $\Om=\mc{C}_1\sqcup ... \sqcup\mc{C}_k$ can require $k\geq 2$.

\begin{lem}\label{reduced and strongly reduced}
Let $X$ be a complete, finite rank median space. Consider $x\in X$ and $\xi\in\partial X$; let $\Om\cu\s_{\xi}\setminus\s_x$ be a minimal UBS.
\begin{enumerate}
\item The subset $\Om'$ is a reduced UBS equivalent to $\Om$. 
\item There exists a strongly reduced UBS contained in $\Om$; all its sub-UBS's are strongly reduced.
\item If $\Om$ is strongly reduced, it is reduced.
\end{enumerate}
\end{lem}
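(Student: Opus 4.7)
The argument rests on two structural inputs: Proposition~\ref{main prop on UBS's} applied to the minimal UBS $\Om$ ensures that the inseparable closure of any diverging chain contained in $\Om$ differs from $\Om$ only in halfspaces at bounded distance from $x$, and Lemma~\ref{Dilworth for differences} decomposes $\Om$ into at most $r$ totally ordered chains. A useful preliminary observation is that two halfspaces of $\s_{\xi}\setminus\s_x$ fail to be transverse precisely when they are comparable by inclusion: both contain $\xi$ in their closures while neither contains $x$, so transversality reduces to incomparability.

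For Part~(1), I first show that every diverging chain $(\mf{h}_n)\cu\Om$ lies in $\Om'$. Given a second diverging chain $(\mf{k}_m)\cu\Om$, minimality forces almost every $\mf{h}_n$ into $\mscr{H}(\mf{k}_{m_1}^*|\mf{k}_{m_2})$ for some $m_1\leq m_2$; such an $\mf{h}_n$ is comparable to $\mf{k}_m$ whenever $m\leq m_1$ or $m\geq m_2$, hence not transverse to the chain $(\mf{k}_m)$. Conversely, a halfspace $\mf{k}\in\Om\setminus\Om'$ is transverse to some diverging chain $(\mf{h}_n)\cu\Om$; by the preliminary observation it is incomparable to infinitely many $\mf{h}_n$, so it lies outside the inseparable closure of $(\mf{h}_n)$, which is equivalent to $\Om$. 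Hence $\mf{k}$ sits at bounded distance from $x$, giving $\Om'\sim\Om$. For inseparability, if $\mf{h}\cu\mf{j}\cu\mf{k}$ in $\Om$ with $\mf{j}$ transverse to $(\mf{l}_m)\cu\Om$ at infinitely many $m$, I use monotonicity of $\mf{j}\cap\mf{l}_m$ and $\mf{j}^*\cap\mf{l}_m^*$ along the chain to propagate the nonempty intersections to $\mf{h}$ or $\mf{k}$, contradicting $\mf{h},\mf{k}\in\Om'$. Reducedness is then immediate: diverging chains in $\Om'$ are diverging chains in $\Om$.

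For Part~(2), the Dilworth decomposition $\Om=\mc{C}_1\sqcup\ldots\sqcup\mc{C}_k$ can be enlarged: by the preliminary observation, a halfspace of $\Om$ sandwiched between two members of $\mc{C}_i$ is in fact comparable to all of $\mc{C}_i$, so it can be absorbed, yielding an inseparable refinement where each $\mc{C}_i$ either contains a diverging chain or stays at bounded distance from $x$. Let $\widetilde\Om$ be the union of those $\mc{C}_i$ containing a diverging chain; it is strongly reduced, contains a diverging chain, and is inseparable. For a sub-UBS $\Om_1\cu\widetilde\Om$, restricting the decomposition gives totally ordered pieces, and pigeonhole applied to the diverging chain of $\Om_1$ forces at least one piece to contain a diverging subchain; the bounded pieces can be absorbed by the same sandwich procedure.

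For Part~(3), fix $\mf{k}\in\Om$ lying in $\mc{C}_i$ with diverging chain $(\mf{h}_n)\cu\mc{C}_i$; total order gives $\mf{k}$ comparable to each $\mf{h}_n$. For any diverging chain $(\mf{l}_m)\cu\Om$, minimality of $\Om$ aligns the inseparable closures of $(\mf{h}_n)$ and $(\mf{l}_m)$ up to bounded halfspaces, so for large $m$, $\mf{l}_m\in\mscr{H}(\mf{h}_a^*|\mf{h}_b)$ for some $a\leq b$. Combining $\mf{h}_b\cu\mf{l}_m\cu\mf{h}_a$ with comparability of $\mf{k}$ to $\mf{h}_a$ and $\mf{h}_b$ yields that $\mf{k}$ is comparable to $\mf{l}_m$ for almost every $m$, so $\mf{k}$ is not transverse to $(\mf{l}_m)$. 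The main obstacle is Part~(2), where one must verify carefully that the sandwich-absorption genuinely produces a valid strongly reduced decomposition and that the property is inherited by arbitrary sub-UBSs.
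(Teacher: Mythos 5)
Your overall strategy (Dilworth decomposition plus the ``transverse iff incomparable'' observation in $\s_\xi\setminus\s_x$) is the right one, but each part has a gap in the execution. In Part~(1) the derivation of $\Om'\sim\Om$ mishandles a quantifier: given $\mf{k}\in\Om\setminus\Om'$ you produce a diverging chain $(\mf{h}_n)$ transverse to $\mf{k}$ and conclude that $\mf{k}$ lies at bounded distance because $\Om$ minus the inseparable closure of $(\mf{h}_n)$ is bounded --- but both the chain and the resulting bound depend on $\mf{k}$, so you have only shown that each $\mf{k}$ is at \emph{finite} distance, not that $\Om\setminus\Om'$ is uniformly bounded. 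The same slip affects your opening claim that any diverging chain lies in $\Om'$, since the cofinite set of surviving $\mf{h}_n$ depends on which chain $(\mf{k}_m)$ you test against. The paper closes this by showing $\Om\setminus\Om'$ contains no diverging chain: if $(\mf{l}_m)$ were one, $\mf{l}_0$ is transverse to some diverging chain $(\mf{k}_n)$, and since $\mf{l}_m\cu\mf{l}_0$ no $\mf{k}_n$ can be contained in any $\mf{l}_m$, so the entire chain $(\mf{l}_m)$ misses the inseparable closure of $(\mf{k}_n)$, contradicting minimality of $\Om$; uniform boundedness then follows from Lemma~\ref{Dilworth for differences}. Part~(3) has an analogous unhandled case: when $\mf{h}_b\cu\mf{k}\cu\mf{h}_a$, both $\mf{k}$ and $\mf{l}_m$ lie in $\mscr{H}(\mf{h}_a^*|\mf{h}_b)$ and can be transverse, so comparability of $\mf{k}$ with $\mf{h}_a$ and $\mf{h}_b$ does not force comparability with $\mf{l}_m$.

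Part~(2) is the most serious flaw: the ``sandwich absorption'' rests on the claim that $\mf{a}\cu\mf{j}\cu\mf{b}$ with $\mf{a},\mf{b}\in\mc{C}_i$ makes $\mf{j}$ comparable to all of $\mc{C}_i$. This is false --- $\mf{j}$ can be transverse to an intermediate $\mf{c}\in\mc{C}_i$ with $\mf{a}\cu\mf{c}\cu\mf{b}$ (in a staircase square complex a vertical hyperplane can nest between two horizontal ones while remaining transverse to a third horizontal hyperplane between them). So the absorbed ``chains'' need not be totally ordered, and your $\widetilde\Om$ has no reason to be inseparable, which is essential for it to be a UBS. The paper's construction sidesteps all of this: take $\widetilde\Om=\{\mf{h}\in\Om\mid d(x,\mf{h})>D\}$, where $D$ bounds the chains $\mc{C}_i$ without a diverging chain. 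This is inseparable on the nose and decomposes as $\bigsqcup_i(\mc{C}_i\cap\widetilde\Om)$ with each nonempty piece carrying a diverging chain. For a sub-UBS $\Xi\cu\widetilde\Om$, minimality of $\Om$ forces $\Xi\sim\Om$, so $\Xi$ contains all of $\widetilde\Om$ beyond some distance and each nonempty $\mc{C}_i\cap\Xi$ still carries a diverging chain; this is the (implicit) reason the hereditary statement holds, and your pigeonhole-plus-absorption route does not recover it.
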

\begin{proof}
If $\mf{h}\cu\mf{k}$ lie in $\s_{\xi}\setminus\s_x$ and $\mf{k}$ is transverse to a diverging chain of halfspaces, then $\mf{h}$ is transverse to an infinite subchain. This implies that $\Om'$ is inseparable; moreover, $\Om\setminus\Om'$ cannot contain a diverging chain or $\Om$ would contain two inequivalent UBS's. This proves part~1.

To obtain part~2, we decompose $\Om=\mc{C}_1\sqcup ... \sqcup\mc{C}_k$ as in Lemma~\ref{Dilworth for differences}; let $A$ be the union of the sets $\mc{C}_i$ that do not contain a diverging chain. There exists $D<+\infty$ such that $d(x,\mf{h})\leq D$ for every $\mf{h}\in A$. The set $\{\mf{h}\in\Om\mid d(x,\mf{h})>D\}$ is a strongly reduced UBS and so are its sub-UBS's.

Regarding part~3, decompose $\Om=\mc{C}_1\sqcup ... \sqcup\mc{C}_k$, where each $\mc{C}_i$ is totally ordered by inclusion and contains a diverging chain. If there existed ${\mf{k}\in\Om\setminus\Om'}$, we would have $\mf{k}\in\mc{C}_i$ for some $i$; in particular, $\mf{k}$ would not be transverse to a diverging chain in $\mc{C}_i$. Since $\Om$ is minimal, $\mf{k}$ would not be transverse to any diverging chain in $\Om$, a contradiction.
\end{proof}

Given $\xi\in\partial X$, we denote by $\text{Isom}_{\xi}X$ the group of isometries that fix $\xi$. Let $K_{\xi}$ be the kernel of the action of $\text{Isom}_{\xi}X$ on $\overline{\mc{U}}(\xi)$; it is a finite-index subgroup of $\text{Isom}_{\xi}X$.

If $\Om\cu\s_{\xi}\setminus\s_x$ is a UBS, we denote by $K_{\Om}$ the subgroup of $\text{Isom}_{\xi}X$ that fixes the equivalence class $[\Om]$. We can define a \emph{transfer character} $\chi_{\Om}\colon K_{\Om}\ra\R$ by the formula $\chi_{\Om}(g)=\wh{\nu}\left(g^{-1}\Om\setminus\Om\right)- \wh{\nu}\left(\Om\setminus g^{-1}\Om\right)$; compare Section~4.H in \cite{Cor}. This is a homomorphism and only depends on the equivalence class $[\Om]$ (see Lemma~4.8 in \cite{Fioravanti2}). If $\Om_1,...,\Om_k$ are a set of representatives for $\mc{G}(\xi)^{(0)}$, we can also consider the \emph{full transfer homomorphism} $\chi_{\xi}=(\chi_{\Om_1},...,\chi_{\Om_k})\colon K_{\xi}\ra\R^k$. 

\begin{prop}\label{chi_xi continuous}
Let $X$ be a complete, finite rank median space; consider ${\xi\in\partial X}$. The subgroup $K_{\xi}$ is open in $\text{Isom}_{\xi}X$ and the full transfer homomorphism $\chi_{\xi}\colon K_{\xi}\ra\R^k$ is continuous. Every finitely generated subgroup of $\ker\chi_{\xi}$ has a finite orbit in $X$; if $X$ is connected, every finitely generated subgroup of $\ker\chi_{\xi}$ fixes a point.
\end{prop}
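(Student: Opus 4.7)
The plan is to handle the four claims in turn, drawing on the finiteness of $\overline{\mc{U}}(\xi)$ and $\mc{G}(\xi)^{(0)}$ (Proposition~\ref{main prop on UBS's}) and the identity $\wh{\nu}(\s_{gx}\triangle\s_x)=d(gx,x)$.

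For openness of $K_{\xi}$: since $\overline{\mc{U}}(\xi)$ is finite, $K_{\xi}$ is the kernel of an action on the finite set $\mc{G}(\xi)^{(0)}$, so it suffices to find a neighbourhood of the identity acting trivially. Fix a minimal UBS $\Om$ with a diverging chain $(\mf{h}_n)\cu\Om$ and choose $N$ so that $d(x,\mf{h}_N)$ is large. For $g\in\text{Isom}_{\xi}X$ with $d(gx,x)<d(x,\mf{h}_N)$, both $\mf{h}_N$ and $g\mf{h}_N$ lie in $\s_{\xi}$ (as $g\xi=\xi$), and the bound on $d(gx,x)$ prevents them from being transverse; since halfspaces are open or closed (Proposition~\ref{trivial chains of halfspaces}), $g\mf{h}_N$ must therefore be comparable with $\mf{h}_N$. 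Propagating along the chain via inseparability yields $g\Om\sim\Om$, so running this argument over representatives of each class in $\mc{G}(\xi)^{(0)}$ produces a neighbourhood of the identity contained in $K_{\xi}$.

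Continuity of $\chi_{\Om}$ on $K_{\Om}$ reduces, via the homomorphism property, to continuity at the identity, and thence to $|\chi_{\Om}(g)|\le\wh{\nu}(g^{-1}\Om\triangle\Om)\to 0$. One decomposes the symmetric difference into the piece lying in $\mscr{H}(x|gx)$, of measure at most $d(gx,x)$, and the complementary piece, which sits in halfspaces at bounded distance from $x$ (because $g\in K_{\Om}$) and whose measure vanishes by the same chain-comparability argument, applied to the diverging chains of $\Om$ and of $g^{-1}\Om$. Continuity of $\chi_{\xi}=(\chi_{\Om_1},\dots,\chi_{\Om_k})$ on $K_{\xi}$ then follows by taking components.

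For the third and fourth claims, let $H=\langle h_1,\dots,h_m\rangle\le\ker\chi_{\xi}$. By Proposition~\ref{main prop on UBS's}(3), $\chi_{\Om}(h)=0$ for \emph{every} UBS $\Om$ at $\xi$ and every $h\in H$. Each such $\Om$ with diverging chain $(\mf{h}_n)$ determines a Busemann-type function $\beta_{\Om}(y)=\lim_n\bigl(d(y,\mf{h}_n)-d(x,\mf{h}_n)\bigr)$, and the cocycle identity $\beta_{\Om}(hy)-\beta_{\Om}(y)=\chi_{\Om}(h)$ shows that $H$ preserves every level set. Setting $Y=\{y\in X\mid\beta_{\Om_i}(y)=\beta_{\Om_i}(x),\ 1\le i\le k\}$ gives a closed, convex, $H$-invariant median subalgebra of strictly smaller rank than $X$, since each minimal UBS consumes one transverse direction; inducting on rank one then deduces a bounded $H$-orbit in $Y$, and completeness with finite rank yields a canonical median circumcenter of $\overline{Hx}$, giving a finite $H$-orbit inside $X$, which specialises to a global fixed point when $X$ is connected and the circumcenter is unique. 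The main obstacle is precisely this inductive step: while the vanishing of $\chi_{\xi}$ controls motion in the $k$ directions at $\xi$ coming from minimal UBS's, one must exploit the finite generation of $H$ together with the absence of any distinguished fixed boundary point of $\overline{Y}$ inherited from $\xi$ in order to rule out residual unbounded displacement in directions transverse to all the $\beta_{\Om_i}$.
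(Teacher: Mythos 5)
Your argument for openness of $K_{\xi}$ has a genuine gap. The key claim --- that for $g\in\text{Isom}_{\xi}X$ with $d(gx,x)<d(x,\mf{h}_N)$, the halfspaces $\mf{h}_N$ and $g\mf{h}_N$ cannot be transverse --- is not true and not justified. Both halfspaces do lie in $\s_{\xi}\setminus\s_x$ under your hypotheses, but two halfspaces in $\s_{\xi}\setminus\s_x$ can perfectly well be transverse; this is exactly what happens when $\mc{G}(\xi)$ has more than one vertex, as in a product $T\x T$ where $\xi$ corresponds to a pair of ends. An isometry close to the identity that nonetheless swaps two inequivalent minimal UBS's is not ruled out by the displacement bound alone, and nothing in Proposition~\ref{trivial chains of halfspaces} (openness/closedness of halfspaces) forces comparability. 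The paper's proof works around precisely this difficulty: Lemma~\ref{aux lemma 1} controls the non-minimal UBS $\mscr{H}(\mf{h}^*|\xi)$ (not individual minimal classes), and the layered induction on the subsets $A_0\subsetneq A_1\subsetneq\dots\subsetneq A_k=\mc{G}(\xi)^{(0)}$, defined by the lengths of incoming directed paths in $\mc{G}(\xi)$, is what allows one to pass from "$g\mscr{H}(\mf{h}_v^*|\xi)\sim\mscr{H}(\mf{h}_v^*|\xi)$ for all small $g$" to "$gv=v$" one layer at a time. Your proof never engages with the directed-graph structure of $\mc{G}(\xi)$ at all, and without it the propagation step does not go through. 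The same objection applies to your continuity argument, where "the same chain-comparability argument" is invoked again.

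For the last two assertions (finite orbits / fixed points for finitely generated subgroups of $\ker\chi_{\xi}$), the paper does not reprove them here: it simply quotes Theorem~F of \cite{Fioravanti2}. You attempt a direct proof via Busemann-type functions $\beta_{\Om}$ and level-set median subalgebras, and you explicitly acknowledge that the inductive step --- ruling out residual unbounded displacement transverse to the $\beta_{\Om_i}$ --- is left open. That is a real gap, not a minor detail; it is essentially where the content of Theorem~F of \cite{Fioravanti2} lives, and it cannot be patched by appealing to "absence of a distinguished fixed boundary point" without further argument. Also note that the drop in rank for the level set $Y$ and the well-definedness/convexity of $\beta_{\Om}$ are asserted rather than proved.
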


Before proving the proposition, we will need to obtain the following lemma. Note that, for every point $\xi\in\partial X$ and every halfspace $\mf{h}\in\s_{\xi}$, the set $\mscr{H}(\mf{h}^*|\xi)=\{\mf{k}\in\s_{\xi}\mid\mf{k}\cu\mf{h}\}$ is a UBS.

\begin{lem}\label{aux lemma 1}
For every thick halfspace $\mf{h}\in\s_{\xi}$ and every $\eps>0$, there exists a neighbourhood $U$ of the identity in $\text{Isom}_{\xi}X$ such that $\mscr{H}(\mf{h}^*|\xi)\sim\mscr{H}(g\mf{h}^*|\xi)$ and $\wh{\nu}\left(\mscr{H}(\mf{h}^*|\xi)\triangle\mscr{H}(g\mf{h}^*|\xi)\right)<\eps$ for all $g\in U$.
\end{lem}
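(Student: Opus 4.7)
The strategy is to use thickness of $\mf{h}$ to build a neighbourhood of the identity in which $\mf{h}$ and $g\mf{h}$ remain ``compatible'', and then to estimate the symmetric difference via halfspace intervals.

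Step 1 (Setup). Pick $y\in\text{int}\,\mf{h}$ and $y^*\in\text{int}\,\mf{h}^*$, and set $\delta_0:=\min(d(y,\mf{h}^*),d(y^*,\mf{h}))>0$. For $g$ in the preliminary neighbourhood
\[
U_0:=\{g\in\text{Isom}_\xi X : d(gy,y)<\delta_0\text{ and }d(gy^*,y^*)<\delta_0\},
\]
we obtain $gy\in\mf{h}\cap g\mf{h}$ and $gy^*\in\mf{h}^*\cap g\mf{h}^*$, so the configurations $\mf{h}\subseteq g\mf{h}^*$ and $g\mf{h}\subseteq\mf{h}^*$ are ruled out.

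Step 2 (Decomposition). For $\mf{k}\in\mscr{H}(\mf{h}^*|\xi)\setminus\mscr{H}(g\mf{h}^*|\xi)$: we have $\mf{k}\in\s_\xi$, $\mf{k}\subseteq\mf{h}$, $\mf{k}\not\subseteq g\mf{h}$. Since $\mf{k}$ and $g\mf{h}$ both lie in the ultrafilter $\s_\xi$ they must intersect, so either $g\mf{h}\subsetneq\mf{k}\subseteq\mf{h}$---placing $\mf{k}$ in the halfspace interval $\mscr{H}(\mf{h}^*|g\mf{h})$---or $\mf{k}$ is transverse to $g\mf{h}$. The reverse direction decomposes analogously using $\mscr{H}(g\mf{h}^*|\mf{h})$.

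Step 3 (Estimate on the nested sub-case). By thickness one can find a boundary point $y_0\in\overline{\mf{h}}\cap\overline{\mf{h}^*}$, for instance via a suitable gate-projection. Then $gy_0\in\overline{g\mf{h}}\cap\overline{g\mf{h}^*}$, so
\[
\wh{\nu}(\mscr{H}(\mf{h}^*|g\mf{h}))\leq d(\overline{g\mf{h}},\overline{\mf{h}^*})\leq d(gy_0,y_0),
\]
with the same bound for $\mscr{H}(g\mf{h}^*|\mf{h})$. Shrinking $U\subseteq U_0$ so that additionally $d(gy_0,y_0)<\eps/4$ yields the desired $\wh{\nu}$-bound. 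For the UBS-equivalence, every $\mf{k}\in\mscr{H}(\mf{h}^*|g\mf{h})$ contains $gy\in g\mf{h}$ and every $\mf{k}\in\mscr{H}(g\mf{h}^*|\mf{h})$ contains $y\in\mf{h}$, so $d(x,\mf{k})\leq d(x,y)+\delta_0$ uniformly in such $\mf{k}$ and in any basepoint $x\in X$.

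The main obstacle is ruling out the transverse sub-case of Step 2: I expect that, for $g$ sufficiently close to the identity, thickness of $\mf{h}$ forces $\mf{h}$ and $g\mf{h}$ themselves to be nested, whence no $\mf{k}\subseteq\mf{h}$ can be transverse to $g\mf{h}$. This nesting should follow from a factor-preservation argument in the spirit of Proposition~\ref{products}, combined with the open/closed dichotomy of Proposition~\ref{trivial chains of halfspaces}. Once transversality is excluded, the estimates from Step 3 complete the proof after a final shrinking of $U$.
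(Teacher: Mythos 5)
There is a genuine gap at the step you flag as the main obstacle, and the fix you propose would not close it. The claim that for $g$ near the identity $\mf{h}$ and $g\mf{h}$ must be nested is not obviously true, and even if it were, it would not suffice: if $g\mf{h}\subsetneq\mf{h}$, a halfspace $\mf{k}\cu\mf{h}$ can perfectly well be transverse to $g\mf{h}$ (both $\mf{k}^*$ and $g\mf{h}^*$ contain $\mf{h}^*$, so all four corners can be nonempty inside $\mf{h}$). A factor-preservation argument only rules out transversality coming from a product decomposition; it says nothing about transversality within an irreducible factor, which is exactly where the problem lives. Your Step~3 has a second, independent gap: $\overline{\mf{h}}\cap\overline{\mf{h}^*}$ can be empty when $X$ is disconnected and $\mf{h}$ is clopen, and the lemma is stated without a connectedness hypothesis, so the boundary point $y_0$ need not exist.

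The paper's proof avoids comparing $\mf{h}$ and $g\mf{h}$ altogether. Fix $x$ with $d(x,\mf{h})>0$. For a single $\mf{k}\cu\mf{h}$ at positive distance $d(\mf{k},\mf{h}^*)>0$, one shows directly that $\mf{k}\cu g\mf{h}$ once $g$ is close enough: if not, then $\overline{\mf{k}}$ meets $g\overline{\mf{h}^*}$, and since $x\in g\overline{\mf{h}^*}$, part~4 of Proposition~\ref{all about gates} forces the gate $y=\pi_{\overline{\mf{k}}}(x)$ to land in $g\overline{\mf{h}^*}$, giving $d(y,g^{-1}y)\geq d(\mf{k},\mf{h}^*)$ --- an open condition on $g$. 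This works regardless of whether $\mf{k}$ and $g\mf{h}$ are nested or transverse, which is precisely what your proposal is missing. One then makes this quantitative by decomposing $\mscr{H}(\mf{h}^*|\xi)$ into at most $r$ chains via Lemma~\ref{Dilworth for differences}, taking in each chain the union $\mf{k}_i$ of halfspaces at distance $\geq\eps/2k$ from $\mf{h}^*$; the halfspaces not contained in any $\mf{k}_i$ contribute at most $\eps/2$ in measure, and the finitely many $V_{\mf{k}_i}$ give the desired neighbourhood. I'd suggest replacing the transversality-exclusion idea with this gate-projection argument.
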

\begin{proof}
Pick a point $x\in X$ with $d(x,\mf{h})>0$; in a neighbourhood of the identity of $\text{Isom}_{\xi}X$, we have $x\in g\mf{h}^*$. If $\mf{k}\in\mscr{H}(\mf{h}^*|\xi)\setminus\mscr{H}(g\mf{h}^*|\xi)$ and $y$ is the gate-projection of $x$ to $\overline{\mf{k}}$, we have $y\in g\overline{\mf{h}^*}$ by part~4 of Proposition~\ref{all about gates}, since $\overline{\mf{k}}\cap g\overline{\mf{h}^*}\neq\emptyset$ and $x\in g\overline{\mf{h}^*}$; thus $d(y,g^{-1}y)\geq d(\mf{k},\mf{h}^*)$. We conclude that, for every $\mf{k}\in\mscr{H}(\mf{h}^*|\xi)$ with $d(\mf{k},\mf{h}^*)>0$, there exists a neighbourhood $V_{\mf{k}}$ of the identity in $\text{Isom}_{\xi}X$ such that $\mf{k}\in\mscr{H}(g\mf{h}^*|\xi)$ for all $g\in V_{\mf{k}}$. \\
Decompose $\mscr{H}(\mf{h}^*|\xi)=\mc{C}_1\sqcup ... \sqcup\mc{C}_k$ as in Lemma~\ref{Dilworth for differences}. Let $\mf{k}_i$ be the union of all $\mf{k}\in\mc{C}_i$ with $d(\mf{h}^*,\mf{k})\geq\frac{\eps}{2k}$; if nonempty, it is a halfspace and $d(\mf{h}^*,\mf{k}_i)\geq\frac{\eps}{2k}$. Elements of $\mscr{H}(\mf{h}^*|\xi)$ not contained in any $\mf{k}_i$ form a subset of measure at most $\sum\wh{\nu}(\mscr{H}(\mf{h}^*|\mf{k}_i))\leq\frac{\eps}{2}$. Let $V$ be the intersection of the $V_{\mf{k}_i}$ for $\mf{k}_i\neq\emptyset$; if $g\in V$, the set $\mscr{H}(\mf{h}^*|\xi)\setminus\mscr{H}(g\mf{h}^*|\xi)$ has measure at most $\frac{\eps}{2}$ and consists of halfspaces at uniformly bounded distance from $x\in X$. It now suffices to consider $U:=V\cap V^{-1}$.
\end{proof}

\begin{proof}[Proof of Proposition~\ref{chi_xi continuous}.]
We only need to prove that $K_{\xi}$ is open and that $\chi_{\xi}$ is continuous; the rest of the statement is contained in Theorem~F of \cite{Fioravanti2}. For every $v\in\mc{G}(\xi)^{(0)}$, there exists $\mf{h}_v$ such that the vertices $w\in\mc{G}(\xi)^{(0)}$ with $w\preceq[\mscr{H}(\mf{h}_v^*|\xi)]$ are precisely $v$ and those that are at the other end of an incoming edge at $v$. Indeed, given a diverging chain in a UBS representing $v$, almost every halfspace in the chain can be chosen as $\mf{h}_v$. 

Let $A_i$ be the set of vertices $v\in\mc{G}(\xi)^{(0)}$ such that there exists no directed path of length $\geq i$ in $\mc{G}(\xi)$ that ends at $v$. Note that, by Proposition~\ref{main prop on UBS's}, we have ${\emptyset=A_0\subsetneq A_1\subsetneq ...\subsetneq A_k=\mc{G}(\xi)^{(0)}}$ for some $k\leq r$. We will show that the subgroup $K_i\leq\text{Isom}_{\xi}X$ that fixes $A_i\cu\mc{G}(\xi)^{(0)}$ pointwise is open in $\text{Isom}_{\xi}X$ and that, for every $[\Om]\in A_i$, the homomorphism ${\chi_{\Om}\colon K_i\ra\R}$ is continuous. We proceed by induction on $i$, setting $K_0:=\text{Isom}_{\xi}X$. 

The base step is trivial. If $i\geq 1$, let $A_i\setminus A_{i-1}=\{v_1,...,v_s\}$ and consider the halfspaces $\mf{h}_{v_1},...,\mf{h}_{v_s}$. Setting ${\Xi_j:=\mscr{H}(\mf{h}_{v_j}^*|\xi)}$, Lemma~\ref{aux lemma 1} provides a neighbourhood $U$ of $\id$ in $\text{Isom}_{\xi}X$ such that $g\Xi_j\sim\Xi_j$ for all $j$ and all $g\in U$. A minimal UBS almost contained in $\Xi_j$ projects to an element of $A_{i-1}$ or to $v_j$; hence, we have $gv_j=v_j$ for every $g\in U\cap K_{i-1}$. Since, by the inductive hypothesis, $K_{i-1}$ is open, so is $K_i$. Continuity of the transfer characters is obtained with a similar argument.
\end{proof}

\subsection{Bridges.}\label{bridges}

Let a median algebra $(M,m)$ and two gate-convex subsets $C_1$, $C_2$ be fixed throughout this section. All the following results have analogues in the context of {\rm CAT}(0) cube complexes; see Section~2.G in \cite{CFI}.

Denote by $\pi_i\colon M\ra C_i$ the gate-projection to $C_i$. We will refer to the sets
\[S_1:=\left\{x_1\in C_1\mid \exists x_2\in C_2 \text{ s.t. } (x_1,x_2) \text{ are gates for } (C_1,C_2)\right\}, \]
\[S_2:=\left\{x_2\in C_2\mid \exists x_1\in C_1 \text{ s.t. } (x_1,x_2) \text{ are gates for } (C_1,C_2)\right\}, \]
as the \emph{shores} of $C_1$ and $C_2$, respectively. By part~3 of Proposition~\ref{all about gates}, these coincide with $\pi_1(C_2)$ and $\pi_2(C_1)$, hence they are gate-convex. The map $\pi_2|_{S_1}\colon S_1\ra S_2$ is a bijection with inverse $\pi_1|_{S_2}$; if $M$ arises from a median space $X$, this is an isometry as gate-projections are $1$-Lipschitz. The \emph{bridge} is the set
\[
\tag{$\ast$}
B:=\bigsqcup_{x_1\in S_1} I\left(x_1,\pi_2(x_1)\right)=\bigsqcup_{x_2\in S_2} I\left(\pi_1(x_2),x_2\right).\]
The union is disjoint because, if $(x_1,x_2)$ is a pair of gates for $(C_1,C_2)$, we have $\pi_i(I(x_1,x_2))=\{x_i\}$ for $i=1,2$; this follows from part~4 of Proposition~\ref{all about gates} and the observation that $I(x_1,x_2)\cap C_i=\{x_i\}$.

\begin{prop}\label{all about bridges}
The bridge $B$ is gate-convex and 
\[\mscr{W}(B)=\left(\mscr{W}(C_1)\cap\mscr{W}(C_2)\right)\sqcup\mscr{W}(C_1|C_2).\]
For any pair of gates $(x_1,x_2)$, the bridge is canonically isomorphic to the product $S_1\times I(x_1,x_2)$; this is an isometry if $M$ arises from a median space.
\end{prop}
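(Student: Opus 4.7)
I would identify the walls of $B$ inside $M$ first, verify that the two resulting pieces are pairwise transverse, and then realise the product structure via an explicit pair of mutually inverse median morphisms.

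A \emph{parallel transport} principle drives the combinatorics. If a halfspace $\mf{h}\in\mscr{H}(M)$ crosses $C_1$, pick $c\in\mf{h}\cap C_1$ and $c'\in\mf{h}^*\cap C_1$; since $\pi_1(y)\in I(y,c)\cap I(y,c')$ for every $y\in M$, we obtain $y\in\mf{h}\iff\pi_1(y)\in\mf{h}$. Applied to both factors, this shows that whenever $\mf{h}$ crosses both $C_1$ and $C_2$ the projections $\pi_1,\pi_2$ respect $\mf{h}$; in particular $\mscr{W}(S_1)=\mscr{W}(C_1)\cap\mscr{W}(C_2)$. The same principle rules out walls crossing only $C_1$ (or only $C_2$) from crossing $B$: the shore $S_1=\pi_1(C_2)$ must fall on the side of $\mf{h}$ containing $C_2$, and with it all of $B$. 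Halfspaces in $\mscr{H}(C_1|C_2)$, on the other hand, always cross every interval $I(y_1,\pi_2(y_1))\cu B$. Hence $\mscr{W}(B)=(\mscr{W}(C_1)\cap\mscr{W}(C_2))\sqcup\mscr{W}(C_1|C_2)$, the summands being disjoint since members of the second do not cross $C_1$. Transversality between them is a one-line check: for $\mf{h}\in\mscr{W}(C_1)\cap\mscr{W}(C_2)$ and $\mf{k}\in\mscr{H}(C_1|C_2)$, each of the four intersections $\mf{h}^{\pm}\cap\mf{k}^{\pm}$ contains a point of $\mf{h}^\pm\cap C_i$.

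Next, introduce the maps
\[\Phi\colon S_1\x I(x_1,x_2)\ra M,\qquad \Phi(y_1,z):=m(y_1,z,\pi_2(y_1)),\]
\[\Psi\colon B\ra S_1\x I(x_1,x_2),\qquad \Psi(y):=(\pi_1(y),m(y,x_1,x_2)).\]
Both are compositions of $m$ with median morphisms ($\pi_1$, $\pi_2|_{S_1}$, constants), hence are themselves median morphisms. A direct computation gives $\Phi(y_1,z)\in I(y_1,\pi_2(y_1))\cu B$, while for $y\in I(y_1,\pi_2(y_1))$ the morphism $\pi_1$ sends $y$ to $m(\pi_1(y),y_1,y_1)=y_1\in S_1$, so $\Psi$ does land in the product. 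The core of the argument is verifying $\Psi\o\Phi=\id$ and $\Phi\o\Psi=\id_B$ by showing that both sides of each identity lie in the same halfspaces of $M$. On $\mf{h}\in\mscr{H}(C_1|C_2)$ with $C_1\cu\mf{h}$, the triples $\{y_1,\pi_2(y_1),z\}$ and $\{x_1,x_2,y\}$ each have two members forced to opposite sides of $\mf{h}$, so the majority in each median collapses to the side of the remaining element. On $\mf{h}\in\mscr{W}(C_1)\cap\mscr{W}(C_2)$, parallel transport places $y_1,\pi_2(y_1)$ (and $x_1,x_2$) on a common side of $\mf{h}$, while the gate relation $\pi_1(y)\in\mf{h}\iff y\in\mf{h}$ handles the other coordinate. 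Since halfspaces separate points in any median algebra (Theorem~2.7 of~\cite{Roller}), $\Phi$ and $\Psi$ are mutually inverse.

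Consequently $B$ is a median subalgebra of $M$. To promote this to gate-convexity I would produce the gate of an arbitrary $y\in M$ explicitly as $\Phi(\pi_1\pi_2(y),m(y,x_1,x_2))$: the same halfspace bookkeeping shows that this point agrees with $y$ on every wall of $\mscr{W}(B)$ and lies on the same side as $B$ for every wall not in $\mscr{W}(B)$, which via the interval-based gate criterion identifies it as the gate of $y$. Finally, when $M$ arises from a median space, additivity of the wall measure $\wh\nu$ along the disjoint partition of $\mscr{W}(B)$ yields $d(\Phi(y_1,z),\Phi(y_1',z'))=d(y_1,y_1')+d(z,z')$, exhibiting $\Phi$ as an isometry for the $\ell^1$ product metric. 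The principal obstacle is the case-by-case halfspace bookkeeping in the second paragraph, where one must carefully track which summand of $\mscr{W}(B)$ each wall belongs to and how parallel transport constrains the majority computations.
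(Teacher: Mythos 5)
Your proof is correct and follows essentially the same route as the paper: both proofs realise $B$ as the image of the morphism $\pi_{S_1}\times\pi_{I}$ (your $\Psi$ is exactly this map on $B$, and your gate formula $\Phi(\pi_1\pi_2(y),m(y,x_1,x_2))$ is $(\phi|_B)^{-1}\circ\phi$), decompose $\mscr{W}(B)$ into the same two pieces, and obtain the isometry from additivity of the wall measure. The only difference is that you carry out the halfspace bookkeeping by hand where the paper delegates it to Proposition~\ref{all about gates} and Proposition~2.1 of~\cite{Fioravanti1}, making your version more self-contained at the cost of length.
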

\begin{proof}
Pick a pair of gates $(x_1,x_2)$, set $I:=I(x_1,x_2)$ and consider the morphism of median algebras $\phi:=\pi_{S_1}\x\pi_I\colon M\ra S_1\x I$. If $(x_1',x_2')$ is another pair of gates, the projection $\pi_I$ provides an isomorphism ${I(x_1',x_2')\ra I}$ mapping each $x_i'$ to $x_i$. This observation and the decomposition $(\ast)$ above imply that the restriction $\phi|_B$ is bijective.  

The map $\pi_B=(\phi|_B)^{-1}\o\phi\colon M\ra B$ is surjective and it is a gate-projection by Proposition~2.1 in \cite{Fioravanti1}. By part~1 of Proposition~\ref{all about gates} and the discussion above, every wall of $B$ arises either from a wall of $M$ cutting $S_1$ or from a wall of $M$ cutting $I$; the latter correspond to $\mscr{W}(C_1|C_2)$ by part~2 of Proposition~\ref{all about gates}, so we are left to show that $\mscr{W}(S_1)=\mscr{W}(C_1)\cap\mscr{W}(C_2)$. This follows from the fact that $S_1=\pi_1(C_2)$.

When $M$ arises from a median space $X$, the measure $\wh{\nu}$ can be pushed forward via the standard two-to-one projection $\mscr{H}\ra\mscr{W}$. For the purpose of the current proof, we denote by $\wh{\mu}$ the resulting measure on $\mscr{W}$. The fact that $\phi|_B$ is an isometry then follows from the decomposition of $\mscr{W}(B)$ above and the observation that $\frac{1}{2}\cdot\wh{\mu}(\mscr{W}(x|y))=d(x,y)$ for all $x,y\in X$.
\end{proof}

We can extend the notion of strong separation to arbitrary gate-convex subsets of median algebras. We say that $C_1$ and $C_2$ are \emph{strongly separated} if they are disjoint and $\mscr{W}(C_1)\cap\mscr{W}(C_2)=\emptyset$. Note that the condition ${\mscr{W}(C_1)\cap\mscr{W}(C_2)=\emptyset}$ alone already implies that $C_1\cap C_2$ consists of at most one point. In a median space, two halfspaces are strongly separated in the sense of Section~\ref{prelims} if and only if their closures are strongly separated according to the definition above; see Lemma~\ref{ss vs closures} below for a stronger result. 

Proposition~\ref{all about bridges} implies that two disjoint, gate-convex sets are strongly separated if and only if their shores are singletons; this yields the following result.

\begin{cor}\label{strongly separated closed convex sets}
Let $C_1,C_2\cu M$ be strongly separated. There exists a unique pair of gates $(x_1,x_2)$ and $\pi_1(C_2)=\{x_1\}$, $\pi_2(C_1)=\{x_2\}$.
\end{cor}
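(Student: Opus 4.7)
The plan is to deduce the corollary directly from Proposition~\ref{all about bridges}. Strong separation means $C_1$ and $C_2$ are disjoint gate-convex sets with $\mscr{W}(C_1)\cap\mscr{W}(C_2)=\emptyset$. I will show that this forces each shore $S_i$ to be a single point, from which everything follows.

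First, I would invoke Proposition~\ref{all about bridges} to compute $\mscr{W}(S_1)$. Tracing through the proof of that proposition (specifically the identification $\mscr{W}(S_1)=\mscr{W}(C_1)\cap\mscr{W}(C_2)$, which was obtained from $S_1=\pi_1(C_2)$), the strong separation hypothesis yields $\mscr{W}(S_1)=\emptyset$. The same applies to $S_2$.

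Next, I would observe that a median algebra with no walls is necessarily a single point. Indeed, if $x,y\in S_1$ were distinct, then by Theorem~2.7 of \cite{Roller} cited in Section~\ref{prelims}, the disjoint convex sets $\{x\},\{y\}\cu S_1$ would be separated by some halfspace of $S_1$, producing a nonempty element of $\mscr{W}(S_1)$. Thus $S_1=\{x_1\}$ and $S_2=\{x_2\}$ for unique points $x_1\in C_1$ and $x_2\in C_2$.

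Finally, by the definition of the shores, $\pi_1(C_2)=S_1=\{x_1\}$ and $\pi_2(C_1)=S_2=\{x_2\}$, and the pair $(x_1,x_2)$ is the unique pair of gates because any such pair must have its first coordinate in $S_1$ and its second in $S_2$. There is no real obstacle: the entire argument reduces to the wall-set computation in Proposition~\ref{all about bridges} together with the elementary fact that a median algebra lacking walls is trivial.
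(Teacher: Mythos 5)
Your argument is correct and is essentially the paper's own: the paper dispenses with the corollary in the one-line remark immediately preceding it, noting that Proposition~\ref{all about bridges} identifies $\mscr{W}(S_i)$ with $\mscr{W}(C_1)\cap\mscr{W}(C_2)$, which is empty under strong separation, so the shores are singletons. Your added step spelling out that a median algebra without walls is a point (via Theorem~2.7 of \cite{Roller}) is exactly the implicit fact the paper is relying on.
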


We will also need the following:

\begin{lem}\label{ss vs closures}
Let $\mf{h},\mf{k}\in\mscr{H}$ be strongly separated as halfspaces. The closures $H,K$ of $\wt{\mf{h}},\wt{\mf{k}}$ in $\overline X$ are strongly separated as subsets of $\overline X$.
\end{lem}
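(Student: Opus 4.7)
The plan is to identify $H$ and $K$ with the closures in $\overline X$ of the gate-convex subsets $\overline{\mf{h}}$ and $\overline{\mf{k}}$ of $X$, and then transfer the strong separation of the pair $(\overline{\mf{h}}, \overline{\mf{k}})$ across the Roller compactification via continuity of gate-projections. Note that, as recalled in the paragraph preceding the lemma, the strong separation hypothesis on the halfspaces implies that $\overline{\mf{h}}$ and $\overline{\mf{k}}$ are strongly separated as gate-convex subsets of $X$.

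First I would show that $H$ coincides with the closure of $\overline{\mf{h}}$ in $\overline X$. One inclusion is immediate from $\mf{h} \cu \wt{\mf{h}}$. For the other, fix any $x \in \mf{h}$ and let $\xi \in \wt{\mf{h}}$. Using density of $X$ in $\overline X$, choose a net $y_\alpha \to \xi$ with $y_\alpha \in X$ and consider the medians $z_\alpha := m(x, \xi, y_\alpha)$. Each $z_\alpha$ lies in $I(x, y_\alpha) \cu X$ (since $X$ is convex in $\overline X$) and, as $\mf{h} \in \s_x \cap \s_\xi \cu \s_{z_\alpha}$, we have $z_\alpha \in \wt{\mf{h}} \cap X = \mf{h} \cu \overline{\mf{h}}$. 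Continuity of the median on $\overline X$ gives $z_\alpha \to m(x, \xi, \xi) = \xi$, so $\xi$ lies in the closure of $\overline{\mf{h}}$. By the same argument, $K$ equals the closure of $\overline{\mf{k}}$. Invoking the discussion preceding Proposition~\ref{pi_Z 2}, by which the closure in $\overline X$ of any closed convex subset of $X$ is gate-convex with gate-projection extending the one defined on $X$, we conclude that $H$ is gate-convex in $\overline X$ and its gate-projection $\pi_H$ extends $\pi_{\overline{\mf{h}}} \colon X \to \overline{\mf{h}}$; likewise for $K$.

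By Corollary~\ref{strongly separated closed convex sets}, there is a unique pair of gates $(a,b) \in \overline{\mf{h}} \times \overline{\mf{k}}$, and $\pi_{\overline{\mf{h}}}(\overline{\mf{k}}) = \{a\}$, $\pi_{\overline{\mf{k}}}(\overline{\mf{h}}) = \{b\}$. Since $\overline{\mf{k}}$ is dense in $K$ and $\pi_H$ is continuous with $\pi_H|_{\overline{\mf{k}}} \equiv a$, we get $\pi_H(K) = \{a\}$; symmetrically $\pi_K(H) = \{b\}$. As $a \neq b$ (because $\overline{\mf{h}} \cap \overline{\mf{k}} = \emptyset$), any point $\xi \in H \cap K$ would satisfy $\xi = \pi_H(\xi) = a$ and $\xi = \pi_K(\xi) = b$, a contradiction; hence $H \cap K = \emptyset$. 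Thus the shores of the pair $(H, K)$ are the singletons $\{a\}$ and $\{b\}$, and Proposition~\ref{all about bridges} then forces $\mscr{W}(H) \cap \mscr{W}(K) = \emptyset$. So $H$ and $K$ are strongly separated.

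The main obstacle is the identification of $H$ with the closure of $\overline{\mf{h}}$: a priori the halfspace $\wt{\mf{h}}$ need not be closed in $\overline X$, so $H$ could in principle contain boundary points not approached by elements of $\overline{\mf{h}}$. The median-approximation trick with a basepoint $x \in \mf{h}$ is designed precisely to force approximants of any point in $\wt{\mf{h}}$ into $\mf{h}$ itself. Everything else is a continuity argument.
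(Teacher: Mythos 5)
Your proof is correct in structure and takes a genuinely different route from the paper's. You reduce the statement to the $X$-level strong separation of $\overline{\mf{h}}$ and $\overline{\mf{k}}$, invoke Corollary~\ref{strongly separated closed convex sets} there, and then transport the singleton-shore conclusion to $\overline X$ via the identification $H = \overline{\overline{\mf{h}}}$, density of $\overline{\mf{h}}$ in $H$, and continuity of gate-projections. The identification $H = \overline{\overline{\mf{h}}}$ is carried out cleanly by the median-approximation trick (and could be stated even more simply: $z_\alpha = m(x,\xi,y_\alpha)$ lies in the convex set $\wt{\mf{h}}$ by $I(x,\xi)\cu\wt{\mf{h}}$, and in $X$ by $I(x,y_\alpha)\cu X$). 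The paper instead never identifies $H$ with $\overline{\overline{\mf{h}}}$: it establishes disjointness of $H,K$ via Helly and then proves the shore in $\overline X$ is a singleton directly by an ultrafilter argument, using the identity $\s_\eta\setminus\s_\xi=\s_{\eta'}\setminus\s_{\xi'}$ across the bridge and invoking a measurability lemma (Lemma~3.6 in \cite{Fioravanti1}) to pass from ``closures intersect'' to ``almost every such $\mf{j}$ is transverse to both $\mf{h}$ and $\mf{k}$'', producing a contradiction since $d(\xi,\eta)>0$.

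One concern you should address: the input you cite, that strong separation of the halfspaces forces $\overline{\mf{h}}$ and $\overline{\mf{k}}$ to be strongly separated as gate-convex subsets of $X$, is stated in the paper only as a remark, followed by ``see Lemma~\ref{ss vs closures} below for a stronger result.'' The forward direction of that remark (the one you use) is in fact a corollary of Lemma~\ref{ss vs closures} itself, since walls of $X$ separating points of $\overline{\mf{h}}$ (resp.\ $\overline{\mf{k}}$) extend to walls of $\overline X$ separating points of $H$ (resp.\ $K$). If the paper's intended justification for the remark is the lemma, your argument is circular. The issue is not cosmetic: the honest direct proof that no wall of $X$ cuts both $\overline{\mf{h}}$ and $\overline{\mf{k}}$ runs into exactly the same boundary subtlety the paper handles with Lemma~3.6 of \cite{Fioravanti1} --- a wall may cut $\overline{\mf{h}}$ without being transverse to $\mf{h}$ when $\mf{h}$ is open and the cutting happens along $\partial\mf{h}$. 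So to make your proof self-contained you would need to supply an independent argument for the $X$-level strong separation (which would likely reproduce the paper's measurability argument in miniature), at which point the two proofs are closer than they first appear.
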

\begin{proof}
Since $\mf{h},\mf{k}$ have disjoint closures in $X$, the sets $H$ and $K$ are disjoint by Helly's Theorem. By Proposition~\ref{all about bridges}, it then suffices to prove that the shore $S\cu H$ is a singleton. Suppose for the sake of contradiction that $S$ contains distinct points $\xi,\eta$ and let $\xi',\eta'\in K$ be their projections to $K$; in particular, $\s_{\eta}\setminus\s_{\xi}=\s_{\eta'}\setminus\s_{\xi'}$. Given $\mf{j}\in\s_{\eta}\setminus\s_{\xi}$, the argument at the beginning of the proof shows that the closures of $\mf{j}$ and $\mf{h}$ inside $X$ intersect nontrivially; by Lemma~3.6 in \cite{Fioravanti1}, almost every $\mf{j}\in\s_{\eta}\setminus\s_{\xi}$ intersects $\mf{h}$. Considering complements, we conclude that almost every $\mf{j}\in\s_{\eta}\setminus\s_{\xi}$ is transverse to $\mf{h}$ and, similarly, to $\mf{k}$. Since $d(\xi,\eta)>0$, there exists such a $\mf{j}$, contradicting the fact that $\mf{h},\mf{k}$ are strongly separated.
\end{proof}

\subsection{The Haagerup class.}\label{Haagerup}

Let $X$ be a median space, $G$ a topological group and $p\in [1,+\infty)$. Given a Banach space $E$, we denote by $\mscr{U}(E)$ the group of linear isometries of $E$. 

An isometric action $G\acts X$ corresponds to a measure preserving action $G\acts (\mscr{H},\wh{\nu})$. We obtain a continuous representation ${\rho_p\colon G\ra\mscr{U}(L^p(\mscr{H,\wh{\nu}}))}$; when $p=2$, we simply write $\rho:=\rho_2$. We will use interchangeably the notations $H^1_c(G,\rho_p)$ and $H^1_c\left(G,L^p(\mscr{H},\wh{\nu})\right)$ to denote continuous cohomology. 

Given $x\in X$, we can consider the continuous $1$-cocycle ${b^x\colon G\ra L^p(\mscr{H,\wh{\nu}})}$ defined by ${b^x(g):=g\cdot\mathds{1}_{\s_x}-\mathds{1}_{\s_x}}$; it satisfies ${\|b^x(g)\|_p=(2\cdot d(x,gx))^{1/p}}$. We will refer to $b^x$ as a \emph{Haagerup cocycle}. The cohomology class $[b^x]\in H_c^1(G,\rho_p)$ does not depend on the point $x$ and we will simply denote it by $[b]$. 

The action $G\acts X$ has bounded orbits if and only if the affine action $G\acts L^p(\mscr{H},\wh{\nu})$ induced by $b^x$ fixes a point; this follows for instance from the Ryll-Nardzewski Theorem for $p>1$ and from Theorem~A in \cite{Bader-Gelander-Monod} in the case $p=1$. Thus, we have $[b]=0$ if and only if the action $G\acts X$ has bounded orbits.

We can also consider the projection $\overline{[b]}$ of $[b]$ to \emph{reduced} continuous cohomology, which carries more interesting geometrical information (see Theorem~\ref{A} in the introduction). We will refer to $\overline{[b]}\in\overline{H_c^1}(G,\rho)$ as the \emph{Haagerup class} of $G\acts X$. The choice of $p=2$ here is not particularly relevant and the same discussion could be equally carried out for any other $p\in [1,+\infty)$ with few complications; see Remark~\ref{p neq 2}.

We conclude this section by collecting a few straightforward lemmata for later use. Let $\mc{H}$ be a Hilbert space and $G\ra\mscr{U}(\mc{H})$ a continuous unitary representation. 

\begin{lem}\label{H^1 vs open subgroups}
If $H\leq G$ is open and finite-index, functoriality induces an injective map $\overline{H^1_c}\left(G,\mc{H}\right)\hookrightarrow\overline{H^1_c}\left(H,\mc{H}\right)$.
\end{lem}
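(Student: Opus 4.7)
The plan is a standard transfer argument exploiting that $n := [G:H]$ is finite and that $H$ is open. First I will observe that $b \mapsto b|_H$ is continuous from $Z^1_c(G,\rho)$ to $Z^1_c(H,\rho|_H)$ for the topology of uniform convergence on compacta, sends coboundaries to coboundaries, and therefore descends to a well-defined map on reduced cohomology. Injectivity then reduces to the claim: if $b \in Z^1_c(G,\rho)$ satisfies $[b|_H] = 0$ in $\overline{H^1_c}(H,\rho|_H)$, then $[b] = 0$ in $\overline{H^1_c}(G,\rho)$.

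To prove this, fix left coset representatives $g_1,\dots,g_n$ so that $G = \bigsqcup_i g_i H$; since $H$ is open, each $g_i H$ is clopen. For every $g \in G$, write $gg_i = g_{\sigma_g(i)}\, h_i(g)$ with $\sigma_g$ a permutation of $\{1,\dots,n\}$ and $h_i(g) := g_{\sigma_g(i)}^{-1} g g_i \in H$. The map $g\mapsto \sigma_g$ is locally constant, so each $h_i$ is continuous on the clopen stratum $\{g : \sigma_g = \sigma\}$. Applying the cocycle identity to both sides of $g \cdot g_i = g_{\sigma_g(i)}\cdot h_i(g)$ and averaging over $i$ yields
\[
b(g) \;=\; -\partial W(g) \;+\; \frac{1}{n}\sum_{i=1}^{n} \rho\bigl(g_{\sigma_g(i)}\bigr)\, b(h_i(g)), \qquad W := \frac{1}{n}\sum_{i=1}^{n} b(g_i),
\]
where $\partial W(g) := \rho(g)W - W$. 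Specialised to $b$ itself this already proves injectivity on unreduced $H^1_c$; the reduced statement requires more care.

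To upgrade to reduced cohomology, I will pick $v_k \in \mathcal{H}$ with $\partial v_k|_H \to b|_H$ uniformly on compacta of $H$ and apply the identity to the cocycle $c_k := b - \partial v_k$. Setting $W_k := \frac{1}{n}\sum_i c_k(g_i)$ and $u_k := v_k - W_k$, a rearrangement gives
\[
\bigl(b - \partial u_k\bigr)(g) \;=\; \frac{1}{n}\sum_{i=1}^{n} \rho\bigl(g_{\sigma_g(i)}\bigr)\, c_k\bigl(h_i(g)\bigr).
\]
For any compact $K \subset G$, the set $\bigcup_i h_i(K)$ is a finite union of compact subsets of $H$ (one per index $i$ and each value taken by $\sigma_g$ on $K$), and the operators $\rho(g_{\sigma_g(i)})$ are unitary; hence the right-hand side tends to $0$ uniformly on $K$. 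Therefore $\partial u_k \to b$ uniformly on compacta of $G$, which gives $[b] = 0$ in $\overline{H^1_c}(G,\rho)$ as required.

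The only mildly delicate point is the compactness of $\bigcup_i h_i(K)$, which is precisely where openness of $H$ enters (otherwise $\sigma_g$ need not be locally constant and the image could fail to be compact); everything else is routine cocycle bookkeeping.
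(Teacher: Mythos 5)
The paper states this lemma without proof (it appears in a short list described as ``a few straightforward lemmata for later use''), so there is no paper argument to compare against. Your transfer argument is correct and is the standard way to prove it. Two small remarks on the exposition. First, the cocycle identity applied to $gg_i = g_{\sigma_g(i)} h_i(g)$ gives $b(g) = b(g_{\sigma_g(i)}) - \rho(g)b(g_i) + \rho(g_{\sigma_g(i)})\,b(h_i(g))$; averaging and using that $\sigma_g$ is a bijection does yield exactly your displayed formula, so the bookkeeping checks out. Second, you phrase the key point as ``each $h_i$ is continuous on the clopen stratum $\{g : \sigma_g = \sigma\}$,'' which is accurate but slightly understated: since those strata are clopen and cover $G$, each $h_i\colon G\to H$ is in fact globally continuous, so $h_i(K)$ is compact directly. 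Your argument of splitting $K$ into the finitely many strata reaches the same conclusion and is also fine. You correctly identify that openness is exactly what makes $g\mapsto\sigma_g$ locally constant and hence keeps $\bigcup_i h_i(K)$ compact; without that, the estimate $\sup_{g\in K}\|c_k(h_i(g))\|\to 0$ would not follow from uniform convergence on compacta of $H$.
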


\begin{lem}\label{H^1 vs subrepresentations}
Given a $G$-invariant decomposition $\mc{H}=\mc{H}_1\perp\mc{H}_2$, the projections onto the two factors induce $\overline{H^1_c}\left(G,\mc{H}\right)\simeq\overline{H^1_c}\left(G,\mc{H}_1\right)\oplus\overline{H^1_c}\left(G,\mc{H}_2\right)$.
\end{lem}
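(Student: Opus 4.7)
The plan is to use the $G$-equivariant orthogonal projections $p_i \colon \mc{H} \to \mc{H}_i$ to split every cocycle into two components and then check that this splitting descends to the reduced cohomology level. Explicitly, I would first observe that, since $p_1$ and $p_2$ are continuous $G$-equivariant linear maps, given any continuous cocycle $b \colon G \to \mc{H}$ the compositions $b_i := p_i \o b$ are continuous cocycles with values in $\mc{H}_i$, and $b = b_1 + b_2$. This immediately yields a topological linear isomorphism $\Phi \colon Z^1_c(G,\mc{H}) \to Z^1_c(G,\mc{H}_1) \oplus Z^1_c(G,\mc{H}_2)$, where both sides are endowed with the topology of uniform convergence on compact subsets of $G$. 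Continuity in both directions follows from the Pythagorean inequalities $\|p_i(v)\| \leq \|v\|$ and $\|v_1 + v_2\|^2 = \|v_1\|^2 + \|v_2\|^2$ for $v_i \in \mc{H}_i$.

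Next I would check that $\Phi$ carries the space $B^1_c(G,\mc{H})$ of coboundaries isomorphically onto $B^1_c(G,\mc{H}_1) \oplus B^1_c(G,\mc{H}_2)$: if $b(g) = \rho(g)v - v$, then writing $v = v_1 + v_2$ with $v_i \in \mc{H}_i$ gives $b_i(g) = \rho(g)v_i - v_i$ because $\mc{H}_i$ is $G$-invariant; conversely, the sum of two coboundaries is obviously a coboundary.

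The key step is then to verify that $\Phi$ also restricts to an isomorphism between the closures $\overline{B^1_c(G,\mc{H})}$ and $\overline{B^1_c(G,\mc{H}_1)} \oplus \overline{B^1_c(G,\mc{H}_2)}$. One inclusion is immediate from continuity of $\Phi$: if $b^{(n)} \to b$ with each $b^{(n)}$ a coboundary, then $p_i \o b^{(n)} \to p_i \o b$ and each $p_i \o b^{(n)}$ is a coboundary. For the other inclusion, given $b_i \in \overline{B^1_c(G,\mc{H}_i)}$ approximated by coboundaries $b_i^{(n)}$, the sums $b_1^{(n)} + b_2^{(n)}$ are coboundaries in $\mc{H}$ converging to $b_1 + b_2$; here continuity of $\Phi^{-1}$ is what ensures the convergence is uniform on compacta. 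Quotienting gives the desired isomorphism
\[
\overline{H^1_c}(G,\mc{H}) \;\simeq\; \overline{H^1_c}(G,\mc{H}_1) \oplus \overline{H^1_c}(G,\mc{H}_2).
\]

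The only point requiring minor care is the interplay between the topologies, but this is routine since the projections are norm-decreasing bounded linear maps and the topology on $Z^1_c$ is the compact-open topology induced by the Hilbert norm; no substantial obstacle should arise.
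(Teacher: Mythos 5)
The paper states this lemma without proof --- it is one of ``a few straightforward lemmata'' collected at the end of Section 2.3 --- so there is no paper argument to compare against. Your proof is correct and is the natural one: split cocycles via the $G$-equivariant orthogonal projections to get a topological isomorphism of the spaces $Z^1_c$ (for the compact-open topology), observe that it matches coboundaries with pairs of coboundaries (using $G$-invariance of $\mc H_1,\mc H_2$), and use continuity in both directions to conclude that closures of coboundary spaces correspond as well, whence the isomorphism descends to reduced cohomology.
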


Recall that we denote by $X'$ the barycentric subdivision of $X$ and by $i\colon X\hookrightarrow X'$ the standard inclusion; we write $(\mscr{H}',\nu')$ instead of $(\mscr{H}(X'),\wh{\nu}_{X'})$. Every isometric action $G\acts X$ also induces a continuous representation ${\rho'\colon G\ra\mscr{U}(L^2(\mscr{H}',\nu'))}$.

\begin{lem}\label{Haagerup vs X'}
Let $X$ be complete and finite rank and let $G\acts X$ be an isometric action. The projection $p\colon\mscr{H}'\ra\mscr{H}$ induces an isometric embedding $p^*\colon L^2\left(\mscr{H},\wh{\nu}\right)\hookrightarrow L^2\left(\mscr{H}',\nu'\right)$ and a monomorphism 
\[p_*\colon\overline{H^1_c}(G,\rho)\hookrightarrow\overline{H^1_c}(G,\rho')\] 
taking the Haagerup class of $G\acts X$ to the Haagerup class of $G\acts X'$.
\end{lem}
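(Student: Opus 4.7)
The plan is to verify each piece of the statement in turn, exploiting that the projection $p\colon\mscr{H}'\ra\mscr{H}$ is surjective with fibres of cardinality at most two, equivariant, and compatible with the measures.

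First I would show that $p^*\colon L^2(\mscr{H},\wh{\nu})\hookrightarrow L^2(\mscr{H}',\nu')$, defined by $p^*f:=f\o p$, is a well-defined linear isometry. The key point is that $p_*\nu'=\wh{\nu}$: on halfspaces $\mf{h}$ for which $p^{-1}(\mf{h})$ is a singleton, $p$ is a measure-preserving bijection, while an atom $\{\mf{h}\}$ of $\wh{\nu}$ splits into two hemiatoms each carrying half the mass (this is built into the definition of the barycentric subdivision, which doubles the set of walls separating the atom while halving their measure so that distances are preserved along the inclusion $i\colon X\hookrightarrow X'$). Therefore $\|p^*f\|_2^2=\int|f\o p|^2\,d\nu'=\int|f|^2\,d\wh{\nu}=\|f\|_2^2$. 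Since the fibres of $p$ are finite and the map is measurable, $p^*f$ is measurable, so $p^*$ is indeed an isometric embedding of Hilbert spaces.

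Next I would check that $p^*$ intertwines the representations $\rho$ and $\rho'$. Because the action $G\acts X$ extends to $G\acts X'$ and the map $p$ is $(\text{Isom}~X)$-equivariant, we have, for $f\in L^2(\mscr{H},\wh{\nu})$ and $g\in G$,
\[\rho'(g)(p^*f) = (f\o p)\o g^{-1} = (f\o g^{-1})\o p = p^*(\rho(g)f).\]
Thus $p^*(L^2(\mscr{H},\wh{\nu}))$ is a closed $G$-invariant subspace of $L^2(\mscr{H}',\nu')$; let $W$ be its orthogonal complement, which is also $G$-invariant. Lemma~\ref{H^1 vs subrepresentations} then gives an isomorphism ${\overline{H^1_c}(G,\rho')\simeq\overline{H^1_c}(G,\rho)\oplus\overline{H^1_c}(G,W)}$, and $p_*$ is by definition the inclusion of the first summand; in particular it is a monomorphism.

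Finally I would verify that $p^*$ sends the Haagerup cocycle of $X$ to the Haagerup cocycle of $X'$. Fix $x\in X$ and set $x':=i(x)\in X'$. The essential identity is
\[\s_{x'} = p^{-1}(\s_x)\]
as subsets of $\mscr{H}'$: a halfspace $\mf{h}'\in\mscr{H}'$ contains $x'$ precisely when $p(\mf{h}')$ contains $x$ (both hemiatoms above a halfspace $\mf{h}\ni x$ contain $i(x)$, as $i(x)$ is not separated from $x$ by any new wall). Hence $\mathds{1}_{\s_{x'}} = p^*(\mathds{1}_{\s_x})$, and for every $g\in G$
\[b^{x'}(g) = \rho'(g)p^*(\mathds{1}_{\s_x}) - p^*(\mathds{1}_{\s_x}) = p^*\bigl(\rho(g)\mathds{1}_{\s_x}-\mathds{1}_{\s_x}\bigr) = p^*(b^x(g)).\]
Passing to (reduced) cohomology, $p_*[b]=[b']$, which is the required statement.

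The only non-routine step is the first one: the identification $p_*\nu'=\wh{\nu}$, together with the dual identity $\s_{i(x)}=p^{-1}(\s_x)$, both of which rely on the precise bookkeeping of hemiatoms in the construction of $X'$ in \cite{Fioravanti2}. Once these are in hand, everything else is a formal application of functoriality and Lemma~\ref{H^1 vs subrepresentations}.
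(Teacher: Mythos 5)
Your proof is correct and follows essentially the same route as the paper: the isometry comes from $p_*\nu'=\wh{\nu}$, injectivity of $p_*$ comes from Lemma~\ref{H^1 vs subrepresentations} applied to $p^*(L^2(\mscr{H},\wh{\nu}))$ and its orthogonal complement, and the cocycle identification $b^{i(x)}=p^*\circ b^x$ rests on $\s_{i(x)}=p^{-1}(\s_x)$. You merely spell out the equivariance of $p^*$ and the bookkeeping of hemiatoms, which the paper leaves implicit; note only that the argument requires $p_*\nu'=\wh{\nu}$ and $\s_{i(x)}=p^{-1}(\s_x)$, not the stronger (true but unnecessary) claim that each hemiatom carries exactly half the mass of the atom below it.
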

\begin{proof}
The fact that $p^*$ is an isometric embedding follows from the observation that $p_*\nu'=\wh{\nu}$. Injectivity of $p_*$ follows from Lemma~\ref{H^1 vs subrepresentations} applied to $L^2\left(\mscr{H},\wh{\nu}\right)$ and its orthogonal complement. Finally, if $x\in X$ and $b^x$ is the corresponding Haagerup cocycle for $G\acts X$, the cocycle $p^*\o b^x$ is the Haagerup cocycle for $G\acts X'$ relative to the point $i(x)\in X'$.
\end{proof}

\section{Haagerup class and elementarity of actions.}

\subsection{The main statement.}\label{Haag main}

Let $X$ be a complete, finite rank median space and let $G\acts X$ be an isometric action of a topological group $G$. The goal of this section is to prove Theorem~\ref{A}.

By Lemmata~\ref{RNE for X'} and~\ref{Haagerup vs X'}, it suffices to consider the case when $G\acts X$ is without wall inversions; this will be a standing assumption throughout the rest of the section.

\begin{lem}\label{no AIV's}
Suppose that $X$ is irreducible and that $G\acts X$ is Roller minimal and Roller nonelementary. There exists a non-abelian free subgroup ${H\leq G}$ such that $\rho$ has no $H$-almost-invariant vectors and $H\acts X$ has unbounded orbits.
\end{lem}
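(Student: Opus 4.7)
The plan is to first produce $H$ via a ping-pong argument yielding unbounded orbits, and then, separately, to exclude $H$-almost-invariant vectors in $\rho$ by analysing the resulting dynamics on $(\mscr{H},\wh{\nu})$.

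For the ping-pong construction, I would exploit that $G\acts X$ is Roller nonelementary, Roller minimal, and without wall inversions on the irreducible space $X$. Proposition~\ref{strong separation}(2) yields a facing $4$-tuple of thick halfspaces, and Proposition~\ref{strong separation}(1) allows me to refine each halfspace to a nested thick subhalfspace with strong separation to its complement. This produces thick halfspaces $\mf{h}_i\subsetneq\mf{k}_i$ for $i=1,2$, with $\mf{h}_i$ and $\mf{k}_i^*$ strongly separated, and with the four closed halfspaces $\overline{\mf{k}_1}$, $\overline{\mf{k}_1^*}$, $\overline{\mf{k}_2}$, $\overline{\mf{k}_2^*}$ pairwise disjoint in $\overline{X}$ (using Lemma~\ref{ss vs closures} to transfer strong separation to the Roller compactification). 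Proposition~\ref{double skewering}(2) then supplies $g_i\in G$ with $g_i\mf{k}_i\subsetneq\mf{h}_i$ and $d(g_i\mf{k}_i,\mf{h}_i^*)>0$. After replacing each $g_i$ by a sufficiently high power, the closed halfspaces above play classical ping-pong in $\overline{X}$, so $H:=\langle g_1,g_2\rangle\cong F_2$; the positive translation distances immediately yield unboundedness of $H$-orbits in $X$.

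For the absence of $H$-almost-invariant vectors in $\rho$, I would decompose $L^2(\mscr{H},\wh{\nu})$ into the two $H$-invariant closed subspaces spanned by halfspaces whose $H$-orbit is, respectively, of \emph{finite} and \emph{infinite} period. On the infinite-period part, the ping-pong dynamics make the $H$-action essentially free: a measurable fundamental domain can be selected using the four regions $\overline{\mf{k}_1^{\pm}},\overline{\mf{k}_2^{\pm}}$ as a coarse partition (and Lemma~\ref{Dilworth for differences} to finesse the measurability), and the corresponding subrepresentation is then a multiple of the left regular representation of $F_2\cong H$, so by Hulanicki's theorem it admits no almost-invariant vectors. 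On the finite-period part, I would argue that any $H$-periodic halfspace must have its wall transverse to the axes of all sufficiently long words in $g_1,g_2$; the facing $4$-tuple together with strong separation forces the set of such halfspaces to be $\wh{\nu}$-null, provided $g_1,g_2$ are chosen with sufficiently independent skewering directions.

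The main obstacle is precisely the control of the finite-period (non-free) part of the action $H\acts(\mscr{H},\wh{\nu})$. Unlike the cube complex setting, where stabilisers are pinned down by a discrete combinatorial structure, in the measured median setting the set of halfspaces fixed by a single $g_i$ can easily have positive $\wh{\nu}$-measure (already visible in $X=\R^2$), and one must show that generic intersection with the $g_j$-dynamics ($j\neq i$) annihilates this phenomenon. Here I would rely on the rank bound from Proposition~\ref{main prop on UBS's} and the transversality built into the facing $4$-tuple to force every invariant function on the periodic part to vanish.
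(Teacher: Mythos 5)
Your construction of $H$ via ping-pong and Proposition~\ref{strong separation}/\ref{double skewering} is in the right spirit, and the unboundedness of orbits is not the issue. The real difficulty is in the second half, and you have correctly identified it: controlling the ``periodic'' (non-free) part of the $H$-action on $(\mscr{H},\wh{\nu})$.

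The paper resolves this differently, and more decisively, by invoking Part~2 of Proposition~6.4 in \cite{Fioravanti2}, which produces not just a free subgroup $H\leq G$ with unbounded orbits, but simultaneously a measurable, $*$-invariant partition $\mscr{H}=\bigsqcup_{h\in H}\mscr{H}_h$ satisfying $g\mscr{H}_h=\mscr{H}_{gh}$ for all $g,h\in H$. That statement already eats the problem you are worried about: there is no residual finite-period or positive-measure-stabiliser part to dispose of, because the partition covers all of $\mscr{H}$. Whatever ping-pong one runs, the delicate measure-theoretic point (that the bad set is null) is exactly the content of that cited proposition; trying to rederive it here from Proposition~\ref{main prop on UBS's} and generic transversality is where your sketch would stall. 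Indeed the obstruction you name near the end — that stabilisers can have positive $\wh{\nu}$-measure, e.g.\ already in $\R^2$ — is precisely why a direct fundamental-domain argument does not go through without substantial extra work.

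There is a second, independent simplification in the paper's argument that you should take note of: it does not identify the $H$-representation on $L^2(\mscr{H},\wh{\nu})$ (or any invariant piece of it) as a multiple of the regular representation. That conclusion is stronger than needed, requires the action on $\mscr{H}$ to be essentially free — which does not follow from orbits being infinite — and is avoided entirely. Instead, given almost-invariant unit vectors $F_n$, the paper defines $f_n\in\ell^2(H)$ by $f_n(h):=\|F_n\mathds{1}_{\mscr{H}_h}\|_2$; the partition gives $\|f_n\|_2=1$, and the elementary inequality $\bigl|\|a\|-\|b\|\bigr|\leq\|a-b\|$ transports almost-invariance to $\ell^2(H)$. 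Non-amenability of $H$ then finishes the proof. This norm-averaging step is the clean replacement for your ``multiple of the regular representation'' plan, and it needs only the partition, not freeness. (Minor point: the non-existence of almost-invariant vectors in $\ell^2(F_2)$ is the standard characterisation of amenability via weak containment of the trivial representation, not Hulanicki's theorem.)

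So: your diagnosis of the hard step is accurate, but your proposed treatment of it is incomplete in exactly that place, and you are also overshooting on the infinite-period part. The proof works by outsourcing the measure-theoretic partition to the cited result and then using a much lighter analytic argument than the one you envision.
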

\begin{proof}
Part~2 of Proposition~6.4 in \cite{Fioravanti2} provides a non-abelian free subgroup $H\leq G$ and a measurable, $*$-invariant partition ${\mscr{H}=\bigsqcup_{h\in H}\mscr{H}_h}$ with $g\mscr{H}_h=\mscr{H}_{gh}$ for all $g,h\in H$. It is immediate from the construction of $H$ that it acts on $X$ with unbounded orbits. If there existed a sequence of almost invariant vectors $(F_n)_{n\geq 0}$ in $L^2(\mscr{H},\wh{\nu})$, say with $\|F_n\|_2=1$, we could define functions $f_n\in\ell^2(H)$ by $f_n(h):=\|F_n\mathds{1}_{\mscr{H}_h}\|_2$. It is immediate to check that $\|f_n\|_2=1$ for every $n\geq 0$ and, for every $g\in H$,
\begin{align*}
\|gf_n-f_n\|_2^2&=\sum_{h\in H}\left(\|F_n\mathds{1}_{\mscr{H}_{g^{-1}h}}\|_2-\left\|F_n\mathds{1}_{\mscr{H}_h}\right\|_2\right)^2 \\
&= \sum_{h\in H}\left(\left\|\left(gF_n\right)\mathds{1}_{\mscr{H}_{h}}\right\|_2-\left\|F_n\mathds{1}_{\mscr{H}_h}\right\|_2\right)^2 \\
&\leq\sum_{h\in H}\left\|\left(gF_n\right)\mathds{1}_{\mscr{H}_{h}}-F_n\mathds{1}_{\mscr{H}_h}\right\|_2^2=\left\|gF_n-F_n\right\|_2^2\xrightarrow[n\ra+\infty]{} 0.
\end{align*}
Thus, the regular representation of $H$ would contain almost-invariant vectors, implying amenability of $H$ (see e.g.\,Theorem~G.3.2 in \cite{Bekka-Harpe-Valette}). This is a contradiction.
\end{proof}

We can already prove the ``only if'' half of Theorem~\ref{A}.

\begin{prop}\label{Haagerup easy arrow}
If $G\acts X$ is Roller nonelementary, we have $\overline{[b]}\neq 0$.
\end{prop}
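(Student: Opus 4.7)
The argument is contrapositive and exploits Lemma~\ref{no AIV's}. Assume $G\acts X$ is Roller nonelementary and $\overline{[b]}=0$; the goal is to derive a contradiction. Recall that $\overline{[b]}=0$ means that the affine isometric $G$-action on $L^2(\mscr{H}(X),\wh{\nu}_X)$ induced by $b^x$ has almost fixed points. The plan is to reduce to a situation where Lemma~\ref{no AIV's} applies, namely to a Roller minimal, Roller nonelementary action of an open finite-index subgroup $G_\infty\leq G$ on an irreducible complete finite rank median space $W$, while preserving the vanishing of the Haagerup class. Once there, Lemma~\ref{no AIV's} supplies a non-abelian free subgroup $H\leq G_\infty$ with unbounded orbits in $W$ such that the associated permutation representation has no $H$-almost-invariant vectors. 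The restriction of the vanishing Haagerup class of $G_\infty\acts W$ to $H$ says that the corresponding affine $H$-action has almost fixed points; a standard Guichardet-type argument (no almost-invariant vectors in the linear part forces the space of coboundaries to be closed) upgrades this to a genuine fixed point, which by the discussion at the end of Section~\ref{Haagerup} forces $H\acts W$ to have bounded orbits, contradicting Lemma~\ref{no AIV's}.

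The reduction proceeds by strong induction on $r=\mathrm{rank}(X)$, the case $r=0$ being vacuous. For the inductive step, Proposition~\ref{Roller elementary vs strongly so} provides a $G$-invariant component $Z\subseteq\overline{X}$ and a $G$-invariant closed convex $C\subseteq Z$ with $G\acts C$ Roller minimal; Roller nonelementarity of this smaller action follows by applying the equivariant gate-projection $\overline{X}\to\overline{C}$ to any hypothetical finite orbit. The $G$-invariant measurable partition $\mscr{H}(X)=\mscr{H}_Z\sqcup\s_Z\sqcup\s_Z^*$ yields an orthogonal $G$-decomposition of $L^2(\mscr{H}(X),\wh{\nu}_X)$; under the canonical isomorphism $(\mscr{H}_Z,\wh{\nu}_X)\cong(\mscr{H}(Z),\wh{\nu}_Z)$, a direct computation using the description of $\pi_Z$ in terms of preferred ultrafilters shows that the $\mscr{H}_Z$-piece of $b^x$ is exactly the Haagerup cocycle of $G\acts Z$ based at $\pi_Z(x)\in Z$. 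By Lemma~\ref{H^1 vs subrepresentations}, vanishing of $\overline{[b]}$ on $X$ descends to vanishing of the Haagerup class of $G\acts Z$. If $Z\subsetneq X$, Proposition~\ref{pi_Z 2} gives $\mathrm{rank}(Z)<r$, and Roller nonelementarity of $G\acts Z$ allows the inductive hypothesis to finish the proof. Otherwise $Z=X$ and the analogous decomposition with $C$ in place of $Z$ propagates vanishing to $G\acts C$. Then Proposition~\ref{products} writes $C=Y_1\times\dots\times Y_k$ into irreducibles; passing to the open finite-index subgroup $G_0\leq G$ preserving this splitting via Lemma~\ref{H^1 vs open subgroups}, and using $\mscr{H}(C)=\sqcup_i\mscr{H}(Y_i)$ together with Lemma~\ref{H^1 vs subrepresentations}, one gets vanishing of the Haagerup class for each $G_0\acts Y_i$; since $\overline{C}=\prod\overline{Y_i}$, at least one factor $Y_j$ supports a Roller nonelementary $G_0$-action.

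Applying the same loop to $G_0\acts Y_j$ one eventually either reaches a case in which the rank has strictly dropped, closing by induction, or obtains an irreducible Roller minimal Roller nonelementary action of an open finite-index subgroup $G_\infty\leq G$ on a space $W$ with vanishing Haagerup class; at that stage Lemma~\ref{no AIV's} applies and the first-paragraph argument delivers the contradiction. The main delicate point is to show that this iteration terminates: each passage to a proper component strictly decreases rank (which triggers the inductive hypothesis), while each passage to an irreducible factor permanently kills the reducibility axis, so only finitely many non-rank-lowering steps can occur in a row. This bookkeeping, together with the orthogonal-decomposition arguments used to transfer vanishing of the reduced Haagerup class between $X$, $Z$, $C$, and the irreducible factors, is the heart of the proof.
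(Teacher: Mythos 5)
Your proposal is a contrapositive reformulation of the paper's proof and follows the same overall strategy: induct on the rank, reduce to a Roller minimal action via Proposition~\ref{Roller elementary vs strongly so}, split into irreducible factors via Proposition~\ref{products} and Lemmas~\ref{H^1 vs open subgroups}--\ref{H^1 vs subrepresentations}, and handle the irreducible Roller minimal case with Lemma~\ref{no AIV's} together with Guichardet's trick. The substance is correct, but a few points deserve comment.

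There is a small direction slip in your justification that $G\acts C$ is Roller nonelementary: you ``apply the equivariant gate-projection $\overline X\to\overline C$ to a hypothetical finite orbit,'' but the finite orbit you need to rule out lives in $\overline C$, and the argument is simply that any finite orbit in $\overline C$ is already a finite orbit in $\overline X$ via the inclusion $\overline C\hookrightarrow\overline X$; no gate-projection is involved. More substantively, the intermediate pass through the component $Z$ is not needed. The paper works directly with the $G$-invariant partition $\mscr{H}=\mscr{H}_C\sqcup(\s_C\cup\s_C^*)$ coming from the closed convex $C\subseteq Z$, and the $\mscr{H}_C$-piece of the Haagerup cocycle is already the Haagerup cocycle of $G\acts C$. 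This one-step reduction also dissolves your ``termination'' worry, which is misplaced: after projecting once to $C$ (Roller minimal), either $C$ is irreducible (handled by Lemma~\ref{no AIV's} and Guichardet, no induction) or $C$ is a nontrivial product and passing to an irreducible factor strictly drops the rank (handled by the inductive hypothesis). There is no indefinite iteration; the bookkeeping about ``reducibility axes'' is unnecessary.

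Finally, your write-up omits two standing reductions made at the top of the section, and both are used implicitly by Lemma~\ref{no AIV's}: first, passing to the barycentric subdivision via Lemmas~\ref{RNE for X'} and~\ref{Haagerup vs X'} so that the action is without wall inversions (this hypothesis feeds into Propositions~\ref{double skewering} and~\ref{strong separation}, on which the construction of the free subgroup $H$ rests); second, reducing to $G$ with the discrete topology by functoriality of reduced cohomology, which sidesteps continuity considerations when restricting to the discrete free group $H$. Both should be stated to make the argument complete.
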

\begin{proof}
Note that, by functoriality of reduced cohomology, it suffices to consider the case when $G$ has the discrete topology; thus, we do not need to worry about continuity issues. We proceed by induction on $r=\text{rank}(X)$. When $r=0$, all actions are Roller elementary; suppose for the rest of the proof that $r\geq 1$.

We can assume that $G\acts X$ is Roller minimal. Indeed, if ${C\cu Z\cu\overline X}$ are the subsets provided by Proposition~\ref{Roller elementary vs strongly so}, the action $G\acts C$ is again without wall inversions and $\text{rank}(C)\leq r$, by Proposition~\ref{pi_Z 2}. The $G$-equivariant, measurable partition $\mscr{H}=\mscr{H}_C\sqcup(\s_C\cup\s_C^*)$ induces an orthogonal decomposition of $L^2\left(\mscr{H},\wh{\nu}\right)$ and a $G$-equivariant splitting
\[\overline{H^1_c}\left(G,L^2(\mscr{H},\wh{\nu})\right)=\overline{H^1_c}\left(G,L^2(\mscr{H}(C),\wh{\nu}_C)\right)\oplus\overline{H^1_c}\left(G,L^2(\s_C\cup\s_C^*,\wh{\nu})\right).\]
If $p_1$ and $p_2$ are the orthogonal projections of $L^2\left(\mscr{H},\wh{\nu}\right)$ onto the two direct summands, we can write $\overline{[b^x]}=\overline{[p_1b^x]}+\overline{[p_2 b^x]}$ for every $x\in X$. Note that the gate-projection $\pi_C\colon\overline X\ra\overline C$ maps $x$ to a point $\xi\in C$. The cocycle $p_1b^x$ is precisely the Haagerup cocycle $b^{\xi}$ for the action $G\acts C$, by part~2 of Proposition~\ref{pi_Z 2}. In particular, if $\overline{[b^{\xi}]}\neq 0$, we can conclude that $\overline{[b]}\neq 0$.

We thus assume in the rest of the proof that $X=C$, i.e.\,that $G\acts X$ is Roller minimal. If $X$ is irreducible, Lemma~\ref{no AIV's} provides $i\colon H\hookrightarrow G$ such that $H\acts X$ has unbounded orbits and $\rho$ has no $H$-almost-invariant vectors. The first condition implies that $i^*[b]\neq 0$; the second condition and Guichardet's trick (Theorem~1 in \cite{Guichardet}) thus yield $i^*\overline{[b]}\neq 0$. In particular, $\overline{[b]}\neq 0$.

If instead $X$ splits as a nontrivial product $X_1\x X_2$, there exists a finite-index subgroup $j\colon G_0\hookrightarrow G$ preserving this decomposition (see part~2 of Proposition~\ref{products}). It suffices to show that $j^*\overline{[b]}\neq 0$. Writing $\wh{\nu}_i$ instead of $\wh{\nu}_{X_i}$, Proposition~\ref{products} and Lemma~\ref{H^1 vs subrepresentations} imply:
\[\overline{H^1_c}\left(G_0,L^2(\mscr{H},\wh{\nu})\right)=\overline{H^1_c}\left(G_0,L^2(\mscr{H}(X_1),\wh{\nu}_1)\right)\oplus\overline{H^1_c}\left(G_0,L^2(\mscr{H}(X_2),\wh{\nu}_2)\right);\]
hence, if $x=(x_1,x_2)$, we have $\overline{[b^x]}=\overline{[b^{x_1}_{X_1}]}+\overline{[b^{x_2}_{X_2}]}$. The action $G_0\acts X$ is Roller nonelementary, since $G_0$ is finite-index in $G$; thus, up to exchanging the two factors, $G_0\acts X_1$ is Roller nonelementary. Since $\text{rank}(X_1)<r$, the inductive hypothesis guarantees that $\overline{[b^{x_1}_{X_1}]}\neq 0$ and this concludes the proof.
\end{proof} 

Before proving the rest of Theorem~\ref{A} we need to obtain a few more results.

\begin{lem}\label{open stabilisers}
If the $G$-orbit of $\xi\in\overline X$ is finite, the $G$-stabiliser of $\xi$ is open.
\end{lem}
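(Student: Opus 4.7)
The plan is to handle separately the cases $\xi\in X$ and $\xi\in\partial X$.

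\medskip
\noindent\textbf{Case 1: }$\xi\in X$. The finite orbit $G\xi$ is a discrete subset of the Hausdorff space $X$, so $\xi$ is isolated in $G\xi$, and we can choose $\delta>0$ strictly smaller than $d(\xi,\eta)$ for every $\eta\in G\xi\setminus\{\xi\}$. Since the action $G\acts X$ has continuous orbit maps, the set $V:=\{g\in G\mid d(g\xi,\xi)<\delta\}$ is an open neighbourhood of $\id$ in $G$; moreover, because $g\xi\in G\xi$ for every $g\in G$, the condition $d(g\xi,\xi)<\delta$ forces $g\xi=\xi$, so $V\cu\mathrm{Stab}_G(\xi)$.

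\medskip
\noindent\textbf{Case 2: }$\xi\in\partial X$. The subtlety is that the extended action on $\overline X$ is not \emph{a priori} jointly continuous, so we must produce, from the continuous action on $X$ alone, enough local data to distinguish $\xi$ from the remaining finitely many orbit points. Enumerate the orbit as $G\xi=\{\xi_1,\ldots,\xi_n\}$ with $\xi=\xi_1$. For each $i\geq 2$ the symmetric difference $\s_{\xi}\setminus\s_{\xi_i}$ is non-null, and since almost every halfspace is thick (Corollary~3.7 in \cite{Fioravanti1}) we can pick a thick halfspace $\mf{h}_i\in\s_{\xi}\setminus\s_{\xi_i}$. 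Using Proposition~\ref{trivial chains of halfspaces} we may moreover select a nested thick sub-halfspace $\mf{k}_i\subsetneq\mf{h}_i$ still in $\s_{\xi}$ with $d(\mf{k}_i,\mf{h}_i^*)>0$, together with an interior point $y_i\in X$ with $d(y_i,\mf{k}_i^*)>0$. The set
\[U:=\left\{g\in G\,\Big|\,d(gy_i,y_i)<\min\{d(y_i,\mf{k}_i^*),d(\mf{k}_i,\mf{h}_i^*)\}\text{ for every }i=2,\ldots,n\right\}\]
is an open neighbourhood of $\id$ in $G$, and I claim that $U\cu\mathrm{Stab}_G(\xi)$. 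Indeed, if $g\in U$ satisfies $g\xi=\xi_i$ for some $i\geq 2$, then $g\s_{\xi}=\s_{\xi_i}$ modulo null sets, whence $g\mf{k}_i\in\s_{\xi_i}$ (because $\mf{k}_i\in\s_{\xi}$), while $\mf{h}_i^*\in\s_{\xi_i}$ by construction. The proximity of $g$ to $\id$ at the points $y_i$ is used, in conjunction with the bridge/gate-projection mechanism of Section~\ref{bridges} (in the spirit of the proof of Lemma~\ref{aux lemma 1}), to force $g\mf{k}_i$ to remain inside $\mf{h}_i$; this contradicts the compatibility of $g\mf{k}_i$ and $\mf{h}_i^*$ inside the ultrafilter $\s_{\xi_i}$.

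\medskip
The main technical obstacle is precisely the incompatibility argument in Case~2: the data that $g$ is close to $\id$ at finitely many points of $X$ does not directly control the image of a whole halfspace, and a single halfspace separation between $\xi$ and some $\xi_i$ is not enough to detect that $g\xi$ has jumped to $\xi_i$. Overcoming this requires carefully nested choices of thick halfspaces (so that the interior witness points $y_i$ genuinely certify the nesting $g\mf{k}_i\subseteq\mf{h}_i$), mirroring the gate-projection strategy that already underlies Lemma~\ref{aux lemma 1} and the proof of Proposition~\ref{chi_xi continuous}.
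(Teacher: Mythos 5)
The argument in Case 2 has a genuine gap. The key claim --- that $d(gy_i,y_i)$ being small forces $g\mf{k}_i\cu\mf{h}_i$ --- is false: controlling an isometry at a single point of $X$ gives no control over the image of an unbounded halfspace. Concretely, take $X$ to be a regular tree, choose $\mf{h}_i\supsetneq\mf{k}_i\ni y_i$ to be nested halftrees with $\xi$ in their ``limit direction'' and $\xi_i$ behind $\mf{h}_i^*$, and let $g$ be an elliptic isometry that fixes $y_i$ and swaps the edge at $y_i$ pointing towards $\xi$ with the edge pointing towards $\xi_i$. Then $g\xi=\xi_i$, so $g\notin\mathrm{Stab}_G(\xi)$, while $d(gy_i,y_i)=0$, so $g$ lies in your open set $U$. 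Taking $G=\langle g\rangle$, whose orbit $\{\xi,\xi_i\}$ is finite, shows the asserted inclusion $U\cu\mathrm{Stab}_G(\xi)$ simply does not hold. Your appeal to Lemma~\ref{aux lemma 1} is also misleading: that proof controls the isometry at \emph{two} points, a base point $x$ with $d(x,\mf{h})>0$ (so that proximity forces $x\in g\mf{h}^*$) and the gate-projection $y=\pi_{\overline{\mf{k}}}(x)$, which lies near the frontier of $\mf{k}$, not deep in its interior; dropping $x$ and replacing $y$ by an arbitrary interior point destroys the argument.

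The paper's proof sidesteps halfspaces entirely. By Proposition~4.24 in \cite{Fioravanti1}, for each $i$ one can find $x_i,y_i\in X$ so that the gate-projections of $\xi$ and $\xi_i$ onto the \emph{bounded} interval $I_i:=I(x_i,y_i)$ are more than $\frac{3}{4}\eps$ apart. Since $\pi_{I(x,y)}=m(x,y,\,\cdot\,)$ and the median on $\overline X$ is $1$-Lipschitz, controlling $g$ at $x_i$, $y_i$ and $\pi_{I_i}(\xi)\in X$ controls the displacement of $\pi_{gI_i}(\xi_i)$; the hypothesis $g\xi=\xi_i$ forces $\pi_{gI_i}(\xi_i)=g\pi_{I_i}(\xi)$, and the triangle inequality gives a quantitative contradiction. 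The crucial feature is that $I_i$ is a bounded gate-convex set determined by two points, so controlling $g$ at those points genuinely controls the whole projection --- something that cannot be achieved for a halfspace. Your Case 1 is correct as stated, but the paper's argument applies verbatim whether $\xi\in X$ or $\xi\in\partial X$, so the case split is unnecessary.
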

\begin{proof}
Suppose that $G\xi=\{\xi\}\sqcup\{\xi_1,...,\xi_k\}$ and $d(\xi,\xi_i)\geq\eps>0$ for all $i$, where $d$ is the extended metric on $\overline X$. By Proposition~4.24 in \cite{Fioravanti1}, we can find $x_i,y_i\in X$ such that, denoting by $\pi_i$ the projection to $I_i:=I(x_i,y_i)$, we have $d(\pi_i(\xi),\pi_i(\xi_i))>\frac{3}{4}\eps$. In a neighbourhood $U\cu G$ of the identity element we have $d(gx_i,x_i)<\frac{1}{4}\eps$, $d(gy_i,y_i)<\frac{1}{4}\eps$ and $d(g\pi_i(\xi),\pi_i(\xi))<\frac{1}{4}\eps$, for all $i$.

If $g\in U$, we have $d(\pi_{gI_i}(\xi_i),\pi_i(\xi_i))\leq d(gx_i,x_i)+d(gy_i,y_i)<\frac{1}{2}\eps$; if in addition we had $g\xi=\xi_i$, we would have $\pi_{gI_i}(\xi_i)=g\pi_i(\xi)$. As a consequence, $d(\pi_i(\xi),\pi_i(\xi_i))\leq d(\pi_i(\xi),g\pi_i(\xi))+d(g\pi_i(\xi),\pi_i(\xi_i))<\frac{3}{4}\eps$, which would contradict our choice of $I_i$. We conclude that $U$ is contained in the stabiliser of $\xi$, which must be open in $G$.
\end{proof}

The proof of the following fact is rather lengthy and technical; it will be carried out in Appendix~\ref{structure of UBS's}. See Definition~\ref{UBS defn} for the definition of UBS.

\begin{prop}\label{point x_S}
Let $\xi\in\partial X$ and $K\cu\text{Isom}_{\xi}X$ be a compact set of isometries acting trivially on $\overline{\mc{U}}(\xi)$. There exists a point $x_K\in X$ such that $\s_{\xi}\setminus\s_{x_K}$ coincides, up to a null set, with $\Om_K:=\Om_K^1\sqcup ... \sqcup\Om_K^k$, where 
\begin{itemize}
\item each $\Om_K^i$ is a strongly reduced, minimal UBS; 
\item if $g\in K$ we have $g\Om_K^i\cu\Om_K^i$ whenever $\chi_{\Om_K^i}(g)\geq 0$ and $g\Om_K^i\supseteq\Om_K^i$ whenever $\chi_{\Om_K^i}(g)\leq 0$;
\item if $i\neq j$ and $g\in K$, we have $\Om_K^i\cap g\Om_K^j=\emptyset$. 
\end{itemize}
\end{prop}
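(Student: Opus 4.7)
The strategy is to construct, for each equivalence class $[\Om_i]$ of minimal UBSs at $\xi$, a strongly reduced representative $\Om_K^i$ that behaves monotonically under $K$, and then to locate a basepoint $x_K$ whose ultrafilter complement $\s_\xi\setminus\s_{x_K}$ assembles these representatives up to null sets. By Proposition~\ref{main prop on UBS's}, the vertex set of $\mc{G}(\xi)$ consists of finitely many equivalence classes $[\Om_1],\ldots,[\Om_k]$, each admitting a strongly reduced representative $\Om_i$ by Lemma~\ref{reduced and strongly reduced}. Since $K$ acts trivially on $\overline{\mc{U}}(\xi)$, we have $g\Om_i\sim\Om_i$ for every $g\in K$; writing $\chi_i:=\chi_{\Om_i}=\chi_{\Om_K^i}$ (the transfer character depends only on the equivalence class), Proposition~\ref{chi_xi continuous} ensures continuity of $\chi_i$ on $K$. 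By compactness of $K$ and Lemma~\ref{aux lemma 1}, there exists $D>0$ such that for every $g\in K$ and every $i$, the symmetric difference $g\Om_i\triangle\Om_i$ consists of halfspaces within distance $D$ of a fixed basepoint $x\in X$, and $d(x,gx)\leq D$.

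Write each strongly reduced $\Om_i$ as $\mc{C}_1\sqcup\ldots\sqcup\mc{C}_{k_i}$ with each $\mc{C}_j$ a chain containing a diverging subchain. Each $g\in K$ permutes the chains of $\Om_i$ up to bounded symmetric difference, and the direction of its shift along each chain is encoded by $\chi_i(g)$: when $\chi_i(g)\geq 0$, $g$ shifts halfspaces toward $\xi$ along their chains. For each $j$, remove from $\mc{C}_j$ an initial bounded segment---chosen large enough both to exceed the uniform displacement of $K$ and to lie below the distance threshold where $g\Om_i$ and $\Om_i$ may differ---and define $\Om_K^i$ as the union of the remaining tails. The relation $\mf{h}\cu\mf{j}\cu\mf{k}$ with $\mf{h},\mf{k}\in\Om_K^i$ forces $d(x,\mf{j})\geq d(x,\mf{k})>D$, so $\Om_K^i$ is inseparable; each chain retains a diverging tail, so $\Om_K^i$ remains a strongly reduced minimal UBS equivalent to $\Om_i$. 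For $g\in K$ with $\chi_i(g)\geq 0$, the chain-shift analysis shows that $g$ maps the tail of each chain into another tail belonging to $\Om_K^i$, yielding $g\Om_K^i\cu\Om_K^i$; the reverse inclusion for $\chi_i(g)\leq 0$ follows by applying the same argument to $g^{-1}$. The disjointness $\Om_K^i\cap g\Om_K^j=\emptyset$ for $i\neq j$ stems from the observation that representatives of distinct minimal classes overlap only in a bounded set of halfspaces; uniform enlargement of the truncation over the compact set $K$ removes all such overlaps.

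Finally, produce $x_K$ via Proposition~\ref{M(X)=X}. By part~3 of Proposition~\ref{main prop on UBS's}, both $\s_\xi\setminus\s_x$ and $\Om_K:=\Om_K^1\sqcup\ldots\sqcup\Om_K^k$ differ from $\Om_1\cup\ldots\cup\Om_k$ only in halfspaces at bounded distance from $x$, so their symmetric difference has finite $\wh{\nu}$-measure. The set $\Om_K$ is inseparable: for $\mf{h}_1\in\Om_K^{i_1}$, $\mf{h}_2\in\Om_K^{i_2}$ with $\mf{h}_1\cu\mf{j}\cu\mf{h}_2$, inseparability of $\s_\xi$ places $\mf{j}\in\s_\xi$ at distance $>D$ from $x$, hence in some $\Om_{i_3}$ by Proposition~\ref{main prop on UBS's} part~3, and thus in $\Om_K^{i_3}\cu\Om_K$. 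Flipping the sides of the halfspaces in $\Om_K$ within $\s_\xi$ therefore yields a valid ultrafilter $\s$ with $\wh{\nu}(\s\triangle\s_x)<+\infty$ (the depth condition $d(x,\mf{h})>D$ rules out the pathological cases of the ultrafilter property), and Proposition~\ref{M(X)=X} supplies $x_K\in X$ with $\s=\s_{x_K}$ up to null set, whence $\s_\xi\setminus\s_{x_K}=\Om_K$ up to null set. The main obstacle will be making the chain-shift analysis of the second paragraph fully rigorous: one must track precisely how an isometry with $\chi_i(g)\geq 0$ transforms the chains of a strongly reduced UBS, and show that sufficient truncation forces literal monotonicity rather than merely almost-inclusion. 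Finite rank is essential throughout, as is the uniformity granted by compactness of $K$.
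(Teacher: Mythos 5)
Your overall scaffolding matches the paper's: take strongly reduced, pairwise inequivalent representatives of each minimal equivalence class, truncate each one to force $K$-monotonicity and disjointness, check inseparability of the union, and convert the resulting UBS into $\s_{\xi}\setminus\s_{x_K}$ via Proposition~\ref{M(X)=X}. The inseparability argument for the union $\Om_K$ and the ultrafilter construction at the end are essentially right (and indeed the paper packages the last part as Lemma~\ref{UBS's vs points}).

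The genuine gap is exactly where you flag it: the ``chain-shift analysis.'' Your truncation is \emph{distance-based} --- you throw away halfspaces with $d(x,\mf{h})\leq D$. But this cannot give literal inclusion $g\Om_K^i\cu\Om_K^i$. For $g\in K$ with $d(x,gx)=\delta>0$, a halfspace $\mf{h}$ with $d(x,\mf{h})$ barely above $D$ can satisfy $d(x,g\mf{h})\geq d(x,\mf{h})-\delta < D$, so $g\mf{h}$ falls out of the tail. The sign of $\chi_i(g)$ controls only the \emph{aggregate} measure of $g\Om_i\triangle\Om_i$; it does not prevent individual halfspaces from being pulled closer to $x$. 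Over a compact $K$, this loss of depth does not accumulate in your favour: there is no threshold $D$ that makes a distance tail literally $g$-invariant for all $g\in K$ with $\chi_i(g)\geq 0$.

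The paper's fix is to truncate not by distance but by the function $\alpha_{\Om}(\mf{h})=\wh{\nu}(\mscr{H}(x|\mf{h})\cap\Om)$, i.e.\ to take $\Om\setminus\Om_c$ rather than $\{\mf{h}\in\Om: d(x,\mf{h})>D\}$. The crucial technical input is Lemma~\ref{properties of Om_c 2}: after passing (via Corollary~\ref{gXi_2}) to a sub-UBS on which $K$ acts by honest inclusions, one shows $\alpha_{\Om}(g\mf{h})\geq\alpha_{\Om}(\mf{h})$ for all sufficiently deep $\mf{h}$ whenever $\chi_{\Om}(g)\geq 0$, uniformly over $K$. That monotonicity of $\alpha_{\Om}$ --- unlike monotonicity of $d(x,\cdot)$ --- does hold, and it directly yields $g(\Om\setminus\Om_c)\cu\Om\setminus\Om_c$. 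The price is that inseparability of the union $\bigcup_i(\Xi^i\setminus\Xi^i_{c_i})$ is no longer automatic: unlike a distance truncation, an $\alpha$-truncation of one minimal UBS can fail to ``line up'' with that of another. This is handled by Lemma~\ref{saving the day}, a nontrivial induction along the directed graph $\mc{G}(\xi)$ that chooses the constants $c_i$ compatibly. So the trade-off is: you chose a truncation that makes inseparability easy but monotonicity false; the paper chooses one that makes monotonicity true but inseparability delicate. Both choices must be made together, and the monotonicity issue is the one that breaks your argument rather than merely requiring more care.
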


Given points $\xi\in\partial X$, $x\in X$ and a UBS $\Om\cu\s_{\xi}\setminus\s_x$, we can define a function $\alpha_{\Om}\colon\Om\ra\R$ as $\alpha_{\Om}(\mf{h}):=\wh{\nu}\left(\mscr{H}(x|\mf{h})\cap\Om\right)$. The dependence on the point $x$ is not particularly relevant, so we do not record it in our notation. We can consider the sets ${\Om_c:=\{\mf{h}\in\Om\mid\alpha_{\Om}(\mf{h})\leq c\}}$. In Appendix~\ref{structure of UBS's} we will also obtain the following result (see Lemma~\ref{properties of Om_c 4}).

\begin{prop}\label{functions along UBS's}
Suppose that $\Om$ is minimal and strongly reduced. Let $K\cu\text{Isom}_{\xi}X$ be a compact set of isometries such that $g\Om\cu\Om$ for all $g\in K$. As $c\ra +\infty$, the functions
\[(g-\id)\cdot\left[-\left(1-\frac{\alpha_{\Om}}{c}\right)\cdot\mathds{1}_{\Om_c}\right]\]
converge to $\mathds{1}_{\Om\setminus g\Om}$ in $L^2\left(\mscr{H},\wh{\nu}\right)$, uniformly in $g\in K$. If instead $g\Om\supseteq\Om$ for all $g\in K$, they converge to the function $-\mathds{1}_{g\Om\setminus\Om}$.
\end{prop}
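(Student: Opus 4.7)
The plan is to set $G_c := -F_c = (1 - \alpha_\Om/c)\mathds{1}_{\Om_c}$, so that $(g-\id)F_c = G_c - gG_c$, and prove that $G_c - gG_c$ converges to $\mathds{1}_{\Om\setminus g\Om}$ in $L^2(\mscr{H},\wh{\nu})$ uniformly over $g\in K$. The case $g\Om\supseteq\Om$ is handled by the symmetric computation obtained by swapping the roles of $\Om$ and $g\Om$, which replaces the limit by $-\mathds{1}_{g\Om\setminus\Om}$; the remainder of this plan describes the case $g\Om\cu\Om$.

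Two preliminary estimates drive the proof. First, a uniform bound $|\alpha_\Om(\mf{h}) - \alpha_\Om(g^{-1}\mf{h})| \leq M_K$ for all $g\in K$ and $\mf{h}\in g\Om$: translating by the measure-preserving $g$-action gives $\alpha_\Om(g^{-1}\mf{h}) = \wh{\nu}(\mscr{H}(gx|\mf{h}) \cap g\Om)$, and a set-theoretic comparison with $\mscr{H}(x|\mf{h}) \cap \Om$ controls the difference by $\wh{\nu}(\Om\setminus g\Om) + \wh{\nu}(\s_x\triangle\s_{gx})$, i.e.\,by $\chi_\Om(g)$ plus a multiple of $d(x,gx)$. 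Continuity of $\chi_\Om$ (Proposition~\ref{chi_xi continuous}) and of the orbit map, together with compactness of $K$, make this uniformly bounded. Second, the measure estimate $\wh{\nu}(\Om_c)\leq kc$: the strongly reduced decomposition $\Om = \mc{C}_1\sqcup\dots\sqcup\mc{C}_k$ gives $\alpha_\Om \geq \alpha_{\mc{C}_i}$ on each chain $\mc{C}_i$, while $\{\alpha_{\mc{C}_i}\leq c\}$ is an upper set of $\mc{C}_i$ of $\wh{\nu}$-measure at most $c$ thanks to the cumulative-from-the-top nature of $\alpha_{\mc{C}_i}$.

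I then decompose $\|G_c - gG_c - \mathds{1}_{\Om\setminus g\Om}\|_2^2$ over $\Om\cap g\Om$ and $\Om\setminus g\Om$ (the third piece $g\Om\setminus\Om$ is empty). On $\Om\cap g\Om$, Lipschitz continuity of $t\mapsto(1-t/c)_+$ combined with the first ingredient yields $|G_c - gG_c|\leq M_K/c$, supported in $\Om_c \cup g\Om_c$; the second ingredient and $g$-invariance of $\wh{\nu}$ bound this support by $2kc$, so the contribution is at most $2kM_K^2/c$, vanishing uniformly. On $\Om\setminus g\Om$, where $gG_c$ vanishes, the integrand is $(1-G_c)^2 = (\alpha_\Om/c)^2\mathds{1}_{\Om_c} + \mathds{1}_{\Om\setminus\Om_c}$; an elementary bound $\alpha_\Om(\mf{h})\leq d(x,\mf{h})$, obtained from the inclusion $\mscr{H}(x|\mf{h})\cu\mscr{H}(x|y)$ for any $y\in\mf{h}$ together with $\wh{\nu}(\mscr{H}(x|y)) \leq d(x,y)$, truncates this against a cutoff $R$ into a term of order $R^2 M_K/c^2$ plus a residual $\wh{\nu}((\Om\setminus g\Om)\cap\{d(x,\cdot)>R\})$.

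The main obstacle is establishing the equi-integrability statement $\sup_{g\in K}\wh{\nu}((\Om\setminus g\Om)\cap\{d(x,\cdot)>R\})\to 0$ as $R\to+\infty$. For fixed $g$ this follows directly from the hypothesis $g\Om\sim\Om$, which says that the halfspaces in $\Om\setminus g\Om$ lie at uniformly bounded distance from $x$. To upgrade this to a uniform bound over the compact set $K$, I plan to show that $g\mapsto\mathds{1}_{\Om\setminus g\Om}$ is continuous from $K$ into $L^1(\mscr{H},\wh{\nu})$, so that its image is relatively compact and hence uniformly tight. The continuity itself should follow from Proposition~\ref{chi_xi continuous} together with a careful analysis of how the halfspaces of $\Om\setminus g\Om$ move under small perturbations of $g$—which is precisely the kind of finer structural information on UBS's developed in Appendix~\ref{structure of UBS's}.
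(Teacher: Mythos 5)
Your decomposition and the two driving estimates are essentially those of the paper (Lemma~\ref{properties of Om_c 3} for the bound $\|g\alpha_\Om-\alpha_\Om\|_\infty\leq C_2$, and Lemma~\ref{properties of Om_c}(2) for $\wh{\nu}(\Om_c)\leq rc$), and your treatment of the $\Om\cap g\Om$ piece and of the second case $g\Om\supseteq\Om$ are sound. The one real gap is in your handling of the $\Om\setminus g\Om$ piece, where you correctly identify the need for a uniform-in-$g$ tightness estimate but leave it as a plan rather than a proof. The crucial ingredient you are missing is \textbf{Lemma~\ref{C_1}(1)}: there is a single constant $C_1=C_1(\Om,K)$ such that every $\mf{h}\in\Om$ with $d(x,\mf{h})>C_1$ lies in $g\Om$ for \emph{all} $g\in K$. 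In other words, $\Om\setminus g\Om\subseteq\{\mf{h}\in\Om:d(x,\mf{h})\leq C_1\}$ uniformly over $K$. This replaces your proposed equi-integrability argument entirely: it makes the residual $\wh{\nu}\bigl((\Om\setminus g\Om)\cap\{d(x,\cdot)>R\}\bigr)$ vanish identically once $R>C_1$, with no need to go through relative compactness of $g\mapsto\mathds{1}_{\Om\setminus g\Om}$ in $L^1$. Your compactness route could be made rigorous — one can establish $L^1$-continuity via Lemma~\ref{aux lemma 1}, cover $K$ by finitely many $\eps$-balls, and use that each fixed $\Om\setminus g_i\Om$ is bounded — but this is circuitous compared to invoking Lemma~\ref{C_1}(1) directly, which is proved in the same appendix and is one of the intended uses of the strongly reduced hypothesis.

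A related point: the paper also uses Lemma~\ref{properties of Om_c 2}(2), which says that for $c>C_5(\Om,K)$ and $g\in K$ one has $\Om_c\setminus g\Om_c=\Om\setminus g\Om$ exactly. This identity simultaneously kills the term you denote $\wh{\nu}\bigl((\Om\setminus g\Om)\setminus\Om_c\bigr)$ and lets one bound the remaining integrand by $\bigl(\alpha_\Om/c\bigr)^2\leq (C_1/c)^2$, giving the contribution $\leq (C_1/c)^2\wh{\nu}(\Om\setminus g\Om)\leq C_1^2 M/c^2$ where $M=\max_{g\in K}\chi_\Om(g)$ is finite by Proposition~\ref{chi_xi continuous}. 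So with Lemma~\ref{C_1}(1) and Lemma~\ref{properties of Om_c 2}(2) in hand, your cutoff parameter $R$ becomes unnecessary and the proof closes cleanly.

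One smaller remark: your justification for $\wh{\nu}(\Om_c)\leq kc$ via the strongly reduced decomposition is plausible, but the paper instead applies Dilworth (Lemma~\ref{Dilworth for differences}) to $\Om_c$ itself and then invokes Lemma~2.27 of \cite{Fioravanti1} to control the measure of each chain. Your phrase about $\{\alpha_{\mc{C}_i}\leq c\}$ being ``an upper set of measure at most $c$'' is really the content of that cited lemma and should be attributed to it rather than asserted.
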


We are finally ready to complete the proof of Theorem~\ref{A}.

\begin{proof}[Proof of Theorem~\ref{A}]
By Proposition~\ref{Haagerup easy arrow}, it suffices to consider the case when $G$ has a finite orbit in $\overline X$ and, by Lemmata~\ref{H^1 vs open subgroups} and~\ref{open stabilisers}, we can actually assume that $G$ fixes a point $\xi\in\overline X$. If $\xi\in X$, we have $[b]=0$; suppose instead that $\xi\in\partial X$. By Proposition~\ref{chi_xi continuous}, an open, finite-index subgroup $G_0\leq G$ acts trivially on $\overline{\mc{U}}(\xi)$; by Lemma~\ref{H^1 vs open subgroups}, it suffices to consider the case $G=G_0$.

Fix $x\in X$. For every $\eps>0$ and every compact subset $K\cu G$, we need to construct a function $\psi\in L^2\left(\mscr{H}, \wh{\nu}\right)$ such that ${\left\|b^x(g)-(g\psi-\psi)\right\|_2<\eps}$ for all $g\in K$. Considering the point $x_K\in X$ provided by Proposition~\ref{point x_S}, it suffices to find a function $\phi\in L^2\left(\mscr{H}, \wh{\nu}\right)$ such that ${\left\|b^{x_K}(g)-(g\phi-\phi)\right\|_2<\eps}$ for all $g\in K$ and then set $\psi:=\phi+\mathds{1}_{\s_x}-\mathds{1}_{\s_{x_K}}$. If $g\in K$, considering all equalities up to null sets, we have:
\begin{align*}
\s_{gx_K}\setminus\s_{x_K}&=\left[\left(\s_{gx_K}\setminus\s_{x_K}\right)\cap\s_{\xi}\right]\sqcup\left[\left(\s_{gx_K}\setminus\s_{x_K}\right)\cap\s_{\xi}^*\right] \\
&=\left[\left(\s_{\xi}\setminus\s_{x_K}\right)\setminus\left(\s_{\xi}\setminus\s_{gx_K}\right)\right]\sqcup\left[\left(\s_{\xi}\setminus\s_{gx_K}\right)\setminus\left(\s_{\xi}\setminus\s_{x_K}\right)\right]^* \\
&=\left(\Om_K\setminus g\Om_K\right)\sqcup\left(g\Om_K\setminus\Om_K\right)^*.
\end{align*}
In particular, since by construction $\Om_K^i\cap g\Om_K^j=\emptyset$ whenever $i\neq j$,
\[\s_{gx_K}\setminus\s_{x_K}=\bigsqcup_{i=1}^k\left(\Om_K^i\setminus g\Om_K^i\right)\sqcup\left(g\Om_K^i\setminus\Om_K^i\right)^*.\]
Introducing the notation $\mathbbm{2}_A:=\mathds{1}_A-\mathds{1}_{A^*}$ for subsets $A\cu\mscr{H}$, we can rewrite:
\[b^{x_K}(g)=\mathbbm{2}_{\s_{gx_K}\setminus\s_{x_K}}=\sum_{\chi_{\Om_K^i}(g)>0}\mathbbm{2}_{\Om_K^i\setminus g\Om_K^i} -\sum_{\chi_{\Om_K^i}(g)<0}\mathbbm{2}_{g\Om_K^i\setminus\Om_K^i}. \]
For $c>0$, consider the function:
\[G_c:=\sum_{i=1}^k-\left(1-\frac{\alpha_{\Om^i_K}}{c}\right)\mathbbm{2}_{\left(\Om^i_K\right)_c}.\]
Proposition~\ref{functions along UBS's} shows that it suffices to take $\phi=G_c$ for large $c$. 
\end{proof}

\begin{rmk}\label{p neq 2}
Theorem~\ref{A} also holds for the analogous class in $\overline{H^1_c}(G,\rho_p)$, for every $p\in [1,+\infty)$. Indeed, Lem\-ma~\ref{H^1 vs subrepresentations} applies to any decomposition of a Banach space into closed subspaces. In the proof of Lemma~\ref{Haagerup vs X'}, a closed complement to $L^p(\mscr{H},\wh{\nu})$ within $L^p(\mscr{H}',\nu')$ is always provided by the subspace of functions on $\mscr{H}'$ that take opposite values on hemiatoms. Theorem~1 in \cite{Guichardet} holds for representations into general Banach spaces (although this does not appear in \cite{Guichardet}). Finally, if $F$ is a non-abelian free group, $\ell^p(F)$ has no $F$-almost-invariant vectors for every $p\in [1,+\infty)$.

The value of $p$ also has little importance for most of the material in Appendix~\ref{structure of UBS's}. Note however that Proposition~\ref{functions along UBS's} fails for $p=1$; one needs to consider functions with a quicker decay in that case.
\end{rmk}

\subsection{Elementarity and Shalom's property $H_{FD}$.}

Let $X$ be a complete, finite rank median space and let $G$ be a topological group acting on $X$ by isometries. Here we prove Theorem~\ref{Z} and Corollaries~\ref{G} and~\ref{H}.

We will need the following ergodicity result for the action $G\acts\mscr{H}$.

\begin{lem}\label{invariant sets}
Suppose that $X$ is irreducible and let $G\acts X$ be a Roller nonelementary, Roller minimal action. Let $E\cu\mscr{H}$ be a measurable subset such that $\wh{\nu}(gE\triangle E)=0$ for all $g\in G$. Then $\wh{\nu}(E)$ is either $0$ or $+\infty$.
\end{lem}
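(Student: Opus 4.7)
\medskip
\noindent
\textbf{Proof plan.}
The natural strategy is to derive a contradiction from the assumption $0<\wh{\nu}(E)<+\infty$ by exhibiting a nonzero invariant vector in $L^2(\mscr{H},\wh{\nu})$, in direct conflict with Lemma~\ref{no AIV's}. The only real work is to reformulate the essential $G$-invariance of $E$ as the honest $G$-invariance of the characteristic function $\mathds{1}_E$; everything else is already packaged in the existence of the non-abelian free subgroup $H\leq G$ produced in Section~3.1.

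First I would suppose for contradiction that $0<\wh{\nu}(E)<+\infty$, so that $f:=\mathds{1}_E$ is a nonzero element of $L^2(\mscr{H},\wh{\nu})$. For any $g\in G$, the pointwise identity $gf-f=\mathds{1}_{gE}-\mathds{1}_E$ shows that $|gf-f|^2=\mathds{1}_{gE\triangle E}$, whence
\[\|\rho(g)f-f\|_2^2=\wh{\nu}(gE\triangle E)=0\]
by hypothesis; thus $f$ is a nonzero $\rho(G)$-invariant vector. Restricting to the non-abelian free subgroup $H\leq G$ supplied by Lemma~\ref{no AIV's}, the constant sequence $v_n:=f/\|f\|_2$ is a sequence of unit $H$-invariant (hence $H$-almost-invariant) vectors, contradicting the conclusion of Lemma~\ref{no AIV's}. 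This rules out the intermediate case and yields the dichotomy $\wh{\nu}(E)\in\{0,+\infty\}$. No step looks delicate, since the irreducibility of $X$ and the Roller nonelementarity/minimality of $G\acts X$ are precisely the hypotheses needed to invoke Lemma~\ref{no AIV's}.
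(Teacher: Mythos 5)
Your proof is correct in spirit and takes a genuinely different route from the paper's, but it glosses over a point that the paper's own proof handles explicitly. The paper does not argue via invariant vectors at all: it assumes $0<\wh{\nu}(E)<+\infty$, finds a thick halfspace $\mf{h}$ with $E_{\mf{h}}:=E\cap\{\mf{k}\mid\mf{k}\cu\mf{h}\}$ of positive measure $a$, uses Proposition~\ref{strong separation} to place $\mf{h}$ in a facing $n$-tuple with $na>\wh{\nu}(E)$, and then uses double skewering (Proposition~\ref{double skewering}) to produce translates $g_2\mf{h},\dots,g_n\mf{h}$ giving $n$ essentially disjoint copies of $E_{\mf{h}}$ inside $E$ --- a direct measure contradiction. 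Your approach instead packages all the geometry into Lemma~\ref{no AIV's}: a nonzero $G$-invariant vector $\mathds{1}_E$ is in particular an $H$-almost-invariant vector, contradicting that lemma. This is cleaner and shorter, and it also makes transparent the deduction later used in Theorem~\ref{SR 1} that $\rho$ has no nonzero invariant vectors.

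The caveat you overlook: Lemma~\ref{no AIV's} lives in Section~3.1, where the standing assumption that $G\acts X$ is without wall inversions is in force, and the geometric inputs to that lemma (facing tuples, double skewering from \cite{Fioravanti2}) rely on it. Lemma~\ref{invariant sets} is stated for general actions, and the paper's first sentence of the proof is precisely to reduce to the wall-inversion-free case by passing to the barycentric subdivision $X'$ via Lemma~\ref{RM for X'}. Your closing remark that irreducibility and Roller nonelementarity/minimality are ``precisely the hypotheses needed to invoke Lemma~\ref{no AIV's}'' is therefore not quite right. The fix is cheap: push $\mathds{1}_E$ forward to a $G$-invariant vector in $L^2(\mscr{H}',\nu')$ via the isometric embedding $p^*$ of Lemma~\ref{Haagerup vs X'}, note that $X'$ is irreducible and $G\acts X'$ is Roller minimal, Roller nonelementary, and without wall inversions (Lemma~\ref{RM for X'}), and then apply Lemma~\ref{no AIV's} there. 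With that patch your argument is complete.
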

\begin{proof}
Without loss of generality we can assume that $G\acts X$ is without wall inversions, as Lemma~\ref{RM for X'} allows us to pass to the barycentric subdivision $X'$ if necessary. Now, suppose that $E$ is such a set and $0<\wh{\nu}(E)<+\infty$. Since $X$ has finite rank, we can find a thick halfspace $\mf{h}$ such that, replacing $E$ with $E^*$ if necessary, the set $E_{\mf{h}}:=E\cap\left\{\mf{k}\in\mscr{H}\mid \mf{k}\cu\mf{h}\right\}$ satisfies ${a:=\wh{\nu}(E_{\mf{h}})>0}$. By part~2 of Proposition~\ref{strong separation}, the halfspace $\mf{h}$ is part of a facing $n$-tuple with ${n>\frac{1}{a}\cdot\wh{\nu}(E)}$. By Proposition~\ref{double skewering}, there exist ${g_2,...,g_n\in G}$ such that $\mf{h},g_2\mf{h},...,g_n\mf{h}$ are a facing $n$-tuple. The sets $E_{\mf{h}}, g_2E_{\mf{h}},...,g_nE_{\mf{h}}$ are pairwise disjoint and contained in $E$, up to null sets. However, their union has measure $na>\wh{\nu}(E)$, a contradiction.
\end{proof}

\begin{proof}[Proof of Theorem~\ref{Z}]
Suppose for the sake of contradiction that $G\acts X$ is Roller nonelementary. Without loss of generality, we can assume that $X$ has minimal rank $r$ among complete median spaces admitting Roller nonelementary actions of $G$. In particular, $X$ must be irreducible, see the proof of Proposition~\ref{Haagerup easy arrow}. By Proposition~\ref{Roller elementary vs strongly so}, we can also assume that $G\acts X$ is Roller minimal. Theorem~\ref{A} guarantees that $\overline{H^1_c}(G,\rho)\neq\{0\}$ and, since $G$ has property $H_{FD}$, there exists a finite dimensional subrepresentation $V<L^2(\mscr{H},\wh{\nu})$. We will construct a measurable $G$-invariant subset of $\mscr{H}$ with positive, finite measure, thus violating Lemma~\ref{invariant sets}.

Let $f_1,...,f_k$ be measurable functions on $\mscr{H}$ whose equivalence classes in $L^2(\mscr{H},\wh{\nu})$ form an orthonormal basis of $V$. Define, for $c>0$,
\[E_c:=\left\{\mf{h}\in\mscr{H}\mid \exists \alpha=(\alpha_1,...,\alpha_k)\in\mbb{S}^{k-1} \text{ s.t. } \left|(\alpha_1f_1+...+\alpha_kf_k)(\mf{h})\right|> c \right\}.\]
Since in the definition of $E_c$ it suffices to look at $\alpha$'s lying in a countable dense subset of $\mbb{S}^{k-1}$, we conclude that $E_c$ is measurable. If $\mf{h}\in E_c$, we must have $|f_i(\mf{h})|>\frac{c}{k}$ for some $i$, hence $\wh{\nu}(E_c)<+\infty$; if $c$ is sufficiently small, we have $\wh{\nu}(E_c)>0$. Given $g\in G$, there exist real numbers $\alpha_{ij}$, $1\leq i,j\leq k$, such that, outside a measure zero set, we have $f_i=\sum_j\alpha_{ij}(gf_j)$ for every $i$. If $\mf{h}\in E_c\setminus gE_c$, we must have $f_i(\mf{h})\neq\sum_j\alpha_{ij}(gf_j)(\mf{h})$ for some $i$; we conclude that $\wh{\nu}(gE_c\triangle E_c)=0$ for all $g\in G$. 
\end{proof}

\begin{proof}[Proof of Corollary~\ref{G}]
Cocompactness of the action implies that $X$ is finite dimensional. By Proposition~\ref{chi_xi continuous} and Theorem~\ref{Z}, there exists a finite-index subgroup $\G_0\leq\G$ and a normal subgroup $N\lhd\G_0$ consisting of elliptic elements, such that $\G_0/N$ is abelian. Since $\G$ acts freely, $N$ is trivial.
\end{proof}

Recall that, in Gromov's density model, random groups at density $d<\frac{1}{2}$ are nonelementary hyperbolic with overwhelming probability \cite{asinv, Ollivier}. Together with Theorem~10.4 in \cite{Ollivier-Wise}, Corollary~\ref{G} then immediately implies Corollary~\ref{H}.

\section{Superrigidity.}\label{SR}

\subsection{The superrigidity result.}\label{SR section}

Let $X$ be a complete median space of finite rank $r$ and $\G\acts X$ an action by isometries of a discrete group $\G$.

\begin{lem}\label{strongly separated closed convex sets 2}
Suppose $\mf{h}_1,\mf{h}_2,\mf{h}_3\in\mscr{H}$ form a facing triple; let $\mf{k}_i\in\mscr{H}$ and $\mf{h}_i^*$ be strongly separated for $i=1,2,3$. There exists a point $z\in X$ such that $m(\xi_1,\xi_2,\xi_3)=z$ whenever $\xi_i\in\wt{\mf{k}_i}$. 
\end{lem}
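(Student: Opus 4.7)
The plan is the following. The facing condition gives $\mf{k}_j\subseteq\mf{h}_j\subseteq\mf{h}_i^*$ for $j\neq i$, so in particular $\wt{\mf{k}_j}\subseteq\wt{\mf{h}_i^*}$. Setting $D_i:=\overline{\mf{h}_i^*}\subseteq X$, strong separation of the halfspaces $\mf{k}_i,\mf{h}_i^*$ translates into strong separation of the gate-convex subsets $\overline{\mf{k}_i},D_i$ of $X$: any wall transverse to both would be transverse to the halfspaces $\mf{k}_i$ and $\mf{h}_i^*$, which is forbidden by strong separation. Corollary~\ref{strongly separated closed convex sets} then provides a unique pair of gates $(y_i,x_i)\in\overline{\mf{k}_i}\x D_i$ with $\pi_{D_i}(\overline{\mf{k}_i})=\{x_i\}$. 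A short convexity argument, using $y_i\in\overline{\mf{k}_i}\subseteq D_j$ and the nonempty intersection $D_i\cap D_j\supseteq\mf{h}_k$ (for $\{i,j,k\}=\{1,2,3\}$), shows that in fact $x_i\in D_1\cap D_2\cap D_3$ for every $i$.

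Next, Lemma~\ref{ss vs closures} ensures that the closures $K_i$ of $\wt{\mf{k}_i}$ and $\overline{D_i}$ of $D_i$ inside $\overline X$ are still strongly separated, and $(y_i,x_i)$ remains a pair of gates in $\overline X$ (by uniqueness in Corollary~\ref{strongly separated closed convex sets}, and since gate-projections from $X$ extend naturally to the Roller compactification). Hence the gate-projection $\pi_{\overline{D_i}}\colon\overline X\to\overline{D_i}$ is constantly equal to $x_i$ on all of $K_i\supseteq\wt{\mf{k}_i}$.

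Setting $z:=m(x_1,x_2,x_3)\in X$, I would finish with a three-step projection argument. Take $\xi_i\in\wt{\mf{k}_i}$ and let $m:=m(\xi_1,\xi_2,\xi_3)$. The inclusion $\wt{\mf{k}_j}\subseteq\wt{\mf{h}_i^*}$ for $j\neq i$ forces $m\in\wt{\mf{h}_i^*}\subseteq\overline{D_i}$ for every $i$, so $\pi_{\overline{D_i}}(m)=m$. Using the morphism-of-median-algebras property of $\pi_{\overline{D_1}}$, we get
\[
m=\pi_{\overline{D_1}}(m)=m\bigl(\pi_{\overline{D_1}}(\xi_1),\pi_{\overline{D_1}}(\xi_2),\pi_{\overline{D_1}}(\xi_3)\bigr)=m(x_1,\xi_2,\xi_3),
\]
since $\pi_{\overline{D_1}}(\xi_1)=x_1$ while $\xi_2,\xi_3\in\overline{D_1}$ are fixed. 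Iterating with $\pi_{\overline{D_2}}$ (noting $x_1\in D_2\subseteq\overline{D_2}$) and then with $\pi_{\overline{D_3}}$ yields $m=m(x_1,x_2,x_3)=z$, as desired.

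The main subtlety I anticipate is the bookkeeping between closures and halfspaces in $X$ versus in $\overline X$: one has to justify that the $X$-level pair of gates $(y_i,x_i)$ genuinely coincides with the $\overline X$-level pair of gates, and that $x_i$ lies simultaneously in every $D_j$. These are exactly the points that Lemma~\ref{ss vs closures}, Corollary~\ref{strongly separated closed convex sets}, and a direct convexity argument are set up to handle; once those identifications are in place, the conclusion is just a repeated application of the fact that gate-projections preserve the median operation.
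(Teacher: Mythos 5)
Your proof is correct and takes essentially the same route as the paper's. The paper projects everything in one shot onto the triple intersection $C$ of the closures of the $\wt{\mf{h}_i^*}$ in $\overline X$, using the identity $\pi_C(\xi_i)=x_i$, whereas you iterate $\pi_{\overline{D_1}},\pi_{\overline{D_2}},\pi_{\overline{D_3}}$ one factor at a time; the two are interchangeable once one knows $x_i\in\overline{D_1}\cap\overline{D_2}\cap\overline{D_3}$, a fact you verify explicitly (via $\pi_{D_i}(D_j)=D_i\cap D_j$ and $y_i\in D_j$) and which the paper leaves implicit. You also spell out that the $X$-level gates agree with the $\overline X$-level gates and hence that $z\in X$, which the paper takes for granted.
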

\begin{proof}
Let $C$ be the intersection of the closures of $\wt{\mf{h}}_i^*$ inside $\overline X$; it is nonempty, closed and convex. Given points $\xi_i\in\wt{\mf{k}_i}$, set $m:=m(\xi_1,\xi_2,\xi_3)$. By convexity we have $I(\xi_2,\xi_3)\cu\wt{\mf{h}}_1^*$, hence $m\in\wt{\mf{h}}_1^*$; permuting the indices, we obtain $m\in C$. In particular, denoting by $\pi_C$ the gate projection $\overline X\ra C$, we have $m=\pi_Cm(\xi_1,\xi_2,\xi_3)=m(\pi_C\xi_1,\pi_C\xi_2,\pi_C\xi_3)$.

The closures of $\wt{\mf{h}}_i^*$ and $\wt{\mf{k}}_i$ in $\overline X$ are strongly separated by Lemma~\ref{ss vs closures}; let $\{x_i\}$ be the shore of $\overline{\mf{h}_i^*}$ and set $z:=m(x_1,x_2,x_3)$. By Corollary~\ref{strongly separated closed convex sets}, we have $\pi_C(\xi_i)=x_i$, hence $m=z$ no matter which points $\xi_i\in\wt{\mf{k}_i}$ we have chosen.
\end{proof}

In addition to Theorem~\ref{A} and Shalom's superrigidity \cite{Shalom}, the following is a key ingredient in the proof of Theorem~\ref{B}. Since $X$ lacks a cellular structure in general, we are forced to concoct a more elaborate proof for Lemma~\ref{main lemma for SR}, if compared to analogues in the context of trees (pp.\,44-45 in \cite{Shalom}) and cube complexes (Claim~6.1 in \cite{CFI}).

\begin{lem}\label{main lemma for SR}
Suppose that $X$ is irreducible and assume that $\G\acts X$ is Roller nonelementary and Roller minimal. Given $0\neq f\in L^2\left(\mscr{H},\wh{\nu}\right)$, consider 
\[\mscr{S}(f):=\left\{(g_n)\in\G^{\N}\mid g_ng\cdot f\xrightarrow[n\ra+\infty]{} g\cdot f,~\forall g\in \G\right\}.\]
There exists $z\in X$ such that, for every $(g_n)\in\mscr{S}(f)$, there exists $N\geq 0$ such that $g_n\cdot z=z$ for all $n\geq N$.
\end{lem}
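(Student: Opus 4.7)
My plan is to build $z$ as the point produced by Lemma~\ref{strongly separated closed convex sets 2} from a facing triple of thick halfspaces supplied by Proposition~\ref{strong separation} (using irreducibility, Roller minimality and Roller nonelementarity), and to use the hypothesis $g_n\gamma f\to\gamma f$ for every $\gamma\in\Gamma$ to force $g_n z=z$ eventually. First, I would convert $f$ into a geometric test set: pick a generic $\epsilon>0$ with $\wh{\nu}\{|f|=\epsilon\}=0$ and set $A:=\{\mf{h}\in\mscr{H}\mid |f(\mf{h})|>\epsilon\}$, a set of positive, finite $\wh{\nu}$-measure. Since $L^2$-convergence forces convergence in measure on finite-measure sets and $\gamma A=\{|\gamma f|>\epsilon\}$, the hypothesis gives $\wh{\nu}(g_n\gamma A\triangle\gamma A)\to 0$ for every $\gamma\in\Gamma$.

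By Proposition~\ref{strong separation}, I select a facing triple $\mf{h}_1,\mf{h}_2,\mf{h}_3$ of thick halfspaces and thick $\mf{k}_i\cu\mf{h}_i$ with $\mf{k}_i$ and $\mf{h}_i^*$ strongly separated; Lemma~\ref{strongly separated closed convex sets 2} then produces $z\in X$ with $z=m(\xi_1,\xi_2,\xi_3)$ for any $\xi_i\in\wt{\mf{k}_i}$. To conclude $g_n z=z$ for $n$ large, it suffices to verify that $g_n\mf{k}_i\cap\mf{k}_i\neq\emptyset$ for each $i$ and $n$ large: picking $\xi_i\in g_n\mf{k}_i\cap\mf{k}_i$ gives $\xi_i,g_n^{-1}\xi_i\in\wt{\mf{k}_i}$, whence
\[g_n z=g_n\,m(g_n^{-1}\xi_1,g_n^{-1}\xi_2,g_n^{-1}\xi_3)=m(\xi_1,\xi_2,\xi_3)=z.\]

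The core step is establishing this eventual non-disjointness. Arguing by contradiction, if $g_n\mf{k}_i\cu\mf{k}_i^*$ along a subsequence, then $g_n$ sends $\gamma A\cap\{\mf{j}\cu\mf{k}_i\}$ into $\{\mf{j}\cu\mf{k}_i^*\}$ for every $\gamma\in\Gamma$; combined with measure-preservation and $\wh{\nu}(g_n\gamma A\triangle\gamma A)\to 0$, passing to the limit yields
\[\wh{\nu}\bigl(\gamma A\cap\{\mf{j}\cu\mf{k}_i\}\bigr)\leq\wh{\nu}\bigl(\gamma A\cap\{\mf{j}\cu\mf{k}_i^*\}\bigr).\]
To violate this, I invoke Proposition~\ref{double skewering} to obtain $g\in\Gamma$ with $g\mf{k}_i\subsetneq\mf{k}_i$ and $d(g\mf{k}_i,\mf{k}_i^*)>0$, and take $\gamma:=g^N$ for $N$ large. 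The expanding chain $\mf{k}_i\subsetneq g^{-1}\mf{k}_i\subsetneq g^{-2}\mf{k}_i\subsetneq\cdots$ together with the shrinking $g^{-N}\mf{k}_i^*\cu\mf{k}_i^*$ should force $\wh{\nu}(A\cap\{\mf{j}\cu g^{-N}\mf{k}_i\})\to\wh{\nu}(A)$ and $\wh{\nu}(A\cap\{\mf{j}\cu g^{-N}\mf{k}_i^*\})\to 0$, producing the contradiction. The main obstacle is justifying these limits rigorously for an arbitrary finite-measure set $A$: a halfspace in $A$ could in principle contain every $g^{-N}\mf{k}_i^*$ and so never be pushed inside $\mf{k}_i$. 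Closing this gap will combine the quantitative strong separation $d(g\mf{k}_i,\mf{k}_i^*)>0$ with the finiteness of $\wh{\nu}(A)$ to show that such obstructing halfspaces form a $\wh{\nu}$-null subset of $A$, presumably through a further appeal to Roller minimality.
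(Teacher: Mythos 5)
Your reduction to ``$g_n\mf{k}_i\cap\mf{k}_i\neq\emptyset$ for each $i$ and $n$ large implies $g_n z=z$'' is correct, and so is the derivation of the inequality $\wh{\nu}(\gamma A\cap\{\mf{j}\cu\mf{k}_i\})\leq\wh{\nu}(\gamma A\cap\{\mf{j}\cu\mf{k}_i^*\})$ from the contradiction hypothesis. The gap, which you flag yourself, is real and is not a technical footnote: the claimed limits $\wh{\nu}(A\cap\{\mf{j}\cu g^{-N}\mf{k}_i\})\to\wh{\nu}(A)$ and $\wh{\nu}(A\cap\{\mf{j}\cu g^{-N}\mf{k}_i^*\})\to 0$ do not hold for a general finite-measure set $A$. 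Write $\mf{l}_N:=g^{-N}\mf{k}_i$; the halfspaces in $A$ partition into four types relative to $\mf{l}_N$ (contained in $\mf{l}_N$, contained in $\mf{l}_N^*$, containing $\mf{l}_N$, containing $\mf{l}_N^*$, plus transversal ones). A halfspace $\mf{j}\in A$ with $\mf{j}\supseteq\mf{l}_{N_0}^*$ for some $N_0$ satisfies $\mf{j}\supseteq\mf{l}_N^*$ for every $N\geq N_0$ (since the $\mf{l}_N^*$ decrease), so it never enters $\{\mf{j}\cu\mf{l}_N\}$ and drops out of both sides. The same is true for a halfspace transverse to a member of the chain that then ``flips'' to containing $\mf{l}_N^*$. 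Such halfspaces can carry \emph{all} of $\wh{\nu}|_A$: for instance the halfspaces between $\mf{k}_i^*$ and $g\mf{k}_i^*$ have positive total measure (exactly $d(g\mf{k}_i,\mf{k}_i^*)>0$), they all contain $\mf{k}_i^*=\mf{l}_0^*$, and nothing stops $|f|$ from being concentrated there. In that case both sides of your inequality tend to $0$ and no contradiction results. Your proposed repair --- that these obstructing halfspaces should form a $\wh{\nu}$-null subset of $A$ by Roller minimality --- is unsubstantiated and, as the example above shows, false in general; $A$ comes from an arbitrary $f$, and the level-set construction gives you no geometric control over where $A$ sits in $\mscr{H}$.

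The paper's proof avoids exactly this loss of control by \emph{not} working with a level set. It first passes to the barycentric subdivision $X'$ (to guarantee no wall inversions, which your appeal to Proposition~\ref{double skewering} tacitly requires), then approximates $f'$ in $L^2$ by a finite linear combination $F$ of characteristic functions of halfspace intervals $\mscr{H}'(x_j|y_j)$. This is the crucial step: the walls of the support of $F$ can all be trapped inside a single halfspace $\mf{h}$, using two layers of strong separation. One then builds a chain $\gamma_1\mf{h}\supsetneq\gamma_2\mf{h}\supsetneq\cdots$ with consecutive terms strongly separated and at definite distance, and a boundary point $\xi=\bigcap_m\gamma_m\wt{\mf{h}}$. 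A quantitative argument (comparing supports of $gF$, $F$, $g\gamma_mF$, $\gamma_mF$) shows $g_n\gamma_m\mf{h}\cu\gamma_{m-d}\mf{h}$ for $n$ large, whence $g_n\xi\to\xi$ in $\overline X$; this is where the geometric localisation of $F$ is used essentially, and it is precisely what the level-set approach cannot deliver. Only afterwards is $z$ built as a median of translates of $\xi$, via Lemma~\ref{strongly separated closed convex sets 2}, and the convergence $g_n\xi\to\xi$ (and $g_nh_i\xi\to h_i\xi$) is what forces $g_nz=z$ eventually. In short: your final step is the same as the paper's, but the core estimate is carried out on a genuinely different object, and the difference is what makes the argument close.
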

\begin{proof}
By part~1 of Proposition~\ref{products}, the barycentric subdivision $X'$ of $X$ is an irreducible median space of the same rank. The action $\G\acts X'$ is without wall inversions, Roller nonelementary and Roller minimal by Lemma~\ref{RM for X'}. As usual, we write $(\mscr{H}',\nu')$ for $(\mscr{H}(X'),\wh{\nu}_{X'})$.

The function $f\in L^2(\mscr{H},\wh{\nu})$ induces $f'\in L^2(\mscr{H}',\nu')$ with $\mscr{S}(f)\cu\mscr{S}(f')$. We approximate $f'$ by a linear combination $F$ of characteristic functions of halfspace intervals $\mscr{H}'(x_1|y_1),...,\mscr{H}'(x_k|y_k)$ such that ${\|F-f'\|<\frac{1}{3}\|f'\|}$. 

Proposition~\ref{double skewering} implies that $\nu'(\mscr{H}')=+\infty$; all halfspace intervals have finite measure, so there exists $\mf{h}'\in\mscr{H}'$ such that $x_i,y_i\in\mf{h}'$ for all $i$. Propositions~\ref{double skewering} and~\ref{strong separation} provide a thick halfspace $\mf{h}\in\mscr{H}'$ such that $\mf{h}^*$ and $\mf{h}'$ are strongly separated; in particular, $\mf{h}$ contains every wall in the set $\mscr{W}'(x_1|y_1)\cup ... \cup\mscr{W}'(x_k|y_k)$.

Propositions~\ref{double skewering} and~\ref{strong separation} also provide $\gamma_1\in\G$ such that $\mf{h}$ and $\gamma_1\mf{h}$ are strongly separated and $\gamma_2\in\G$ such that $\gamma_1\mf{h}^*$ and $\gamma_2\mf{h}$ are strongly separated. We can assume without loss of generality that $d(\gamma_1\mf{h}^*,\gamma_2\mf{h})\geq 1$, as by Proposition~\ref{trivial chains of halfspaces} the quantity $d(\gamma_1\mf{h}^*,\gamma_2^n\mf{h})$ diverges as $n$ goes to infinity. Thus, we can choose elements $\gamma_m\in\G$ such that $\mf{h}^*\supsetneq\gamma_1\mf{h}\supsetneq\gamma_2\mf{h}\supsetneq ...$ and, for all $i\geq 1$, the halfspaces $\gamma_i\mf{h}^*$ and $\gamma_{i+1}\mf{h}$ are strongly separated and at distance at least $1$.

We denote by $\mc{S}\cu\mscr{H}'$ the support of the function $F$ and by $\mc{P}$ the set of all points $x_i,y_i$. Let $D$ be the maximum distance from $\mf{h}^*$ of a point of $\mc{P}$ and $d:=\left\lceil{D+2}\right\rceil$. Let us fix an integer $m>d$ and $g\in\G$ such that $\|g\gamma_mf'-\gamma_mf'\|<\frac{1}{3}\|f'\|$ and $\|gf'-f'\|<\frac{1}{3}\|f'\|$. We prove that $g\gamma_m\mf{h}\cu\gamma_{m-d}\mf{h}$. 

A straightforward repeated application of the triangle inequality yields $\|gF-F\|<2\|F\|$ and $\|g\gamma_mF-\gamma_mF\|<2\|F\|$; thus, $\wh{\nu}(\mc{S}\cap g\mc{S})>0$ and $\wh{\nu}(\gamma_m\mc{S}\cap g\gamma_m\mc{S})>0$. Let $\mf{w}$ be a wall corresponding to a halfspace in $\gamma_m\mc{S}\cap g\gamma_m\mc{S}$. Since $\mf{w}$ is contained in both $\gamma_m\mf{h}$ and $g\gamma_m\mf{h}$ and $\G\acts X'$ is without wall inversions, we conclude that $\gamma_m\mf{h}\cap g\gamma_m\mf{h}\neq\emptyset$. A similar argument shows that $\mf{h}\cap g\mf{h}\neq\emptyset$. 

Let $\mf{u}$ be the wall corresponding to $g\gamma_m\mf{h}$. If $\mf{u}$ is contained in $\gamma_{m-1}\mf{h}$, we either have $g\gamma_m\mf{h}\cu\gamma_{m-1}\mf{h}$ or $\gamma_{m-1}\mf{h}^*\cap g\gamma_m\mf{h}^*=\emptyset$. The former case immediately yields $g\gamma_m\mf{h}\cu\gamma_{m-d}\mf{h}$, while the latter leads to a contradiction as ${\mf{h}\cu\gamma_{m-1}\mf{h}^*}$ and $g\mf{h}\cu g\gamma_m\mf{h}^*$ intersect.

If instead $\mf{u}$ is not contained in $\gamma_{m-1}\mf{h}$, it is contained in $\gamma_m\mf{h}^*$, by strong separation. Let $1\leq l\leq m$ be minimum such that $\gamma_l\mf{h}^*$ contains $\mf{u}$. We have $\gamma_l\mf{h}\cu g\gamma_m\mf{h}$, since $\gamma_l\mf{h}\cap g\gamma_m\mf{h}\supseteq\gamma_m\mf{h}\cap g\gamma_m\mf{h}\neq\emptyset$.

Let $\mf{k}$ be the side of $\mf{w}$ that is contained in $\gamma_m\mf{h}$. Since either $\mf{k}$ or $\mf{k}^*$ lies in $g\gamma_m\mc{S}$, there exists $q\in\mc{P}$ such that $g\gamma_mq\in\mf{k}\cu\gamma_m\mf{h}$. Hence,
\[m-l\leq d(\gamma_m\mf{h},\gamma_l\mf{h}^*)\leq d(g\gamma_mq,g\gamma_m\mf{h}^*)=d(q,\mf{h}^*)\leq D\]
and $m-l+2\leq D+2\leq d$. By strong separation and minimality of $l$, the wall $\mf{u}$ is contained in $\gamma_{l-2}\mf{h}\cu\gamma_{m-d}\mf{h}$. Hence $g\gamma_m\mf{h}\cu\gamma_{m-d}\mf{h}$, since otherwise $g\gamma_m\mf{h}^*\cap\gamma_{m-d}\mf{h}^*=\emptyset$ would again violate the fact that $\mf{h}\cap g\mf{h}\neq\emptyset$.

Now consider the intersection $C$ of the closures in $\overline{X'}$ of the halfspaces $\gamma_m\wt{\mf{h}}$. It consists of a single point $\xi$ since any $\mf{j}\in\mscr{H}'$ with $\wt{\mf{j}}\cap C\neq\emptyset$ and $\wt{\mf{j}}^*\cap C\neq\emptyset$ would have to be transverse to almost all $\gamma_m\mf{h}$, violating strong separation. Strong separation also implies that $\xi$ actually lies in $\overline X\cu\overline{X'}$. 

Given $(g_n)\in\mscr{S}(f')$ we can assume, removing a finite number of elements if necessary, that $\|g_nf'-f'\|<\frac{1}{3}\|f'\|$ for all $n$. Let $N(m)$ be a natural number such that $\|g_n\gamma_mf' -\gamma_mf'\|<\frac{1}{3}\|f'\|$ for all ${n\geq N(m)}$. When ${n\geq N(m)}$, we have shown that $g_n\gamma_m\mf{h}\cu\gamma_{m-d}\mf{h}$; thus we have ${g_n\xi\in g_n\gamma_m\wt{\mf{h}}\cu\gamma_{m-d}\wt{\mf{h}}}$. In this case, strong separation implies that $\s_{g_n\xi}\triangle\s_{\xi}$ consists only of halfspaces whose corresponding walls are contained in $\gamma_{m-d-1}\mf{h}$. This shows that $\limsup\s_{g_n\xi}\triangle\s_{\xi}=\emptyset$; we conclude that $g_n\xi\ra\xi$ in the topology of $\overline X$, for every $(g_n)\in\mscr{S}(f')$.

We finally construct the point $z\in X$. Let $\mf{j},\mf{m}$ be thick halfspaces of $X'$ so that $\mf{m}^*$ and $\mf{j}$ are strongly separated and $\xi\in\wt{\mf{j}}$. Part~2 of Proposition~\ref{strong separation} provides a facing triple consisting of $\mf{m},\mf{m}_1,\mf{m}_2$. We choose thick halfspaces $\mf{j}_1,\mf{j}_2\in\mscr{H}'$ such that $\mf{m}_i^*$ and $\mf{j}_i$ are strongly separated for $i=1,2$; by Proposition~\ref{double skewering} we can find $h_i\in\G$ such that $h_i\mf{j}\cu\mf{j}_i$. Let $z\in X'$ be the point provided by Lemma~\ref{strongly separated closed convex sets 2} applied to $\mf{j},\mf{j}_1,\mf{j}_2$ and $\mf{m},\mf{m}_1,\mf{m}_2$; in particular, we have $z=m(\xi,h_1\xi,h_2\xi)$, hence $z\in X$.

Since the set $\mscr{S}(f')$ is closed under conjugation by elements of $\G$, we have ${g_nh_i\xi\ra h_i\xi}$ for all ${(g_n)\in\mscr{S}(f')}$. Hence, given ${(g_n)\in\mscr{S}(f')}$, there exists $N\geq 0$ such that for every $n\geq N$ we have $g_n\xi\in\wt{\mf{j}}$ and $g_nh_i\xi\in\wt{\mf{j}}_i$, for $i=1,2$. Thus $g_nz=g_nm(\xi,h_1\xi,h_2\xi)=m(g_n\xi,g_nh_1\xi,g_nh_2\xi)=z$.
\end{proof}

In the rest of the section, we consider a locally compact group $G$ and a lattice $\G<G$. Any Borel fundamental domain $U\cu G$ defines a cocycle $\alpha\colon G\x U\ra\G$ such that $gu\in\alpha(g,u)\cdot U$. We say that $\G$ is \emph{square-integrable} if $\G$ is finitely generated and $U$ can be chosen so that
\[\int_U \left|\alpha(g,u)\right|_S^2~du<+\infty,~\forall g\in G;\]
here $du$ is the Haar measure on $U$ and $|\cdot|_S$ denotes the word length with respect to a finite generating set $S\cu\G$. Integrability does not depend on the choice of $S$. Uniform lattices are always square-integrable and a few nonuniform examples were mentioned in the introduction; see \cite{Shalom,Remy99,Remy05,Caprace-Remy09,Caprace-Remy10} for more details and examples. 

We assume that $G$ splits as a product $G_1\x ... \x G_{\ell}$, where each $G_i$ is compactly generated and $\ell\geq 2$. We also require the lattice $\G<G$ to be \emph{irreducible}, i.e.\,to project densely into each factor $G_i$.

Consider a unitary representation ${\pi\colon\G\ra\mscr{U}(\mc{H})}$; we denote by $\mc{H}^0$ the subspace of invariant vectors. Let $c\colon\G\ra\mc{H}$ be a $1$-cocycle for $\pi$. We will make use of the following result of Y. Shalom in an essential way; see page~14 and Theorem~4.1 in \cite{Shalom} for a proof.

\begin{thm}\label{ShalomTHM}
Suppose that $\G$ is square-integrable and that $\mc{H}^0=\{0\}$. There exist $\G$-invariant closed subspaces $\mc{H}_i\leq\mc{H}$, $i=1,...,\ell$, where the restriction $\pi_i\colon\G\ra\mscr{U}(\mc{H}_i)$ extends to a continuous representation $\overline{\pi_i}\colon G\ra\mscr{U}(\mc{H}_i)$ that factors through the projection $p_i\colon G\ra G_i$. Furthermore, there are cocycles $c_i\colon\G\ra\mc{H}_i$, $i=1,...,\ell$, such that $c$ and $c_1+...+c_{\ell}$ represent the same class in $\overline{H^1}(\G,\pi)$.
\end{thm}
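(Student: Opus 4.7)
This result is Shalom's superrigidity for reduced cohomology (Theorem~4.1 in \cite{Shalom}), so any plan I sketch essentially retraces his original argument. The overall strategy is \emph{induction of representations and cocycles} from $\G$ to $G$, combined with a splitting theorem for reduced cohomology of products.

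My plan would begin by constructing the induced representation $\wt{\pi}\colon G\to\mscr{U}(\wt{\mc{H}})$, where $\wt{\mc{H}}=L^2(G/\G,\mc{H})$ is realised as the Hilbert space of measurable ${f\colon G\to\mc{H}}$ satisfying $f(g\gamma)=\pi(\gamma^{-1})f(g)$ with $\int_U\|f\|^2<+\infty$ for a Borel fundamental domain $U\cu G$. Square-integrability of $\G$ is then exactly what is needed to ensure that the cocycle $c\colon\G\to\mc{H}$ induces a continuous cocycle $\wt{c}\colon G\to\wt{\mc{H}}$, defined (after a choice of $U$) via $\wt{c}(g)(u):=\pi(\alpha(g,u))\cdot c(\alpha(g,u)^{-1})+\text{coboundary}$, and so that the resulting map $\overline{H^1}(\G,\pi)\to\overline{H^1_c}(G,\wt{\pi})$ is injective. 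The absence of $\G$-invariant vectors in $\mc{H}$ together with irreducibility of $\G$ in $G$ translates (by a standard Mautner-type argument) into the absence of $G$-invariant vectors in $\wt{\mc{H}}$.

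The core step is then a splitting theorem for reduced $1$-cohomology of the product $G=G_1\x\ldots\x G_{\ell}$. Working with the induced class $[\wt{c}]\in\overline{H^1_c}(G,\wt{\pi})$, one decomposes $\wt{\mc{H}}$ into the closed $G$-invariant subspaces $\wt{\mc{H}}_i$ on which the action of $\prod_{j\neq i}G_j$ is \emph{trivial}. A Delorme/Guichardet-style analysis of affine actions of product groups---relying on Howe--Moore-type ergodicity of each $G_i\acts G/\G$ that again comes from irreducibility of $\G$---shows that every reduced cohomology class for $G$ with no $G$-invariant vectors can be represented by a cocycle of the form $\wt{c}_1+\ldots+\wt{c}_{\ell}$, with $\wt{c}_i$ valued in $\wt{\mc{H}}_i$ and depending only on the projection $p_i\colon G\to G_i$.

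The final step is to transport this decomposition back to $\G$. Restricting the $\G$-action to each $\wt{\mc{H}}_i$ and using density of $p_i(\G)$ in $G_i$, one identifies $\wt{\mc{H}}_i$ with a closed $\G$-invariant subspace $\mc{H}_i\leq\mc{H}$ on which the $\G$-action uniquely extends to a continuous representation $\overline{\pi_i}$ of $G$ factoring through $G_i$; pulling back $\wt{c}_i$ along this identification produces $c_i\colon\G\to\mc{H}_i$ with ${[c]=[c_1+\ldots+c_{\ell}]}$ in $\overline{H^1}(\G,\pi)$. I expect the main obstacle to be the splitting step: separating the induced reduced class into contributions coming from each factor requires a delicate use of ergodicity of the $G_i$-actions on the quotient and a careful interplay between weak closure phenomena in $\overline{H^1_c}$ and the product structure, rather than any sharp geometric argument about median spaces.
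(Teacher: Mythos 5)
The paper does not prove this statement: it is quoted verbatim (after cosmetic renaming) from Shalom's Theorem~4.1 together with the remark on page~14 of \cite{Shalom}, and the author explicitly refers the reader there. So there is no ``paper's own proof'' to compare against; what you have written is an outline of Shalom's original argument, and it is broadly accurate in its three-step structure (induction of the cocycle using square-integrability, splitting of the induced reduced class over the product, descent to $\G$ via density of the projections).

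Two small cautions on the details. First, the splitting step in Shalom is his Theorem~3.1, whose proof proceeds by analysing the closed affine $G$-invariant subspaces of the affine Hilbert space attached to the induced cocycle and extracting a minimal one; labelling it ``Delorme/Guichardet-style'' and invoking ``Howe--Moore-type ergodicity'' misattributes the mechanism --- what is actually used is ergodicity of each $G_i \acts G/\G$, which is a direct consequence of irreducibility of the lattice (no mixing is needed), and the invariant-subspace argument does not go through Delorme's Property~(T) machinery. Second, the descent step is where most of the work hides: identifying the subspaces $\widetilde{\mc{H}}_i \cu L^2(G/\G,\mc{H})$ with $\G$-invariant subspaces $\mc{H}_i\leq\mc{H}$, and then showing that the $\G$-action on $\mc{H}_i$ extends continuously to $G$ through $p_i$, again uses ergodicity of $\prod_{j\neq i}G_j\acts G/\G$ in a nontrivial way (sections of the induced bundle that are invariant under the complementary factors are essentially constant), and your one-sentence gloss leaves this gap. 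Since the paper treats the theorem as a black box, these imprecisions do not affect anything downstream, but a self-contained proof would have to address them.
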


The following is a version of our Theorem~\ref{B} under stronger hypotheses.

\begin{thm}\label{SR 1}
Suppose that $\G$ is square-integrable and $X$ is irreducible; let $\G\acts X$ be Roller nonelementary and Roller minimal. There exists a $\G$-invariant, closed median subalgebra $Y\cu X$ where $\G\acts Y$ extends to a continuous action $G\acts Y$. Moreover, $G\acts Y$ factors through a projection $p_i\colon G\ra G_i$.
\end{thm}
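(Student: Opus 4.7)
The plan is to follow the Monod-style outline sketched in the introduction: use Theorem~\ref{A} to produce a nonvanishing reduced cohomology class, apply Shalom's Theorem~\ref{ShalomTHM} to isolate a single factor, and then use Lemma~\ref{main lemma for SR} to translate this back into a geometric statement about $X$. Since $\G\acts X$ is Roller nonelementary, Proposition~\ref{Haagerup easy arrow} gives $\overline{[b]}\neq 0$ in $\overline{H^1_c}(\G,\rho)$. Moreover, because $X$ is irreducible and $\G\acts X$ is also Roller minimal, Lemma~\ref{no AIV's} provides a free subgroup of $\G$ with no almost invariant vectors in $\mc{H}:=L^2(\mscr{H},\wh{\nu})$; in particular $\mc{H}^0=\{0\}$ and Theorem~\ref{ShalomTHM} applies. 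It yields $\G$-invariant closed subspaces $\mc{H}_i\leq\mc{H}$, continuous extensions $\overline{\pi_i}\colon G\ra\mscr{U}(\mc{H}_i)$ of $\rho|_{\mc{H}_i}$ factoring through $p_i$, and cocycles $b_i\colon\G\ra\mc{H}_i$ with $\overline{[b]}=\sum_i\overline{[b_i]}$. At least one summand is nonzero; after relabelling I assume it is $\overline{[b_1]}$, and I fix a nonzero vector $f\in\mc{H}_1$.

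Next I claim that every sequence $(g_n)\subset\G$ with $p_1(g_n)\ra 1_{G_1}$ belongs to the set $\mscr{S}(f)$ of Lemma~\ref{main lemma for SR}. Indeed, continuity of $\overline{\pi_1}$ gives $\rho(g_n)|_{\mc{H}_1}=\overline{\pi_1}(g_n)\ra\id$ in the strong operator topology, and since $\rho(g)f\in\mc{H}_1$ for every $g\in\G$ this yields $\rho(g_ng)f=\rho(g_n)\rho(g)f\ra\rho(g)f$. The lemma then produces a point $z\in X$ such that every $(g_n)\in\mscr{S}(f)$ stabilises $z$ eventually. A routine contradiction converts this pointwise statement into a uniform one: there is an open neighborhood $U$ of $1_{G_1}$ satisfying $\G\cap p_1^{-1}(U)\subseteq\G_z$, since otherwise we could extract a sequence $g_n\in\G\setminus\G_z$ with $p_1(g_n)\ra 1_{G_1}$, violating the lemma.

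I then take $Y$ to be the closed median subalgebra of $X$ generated by the orbit $\G z$; it is $\G$-invariant. For $g,g'\in\G$ with $p_1(g)=p_1(g')$ one has $g'^{-1}g\in\G\cap\ker p_1\subseteq\G_z$, hence $gz=g'z$; conjugating by arbitrary $\gamma\in\G$ gives the same identity on the whole orbit $\G z$. The set of $y\in X$ for which $gy=g'y$ whenever $p_1(g)=p_1(g')$ is closed under the median operation (as each element of $\G$ is a median morphism) and closed in $X$ (by continuity of isometries), hence it contains $Y$. Therefore the homomorphism $\G\ra\text{Isom}~Y$ factors as $\psi\o p_1$ for a unique $\psi\colon p_1(\G)\ra\text{Isom}~Y$. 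The isometry identity $d(\psi(u)y,\psi(u')y)=d(y,\psi(u^{-1}u')y)$, together with continuity of $u\mapsto\psi(u)y$ at $u=1_{G_1}$ (a consequence of the neighborhood $U$), forces $\psi(u_n)y$ to be Cauchy in $Y$ whenever $(u_n)\subset p_1(\G)$ is Cauchy in $G_1$. Completeness of $Y$ and density of $p_1(\G)$ in $G_1$ then extend $\psi$ uniquely to a continuous homomorphism $\overline{\psi}\colon G_1\ra\text{Isom}~Y$, and the composition $\overline{\psi}\o p_1\colon G\ra\text{Isom}~Y$ is the desired continuous $G$-action factoring through $p_1$.

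The main obstacle is the transfer of $p_1$-equivariance from the single orbit $\G z$, where it is a direct consequence of the stabiliser computation, to the full median subalgebra $Y$ together with its continuous extension to $G_1$. Both propagations rest on two soft facts: $p_1$-equivariance is preserved by the median operation and by pointwise limits (so it descends to the closed median subalgebra generated by $\G z$), and the isometric character of the $\G$-action reduces continuity of $\overline{\psi}$ on $G_1$ to continuity at the identity, which is exactly what the neighborhood $U$ provides.
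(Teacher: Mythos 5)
Your proposal follows the same backbone as the paper's argument: produce a nonzero reduced cohomology class via Theorem~\ref{A}, verify $\mathcal{H}^0=\{0\}$ so that Theorem~\ref{ShalomTHM} applies, pick $0\neq f\in\mathcal{H}_1$, feed the resulting constraint on $\mscr{S}(f)$ into Lemma~\ref{main lemma for SR}, and build $Y$ from the output. The two points where you diverge from the paper are both legitimate and worth noting. First, you kill invariant vectors via Lemma~\ref{no AIV's} (no $H$-almost-invariant vectors for a free subgroup $H\leq\G$), whereas the paper invokes Lemma~\ref{invariant sets} directly; both work, since a nonzero $\G$-invariant vector would a fortiori be $H$-almost-invariant. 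Second, the paper takes $Y$ to be the set of $x\in X$ satisfying the convergence condition $g_nx\ra x$ for all sequences with $p_i(g_n)\ra\id$, and outsources the extension to Proposition~4.3 of \cite{Shalom}; you instead take $Y$ to be the closed median subalgebra generated by $\G z$ and give a self-contained density/Cauchy argument for the extension. Your $Y$ is a priori smaller than the paper's, and corresponds to (the median closure of a piece of) the set $Y_0$ of Remark~\ref{the other topology}.

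One step deserves more care: you assert that continuity of $u\mapsto\psi(u)y$ at $1_{G_1}$ is ``a consequence of the neighbourhood $U$,'' but $U$ directly gives local constancy only for $y=z$ (and, by the conjugation argument, for $y\in\G z$, and hence for finite median combinations of orbit points, since $\G$ acts by median morphisms). For a general $y\in Y$ you need an additional $\eps/3$-argument: approximate $y$ by a point $y'$ in the median subalgebra generated by $\G z$, then use that $\psi(u)$ is an isometry together with local constancy of $\psi(\cdot)y'$ to bound $d(\psi(u)y,y)$. This is routine but should be spelled out, since it is exactly the step that propagates continuity from the orbit to its median-metric closure. Also implicit in the sequence-extraction producing $U$ is first countability of $G_1$; this is harmless here (the ambient groups are locally compact and second countable in Shalom's framework) but is worth acknowledging.

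Subject to filling in that density step, the argument is correct, and you do a bit more work than the paper in exchange for not having to cite Shalom's Proposition~4.3: your explicit Cauchy/completeness argument makes it transparent that it is the isometric character of the $\G$-action plus the orbit-stabiliser openness coming from Lemma~\ref{main lemma for SR} that drives the extension.
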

\begin{proof}
We have $\overline{H^1}(\G,\rho)\neq\{0\}$ by Theorem~\ref{A}. Lem\-ma~\ref{invariant sets} implies that $\rho$ has no nonzero invariant vectors; thus, Theorem~\ref{ShalomTHM} provides a $\G$-invariant subspace $\{0\}\neq\mc{H}_i\cu L^2(\mscr{H},\wh{\nu})$ where the action of $\G$ extends to a continuous action of $G$ factoring through a projection $p_i\colon G\ra G_i$. 

Pick any $0\neq f\in\mc{H}_i$ and consider the set $\mscr{S}(f)$ introduced in Lemma~\ref{main lemma for SR}. Any sequence $(g_n)\in\G^{\N}$ with $p_i(g_n)\ra\id$ lies in $\mscr{S}(f)$. Thus, Lemma~\ref{main lemma for SR} implies that the $\G$-invariant set
\[Y:=\left\{x\in X\mid\forall (g_n)\in\G^{\N}\text{ s.t. }p_i(g_n)\ra\id, \text{ we have } g_nx\ra x\right\}\]
is nonempty. Note that $Y$ is a median subalgebra of $X$, thus the restriction of the metric of $X$ gives $Y$ a structure of median space. Since $Y$ is a closed subset of $X$, it is a complete median space. Finally, Proposition~4.3 in \cite{Shalom} provides a continuous extension to $G$ of $\G\acts Y$ and this factors through the projection $p_i$. 
\end{proof}

The assumption that $\G\acts X$ be Roller minimal and Roller nonelementary can be replaced with the (stronger) requirement that $\G$ have no finite orbit in the visual boundary of the {\rm CAT}(0) space $\wh{X}$; see Proposition~5.2 in \cite{Fioravanti2}.

The homomorphism $\phi\colon G\ra\text{Isom~Y}$ provided by Theorem~\ref{SR 1} is continuous with respect to the topology of pointwise convergence. We remark however that $\phi$ remains continuous even if we endow $\text{Isom}~Y$ with the topology mentioned in Remark~\ref{the other topology} below; this will be a key point in our proof of Theorem~\ref{C}.

\begin{rmk}\label{the other topology}
In the proof of Theorem~\ref{SR 1}, Lemma~\ref{main lemma for SR} actually yields that the smaller set
\[Y_0:=\left\{x\in X\mid\forall (g_n)\in\G^{\N}\text{ s.t. }p_i(g_n)\ra\id, \exists N\geq 0\text{ s.t. } g_nx=x, \forall n\geq N\right\}\]
is nonempty. Thus, $\phi\colon G\ra\text{Isom~Y}$ is continuous with respect to the topology on $\text{Isom}~Y$ that is generated by stabilisers of points of $Y_0$. In the statement of Theorem~\ref{SR 1}, we can always take $Y$ to be the closure of $Y_0$ in $X$.

This topology on $\text{Isom}~Y$ might seem a lot finer than the topology of pointwise convergence. To clarify this phenomenon, we mention the following fact, without proof. Let $W$ be an irreducible, complete, finite rank median space admitting a Roller nonelementary, Roller minimal action; then there exists a dense, convex subset $C\cu W$ such that, for every $x\in C$, the stabiliser of $x$ is open for the topology of pointwise convergence on $\text{Isom}~W$. It is not hard to derive this from Lemma~\ref{strongly separated closed convex sets 2}.
\end{rmk}

Relaxing the hypotheses of Theorem~\ref{SR 1}, we obtain Theorem~\ref{B} for all square-integrable lattices:

\begin{cor}\label{SR 2}
Suppose that $\G$ is square-integrable; let $\G\acts X$ be Roller nonelementary. There exist a finite index subgroup $\G_0\leq\G$, a $\G_0$-invariant component $Z\cu\overline X$ and a $\G_0$-invariant closed median subalgebra $Y\cu Z$ where the action $\G_0\acts Y$ extends to a continuous action $G_0\acts Y$, for an open finite index subgroup $G_0\leq G$.
\end{cor}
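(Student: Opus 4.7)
The plan is to iteratively reduce Corollary~\ref{SR 2} to Theorem~\ref{SR 1} by combining two moves: passing to a Roller minimal subaction via Proposition~\ref{Roller elementary vs strongly so}, and splitting into irreducible factors via Proposition~\ref{products} at the price of replacing $\G$ by a finite index subgroup.

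First, since $\G\acts X$ is Roller nonelementary and therefore fixes no point in $\overline X$, Proposition~\ref{Roller elementary vs strongly so} produces a $\G$-invariant component $Z_0\cu\overline X$ together with a $\G$-invariant closed convex subset $C\cu Z_0$ such that $\G\acts C$ is Roller minimal. The rank of $C$ is positive by definition of Roller minimality, and $\G\acts C$ is automatically Roller nonelementary: a finite orbit in $\overline C$ would have a finite median hull, which would be a proper $\G$-invariant closed convex subset of $\overline C$ contradicting Roller minimality.

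Next, I would decompose $C=C_1\x\dotsb\x C_k$ into irreducible factors (Proposition~\ref{products}) and let $\G_1\leq\G$ be the finite index subgroup preserving the decomposition, provided by part~2 of that proposition. Part~4 identifies $\overline C$ with the product $\overline{C_1}\x\dotsb\x\overline{C_k}$, so if every factor admitted a finite $\G_1$-orbit the product would give a finite $\G_1$-orbit in $\overline C$. Since $\G_1\acts C$ is still Roller nonelementary (finite index preserves this property), some factor $C_i$ must carry a Roller nonelementary $\G_1$-action. Applying Proposition~\ref{Roller elementary vs strongly so} again to $\G_1\acts C_i$ yields a $\G_1$-invariant component of $\overline{C_i}$ and a $\G_1$-invariant closed convex subset $D$ on which $\G_1$ acts Roller minimally (and therefore Roller nonelementarily, by the same hull argument as before). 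If $D$ happens to be reducible, I would iterate the second reduction; each cycle strictly decreases the rank of the ambient median space, so after at most $r$ iterations the process terminates with a finite index subgroup $\G_0\leq\G$ acting Roller minimally and Roller nonelementarily on an irreducible complete finite rank median space $W$, which embeds as a closed median subalgebra into some component $Z\cu\overline X$ via the chain of inclusions and product embeddings.

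Finally, square-integrability is preserved under passage to finite index subgroups, so $\G_0$ remains square-integrable, and the closures $\overline{p_j(\G_0)}\leq G_j$ are open and of finite index in each factor; taking $G_0:=\prod_j\overline{p_j(\G_0)}$ produces an open finite index subgroup of $G$ in which $\G_0$ sits as an irreducible lattice. Theorem~\ref{SR 1} applied to $\G_0\acts W$ then furnishes a $\G_0$-invariant closed median subalgebra $Y\cu W$ whose $\G_0$-action extends continuously to $G_0$; pulling $Y$ back along the embeddings identifies it with a closed median subalgebra of $Z$. The hard part I expect is the bookkeeping during the iteration, in particular checking that the chain of embeddings really terminates inside a single component $Z$ of $\overline X$, and that each subsequent application of Proposition~\ref{Roller elementary vs strongly so} continues to deliver a Roller nonelementary subaction so that Theorem~\ref{SR 1} genuinely applies at the end.
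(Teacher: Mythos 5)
Your first reduction is sound: if $\G\acts X$ is Roller nonelementary, Proposition~\ref{Roller elementary vs strongly so} gives a $\G$-invariant closed convex subset $C$ of a component on which $\G$ acts Roller minimally, and $\G\acts C$ is automatically Roller nonelementary because $\overline C\cu\overline X$, so any finite orbit in $\overline C$ would already be a finite orbit in $\overline X$. (Your ``finite median hull'' justification is shakier --- it is not a priori clear the hull is a \emph{proper} subset of $\overline C$ --- but the simpler inclusion argument does the job.) Your handling of square-integrability, density of projections, and the open finite-index group $G_0$ is also fine.

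The real gap is in the iteration: when $C$ splits as $C_1\x\dotsb\x C_k$, you keep \emph{one} Roller nonelementary factor $C_i$ and discard the rest. This cannot be repaired at the end, because the discarded factors may themselves carry Roller nonelementary $\G_0$-actions (they do, e.g., for a Burger--Mozes lattice acting on a product of two trees). To realise a median subalgebra $Y$ of the surviving factor $C_i$ inside $\overline X$ as a $\G_0$-invariant set, one must take a product $Y\x\{*\}$ with a $\G_0$-fixed point $*$ in the closure of the other factors; but such a fixed point does not exist precisely when those factors are Roller nonelementary. Roller nonelementarity passes to finite-index subgroups, so no amount of shrinking $\G_0$ will produce the needed fixed point, and your ``chain of inclusions and product embeddings'' does not actually land inside $\overline X$ equivariantly. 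The paper avoids this by running an induction on the rank: after splitting off the irreducible factors, it applies the inductive hypothesis \emph{to every Roller nonelementary factor} $D_1,\dots,D_s$ (each of strictly smaller rank) to get subalgebras $Y_1,\dots,Y_s$, fixes a point $\xi_i$ only in the genuinely Roller elementary factors $D_{s+1},\dots,D_k$, and then assembles $Y=Y_1\x\dotsb\x Y_s\x\{\xi_{s+1}\}\x\dotsb\x\{\xi_k\}$. The product structure is exactly what makes $Y$ sit inside a single component $Z\cu\overline X$ while remaining $\G_0$-invariant with an action extending to $G_0$; your approach forfeits this by pruning down to a single factor.
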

\begin{proof}
We proceed by induction on $\text{rank}(X)$; when the rank is zero there is nothing to prove, so we assume that the statement holds for all median spaces of rank at most $r-1$. By Proposition~\ref{Roller elementary vs strongly so}, there exists a $\G$-invariant, closed, convex subset $D$ of a component $W\cu\overline X$ such that $\G\acts D$ is Roller minimal and Roller nonelementary. If $W\cu\partial X$, we have $\text{rank}(D)<r$ and we conclude by the inductive hypothesis; thus we can assume that $W=X$.

Let $D=D_1\x ... \x D_k$ be the splitting of $D$ into irreducible factors provided by Proposition~\ref{products}. If $k=1$, the result follows from Theorem~\ref{SR 1}. If $k\geq 2$, let $\G_1\leq\G$ be a finite index subgroup preserving the splitting of $D$; up to permuting the factors, we can assume that $\G_1\acts D_i$ is Roller nonelementary for $1\leq i\leq s$ and Roller elementary for $i>s$. A further finite index subgroup $\G_2\leq\G_1$ fixes a point $\xi_i\in\overline{D_i}$ for each $i>s$; we denote by $Z_i\cu\overline{D_i}$ the component containing $\xi_i$. Note that $\G_2$ is a square-integrable, irreducible lattice in an open, finite index subgroup of $G$.

Since $\text{rank}(D_i)<r$, for each $i\leq s$ the inductive hypothesis yields a finite index subgroup $\G_{0i}\leq\G_2$, an open finite index subgroup $G_{0i}\leq G$ and a $\G_{0i}$-invariant, closed median subalgebra $Y_i$ of a component $Z_i\cu\overline{D_i}$ where the action of $\G_{0i}$ extends to a continuous action of $G_{0i}$. Let $\G_0$ be the intersection of all $\G_{0i}$ and $G_0$ be the intersection of all $G_{0i}$, for $i\leq s$. The set $Y:=Y_1\x ... \x Y_s\x\{\xi_{s+1}\}\x ... \x\{\xi_k\}\cu\overline D$ is a closed median subalgebra of $Z_1\x ... \x Z_k$, which is a component of $\overline D$; in particular, $Z_1\x ... \x Z_k$ is a closed, convex subset of a component $Z\cu\overline X$. The action $\G_0\acts Y$ trivially extends to a continuous action $G_0\acts Y$.
\end{proof}

We now describe two examples that illustrate how:
\begin{itemize}
\item in Theorem~\ref{SR 1} the space $Y$ cannot be taken to coincide with $X$, nor with a convex subset (Example~\ref{need Y});
\item in Corollary~\ref{SR 2} it cannot be avoided to pass to the finite index subgroup $\G_0$, even when the action is Roller minimal (Example~\ref{need G_0}).
\end{itemize}
The actions that we consider are actually on {\rm CAT}(0) square complexes. Since Burger-Mozes groups play an important role in the construction of the two examples, we briefly recall a few facts regarding their construction.

Given an integer $n\geq 3$, we denote by $T_n$ the $n$-regular tree and by $A_n$ the group of even permutations on $n$ elements. We fix a \emph{legal colouring} on $T_n$, i.e.\,a way of associating an integer in $\{1,...,n\}$ to every edge of $T_n$ so that we see all $n$ integers around each vertex; in particular, we have a bijection $i_v\colon\text{lk}(v)\ra\{1,...,n\}$ for every vertex $v$. Let $U(A_n)\leq\text{Isom}~T_n$ be the subgroup of isometries $g$ such that $i_{gv}\o g\o i_v^{-1}\in A_n$ for every vertex $v$ of $T_n$; we denote by $U(A_n)^+$ the intersection of $U(A_n)$ with the subgroup of $\text{Isom}~T_n$ generated by edge stabilisers. If $n\geq 4$, the subgroup $U(A_n)^+$ has index $2$ in $U(A_n)$, see Proposition~3.2.1 in \cite{Burger-Mozes(A)}.

The subgroup $U(A_n)$ is closed in $\text{Isom}~T_n$; in particular, it is locally compact, second countable and compactly generated (e.g.\,by Theorem~4.C.5 and Proposition~5.B.5 in \cite{Cornulier-DeLaHarpe}). By Theorem~6.3 in \cite{Burger-Mozes(B)}, there exists a uniform irreducible lattice $\Lambda\leq U(A_{2k})\x U(A_{2k})$ for every integer $k\geq 19$.

For the next two examples, we fix such a lattice $\Lambda$. Let ${p_1,p_2\colon\Lambda\ra U(A_{2k})}$ be the projections into the two factors and set $\Lambda_0:=p_1^{-1}(U(A_{2k})^+)$; this is an irreducible lattice in the open, index 2 subgroup $U(A_{2k})^+\x U(A_{2k})$ of $U(A_{2k})\x U(A_{2k})$. Let $\tau\colon\Lambda\ra\Z/2\Z$ be the homomorphism with kernel $\Lambda_0$.

\begin{ex}\label{need Y}
Given any tree $T$, we can blow up every edge to a square as in Figure~\ref{tree of squares}, thus obtaining a ``tree of squares'' $\mathds{T}$. Adjacent squares only share a vertex; if $T$ has no leaves, each square has a pair of opposite vertices that are shared with other squares and a pair of opposite vertices that are not shared. The space $\mathds{T}$ is a complete, rank two median space in which $T$ embeds as a median subalgebra; edges of $T$ correspond to diagonals joining shared pairs of vertices of squares of $\mathds{T}$.

We can embed $\text{Isom}~T\hookrightarrow\text{Isom}~\mathds{T}$ by extending each isometry of $T$ so that the restriction to each square is orientation preserving. Let $\s\in\text{Isom}~\mathds{T}$ be the isometry that fixes pointwise the image of the embedding $T\hookrightarrow\mathds{T}$ and acts on each square as a reflection in the diagonal; we have $\s^2=\id$ and ${\text{Isom}~T\x\langle\s\rangle\hookrightarrow\text{Isom}~\mathds{T}}$. 

Let us now apply this construction to $T=T_{2k}$, writing $\mathds{T}=\mathds{T}_{2k}$. Let $\Lambda\leq U(A_{2k})\x U(A_{2k})$ be as above. Viewing $p_2\colon\Lambda\ra U(A_{2k})$ as a homomorphism into $\text{Isom}~T_{2k}$ we can define a homomorphism $\Lambda\ra\text{Isom}~T_{2k}\x\langle\s\rangle$ by $\lambda\mapsto(p_2(\lambda),\tau(\lambda))$. We denote by $\psi$ the composition of this map with the embedding $\text{Isom}~T_{2k}\x\langle\s\rangle\hookrightarrow\text{Isom}~\mathds{T}_{2k}$.

The action $\Lambda\acts\mathds{T}_{2k}$ induced by $\psi$ is Roller nonelementary and Roller minimal since the action $\Lambda\acts T_{2k}$ induced by $p_2$ is. As $\mathds{T}_{2k}$ is irreducible, Theorem~\ref{SR 1} guarantees a continuous extension of $\Lambda\acts Y$ to $U(A_{2k})$, for some $\Lambda$-invariant median subalgebra of $\mathds{T}_{2k}$. Indeed, one can take $Y$ to be the image of $T_{2k}\hookrightarrow\mathds{T}_{2k}$. 

However, $Y$ cannot be taken to be a convex subspace (or even a subcomplex) of $\mathds{T}_{2k}$. Indeed, $Y$ would be forced to be the whole $\mathds{T}_{2k}$, as this is the only $\Lambda$-invariant convex subset of $\mathds{T}_{2k}$. The action $\Lambda\acts\mathds{T}_{2k}$ does not extend to $U(A_{2k})\x U(A_{2k})$ by factoring via $p_1$; this is because, whenever elements $g_n\in\Lambda$ satisfy $p_1(g_n)\ra\id$, the sequence $(p_2(g_n))$ must diverge. However, $\Lambda\acts\mathds{T}_{2k}$ also does not extend by factoring through $p_2$: we have $\overline{p_2(\Lambda_0)}=\overline{p_2(\Lambda)}=U(A_{2k})$, but $\psi(\Lambda_0)$ is contained in the closed subgroup $\text{Isom}~T_{2k}<\text{Isom}~\mathds{T}_{2k}$ and $\psi(\Lambda)$ is not.
\end{ex}

\begin{figure}
\centering
\includegraphics[width=4.5in]{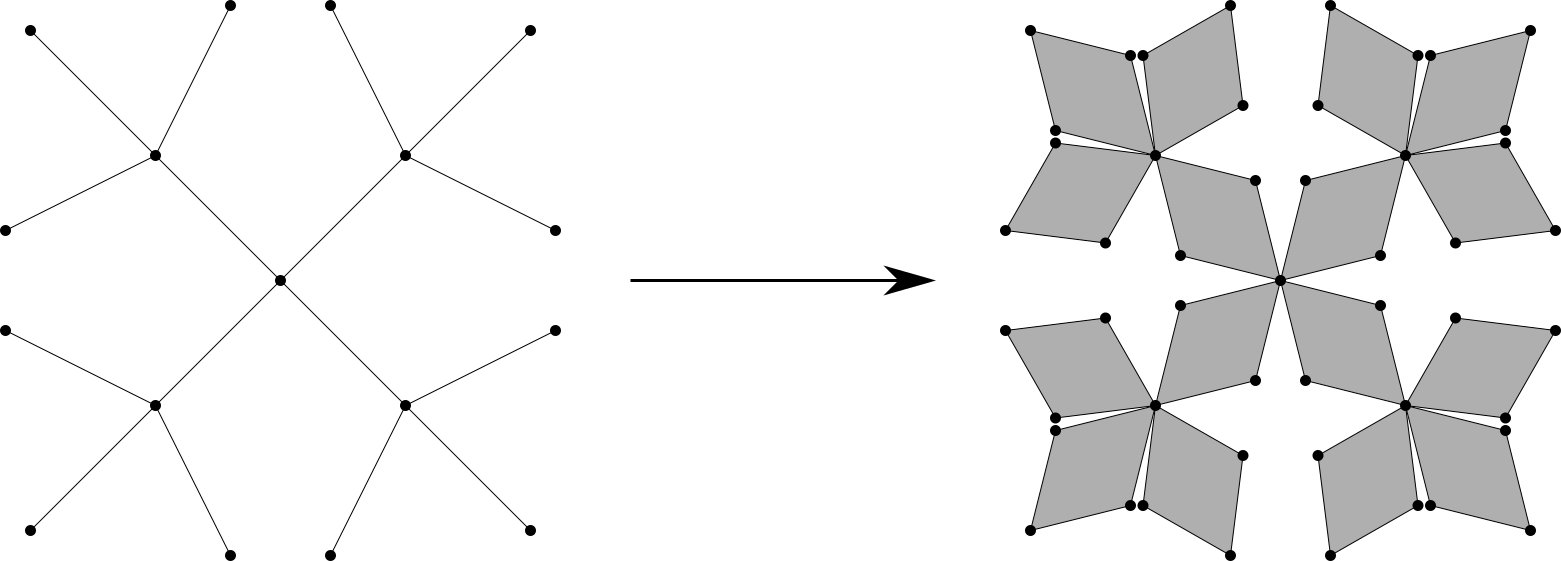}
\caption{}
\label{tree of squares}
\end{figure}

In the next example we maintain the notation introduced before Example~\ref{need Y}.

\begin{ex}\label{need G_0} 
Choose an element $g\in\Lambda\setminus\Lambda_0$ and consider the action ${\Lambda_0\acts T_{2k}\x T_{2k}}$ given by $\lambda\cdot(x,y)=(p_2(\lambda)\cdot x, p_2(g^{-1}\lambda g)\cdot y)$. Since the action $U(A_{2k})\acts T_{2k}$ does not preserve any proper closed subtree, the same holds for the action of $p_2(\Lambda_0)$. Part~3 of Proposition~\ref{products} then implies that $\Lambda_0$ does not leave any proper, closed, convex subset of $T_{2k}\x T_{2k}$ invariant. Note that no component of $\partial(T_{2k}\x T_{2k})=(\partial T_{2k}\x\overline{T_{2k}})\cup(\overline{T_{2k}}\x\partial T_{2k})$ is preserved by $\Lambda_0$, as this would correspond to a fixed point for $p_2(\Lambda_0)\acts\overline{T_{2k}}$, hence to a fixed point for $U(A_{2k})\acts\overline{T_{2k}}$. We conclude that $\Lambda_0\acts T_{2k}\x T_{2k}$ is Roller minimal and the same argument also shows that it is Roller nonelementary.

One can easily check that $\Lambda_0\acts T_{2k}\x T_{2k}$ can be extended to an action of the whole $\Lambda$ by setting $\lambda g\cdot(x,y)=(p_2(\lambda g^2)\cdot y, p_2(g^{-1}\lambda g)\cdot x)$ for all $\lambda\in\Lambda_0$. This action also is Roller minimal and Roller nonelementary. We will show, however, that there exists no $\Lambda$-equivariant isometric embedding $j\colon Y\hookrightarrow\overline{T_{2k}}\x\overline{T_{2k}}$ of a median space $Y$ such that the action on $Y$ extends continuously to $U(A_{2k})\x U(A_{2k})$ by factoring through one of the factors.

Let $j\colon Y\hookrightarrow\overline{T_{2k}}\x\overline{T_{2k}}$ be a $\Lambda$-equivariant embedding; note that $j(Y)$ is entirely contained in a $\Lambda$-invariant component ${Z\cu\overline{T_{2k}}\x\overline{T_{2k}}}$. In particular, the previous discussion shows that $Z=T_{2k}\x T_{2k}$.

By Lemma~6.5 in \cite{Bow1}, each wall of $j(Y)$ arises from a wall of $T_{2k}\x T_{2k}$, i.e. a wall of one of the two factors, see part~1 of Proposition~\ref{products}. Since the two factors are exchanged by $g\in\Lambda$, we conclude that $Y$ splits as $Y_1\x Y_2$, with $\Lambda_0\acts Y$ preserving this decomposition and $g$ exchanging $Y_1$ and $Y_2$. 

Suppose for the sake of contradiction that $\Lambda\acts Y$ extends to an action of $U(A_{2k})\x U(A_{2k})$ by factoring through one of the two factors. As in Example~\ref{need Y}, we see that the extension cannot factor via $p_1$. However, since $p_2(\Lambda_0)$ is dense in $U(A_{2k})$ and $\Lambda_0$ preserves the splitting $Y=Y_1\x Y_2$, part~2 of Proposition~\ref{products} implies that an extension factoring through $p_2$ would also preserve the splitting $Y=Y_1\x Y_2$. This contradicts the fact that $g$ exchanges $Y_1$ and $Y_2$.
\end{ex}

We conclude the section by proving Theorem~\ref{C}.

\begin{proof}[Proof of Theorem~\ref{C}]
We begin by observing that part~2 follows from part~1 and Proposition~\ref{chi_xi continuous}. Now, suppose for the sake of contradiction that $\G$ admits a Roller nonelementary action on $X$. As in the proof of Theorem~\ref{Z}, we can assume that $X$ is irreducible and that $\G\acts X$ is Roller minimal. Theorem~\ref{SR 1} then yields a factor $G_i$, a closed median subalgebra $Y\cu X$ and actions $G_i\acts Y$ and $\G\acts Y$. Without loss of generality, we can assume that $Y$ is the closure of $Y_0$ inside $X$, as in Remark~\ref{the other topology}. 

Stabilisers of points of $Y_0$ are open in $G_i$, thus the identity component $G^0_i$ must fix $Y_0$ pointwise. As $Y_0$ is dense in $Y$, the entire action $G^0_i\acts Y$ vanishes and $G_i\acts Y$ descends to an action of the group $G_i/G^0_i$. Since $G_i$ satisfies condition $(*)$, Theorem~\ref{Z} above and Corollary~6.5 in \cite{Fioravanti2} imply that the action $G_i/G^0_i\acts Y$ is Roller elementary. However, by Lemma~\ref{Roller of subalgebras}, the actions $\G\acts Y$ and $G_i\acts Y$ are Roller nonelementary, a contradiction. 
\end{proof}

\subsection{Homomorphisms to coarse median groups.}

We defined equivariantly coarse median groups in the introduction. Here we simply prove Corollary~\ref{E}. 

\begin{proof}[Proof of Corollary~\ref{E}]
Fix a non-principal ultrafilter $\om$ on $\N$ and let $H_{\om}$ be the corresponding ultrapower of $H$. We endow $H$ with a word metric $d_S$ arising from a finite generating set $S\cu H$. Given $\underline\lambda=(\lambda_n)\in\R_+^{\N}$, we denote by $\text{Cone}_{\om}(H,\underline\lambda)$ the asymptotic cone obtained by taking all basepoints at the identity and $\underline\lambda$ as sequence of scaling factors. Let $d$ denote the metric that $d_S$ induces on $\text{Cone}_{\om}(H,\underline\lambda)$; it is a geodesic metric and it is preserved by the natural action $H_{\om}\acts\text{Cone}_{\om}(H,\underline\lambda)$.

If $\lambda_n\ra+\infty$, the coarse median on $H$ induces a structure of finite rank median algebra on $\text{Cone}_{\om}(H,\underline\lambda)$, see Section~9 in \cite{Bow1}; we denote by $m$ the corresponding median map. The action $H_{\om}\acts\text{Cone}_{\om}(H,\underline\lambda)$ is by automorphisms of the median algebra structure. By Propositions~3.3 and~5.1 in \cite{Zeidler}, we can endow $\text{Cone}_{\om}(H,\underline\lambda)$ with a median metric $d_m$ that is bi-Lipschitz equivalent to $d$ and preserved by the $H_{\om}$-action; furthermore, the median algebra structure associated to $d_m$ is given by the map $m$. 

Now suppose for the sake of contradiction that there exist pairwise non-conjugate homomorphisms $\phi_n\colon\G\ra H$, for $n\geq 0$; these correspond to a homomorphism $\G\ra H_{\om}$, hence to an action on every asymptotic cone of $H$ that preserves the median metric $d_m$. The Bestvina-Paulin construction \cite{Bestvina,Paulin,Paulin-Arb} provides us with a sequence $\mu_n\ra+\infty$ such that, modifying each $\phi_n$ within its conjugacy class if necessary, the induced action $\G\acts\text{Cone}_{\om}(H,\underline\mu)$ has no global fixed point. This, however, contradicts Theorem~\ref{C}.
\end{proof}



\appendix

\section{Structure of UBS's.}\label{structure of UBS's}

Let $X$ be a complete median space of finite rank $r$. We fix points $x\in X$ and $\xi\in\partial X$. Let $\Om\cu\s_{\xi}\setminus\s_x$ be a minimal, reduced UBS (Definition~\ref{UBS defn}).

\begin{lem}
Let $g\in\text{Isom}_{\xi}X$ be an isometry satisfying $g\Om\sim\Om$. Consider the UBS ${\Om_1:=\bigcap_{0\leq i\leq r}g^{-i}\Om\cu\Om}$.
\begin{enumerate}
\item If $\chi_{\Om}(g)\geq 0$, we have $g^{r!}\Om_1\cu\Om_1$ and $g^{r!}\mf{h}\cu\mf{h}$ for all $\mf{h}\in\Om_1$.
\item If $\chi_{\Om}(g)=0$, we have $g^{r!}\Om_1=\Om_1$ and $g^{r!}\mf{h}=\mf{h}$ for all $\mf{h}\in\Om_1$.
\end{enumerate}
\end{lem}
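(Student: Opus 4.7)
The plan is to combine a rank-based pigeonhole argument with a sign analysis of the transfer character $\chi_{\Om}$, and then upgrade the resulting pointwise statement to the announced set inclusion.

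First I would carry out the pigeonhole. For any $\mf{h}\in\Om_1$, the $r+1$ halfspaces $\mf{h},g\mf{h},\ldots,g^r\mf{h}$ all lie in $\Om\cu\s_{\xi}\setminus\s_x$. Two halfspaces in $\s_{\xi}\setminus\s_x$ share $\xi$ in their intersection and $x$ in the intersection of their complements, so they are neither disjoint nor anti-comparable; hence each pair is either inclusion-comparable or transverse. Since $X$ has rank $r$, the $r+1$ halfspaces cannot be pairwise transverse, so there exist $0\leq i<j\leq r$ with $g^i\mf{h}$ and $g^j\mf{h}$ comparable. Applying $g^{-i}$ yields $k:=j-i\in\{1,\ldots,r\}$ such that $g^k\mf{h}$ is comparable to $\mf{h}$. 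Iterating the inclusion (if $g^k\mf{h}\cu\mf{h}$ then $g^{2k}\mf{h}\cu g^k\mf{h}\cu\mf{h}$, and similarly for $\supseteq$) and using $k\mid r!$, I conclude that $g^{r!}\mf{h}$ is comparable to $\mf{h}$, on the same side as $g^k\mf{h}$.

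The second and more delicate step is to rule out $g^{r!}\mf{h}\supsetneq\mf{h}$ under $\chi_{\Om}(g)\geq 0$. Setting $h:=g^{r!}$, I have $\chi_{\Om}(h)=r!\cdot\chi_{\Om}(g)$, so the two characters share a sign. I plan to establish the following dichotomy: a strict inclusion $h\mf{h}\subsetneq\mf{h}$ for some $\mf{h}\in\Om_1$ forces $\chi_{\Om}(h)>0$, whereas $h\mf{h}\supsetneq\mf{h}$ forces $\chi_{\Om}(h)<0$. Under $\chi_{\Om}(g)\geq 0$, only the first option survives, proving the pointwise statement of part (1). This dichotomy is the main obstacle. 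To prove it I would examine the infinite chain $\mf{h}\supsetneq h\mf{h}\supsetneq h^2\mf{h}\supsetneq\cdots$ and the slabs $\mscr{H}((h^{n+1}\mf{h})^*\mid h^n\mf{h})$; each sits in $\Om$ by inseparability once one knows all $h^n\mf{h}$ belong to $\Om$. The signed difference $\wh{\nu}(h^{-1}\Om\setminus\Om)-\wh{\nu}(\Om\setminus h^{-1}\Om)$ receives uniform-sign contributions from these slabs, matched to the sign of $\chi_{\Om}(h)$ via the asymptotic $\wh{\nu}(\Om\triangle h^{-n}\Om)=|n|\cdot|\chi_{\Om}(h)|+O(1)$. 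The arguments that each $h^n\mf{h}$ remains in $\Om$ and that the slab measures accumulate correctly will rely on the strongly reduced decomposition provided by Lemma~\ref{reduced and strongly reduced} and the equivalences $h^n\Om\sim\Om$.

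Third, I upgrade the pointwise statement $g^{r!}\mf{h}\cu\mf{h}$ to the set inclusion $g^{r!}\Om_1\cu\Om_1$. For $\mf{h}\in\Om_1$ and each $0\leq i\leq r$ the pointwise statement yields $g^{r!+i}\mf{h}\cu g^i\mf{h}\in\Om$, giving an upper bound for inseparability. For a lower bound I would exhibit a halfspace in a diverging chain inside the strongly reduced sub-UBS of $\Om$ that is contained in $g^{r!+i}\mf{h}$; such a halfspace is available because a diverging chain representing $[\Om]$ eventually lies inside any halfspace of $\Om$ containing $\xi$ in its interior. Inseparability then gives $g^{r!+i}\mf{h}\in\Om$, so $g^{r!}\mf{h}\in\Om_1$. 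Part (2) is then immediate: under $\chi_{\Om}(g)=0$ the sign dichotomy forbids both strict inclusions, so together with comparability from the pigeonhole step it forces $g^{r!}\mf{h}=\mf{h}$ for every $\mf{h}\in\Om_1$, and hence $g^{r!}\Om_1=\Om_1$.
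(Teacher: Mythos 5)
Your pigeonhole step and your upgrade of the pointwise statement $g^{r!}\mf{h}\cu\mf{h}$ to the set inclusion $g^{r!}\Om_1\cu\Om_1$ agree with the paper. The real issue is the central dichotomy, where your argument is a sketch with a genuine gap.

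Concretely: you want to show that $g^{r!}\mf{h}_0\supsetneq\mf{h}_0$ for some $\mf{h}_0\in\Om_1$ forces $\chi_\Om(g)<0$, by looking at the chain $(h^n\mf{h}_0)_{n\geq 0}$ with $h=g^{r!}$ and the slabs $\mscr{H}((h^{n+1}\mf{h}_0)^*|h^n\mf{h}_0)$. But you assume "once one knows all $h^n\mf{h}_0$ belong to $\Om$" without proving it -- this is not free, and the paper spends a paragraph on exactly this: it uses that $\Om$ is reduced and $g\Om_1\sim\Om_1$ to show each $\mf{h}_k=g^{kr!}\mf{h}_0$ eventually contains halfspaces of the fixed diverging chain $(\mf{k}_n)$, hence lies in $\Om_1$ by inseparability, and then invokes Proposition~\ref{trivial chains of halfspaces} to see that $(\mf{h}_k)_k$ is a diverging chain. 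More seriously, your claim that the slabs feed into $\wh\nu(h^{-1}\Om\setminus\Om)-\wh\nu(\Om\setminus h^{-1}\Om)$ "with uniform sign" is asserted, not proved, and your proposed asymptotic $\wh\nu(\Om\triangle h^{-n}\Om)=|n|\cdot|\chi_\Om(h)|+O(1)$ is not established and does not obviously hold (one only gets the inequality $|n\chi_\Om(h)|\leq\wh\nu(\Om\triangle h^{-n}\Om)$ in general). The difficulty is that $\chi_\Om(h)$ is a difference of two measures, and you have no control on either term separately inside $\Om$ itself.

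The paper's trick sidesteps this: it passes to $\Xi$, the inseparable closure of $\{\mf{h}_k\}_{k\geq 0}$ inside $\Om_1$. Since $g^{r!}$ shifts the chain forward, $g^{r!}\Xi\cu\Xi$, so the transfer character computed on $\Xi$ is a single nonnegative quantity, $\chi_\Xi(g^{kr!})=\wh\nu(\Xi\setminus g^{kr!}\Xi)\geq d(\mf{h}_k,\mf{h}_0^*)$, and the latter is eventually positive because the chain diverges. Since $\Xi\sim\Om$, $\chi_\Xi=\chi_\Om$ and one gets $\chi_\Om(g)>0$ (and $<0$ in the reversed case by symmetry). Without this reduction to a UBS on which $h$ acts by strict inclusion, the sign analysis does not go through.

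One further point: you invoke "the strongly reduced decomposition provided by Lemma~\ref{reduced and strongly reduced}", but the standing hypothesis on $\Om$ in this lemma is only that it is minimal and \emph{reduced}, not strongly reduced; replacing $\Om$ by a strongly reduced equivalent UBS would change $\Om_1$ and the conclusion, so this cannot be used as stated.
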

\begin{proof}
Observe that $\Om_1\sim\Om$; we fix a diverging chain $(\mf{k}_n)_{n\geq 0}$ in $\Om_1$. If $\mf{h}\in\Om_1$, the halfspaces $g^i\mf{h}$ with $0\leq i\leq r$ all lie in $\Om\cu\s_{\xi}\setminus\s_x$ and cannot be pairwise transverse; thus, there exists $0\leq i\leq r$ such that either $g^i\mf{h}\cu\mf{h}$, or $g^i\mf{h}\supseteq\mf{h}$. Hence, for every $\mf{h}\in\Om_1$ we either have $g^{r!}\mf{h}\cu\mf{h}$ or $g^{r!}\mf{h}\supseteq\mf{h}$.  

Suppose that $g^{r!}\mf{h}_0\subsetneq\mf{h}_0$ for some $\mf{h}_0\in\Om_1$; set $\mf{h}_k:=g^{kr!}\mf{h}_0$. For each $k\geq 0$, a cofinite subchain of $\{g^{-kr!}\mf{k}_n\}_{n\geq 0}$ is a diverging chain in $\Om_1$, as $g\Om_1\sim\Om_1$. Hence, for each $k\geq 0$ we have $\mf{h}_0\supseteq g^{-kr!}\mf{k}_n$ if $n$ is sufficiently large, since $\Om$ is reduced; in particular $\mf{h}_0\supseteq\mf{h}_k\supseteq\mf{k}_n$. We conclude that each $\mf{h}_k$ lies in $\Om_1$; Proposition~\ref{trivial chains of halfspaces} guarantees that $(\mf{h}_k)_{k\geq 0}$ is a diverging chain. 

Let $\Xi\cu\Om_1$ be the inseparable closure of $\{\mf{h}_k\}_{k\geq 0}$; it is a UBS equivalent to $\Om$ and it satisfies $g^{r!}\Xi\cu\Xi$. Observe that, for each $k\geq 0$,
\[kr!\cdot\chi_{\Om}(g)=\chi_{\Om}(g^{kr!})=\chi_{\Xi}(g^{kr!})=\wh{\nu}(\Xi\setminus g^{kr!}\Xi)\geq d(\mf{h}_k,\mf{h}_0^*);\]
since $d(\mf{h}_k,\mf{h}_0^*)>0$ for some $k\geq 0$, we conclude that $\chi_{\Om}(g)>0$. The same argument applied to $g^{-1}$ shows that $\chi_{\Om}(g)<0$ if there exists $\mf{h}_0\in\Om_1$ with $g^{r!}\mf{h}_0\supsetneq\mf{h}_0$.

This proves part~2 and shows that $g^{r!}\mf{h}\cu\mf{h}$ for all $\mf{h}\in\Om_1$ if $\chi_{\Om}(g)>0$. In the latter case, for every $\mf{h}\in\Om_1$ we have $\mf{h}\supseteq g^{r!}\mf{h}\supseteq\mf{k}_n$ for sufficiently large $n$, since $\Om$ is reduced; thus, $g^{r!}\mf{h}\in\Om_1$ and $g^{r!}\Om_1\cu\Om_1$.
\end{proof}
 
\begin{cor}\label{gXi_2}
Let $g\in\text{Isom}_{\xi}X$ be an isometry satisfying $g\Om\sim\Om$. Define $\Om_1$ as in the previous lemma and consider the UBS ${\Om_2:=\bigcap_{1\leq i\leq r!}g^{i-1}\Om_1}$.
\begin{enumerate}
\item If $\chi_{\Om}(g)\geq 0$, we have $g\Om_2\cu\Om_2$.
\item If $\chi_{\Om}(g)=0$, we have $g\Om_2=\Om_2$.
\end{enumerate}
\end{cor}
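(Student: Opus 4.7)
The plan is to derive the corollary as a direct formal consequence of the preceding lemma, with no further geometric input. First I would rewrite the defining intersection by reindexing, so that
\[\Om_2 \;=\; \bigcap_{1\leq i\leq r!} g^{i-1}\Om_1 \;=\; \bigcap_{j=0}^{r!-1} g^j\Om_1.\]
This makes the group action easier to track: applying $g$ shifts the intersection by one step, yielding
\[g\Om_2 \;=\; \bigcap_{j=1}^{r!} g^j\Om_1,\]
so $g\Om_2$ and $\Om_2$ share all the factors indexed by $1\leq j\leq r!-1$, and differ only in that $\Om_2$ contains the factor $g^0\Om_1=\Om_1$ while $g\Om_2$ contains the factor $g^{r!}\Om_1$.

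For part~(1), I would invoke the first conclusion of the preceding lemma: under $\chi_{\Om}(g)\geq 0$ we have $g^{r!}\Om_1\subseteq\Om_1$. Substituting $g^{r!}\Om_1\subseteq\Om_1$ into the displayed formula for $g\Om_2$ immediately gives
\[g\Om_2 \;\subseteq\; \Om_1\cap g\Om_1\cap\ldots\cap g^{r!-1}\Om_1 \;=\; \Om_2,\]
which is the desired containment. For part~(2), the equality $g^{r!}\Om_1=\Om_1$ from the second conclusion of the lemma means the ``missing'' factor in $g\Om_2$ coincides with the ``extra'' factor in $\Om_2$, so the two intersections are literally the same set, giving $g\Om_2=\Om_2$.

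The only technical point worth checking is that $\Om_2$ is genuinely a UBS (as the statement implicitly claims), but this is routine: each $g^j\Om_1$ is inseparable and equivalent to $\Om_1\sim\Om$ because $g\Om\sim\Om$, and a finite intersection of inseparable UBS's equivalent to a common minimal UBS is again an equivalent UBS. There is no real obstacle here; the work was all done in the preceding lemma, and the corollary is essentially a bookkeeping observation about how $g$ acts on the telescoping intersection defining $\Om_2$.
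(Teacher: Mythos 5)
Your proof is correct and is exactly the short bookkeeping argument the corollary is meant to have (the paper states it without proof). The reindexing to $\Om_2=\bigcap_{j=0}^{r!-1}g^j\Om_1$, the observation that $g\Om_2=\bigcap_{j=1}^{r!}g^j\Om_1$ differs from $\Om_2$ only in trading the factor $\Om_1$ for $g^{r!}\Om_1$, and the invocation of $g^{r!}\Om_1\subseteq\Om_1$ (resp.\ $=$) from the preceding lemma give precisely parts (1) and (2). Your aside that $\Om_2$ is indeed a UBS is also sound: each $g^j\Om_1$ is inseparable and equivalent to $\Om$ since $g\Om\sim\Om$, and a finite intersection of inseparable sets equivalent to a common UBS is again inseparable, equivalent, and contains a diverging chain.
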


In the rest of the appendix, we also consider a compact subset ${K\cu\text{Isom}_{\xi}X}$ such that $g\Om\sim\Om$ for every $g\in K$. 

\begin{lem}\label{C_1}
\begin{enumerate}
\item There exists a constant $C_1=C_1(\Om,K)$ such that every $\mf{h}\in\Om$ with $d(x,\mf{h})>C_1$ lies in $g\Om$ for every $g\in K$.
\item If $\Xi\cu\s_{\xi}\setminus\s_x$ is a minimal, reduced UBS such that $\Xi\not\sim\Om$ and $g\Xi\sim\Xi$ for all $g\in K$, there exists a UBS $\overline{\Om}\cu\Om$ that is disjoint from $g\Xi$ for all $g\in K$.
\end{enumerate}
\end{lem}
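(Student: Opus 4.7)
I would dispatch part~2 first, assuming part~1. The key observation is that $\Xi \cap \Om$ has all halfspaces at uniformly bounded distance from $x$: it is inseparable as an intersection of inseparables, and if it contained a diverging chain, its inseparable closure would be a UBS almost contained in both $\Xi$ and $\Om$, forcing $\Xi \sim \Om$ by minimality, contradicting the hypothesis. By Lemma~\ref{Dilworth for differences}, $\Xi \cap \Om$ decomposes into at most $r$ chains totally ordered by inclusion, and since none of them contains a diverging chain each must be contained in a bounded distance from $x$. Applying part~1 to $\Xi$ with the compact set $K^{-1}$ uniformly bounds the distance of $g\Xi \setminus \Xi$ from $x$ (transporting between distances from $x$ and $gx$ using that $d(x,gx)$ is bounded on the compact $K$), and combining with the above yields a constant $C$ with $g\Xi \cap \Om \subseteq \{\mf{h} : d(x,\mf{h}) \leq C\}$ uniformly for $g \in K$. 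Taking $\overline{\Om} := \{\mf{h} \in \Om : d(x,\mf{h}) > C\}$ then gives a UBS: it is inseparable because whenever $\mf{h} \subseteq \mf{j} \subseteq \mf{k}$ with $\mf{h},\mf{k} \in \overline{\Om}$, one has $d(x,\mf{j}) \geq d(x,\mf{k}) > C$ by monotonicity of distance along inclusion (all elements lie in $\s_\xi \setminus \s_x$), and a diverging chain survives in $\overline{\Om}$ by truncating one from $\Om$.

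For part~1 I would first reduce to the case $K \subseteq K_\xi$. By Proposition~\ref{chi_xi continuous}, $K_\xi$ is open and of finite index in $\text{Isom}_\xi X$, so $K$ is covered by finitely many left cosets $k_i(K \cap K_\xi)$; for each $i$ the single element $k_i \in K$ gives a bound on $\Om \setminus k_i\Om$, and for $g = k_i h$ with $h \in K \cap K_\xi$ the desired control of $\Om \setminus g\Om$ is obtained by combining this fixed bound with the $k_i$-translate of a uniform bound on $\Om \setminus h\Om$. With $K \subseteq K_\xi$ in place, $K$ fixes every equivalence class in $\overline{\mc{U}}(\xi)$, so in particular $g\Om_w \sim \Om_w$ for $g \in K$ and every $w \in \mc{G}(\xi)^{(0)}$. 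The main argument is then an induction on the depth of $v := [\Om]$ in the DAG $\mc{G}(\xi)$, which by Proposition~\ref{main prop on UBS's} is bounded by $r$. Following the construction in Proposition~\ref{chi_xi continuous}, pick a thick halfspace $\mf{h}_v \in \s_\xi$ so that $\mscr{H}(\mf{h}_v^*|\xi)$ represents $v$ together with all its incoming neighbors; applying Lemma~\ref{aux lemma 1} to $g_0\mf{h}_v$ for each $g_0 \in K$ and extracting a finite subcover produces a constant $D$ with
\[
\mscr{H}(\mf{h}_v^*|\xi) \triangle g\mscr{H}(\mf{h}_v^*|\xi) \subseteq \{\mf{h} : d(x,\mf{h}) \leq D\}
\]
uniformly for $g \in K$, where the left-hand side equals $\mscr{H}(\mf{h}_v^*|\xi) \triangle \mscr{H}(g\mf{h}_v^*|\xi)$ since $g\xi = \xi$.

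If $v$ is a source in $\mc{G}(\xi)$ (no incoming edges), then $\Om \sim \mscr{H}(\mf{h}_v^*|\xi)$ and the displayed inclusion together with the bounded symmetric differences $\Om \triangle \mscr{H}(\mf{h}_v^*|\xi)$ and $g\Om \triangle \mscr{H}(g\mf{h}_v^*|\xi)$ immediately yields the base case. In the inductive step, part~3 of Proposition~\ref{main prop on UBS's} writes $\mscr{H}(\mf{h}_v^*|\xi) \sim \Om \sqcup \bigsqcup_{w \to v} \Om_w$ up to a set at bounded distance from $x$, where each $\Om_w$ represents a strictly shallower vertex; after replacing each $\Om_w$ by its reduced version via part~1 of Lemma~\ref{reduced and strongly reduced}, the inductive hypothesis gives a uniform bound on $\Om_w \triangle g\Om_w$, and combining these through the decomposition produces the uniform bound on $\Om \triangle g\Om$. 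The principal technical obstacle is precisely this last combination: the decomposition holds only up to bounded error sets, and one must check that under translation by $g \in K$ these errors remain bounded from $x$, which reduces to noting that the error set in Proposition~\ref{main prop on UBS's}(3) is an absolute constant independent of $g$ and that $d(x,gx)$ is bounded on the compact $K$.
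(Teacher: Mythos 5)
Your proof of part~2 matches the paper's almost exactly: both show $\Xi\cap\Om$ is at bounded distance from $x$ because no diverging chain can lie in it, control $g\Xi\setminus\Xi$ via part~1 applied to $\Xi$ and $K^{-1}$ together with the bound on $d(x,gx)$ over $K$, and then discard the bounded part of $\Om$.

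For part~1 you take a genuinely different and considerably heavier route than the paper. The paper's proof never touches the graph $\mc{G}(\xi)$: it fixes a diverging chain $(\mf{h}_n)$ in $\Om$ with $\mf{h}_0$ thick, uses that $\Om$ is \emph{reduced} to find for each $g\in K$ an index $n(g)$ with $g^{-1}\mf{h}_{n(g)}\cu\mf{h}_0$, then uses Lemma~\ref{aux lemma 1} and compactness of $K$ to promote this to a single $N$ with $\mf{h}_N\cu g\mf{h}_0$ for all $g\in K$; a second use of reducedness shows the inseparable closure $\Om_N$ of $\{\mf{h}_n\}_{n\ge N}$ lies in $g\Om$ for every $g\in K$, and minimality gives $\Om_N\sim\Om$, which is the required constant. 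Your induction on depth in $\mc{G}(\xi)$, powered by Lemma~\ref{aux lemma 1}, the halfspaces $\mf{h}_v$ from the proof of Proposition~\ref{chi_xi continuous}, and Proposition~\ref{main prop on UBS's}(3), also works and has the minor advantage of not invoking reducedness of $\Om$, but it entails a preliminary reduction to $K\cu K_\xi$ and bookkeeping of several bounded error sets that the paper's direct argument sidesteps. Two small imprecisions in your write-up: in the coset reduction you cannot write $g=k_ih$ with $h\in K\cap K_\xi$ since $K$ is not a group --- the right compact set is $k_i^{-1}(K\cap k_iK_\xi)\cu K_\xi$, for which $h\Om\sim\Om$ still holds because $h=k_i^{-1}g$ with $k_i,g\in K$; and your inductive hypothesis as stated bounds only $\Om_w\setminus g\Om_w$, whereas the combination step needs both halves of $\Om_w\triangle g\Om_w$, the other half being obtained by running the induction with $K^{-1}$ and translating by $g$.
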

\begin{proof}
Let $(\mf{h}_n)_{n\geq 0}$ be a diverging chain in $\Om$ with $\mf{h}_0$ thick. For every $g\in K$, a cofinite subchain of $(g^{-1}\mf{h}_n)_{n\geq 0}$ is contained in $\Om$, as $g\Om\sim\Om$; since $\Om$ is reduced, there exists $n(g)\geq 0$ so that $g^{-1}\mf{h}_{n(g)}\cu\mf{h}_0$. By Proposition~\ref{trivial chains of halfspaces} we can assume that $\wh{\nu}(\mscr{H}(g\mf{h}_0^*|\mf{h}_{n(g)}))>0$ and Lemma~\ref{aux lemma 1} provides a neighbourhood $U(g)$ of $g$ in $K$ such that $\mscr{H}(\gamma\mf{h}_0^*|\xi)\cap\mscr{H}(g\mf{h}_0^*|\mf{h}_{n(g)})\neq\emptyset$ for all $\gamma\in U(g)$; in particular, $\mf{h}_{n(g)}\cu\gamma\mf{h}_0$ for all $\gamma\in U(g)$. There exist $g_1,...,g_k\in K$ such that $K=U(g_1)\cup ... \cup U(g_k)$; if $N$ is the maximum of the $n(g_i)$, we have $\mf{h}_N\cu g\mf{h}_0$ for all $g\in K$. Let $\Om_N$ be the inseparable closure of $\{\mf{h}_n\}_{n\geq N}$. If $m\geq N$ and $g\in K$, we have $g\mf{h}_0\supseteq\mf{h}_m\supseteq g\mf{h}_n$ for every sufficiently large $n$, since $\Om$ is reduced. This shows that $\Om_N$ is contained in $g\Om$ for all $g\in K$. Since $\Om_N\sim\Om$, there exists a constant $C_1$ such that every $\mf{h}\in\Om$ with $d(x,\mf{h})>C_1$ lies in $\Om_N$.

To prove part~2, let $C$ be the supremum of distances $d(x,\mf{h})$ for $\mf{h}\in\Om\cap\Xi$; we have $C<+\infty$ since $\Om\not\sim\Xi$. Let $M$ be the maximum distance $d(x,gx)$ for $g\in K$ and consider $C':=\max\{C,C_1(\Xi,K^{-1})+M\}$; we define $\overline\Om$ to be the set of $\mf{h}\in\Om$ with $d(x,\mf{h})>C'$. If there existed $\mf{h}\in\overline\Om\cap g\Xi$ for some $g\in K$, we would have $g^{-1}\mf{h}\in\Xi$ and $d(x,g^{-1}\mf{h})>C_1(\Xi,K^{-1})$; thus, part~1 implies that $g^{-1}\mf{h}\in g^{-1}\Xi$, i.e. $\mf{h}\in\Xi$, and this contradicts the fact that $\mf{h}\in\Om$ and $d(x,\mf{h})>C$.
\end{proof}

Recall that we have introduced (in Section~\ref{Haag main} after Proposition~\ref{point x_S}) the function $\alpha_{\Om}\colon\mscr{H}\ra\R$, defined by the formula $\alpha_{\Om}(\mf{h}):=\wh{\nu}\left(\mscr{H}(x|\mf{h})\cap\Om\right)$ and the sets ${\Om_c:=\{\mf{h}\in\Om\mid\alpha_{\Om}(\mf{h})\leq c\}}$. Observe that $\alpha_{\Om}(\mf{k})\leq\alpha_{\Om}(\mf{h})$ whenever $\mf{h}\cu\mf{k}$; in particular $\alpha_{\Om}$ is measurable. We have $\alpha_{\Om}(\mf{h})\leq\wh{\nu}(\mscr{H}(x|\mf{h}))=d(x,\mf{h})$ for all $\mf{h}\in\mscr{H}$. 

We say that $\Om$ is \emph{small} if $\wh{\nu}(\Om)<+\infty$; otherwise, $\Om$ is \emph{large}. If $X$ is a {\rm CAT}(0) cube complex, every UBS is large; an example of a small UBS in a rank two median space appears in Figure~3 of \cite{Fioravanti1}. If $\Om$ is small, we have $\chi_{\Om}(h)=0$ for every isometry $h$ fixing $[\Om]$. Note that the supremum of $\alpha_{\Om}$ is precisely $\wh{\nu}(\Om)$.

\begin{lem}\label{chains in small UBS's}
Let $\mf{h}_n\in\Om$ be halfspaces with $\mf{h}_{n+1}\cu\mf{h}_n$ for all $n\geq 0$. Then, ${\alpha_{\Om}(\mf{h}_n)\ra\wh{\nu}(\Om)}$ if and only if $(\mf{h}_n)_{n\geq 0}$ is a diverging chain of halfspaces.
\end{lem}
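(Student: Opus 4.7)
The plan is to rewrite $\alpha_{\Om}(\mf h_n)=\wh\nu(\mscr H(x|\mf h_n)\cap\Om)$ as a measure over an increasing family and analyse the limit. Since $\mf h_{n+1}\cu\mf h_n$ gives $\mscr H(x|\mf h_n)\cap\Om\cu\mscr H(x|\mf h_{n+1})\cap\Om$, continuity of measure from below yields $\lim_n\alpha_{\Om}(\mf h_n)=\wh\nu\bigl(\Om\setminus\bigcap_n A_n\bigr)$, where $A_n:=\{\mf k\in\Om\mid \mf k\not\supseteq\mf h_n\}$. The whole lemma then reduces to showing that $\wh\nu\bigl(\bigcap_n A_n\bigr)=0$ precisely when $(\mf h_n)$ is a diverging chain.

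For the ``if'' direction, assuming $(\mf h_n)$ is diverging, I would prove that $\bigcap_n A_n=\emptyset$ pointwise. Fix $\mf k\in\bigcap_n A_n$; by Lemma~\ref{Dilworth for differences} applied to $\s_\xi$ and $\s_x$, for each $n$ either $\mf k,\mf h_n$ are comparable or transverse. If $\mf k$ is transverse to some $\mf h_N$, then the relation $\mf h_n\cu\mf h_N$ for $n\geq N$, together with $\mf k,\mf h_n\in\s_\xi$ (so $\mf k\cap\mf h_n\neq\emptyset$) and $\mf k\not\supseteq\mf h_n$ (so $\mf h_n\cap\mf k^*\neq\emptyset$), upgrades to full transverseness of $\mf k$ with the entire cofinal subchain $(\mf h_n)_{n\geq N}\cu\Om$, which contradicts reducedness of $\Om$. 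The alternative is $\mf k\subsetneq\mf h_n$ for every $n$, forcing $d(x,\mf k)\geq d(x,\mf h_n)\to+\infty$ and contradicting the fact that the distance from $x$ to any halfspace is finite.

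For the ``only if'' direction, suppose $(\mf h_n)$ is not diverging, so that $d(x,\mf h_n)$ increases to some $D<+\infty$. If $\wh\nu(\Om)=+\infty$, the bound $\alpha_{\Om}(\mf h_n)\leq d(x,\mf h_n)\leq D$ already prevents convergence to $\wh\nu(\Om)$. If $\Om$ is small, I would fix a diverging chain $(\mf h_m')_{m\geq 0}\cu\Om$, choose $M$ with $d(x,\mf h_M')>D$, and consider the inseparable closure $\Om_M\cu\Om$ of $\{\mf h_m'\mid m\geq M\}$. Any $\mf j\in\Om_M$ lies in some halfspace interval $\mscr H(\mf h_{m_1}'^*|\mf h_{m_2}')$ with $M\leq m_1\leq m_2$, hence $\mf j\cu\mf h_{m_1}'$ and $d(x,\mf j)\geq d(x,\mf h_M')>D\geq d(x,\mf h_n)$, which forces $\mf j\not\supseteq\mf h_n$ for every $n$; thus $\Om_M\cu\bigcap_n A_n$. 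To conclude that $\wh\nu(\Om_M)>0$ I would apply Proposition~\ref{trivial chains of halfspaces}: the infinite strict chain $(\mf h_m')_{m\geq M}$ contains two terms $\mf h_{m_1}'\supsetneq\mf h_{m_2}'$ with $\overline{\mf h_{m_1}'^*}\cap\overline{\mf h_{m_2}'}=\emptyset$, and since closed convex subsets of a complete median space are gate-convex, the associated pair of gates sits at strictly positive distance, so $\wh\nu(\mscr H(\mf h_{m_1}'^*|\mf h_{m_2}'))>0$. This gives $\wh\nu\bigl(\bigcap_n A_n\bigr)>0$ and hence $\lim_n\alpha_{\Om}(\mf h_n)<\wh\nu(\Om)$.

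The main obstacle is the small-UBS case of the converse: producing a \emph{positive-measure} piece of $\bigcap_n A_n$, rather than just infinitely many elements, requires the interplay between the diverging chain inherent to every UBS and the quantitative bound on descending chains at bounded distance from $x$ supplied by Proposition~\ref{trivial chains of halfspaces}. The forward direction is conceptually simpler, but the transverse-versus-comparable dichotomy genuinely uses both reducedness of $\Om$ and finiteness of halfspace distances to the basepoint.
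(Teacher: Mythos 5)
Your proof is correct and rests on the same two ingredients as the paper's: reducedness of $\Om$ (driving the forward implication via the comparable-or-transverse dichotomy for pairs in $\s_\xi\setminus\s_x$) and Proposition~\ref{trivial chains of halfspaces} supplying positive-measure halfspace intervals along a diverging chain (driving the converse). The only difference is cosmetic: the paper proves the converse directly, deducing $\mf{h}_n\cu\mf{k}_m$ for $n$ large from $\alpha_{\Om}(\mf{h}_n)>\wh{\nu}(\Om)-a_m$, whereas you argue the contrapositive by exhibiting a positive-measure subset $\Om_M\cu\bigcap_n A_n$.
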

\begin{proof}
The fact that $\alpha_{\Om}(\mf{h}_n)\ra\wh{\nu}(\Om)$ if $(\mf{h}_n)_{n\geq 0}$ is a diverging chain follows from the fact that $\Om$ is reduced. For the other implication, let $(\mf{k}_m)_{m\geq 0}$ be a diverging chain in $\Om$. Since $\alpha_{\Om}(\mf{h})\leq d(x,\mf{h})$, it suffices to consider the case when $\Om$ is small. For every $m\geq 0$, the set $\{\mf{j}\in\Om\mid\mf{j}\cu\mf{k}_m\}$ has measure $a_m>0$ by Proposition~\ref{trivial chains of halfspaces}. For large $n$ we have $\alpha_{\Om}(\mf{h}_n)>\wh{\nu}(\Om)-a_m$, hence there exists $\mf{j}\cu\mf{k}_m$ such that $j\in\mscr{H}(x|\mf{h}_n)$; in particular, $\mf{h}_n\cu\mf{k}_m$. Since $m$ is arbitrary, this shows that $(\mf{h}_n)_{n\geq 0}$ is a diverging chain.
\end{proof}

\begin{lem}\label{properties of Om_c}
\begin{enumerate}
\item For every $0\leq c<\wh{\nu}(\Om)$, the set ${\Om\setminus\Om_c}$ is a UBS.
\item For all $c\geq 0$, we have $\wh{\nu}(\Om_c)\leq rc$.
\end{enumerate}
\end{lem}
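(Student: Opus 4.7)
The key observation for both parts is the antimonotonicity of $\alpha_\Om$ recorded immediately before the statement: if $\mf{h}\cu\mf{k}$, then $\mscr{H}(x|\mf{h})\supseteq\mscr{H}(x|\mf{k})$, whence $\alpha_\Om(\mf{h})\geq\alpha_\Om(\mf{k})$.

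For part~(1), inseparability of $\Om\setminus\Om_c$ is immediate: if $\mf{h}_1\cu\mf{j}\cu\mf{h}_2$ with $\mf{h}_1,\mf{h}_2\in\Om\setminus\Om_c$, then $\mf{j}\in\Om$ by inseparability of $\Om$, and antimonotonicity yields $\alpha_\Om(\mf{j})\geq\alpha_\Om(\mf{h}_2)>c$, so $\mf{j}\in\Om\setminus\Om_c$. To exhibit a diverging chain, I take any diverging chain $(\mf{h}_n)$ in $\Om$; Lemma~\ref{chains in small UBS's} gives $\alpha_\Om(\mf{h}_n)\ra\wh{\nu}(\Om)$, and since $c<\wh{\nu}(\Om)$ a cofinite subchain lies in $\Om\setminus\Om_c$ and is still diverging.

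For part~(2), I apply Lemma~\ref{Dilworth for differences} to $\s_\xi\setminus\s_x$ and intersect with $\Om_c$ to write $\Om_c=\mc{C}_1\sqcup\ldots\sqcup\mc{C}_k$ with $k\leq r$ and each $\mc{C}_i$ totally ordered by inclusion. It suffices to show $\wh{\nu}(\mc{C}_i)\leq c$ for every $i$. Fixing $i$ and setting $\mc{C}_i^{\mf{h}}:=\{\mf{k}\in\mc{C}_i\mid\mf{h}\cu\mf{k}\}$ for $\mf{h}\in\mc{C}_i$, the containment $\mc{C}_i\cu\Om$ forces $x\in\mf{k}^*$ for every $\mf{k}\in\mc{C}_i$, so $\mc{C}_i^{\mf{h}}\cu\mscr{H}(x|\mf{h})\cap\Om$ and hence $\wh{\nu}(\mc{C}_i^{\mf{h}})\leq\alpha_\Om(\mf{h})\leq c$. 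Since $\mc{C}_i$ is the directed union $\bigcup_{\mf{h}\in\mc{C}_i}\mc{C}_i^{\mf{h}}$, I choose a sequence $\mf{h}_n\in\mc{C}_i$ (decreasing in the chain order, by taking running minima) along which $\wh{\nu}(\mc{C}_i^{\mf{h}_n})$ saturates $M:=\sup_{\mf{h}\in\mc{C}_i}\wh{\nu}(\mc{C}_i^{\mf{h}})\leq c$. Monotone convergence then gives $\wh{\nu}(\bigcup_n\mc{C}_i^{\mf{h}_n})=M$, and the remainder $R:=\{\mf{j}\in\mc{C}_i\mid\mf{j}\subsetneq\mf{h}_n\text{ for all }n\}$ is $\wh{\nu}$-null: every $\mf{j}\in R$ satisfies $\mc{C}_i^{\mf{j}}\supseteq\bigcup_n\mc{C}_i^{\mf{h}_n}$, so $\wh{\nu}(\mc{C}_i^{\mf{j}})=M$ by maximality, forcing the ``strips'' $\mc{C}_i^{\mf{j}_1}\setminus\mc{C}_i^{\mf{j}_2}$ between pairs of elements of $R$ to be null, and iterating the same exhaustion within $R$ eliminates any residual positive measure. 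Hence $\wh{\nu}(\mc{C}_i)=M\leq c$, and summing over $i$ yields $\wh{\nu}(\Om_c)\leq rc$.

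The principal obstacle I expect is the measure-theoretic housekeeping in part~(2): carefully verifying that the remainder $R$ is $\wh{\nu}$-null when the chain $\mc{C}_i$ fails to have a countable cofinal sequence from below. Everything else reduces to a direct combination of Lemma~\ref{Dilworth for differences} with the antimonotonicity of $\alpha_\Om$ and the convergence statement of Lemma~\ref{chains in small UBS's}.
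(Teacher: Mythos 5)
Your part~(1) is correct and matches the paper's argument; the only cosmetic difference is that you produce the diverging chain in $\Om\setminus\Om_c$ via Lemma~\ref{chains in small UBS's}, where the paper invokes reducedness of $\Om$ directly.

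For part~(2), the Dilworth decomposition and the pointwise inequality $\wh{\nu}(\mc{C}_i^{\mf{h}})\leq\alpha_\Om(\mf{h})\leq c$ are fine, but the step you flag as ``measure-theoretic housekeeping'' is a genuine gap, not bookkeeping. The assertion that a totally ordered family, all of whose tails-from-above have measure at most $c$, itself has measure at most $c$ is false for abstract measures on abstract chains: the Dieudonn\'e measure on $\omega_1$ yields a chain of measure~$1$ every one of whose proper tails is null. Re-iterating your exhaustion inside the remainder $R$ does not rescue this, because every tail $R^{\mf{j}}$ is already null, so the new supremum is~$0$ and no new cofinal sequence emerges; the process has no reason to terminate after countably many stages. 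There is also a secondary problem: the Dilworth chains $\mc{C}_i$ and their tails $\mc{C}_i^{\mf{h}}$ need not lie in $\wh{\mscr{B}}$, so $\wh{\nu}(\mc{C}_i)$ is not a priori defined. The paper circumvents both issues at once: instead of $\wh{\nu}(\mc{C}_i)$ it bounds the measure of the \emph{inseparable closure} $\overline{\mc{C}_i}$ (which is measurable because inseparable), it controls the measurable inseparable gaps $\mscr{H}(\mf{k}^*|\mf{h})$ for $\mf{h}\cu\mf{k}$ in $\mc{C}_i$ via $\wh{\nu}(\mscr{H}(\mf{k}^*|\mf{h}))\leq\alpha_\Om(\mf{h})\leq c$, and then quotes Lemma~2.27 of \cite{Fioravanti1}, which is precisely the nontrivial statement you are missing: a chain all of whose gaps have measure at most $c$ has inseparable closure of measure at most $c$. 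That lemma uses the particular structure of $(\mscr{H},\wh{\nu})$ and cannot be replaced by an abstract monotone-convergence argument; to repair your proof, replace the direct estimate on $\wh{\nu}(\mc{C}_i)$ by an appeal to that lemma applied to $\overline{\mc{C}_i}$.
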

\begin{proof}
Since $\Om$ is reduced, any $\mf{h}\in\Om\setminus\Om_c$ contains almost every halfspace in any diverging chain in $\Om$; this provides a diverging chain in $\Om\setminus\Om_c$. Inseparability follows from the monotonicity of $\alpha_{\Om}$.

To prove part~2, we decompose $\Om_c=\mc{C}_1\sqcup ... \sqcup\mc{C}_k$ as in Lemma~\ref{Dilworth for differences}. If $\mf{h},\mf{k}\in\mc{C}_i$ and $\mf{h}\cu\mf{k}$, we have $c\geq\alpha_{\Om}(\mf{h})\geq\wh{\nu}\left(\mscr{H}(\mf{k}^*|\mf{h})\right)$. Hence, Lemma~2.27 in \cite{Fioravanti1} implies that the inseparable closure of $\mc{C}_i$ has measure at most $c$. We conclude that $\wh{\nu}(\Om_c)\leq kc\leq rc$.
\end{proof}

\begin{lem}\label{properties of Om_c 3}
Assume that $g\Om\cu\Om$ for all $g\in K$.
\begin{enumerate}
\item For all $\mf{h}\in\Om$ and $g\in K$, we have
\[-d(x,gx)-\chi_{\Om}(g)\leq\alpha_{\Om}(g^{-1}\mf{h})-\alpha_{\Om}(\mf{h})\leq d(x,gx);\]
in particular $\|g\alpha_{\Om}-\alpha_{\Om}\|_{\infty}\leq C_2$ for some constant $C_2=C_2(\Om,K)$.
\item For every $c>0$, we have $g\Om_c\cu\Om_{c+C_2}$.
\end{enumerate}
\end{lem}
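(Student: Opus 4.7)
The plan for part (1) is to exploit the measure-invariance of the $g$-action on $(\mscr{H},\wh{\nu})$ to rewrite
\[\alpha_\Om(g^{-1}\mf{h})=\wh{\nu}(g\mscr{H}(x|g^{-1}\mf{h})\cap g\Om)=\wh{\nu}(\mscr{H}(gx|\mf{h})\cap g\Om),\]
so the problem becomes comparing this quantity with $\alpha_\Om(\mf{h})=\wh{\nu}(\mscr{H}(x|\mf{h})\cap\Om)$. I would insert the auxiliary set $\mscr{H}(gx|\mf{h})\cap\Om$ and split the difference into two controlled pieces: one captures the change of basepoint from $x$ to $gx$, while the other captures the replacement of $\Om$ by its subset $g\Om$.

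The basepoint piece is bounded in absolute value by $d(x,gx)$, since $\mscr{H}(x|\mf{h})\triangle\mscr{H}(gx|\mf{h})\cu\s_x\triangle\s_{gx}$. The remaining piece equals $\wh{\nu}(\mscr{H}(gx|\mf{h})\cap(\Om\setminus g\Om))$, hence is nonnegative and bounded above by $\wh{\nu}(\Om\setminus g\Om)$. Since $g\Om\cu\Om$ forces $\Om\cu g^{-1}\Om$ (so that $\Om\setminus g^{-1}\Om$ is null), measure invariance gives $\wh{\nu}(\Om\setminus g\Om)=\wh{\nu}(g^{-1}\Om\setminus\Om)=\chi_\Om(g)$. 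Recording the signs of the two contributions yields exactly the asymmetric two-sided inequality in (1).

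For the uniform bound, the standing assumption $g\Om\sim\Om$ for $g\in K$ places $K$ inside $K_\Om$; continuity of $\chi_\Om$ on $K_\Om$ (Proposition~\ref{chi_xi continuous}), continuity of the orbit map $g\mapsto gx$, and compactness of $K$ then allow us to set $C_2:=\sup_{g\in K}(d(x,gx)+\chi_\Om(g))$. Part (2) is an immediate consequence of part (1): given $\mf{h}\in\Om_c$, we have $g\mf{h}\in g\Om\cu\Om$, and applying (1) with $g\mf{h}$ in place of $\mf{h}$ gives $\alpha_\Om(g\mf{h})-\alpha_\Om(\mf{h})\leq d(x,gx)+\chi_\Om(g)\leq C_2$, whence $g\mf{h}\in\Om_{c+C_2}$. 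I do not expect significant obstacles; the only mild subtlety will be keeping the two inclusions $g\Om\cu\Om$ and $\Om\cu g^{-1}\Om$ straight while tracking signs in the decomposition.
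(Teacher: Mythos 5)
Your proof is correct and essentially matches the paper's argument: the paper also derives the two-sided bound by splitting off a basepoint-change term (bounded by $d(x,gx)$) and a term controlled by $\chi_\Om(g)$, sets $C_2:=\max_{g\in K}(d(x,gx)+\chi_\Om(g))$ using Proposition~\ref{chi_xi continuous} and compactness, and deduces part~(2) exactly as you do. The only cosmetic difference is that you normalise via measure invariance to $\wh{\nu}(\mscr{H}(gx|\mf{h})\cap g\Om)$ and compare sets directly, whereas the paper runs the same decomposition through the halfspace-interval ``triangle inequality'' $\mscr{H}(x|\mf{j})\cu\mscr{H}(x|y)\cup\mscr{H}(y|\mf{j})$.
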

\begin{proof}
Indeed,
\begin{align*}
\alpha_{\Om}(g^{-1}\mf{h})&\leq\wh{\nu}\left(\mscr{H}(x|g^{-1}x)\right)+\wh{\nu}\left(\mscr{H}(g^{-1}x|g^{-1}\mf{h})\cap\Om\right) \\
&= d(x,g^{-1}x)+\wh{\nu}\left(\mscr{H}(x|\mf{h})\cap g\Om\right)\leq d(x,gx)+\alpha_{\Om}(\mf{h}), 
\end{align*}
and
\begin{align*}
\alpha_{\Om}(\mf{h})&\leq\wh{\nu}\left(\mscr{H}(x|gx)\right)+\wh{\nu}\left(\mscr{H}(gx|\mf{h})\cap\Om\right) \\
&= d(x,gx)+\wh{\nu}\left(\mscr{H}(x|g^{-1}\mf{h})\cap g^{-1}\Om\right) \\
&\leq d(x,gx)+\wh{\nu}\left(\mscr{H}(x|g^{-1}\mf{h})\cap\Om\right)+\wh{\nu}\left(g^{-1}\Om\setminus\Om\right) \\
&=d(x,gx)+\alpha_{\Om}(g^{-1}\mf{h})+\chi_{\Om}(g).
\end{align*}
We then take $C_2$ to be the maximum of $d(x,gx)+\chi_{\Om}(g)$ for $g\in K$; this exists due to Proposition~\ref{chi_xi continuous}. 
Regarding part~2, observe that, if $\mf{h}\in\Om_c$,
\[\alpha_{\Om}(g\mf{h})=\alpha_{\Om}(\mf{h})+\left(\alpha_{\Om}(g\mf{h})-g\alpha_{\Om}(g\mf{h})\right)\leq c+C_2.\]
\end{proof}

\begin{lem}\label{strongly red}
Assume that $\Om$ is strongly reduced. 
\begin{enumerate}
\item For every $d\geq 0$, there exists a constant $C_3=C_3(\Om,d)$ such that $\mf{h}\cu\mf{k}$ for all $\mf{h},\mf{k}\in\Om$ with $d(x,\mf{h})>C_3$ and $d(x,\mf{k})\leq d$.
\item If $\Xi\cu\Om$ is a UBS and $d(x,\mf{k})\leq d$ for all $\mf{k}\in\Om\setminus\Xi$, we have $\alpha_{\Om}(\mf{h})-\alpha_{\Xi}(\mf{h})=\wh{\nu}(\Om\setminus\Xi)$ for all $\mf{h}\in\Om$ with $d(x,\mf{h})>C_3(\Om,d)$.
\end{enumerate}
\end{lem}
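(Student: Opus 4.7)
The plan is to prove part~1 first and then deduce part~2 as an immediate corollary. For part~2, one computes $\alpha_{\Om}(\mf{h}) - \alpha_{\Xi}(\mf{h}) = \wh{\nu}(\mscr{H}(x|\mf{h}) \cap (\Om \setminus \Xi))$, so the claim reduces to showing $\Om \setminus \Xi \cu \mscr{H}(x|\mf{h})$ modulo null sets. Every $\mf{k} \in \Om \setminus \Xi$ automatically satisfies $x \in \mf{k}^*$ (since $\Om \cu \s_{\xi} \setminus \s_x$) and, by hypothesis, $d(x, \mf{k}) \leq d$; part~1 then yields $\mf{h} \cu \mf{k}$, i.e.\ $\mf{k} \in \mscr{H}(x|\mf{h})$, and integration over $\Om\setminus\Xi$ gives the equality.

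For part~1, write the strongly reduced decomposition $\Om = \mc{C}_1 \sqcup \ldots \sqcup \mc{C}_k$ and set $F_d := \{\mf{k} \in \Om : d(x, \mf{k}) \leq d\}$. It suffices to produce a constant $C_3(i,j)$ for each ordered pair $(i,j)$ such that $\mf{h} \in \mc{C}_i$ with $d(x, \mf{h}) > C_3(i,j)$ lies in every $\mf{k} \in \mc{C}_j \cap F_d$; then $C_3 := \max_{i,j} C_3(i,j)$ works. When $i = j$, the total ordering of $\mc{C}_i$ together with the correspondence $\mf{h} \cu \mf{k} \Leftrightarrow d(x, \mf{h}) \geq d(x, \mf{k})$ (which holds for nested halfspaces in $\s_{\xi} \setminus \s_x$, since $\overline{\mf{h}} \cu \overline{\mf{k}}$ with $x \in \mf{k}^*$ whenever $\mf{h} \cu \mf{k}$) forces the conclusion as soon as $d(x, \mf{h}) > d$, so $C_3(i,i) := d$ suffices.

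The off-diagonal case $i \neq j$ is the heart of the argument; proceed by contradiction. Extract sequences $\mf{h}_m \in \mc{C}_i$ and $\mf{k}_m \in \mc{C}_j \cap F_d$ with $d(x, \mf{h}_m) \to +\infty$ and $\mf{h}_m \not\cu \mf{k}_m$. Since two halfspaces of $\s_{\xi} \setminus \s_x$ are nested or transverse, and $\mf{k}_m \cu \mf{h}_m$ would force $d(x, \mf{h}_m) \leq d$, each pair $(\mf{h}_m, \mf{k}_m)$ is transverse. Using the total orders on $\mc{C}_i$ and $\mc{C}_j$, pass to a subsequence making $(\mf{h}_m)$ a strictly decreasing (hence diverging) chain, and $(\mf{k}_m)$ either constant or strictly monotone in $\mc{C}_j$. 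The constant case $\mf{k}_m \equiv \mf{k}$ immediately contradicts reducedness of $\Om$, as $\mf{k}$ would be transverse to the entire diverging chain $(\mf{h}_m)$.

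If $(\mf{k}_m)$ is strictly increasing in $\cu$, then for $m' < m$ the trichotomy rules out $\mf{k}_{m'} \cu \mf{h}_m$ (by distance) and rules out $\mf{h}_m \cu \mf{k}_{m'} \subsetneq \mf{k}_m$ (which would contradict $\mf{h}_m \not\cu \mf{k}_m$); hence $\mf{h}_m$ is transverse to every $\mf{k}_{m'}$ with $m' < m$, so the fixed halfspace $\mf{k}_1$ is transverse to cofinitely many elements of the diverging chain $(\mf{h}_m)$, contradicting reducedness. If $(\mf{k}_m)$ is strictly decreasing in $\cu$, the same trichotomy for $m' > m$ yields $\mf{h}_m$ transverse to every $\mf{k}_{m'}$ with $m' > m$; extend $(\mf{k}_m)$ to a diverging chain in $\mc{C}_j$ by appending elements $\mf{f}_n$ of a diverging chain of $\mc{C}_j$ (for $n$ large, $d(x, \mf{f}_n) > d$ combined with the total order of $\mc{C}_j$ forces $\mf{f}_n \subsetneq \mf{k}_{m'}$ for every $m'$), and reducedness applied to $\mf{h}_m$ together with this extended diverging chain produces the final contradiction. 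The main technical obstacle throughout is the uniformity in $\mf{k} \in F_d$: a single application of reducedness to an individual $\mf{k}$ yields a threshold that is a priori not uniform, and the structure of $\mc{C}_j$ as a totally ordered chain containing a diverging chain is exactly what closes the above case analysis.
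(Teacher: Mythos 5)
Your overall plan is reasonable: reducing part~2 to part~1 via the formula $\alpha_{\Om}(\mf{h})-\alpha_{\Xi}(\mf{h})=\wh{\nu}\bigl(\mscr{H}(x|\mf{h})\cap(\Om\setminus\Xi)\bigr)$ is correct, the chain-by-chain strategy for part~1 is sound, and the $i=j$ case as well as the constant and strictly increasing subcases of $i\neq j$ are handled correctly (modulo needing $\Om$ reduced, which follows from strong reducedness by part~3 of Lemma~\ref{reduced and strongly reduced}).

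The gap is in the strictly decreasing subcase. You establish that $\mf{h}_m$ is transverse to $\mf{k}_{m'}$ for $m'>m$, but $\{\mf{k}_{m'}:m'>m\}$ is a chain at distance at most $d$ from $x$, hence \emph{not} a diverging chain, so reducedness gives nothing yet. Appending the $\mf{f}_n$'s does produce a diverging chain, but you have not shown $\mf{h}_m$ transverse to any $\mf{f}_n$; indeed, running the same trichotomy on the pair $(\mf{h}_m,\mf{f}_n)$ only rules out $\mf{h}_m\cu\mf{f}_n$ (which would force $\mf{h}_m\cu\mf{k}_m$), leaving both transversality and $\mf{f}_n\cu\mf{h}_m$ possible. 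Since $\mf{h}_m$ is not known to be transverse to even one element of the diverging tail, reducedness of $\Om$ applied to $\mf{h}_m$ yields no contradiction. Note also that an infinite strictly decreasing chain with bounded $d(x,\cdot)$ is genuinely possible in a median space (unlike in a cube complex), so this case cannot be dismissed.

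The paper closes this by a different, simpler device. Rather than reasoning about the near halfspaces $F_d$ directly and applying reducedness to the far halfspaces $\mf{h}_m$, one fixes a single \emph{far} buffer halfspace $\mf{q}_j\in\mc{C}_j$ with $d(x,\mf{q}_j)>d$ and applies reducedness to $\mf{q}_j$: since $\mf{q}_j$ is transverse to no diverging chain, part~2 of Lemma~\ref{Dilworth for differences} gives a uniform bound $C_3$ so that every $\mf{h}\in\Om$ with $d(x,\mf{h})>C_3$ is nested with each $\mf{q}_j$, and then $\mf{h}\cu\mf{q}_j$ by distance comparison. The total order of $\mc{C}_j$ and $d(x,\mf{q}_j)>d\geq d(x,\mf{k})$ then give $\mf{q}_j\cu\mf{k}$ for every $\mf{k}\in\mc{C}_j\cap F_d$, hence $\mf{h}\cu\mf{q}_j\cu\mf{k}$. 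This makes the entire case analysis unnecessary and in particular closes your decreasing case at once: for $m$ large enough, $\mf{h}_m\cu\mf{q}_j\cu\mf{k}_m$, contradicting $\mf{h}_m\not\cu\mf{k}_m$.
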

\begin{proof}
Decompose $\Om=\mc{C}_1\sqcup ... \sqcup\mc{C}_k$, where each $\mc{C}_i$ is totally ordered by inclusion and contains a diverging chain. Pick halfspaces $\mf{k}_i\in\mc{C}_i$ with $d(x,\mf{k}_i)>d$. By part~3 of Lemma~\ref{reduced and strongly reduced}, the halfspace $\mf{k}_i$ cannot be transverse to a diverging chain in $\Om$; thus, part~2 of Lemma~\ref{Dilworth for differences} guarantees that the halfspaces of $\Om$ that are transverse to $\mf{k}_i$ lie at uniformly bounded distance from $x$. We conclude that there exists $C_3$ such that every $\mf{h}\in\Om$ with $d(x,\mf{h})>C_3$ is contained in each $\mf{k}_i$, hence also in each $\mf{k}\in\Om$ with $d(x,\mf{k})\leq d$. This proves part~1; part~2 is an immediate consequence.
\end{proof}

In the rest of the section, we assume that $\Om$ is minimal and strongly reduced.

\begin{lem}\label{properties of Om_c 2}
Suppose that $\chi_{\Om}(g)\geq 0$ for all $g\in K$.
\begin{enumerate}
\item There exists a constant $C_4=C_4(\Om,K)$ such that, for every halfspace $\mf{h}\in\Om$ with $d(x,\mf{h})>C_4$ and every $g\in K$, we have ${\alpha_{\Om}(g\mf{h})\geq\alpha_{\Om}(\mf{h})}$. 
\item There exists a constant $C_5=C_5(\Om,K)<\wh{\nu}(\Om)$ such that we have $g(\Om\setminus\Om_c)\cu\Om\setminus\Om_c$ and $\Om_c\setminus g\Om_c=\Om\setminus g\Om$ whenever $c>C_5$ and $g\in K$.
\end{enumerate}
\end{lem}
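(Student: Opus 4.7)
The plan for Part~1 is a direct computation of $\alpha_{\Om}(g\mf h)$, crucially exploiting that $\Om\subseteq\s_\xi\setminus\s_x$ contains no halfspace through $x$. I will choose $C_4$ large enough that, for every $\mf h\in\Om$ with $d(x,\mf h)>C_4$ and every $g\in K$: the halfspaces $g^{\pm 1}\mf h$ lie in $\Om$ (by Lemma~\ref{C_1}(1) applied to $K\cup K^{-1}$), and $g\mf h\subseteq\mf k$ for every $\mf k\in\Om$ at uniformly bounded distance from $x$ (by Lemma~\ref{strongly red}(1), allowing a margin to absorb $d(x,gx)$). The central identity is
\[\alpha_{\Om}(g\mf h)=\wh\nu\bigl(\mscr H(gx|g\mf h)\cap\Om\bigr)+T,\qquad T:=\wh\nu\bigl(\{\mf k\in\Om:g\mf h\subseteq\mf k,\ gx\in\mf k\}\bigr),\]
which holds because the complementary piece of $\mscr H(x|g\mf h)\triangle\mscr H(gx|g\mf h)$ consists of halfspaces containing $x$ and is therefore disjoint from $\Om$. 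Applying $g^{-1}$ to the first summand turns it into $\wh\nu(\mscr H(x|\mf h)\cap g^{-1}\Om)$, and a routine bookkeeping via $\Om\triangle g^{-1}\Om$ (whose halfspaces in $\s_\xi\setminus\s_x$ all lie in $\mscr H(x|\mf h)$ once $\mf h$ is deep) evaluates this to $\alpha_{\Om}(\mf h)+\chi_{\Om}(g)-\wh\nu(\Om\cap\s_{gx})$. Finally, every $\mf k\in\Om\cap\s_{gx}$ automatically satisfies $d(x,\mf k)\leq d(x,gx)$, so Lemma~\ref{strongly red}(1) forces $g\mf h\subseteq\mf k$ and hence $T\geq\wh\nu(\Om\cap\s_{gx})$. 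Combining these gives $\alpha_{\Om}(g\mf h)\geq\alpha_{\Om}(\mf h)+\chi_{\Om}(g)\geq\alpha_{\Om}(\mf h)$.

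Part~2 follows from Part~1. Set $C_5:=\max(C_4,C_1(\Om,K),C_1(\Om,K^{-1}))$ and let $c>C_5$. For~(i): any $\mf h\in\Om\setminus\Om_c$ has $d(x,\mf h)\geq\alpha_{\Om}(\mf h)>c>C_1(\Om,K^{-1})$, so Lemma~\ref{C_1}(1) gives $g\mf h\in\Om$, and Part~1 yields $\alpha_{\Om}(g\mf h)\geq\alpha_{\Om}(\mf h)>c$, placing $g\mf h$ in $\Om\setminus\Om_c$. For~(ii): every $\mf h\in\Om\setminus g\Om$ satisfies $d(x,\mf h)\leq C_1(\Om,K)<c$ by Lemma~\ref{C_1}(1), hence $\alpha_{\Om}(\mf h)\leq c$ and $\mf h\in\Om_c$, while $\mf h\notin g\Om\supseteq g\Om_c$ places $\mf h$ in $\Om_c\setminus g\Om_c$. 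Conversely, if $\mf h\in\Om_c\setminus g\Om_c$ with $g^{-1}\mf h\in\Om$, then $\alpha_{\Om}(g^{-1}\mf h)>c>C_4$ and $d(x,g^{-1}\mf h)\geq\alpha_{\Om}(g^{-1}\mf h)>C_4$, so Part~1 applied to $g^{-1}\mf h$ gives $\alpha_{\Om}(\mf h)\geq\alpha_{\Om}(g^{-1}\mf h)>c$, contradicting $\mf h\in\Om_c$; hence $g^{-1}\mf h\notin\Om$, i.e.\ $\mf h\in\Om\setminus g\Om$.

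The main obstacle is the delicate bookkeeping in Part~1: since $g^{-1}\Om$ is not contained in $\s_\xi\setminus\s_x$, one has to isolate the $\s_x$-part of $g^{-1}\Om\setminus\Om$ (equivalently $\Om\cap\s_{gx}$, of measure at most $d(x,gx)$) and check that its contribution is exactly compensated by the surplus term $T$. This is what permits the hypothesis $\chi_{\Om}(g)\geq 0$ to be used cleanly rather than only up to an error of order $d(x,gx)$, and is the essential reason the argument goes through. The constraint $C_5<\wh\nu(\Om)$ is automatic when $\Om$ is large; when $\Om$ is small the $C_5$ above may in principle exceed $\wh\nu(\Om)$, but the conclusion is vacuous on $c\geq\wh\nu(\Om)$ since $\Om_c=\Om$, and one can refine $C_5$ below $\wh\nu(\Om)$ by invoking Lemma~\ref{chains in small UBS's} to argue that, for $c$ close enough to $\wh\nu(\Om)$, every $\mf h\in\Om\setminus\Om_c$ still satisfies $d(x,\mf h)>C_4$.
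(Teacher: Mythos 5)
Your argument is correct, and Part~1 takes a genuinely different route from the paper's proof. The paper first passes, for each $g\in K$, to the auxiliary sub-UBS $\Om_2(g)=\bigcap_{-r+1\le i\le r!}g^{i-1}\Om$ of Corollary~\ref{gXi_2}, which has the cleaner property $g\Om_2(g)\subseteq\Om_2(g)$; it then derives the lower bound
\[
\alpha_{\Om_2(g)}(g\mf h)-\alpha_{\Om_2(g)}(\mf h)\geq -\wh\nu\bigl(\{\mf k\in\Om_2(g)\setminus g\Om_2(g)\mid g\mf h\not\subseteq\mf k\}\bigr)
\]
(which vanishes for $\mf h$ deep) and transfers back to $\alpha_\Om$ via Lemma~\ref{strongly red}(2). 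Your proof instead works directly on $\Om$; the payoff is avoiding the $\Om_2(g)$ detour and Lemma~\ref{strongly red}(2) entirely, at the cost of the more delicate separation of the $\s_{gx}$-part of $g^{-1}\Om\setminus\Om$ and the observation that it is exactly cancelled by $T$. Both uses of $\chi_\Om(g)\geq 0$ are equivalent in spirit, but the mechanism is different (via Corollary~\ref{gXi_2} in the paper, head-on in yours). One small point worth stating explicitly: to absorb the piece $\bigl(g^{-1}\Om\setminus\Om\bigr)\cap\bigl(\s_\xi\setminus\s_x\bigr)$ into $\mscr H(x|\mf h)$, you should apply Lemma~\ref{strongly red}(1) to the pair $g\mf h,\,g\mf k\in\Om$ (not directly to $\mf k$, which need not lie in $\Om$), after bounding $d(x,g\mf k)\le C_1(\Om,K)$ via the contrapositive of Lemma~\ref{C_1}(1); your stated property ``$g\mf h\subseteq\mf j$ for $\mf j\in\Om$ bounded'' is set up precisely for this, but the substitution $\mf j=g\mf k$ deserves a sentence. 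For Part~2, the argument matches the paper's for the inclusion $\Om\setminus g\Om\subseteq\Om_c\setminus g\Om_c$; for the reverse inclusion, the paper uses a purely set-theoretic identity $\bigl(\Om_c\setminus g\Om_c\bigr)\cap g\Om=\Om_c\cap g(\Om\setminus\Om_c)=\emptyset$, whereas you reapply Part~1 to $g^{-1}\mf h$ — both are fine. Finally, your handling of $C_5<\wh\nu(\Om)$ in the small-UBS case is essentially the paper's device: the paper defines $C_5$ directly as $\sup\{\alpha_\Om(\mf h):d(x,\mf h)\le\max\{C_4,C_1'\}\}$, which is $<\wh\nu(\Om)$ by the same Dilworth-plus-Lemma~\ref{chains in small UBS's} argument you sketch; you could just adopt that definition from the start and dispense with the two-case discussion.
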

\begin{proof}
We first observe that, given a minimal UBS $\Xi\cu\s_{\xi}\setminus\s_x$ and an isometry $g\in\text{Isom}_{\xi}X$ with $g\Xi\cu\Xi$, we have:
\begin{align*}
\alpha_{\Xi}&(g\mf{h})-\alpha_{\Xi}(\mf{h})=\wh{\nu}\left(\mscr{H}(g^{-1}x|\mf{h})\cap g^{-1}\Xi\right)-\wh{\nu}\left(\mscr{H}(x|\mf{h})\cap\Xi\right) \\
&=\wh{\nu}\left(\mscr{H}(g^{-1}x|\mf{h})\cap\Xi\right)-\wh{\nu}\left(\mscr{H}(x|\mf{h})\cap\Xi\right)+\wh{\nu}\left(\mscr{H}(g^{-1}x|\mf{h})\cap(g^{-1}\Xi\setminus\Xi)\right) \\
&=-\wh{\nu}\left(\mscr{H}(x|g^{-1}x,\mf{h})\cap\Xi\right)+\wh{\nu}\left(\mscr{H}(g^{-1}x|\mf{h})\cap(g^{-1}\Xi\setminus\Xi)\right),
\end{align*}
since $\mscr{H}(g^{-1}x|x)\cap\Xi=\emptyset$, as $\Xi\cu\s_{\xi}\setminus\s_x$. Note that: 
\[\mscr{H}(x|g^{-1}x)\cap g\Xi=g^{-1}\left(\mscr{H}(gx|x)\cap g^2\Xi \right)\cu g^{-1}\left(\mscr{H}(gx|x)\cap\Xi \right)=\emptyset.\]
Thus $\alpha_{\Xi}(g\mf{h})-\alpha_{\Xi}(\mf{h})$ equals:
\begin{align*}
&-\wh{\nu}\left(\mscr{H}(x|g^{-1}x,\mf{h})\cap(\Xi\setminus g\Xi)\right)+\wh{\nu}\left(\mscr{H}(g^{-1}x|\mf{h})\cap(g^{-1}\Xi\setminus\Xi)\right) \\
&\geq-\wh{\nu}\left(\s_x^*\cap(\Xi\setminus g\Xi)\right)+\wh{\nu}\left(\mscr{H}(x|g\mf{h})\cap(\Xi\setminus g\Xi)\right) \\
&\geq-\wh{\nu}\left(\left(\s_x^*\setminus\mscr{H}(x|g\mf{h})\right)\cap(\Xi\setminus g\Xi)\right)\geq-\wh{\nu}\left(\left\{\mf{k}\in\Xi\setminus g\Xi\mid g\mf{h}\not\cu\mf{k}\right\}\right).
\end{align*}
Now, consider for each $g\in K$ the set ${\Om_2(g):=\bigcap_{-r+1\leq i\leq r!}g^{i-1}\Om}$ as in Corollary~\ref{gXi_2}; we have $g\Om_2(g)\cu\Om_2(g)$ and, by part~1 of Lemma~\ref{C_1}, there exists a constant $C_1$ such that each $\mf{h}\in\Om$ with $d(x,\mf{h})>C_1$ lies in $g\Om_2(g)$ for every $g\in K$. By part~1 of Lemma~\ref{strongly red}, if $d(x,g\mf{h})>C_3(\Om,C_1)$ we have
\[\alpha_{\Om_2(g)}(g\mf{h})-\alpha_{\Om_2(g)}(\mf{h})\geq-\wh{\nu}\left(\left\{\mf{k}\in\Om_2(g)\setminus g\Om_2(g)\mid g\mf{h}\not\cu\mf{k}\right\}\right)=0.\]
By part~2 of Lemma~\ref{strongly red}, if $d(x,\mf{h})>C_3(\Om,C_1)$ and $d(x,g\mf{h})>C_3(\Om,C_1)$, we have
\[\alpha_{\Om}(g\mf{h})-\alpha_{\Om}(\mf{h})=\alpha_{\Om_2(g)}(g\mf{h})-\alpha_{\Om_2(g)}(\mf{h})\geq 0. \]
We conclude that $\alpha_{\Om}(g\mf{h})\geq\alpha_{\Om}(\mf{h})$ whenever $d(x,\mf{h})>C_4:=C_3(\Om,C_1)+M$, where $M$ is the maximum of the distances $d(x,gx)$ for $g\in K$.

We now prove part~2. Let $C_4$ be as in part~1. Part~1 of Lemma~\ref{C_1} provides a constant $C_1'$ such that each $\mf{h}\in\Om$ with $d(x,\mf{h})>C_1'$ lies in $g\Om\cap g^{-1}\Om$ for every $g\in K$. Let $C_5$ be the supremum of the values $\alpha_{\Om}(\mf{h})$ for $\mf{h}\in\Om$ with $d(x,\mf{h})\leq\max\{C_4,C_1'\}$; by Lemma~\ref{chains in small UBS's} we have $C_5<\wh{\nu}(\Om)$. If $c>C_5$, any $\mf{h}\in\Om\setminus\Om_c$ satisfies $d(x,\mf{h})>\max\{C_4,C_1'\}$, hence $\alpha_{\Om}(g\mf{h})\geq\alpha_{\Om}(\mf{h})>c$ and $g\mf{h}\in\Om$; in particular $g(\Om\setminus\Om_c)\cu\Om\setminus\Om_c$. 

Observe that $(\Om\setminus g\Om)\cap g\Om_c\cu g(\Om_c\setminus\Om)=\emptyset$ and $\Om\setminus g\Om\cu\Om_c$, by our choice of the constant $C_1'$; thus, $\Om\setminus g\Om\cu \Om_c\setminus g\Om_c$. Conversely, it is clear that $\Om_c\setminus g\Om_c\cu\Om$ and 
\[\left(\Om_c\setminus g\Om_c\right)\cap g\Om=\Om_c\cap g\left(\Om\setminus\Om_c\right)\cu\Om_c\cap\left(\Om\setminus\Om_c\right)=\emptyset .\]
\end{proof}

Consider now, for $c>0$, the functions $F_{c,\Om}:=-\left(1-\frac{\alpha_{\Om}}{c}\right)\mathds{1}_{\Om_c}$ in $L^2\left(\mscr{H},\wh{\nu}\right)$.

\begin{lem}\label{properties of Om_c 4}
Assume that $g\Om\cu\Om$ for all $g\in K$. For every $\eps$, there exists a constant $C_{\eps}=C_{\eps}(\Om,K)<+\infty$ such that $\|(g-\id)F_{c,\Om}-\mathds{1}_{\Om\setminus g\Om}\|_2<\eps$ for all $g\in K$ and all $c\geq C_{\eps}$. If instead $g\Om\supseteq\Om$ for all $g\in K$, we have $\|(g-\id)F_{c,\Om}+\mathds{1}_{g\Om\setminus\Om}\|_2<\eps$ for $c\geq C_{\eps}$.
\end{lem}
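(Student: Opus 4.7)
The plan is to expand $(g-\id)F_{c,\Om}$ pointwise and split its support into the three regions $\Om_c \cap g\Om_c$, $\Om_c \setminus g\Om_c$, and $g\Om_c \setminus \Om_c$ (off these it vanishes). Since $(gF_{c,\Om})(\mf{h}) = F_{c,\Om}(g^{-1}\mf{h}) = -(1 - \alpha_{\Om}(g^{-1}\mf{h})/c)\mathds{1}_{g\Om_c}(\mf{h})$, subtracting $F_{c,\Om}$ yields the value $(\alpha_{\Om}(g^{-1}\mf{h}) - \alpha_{\Om}(\mf{h}))/c$ on $\Om_c \cap g\Om_c$, the value $1 - \alpha_{\Om}(\mf{h})/c$ on $\Om_c \setminus g\Om_c$, and $-(1 - \alpha_{\Om}(g^{-1}\mf{h})/c)$ on $g\Om_c \setminus \Om_c$. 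The goal is to show each region contributes $O(1/c)$ to the $L^2$-norm squared, uniformly in $g\in K$.

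I focus first on the case $g\Om \cu \Om$ for every $g \in K$; here $\chi_{\Om}(g) \geq 0$, so Lemma~\ref{properties of Om_c 2} part~2 applies once $c > C_5$. The crucial identity $\Om_c \setminus g\Om_c = \Om \setminus g\Om$ means that $\mathds{1}_{\Om \setminus g\Om}$ exactly cancels the constant $1$ in the middle region (leaving $-\alpha_{\Om}(\mf{h})/c$), while vanishing on the other two regions (which lie in $g\Om \cu \Om$). On $\Om_c \cap g\Om_c$, Lemma~\ref{properties of Om_c 3} part~1 bounds the integrand by $C_2/c$, while Lemma~\ref{properties of Om_c} part~2 gives $\wh{\nu}(\Om_c \cap g\Om_c) \leq rc$, producing a contribution $O(1/c)$. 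On $\Om_c \setminus g\Om_c = \Om \setminus g\Om$, Lemma~\ref{C_1} part~1 forces every $\mf{h}$ to satisfy $d(x,\mf{h}) \leq C_1(\Om,K)$, hence $\alpha_{\Om}(\mf{h}) \leq C_1$; combined with $\wh{\nu}(\Om \setminus g\Om) = \chi_{\Om}(g)$, uniformly bounded on $K$ by Proposition~\ref{chi_xi continuous}, this yields $O(1/c^2)$. On $g\Om_c \setminus \Om_c$, the conditions $\alpha_{\Om}(\mf{h}) > c \geq \alpha_{\Om}(g^{-1}\mf{h})$ together with Lemma~\ref{properties of Om_c 3} part~1 force $\alpha_{\Om}(g^{-1}\mf{h}) > c - C_2$, so the integrand has magnitude $< C_2/c$; since $\wh{\nu}(g\Om_c \setminus \Om_c) = \wh{\nu}(\Om_c \setminus g\Om_c)$ is again uniformly bounded, this contribution is also $O(1/c^2)$.

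For the case $g\Om \supseteq \Om$, I apply the same lemmata to the compact set $K^{-1}$, for which $g^{-1}\Om \cu \Om$; here $\chi_{\Om}(g) \leq 0$, Lemma~\ref{properties of Om_c 2} gives $g\Om_c \setminus \Om_c = g\Om \setminus \Om$ for $c$ large, and the indicator $+\mathds{1}_{g\Om \setminus \Om}$ cancels the value $-1$ appearing in the third region; the pointwise and measure estimates then run symmetrically, with the bound $\alpha_{\Om}(g^{-1}\mf{h}) \leq C_1(\Om,K^{-1})$ on $g\Om \setminus \Om$ replacing the bound $\alpha_{\Om}(\mf{h}) \leq C_1(\Om,K)$ on $\Om \setminus g\Om$. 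The main obstacle is organising this case analysis while verifying that every constant stays uniform over $K$: the essential inputs are the bounded oscillation of $\alpha_{\Om}$ under $K$ (Lemma~\ref{properties of Om_c 3}), the fact that $\alpha_{\Om}$ is uniformly bounded on the boundary set $\Om \triangle g\Om$ (Lemma~\ref{C_1}), and continuity of the transfer character (Proposition~\ref{chi_xi continuous}) ensuring $\chi_{\Om}$ is bounded on $K$.
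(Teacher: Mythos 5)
Your proposal is correct and follows essentially the same strategy as the paper: expand $(g-\id)F_{c,\Om}$ into the same three pieces supported on $\Om_c\cap g\Om_c$, $\Om_c\setminus g\Om_c$, $g\Om_c\setminus\Om_c$, cancel the indicator against the constant on $\Om\setminus g\Om$ using Lemma~\ref{properties of Om_c 2}, and invoke the same bounds from Lemmas~\ref{C_1}, \ref{properties of Om_c}, \ref{properties of Om_c 3} and Proposition~\ref{chi_xi continuous}. The only (harmless) deviations are that you bound $\wh{\nu}(g\Om_c\setminus\Om_c)$ by the measure-preserving identity $\wh{\nu}(g\Om_c\setminus\Om_c)=\wh{\nu}(\Om_c\setminus g\Om_c)$ rather than through $\Om_{c+C_2}$ as the paper does, and you handle the case $g\Om\supseteq\Om$ directly via $K^{-1}$ instead of proving the $g^{-1}$ estimate and then applying the unitary $g$.
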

\begin{proof}
Observe that:
\begin{align*}
(g-\id)&F_{c,\Om}=-\left(1-\frac{g\alpha_{\Om}}{c}\right)\mathds{1}_{g\Om_c}+\left(1-\frac{\alpha_{\Om}}{c}\right)\mathds{1}_{\Om_c} \\
&=-\left(1-\frac{g\alpha_{\Om}}{c}\right)\mathds{1}_{g\Om_c\setminus\Om_c}+\left(1-\frac{\alpha_{\Om}}{c}\right)\mathds{1}_{\Om_c\setminus g\Om_c}+\frac{g\alpha_{\Om}-\alpha_{\Om}}{c}\mathds{1}_{g\Om_c\cap\Om_c}.
\end{align*}
We will analyse the three summands separately. By Lemma~\ref{properties of Om_c 3}, we have 
\[-\frac{2C_2}{c}\leq\left(1-\frac{\alpha_{\Om}(\mf{h})+C_2}{c}\right)\leq\left(1-\frac{g\alpha_{\Om}(\mf{h})}{c}\right)\leq\left(1-\frac{\alpha_{\Om}(\mf{h})-C_2}{c}\right)<\frac{C_2}{c},\]
for each $\mf{h}\in g\Om_c\setminus\Om_c\cu\Om_{c+C_2}\setminus\Om_c$. By part~2 of Lemma~\ref{properties of Om_c},
\[\left\|\left(1-\frac{g\alpha_{\Om}}{c}\right)\mathds{1}_{g\Om_c\setminus\Om_c}\right\|_2\leq\frac{2C_2}{c}\cdot\wh{\nu}(g\Om_c\setminus\Om_c)^{1/2}\leq\frac{2C_2(r(c+C_2))^{1/2}}{c}\xrightarrow[c\ra+\infty]{} 0.\]
By part~1 of Lemma~\ref{C_1}, there exists a constant $C_1$ such that each $\mf{h}\in\Om$ with $d(x,\mf{h})>C_1$ lies in $g\Om$ for every $g\in K$; in particular, if $\mf{k}\in\Om\setminus g\Om$ for some $g\in K$, then $\alpha_{\Om}(\mf{k})\leq d(x,\mf{k})\leq C_1$. By part~2 of Lemma~\ref{properties of Om_c 2}, if $c\geq C_5$ we have $\Om_c\setminus g\Om_c=\Om\setminus g\Om$ and
\begin{align*}
&\left\|\left(1-\frac{\alpha_{\Om}}{c}\right)\mathds{1}_{\Om_c\setminus g\Om_c}-\mathds{1}_{\Om\setminus g\Om}\right\|_2=\left\|\left(\mathds{1}_{\Om_c\setminus g\Om_c}-\mathds{1}_{\Om\setminus g\Om}\right) - \frac{\alpha_{\Om}}{c}\mathds{1}_{\Om_c\setminus g\Om_c}\right\|_2 \\
&=\left\|\frac{\alpha_{\Om}}{c}\mathds{1}_{\Om\setminus g\Om}\right\|_2\leq\frac{C_1}{c}\cdot\wh{\nu}\left(\Om\setminus g\Om\right)^{1/2} \leq \frac{C_1M^{1/2}}{c}\xrightarrow[c\ra+\infty]{} 0 ,
\end{align*}
where $M$ is the maximum of $\chi_{\Om}(g)$ for $g\in K$, which exists by Proposition~\ref{chi_xi continuous}. Finally, by part~2 of Lemma~\ref{properties of Om_c} and part~1 of Lemma~\ref{properties of Om_c 3},
\[\left\|\frac{g\alpha_{\Om}-\alpha_{\Om}}{c}\mathds{1}_{g\Om_c\cap\Om_c}\right\|_2\leq\frac{C_2}{c}\cdot\wh{\nu}(\Om_c)^{1/2}\leq\frac{C_2(cr)^{1/2}}{c}\xrightarrow[c\ra+\infty]{} 0.\]
If instead $g\Om\supseteq\Om$ for all $g\in K$, the previous discussion shows that, for large $c$, we have $\|(g^{-1}-\id)F_{c,\Om}-\mathds{1}_{\Om\setminus g^{-1}\Om}\|_2<\eps$; the conclusion follows by applying $g$.
\end{proof}

\begin{lem}\label{saving the day}
Let $\Xi\cu\s_{\xi}\setminus\s_x$ be a UBS and let $\Xi^1,...,\Xi^k\cu\Xi$ be pairwise inequivalent, reduced UBS's representing all minimal equivalence classes of UBS's almost contained in $\Xi$; for every $i$, set $\s_i:=\wh{\nu}(\Xi^i)$. There exist increasing sequences $(c_n^{(i)})_{n\geq 0}$ such that 
\begin{itemize}
\item $c_n^{(i)}\ra\s_i$ for all $i=1,...,k$;
\item $\left(\Xi^1\setminus\Xi^1_{c_n^{(1)}}\right)\cup ... \cup\left(\Xi^k\setminus\Xi^k_{c_n^{(k)}}\right)$ is inseparable for all $n\geq 0$.
\end{itemize}
\end{lem}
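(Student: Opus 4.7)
The plan is to choose each $c_n^{(i)}$ just below $\s_i$ so that the tails $\Xi^i\setminus\Xi^i_{c_n^{(i)}}$ consist only of halfspaces lying uniformly far from $x$. This will force any halfspace squeezed between two members of $\bigcup_l(\Xi^l\setminus\Xi^l_{c_n^{(l)}})$ to belong itself to a unique $\Xi^l$, and then the monotonicity of $\alpha$ will do the rest.

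To set things up, part~3 of Proposition~\ref{main prop on UBS's} provides a constant $D_0<+\infty$ with $d(x,\mf{h})\leq D_0$ for all $\mf{h}\in\Xi\triangle(\Xi^1\cup\cdots\cup\Xi^k)$. Since a pairwise intersection $\Xi^i\cap\Xi^j$ (for $i\neq j$) cannot contain a diverging chain (otherwise $\Xi^i\sim\Xi^j$, contradicting the inequivalence assumption), by part~2 of Lemma~\ref{Dilworth for differences} such an intersection is bounded in distance from $x$. Enlarging $D_0$ to an appropriate finite $D$ incorporating these pairwise bounds, we ensure that every halfspace of $\Xi$ at distance greater than $D$ from $x$ lies in exactly one of the $\Xi^l$'s. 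Using the basic inequality $\alpha_{\Xi^i}(\mf{h})\leq d(x,\mf{h})$, any sequence $(c_n^{(i)})$ strictly increasing to $\s_i$ with $c_n^{(i)}>D$ then has the property that every $\mf{h}\in\Xi^i\setminus\Xi^i_{c_n^{(i)}}$ satisfies $d(x,\mf{h})>D$ and hence lies only in $\Xi^i$. Such sequences exist whenever $\s_i>D$; the small-UBS case $\s_i\leq D$ is a further technical point that I would handle by passing, if necessary, to a strictly larger representative of $[\Xi^i]$ (possible since the bounded exceptional set has controlled $\wh{\nu}$-measure inside $\Xi$).

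Set $\Omega_n:=\bigcup_l(\Xi^l\setminus\Xi^l_{c_n^{(l)}})$, and let $\mf{h}\subseteq\mf{j}\subseteq\mf{k}$ with $\mf{h}\in\Xi^i\setminus\Xi^i_{c_n^{(i)}}$ and $\mf{k}\in\Xi^{l'}\setminus\Xi^{l'}_{c_n^{(l')}}$. Since $\Xi$ is inseparable, $\mf{j}\in\Xi$; since $d(x,\mf{j})\geq d(x,\mf{k})>D$, there is a unique $l$ with $\mf{j}\in\Xi^l$. The direct monotonicity $\alpha_{\Xi^l}(\mf{j})\geq\alpha_{\Xi^l}(\mf{k})$ (the set of halfspaces of $\Xi^l$ containing $\mf{j}$ contains those containing $\mf{k}$) then gives $\alpha_{\Xi^{l'}}(\mf{j})>c_n^{(l')}$ as soon as $l=l'$, and hence $\mf{j}\in\Omega_n$.

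The main obstacle, and the technical heart of the proof, is to rule out the case $l\neq l'$. The strategy is to exploit reducedness of $\Xi^{l'}$: a diverging chain $(\mf{k}_m)$ in $\Xi^{l'}$ is cofinally contained in $\mf{k}$, and one wants to show that in fact $\mf{k}_m\subseteq\mf{j}$ for cofinitely many $m$. Once established, $\mf{j}$ is sandwiched between two halfspaces of $\Xi^{l'}$, and inseparability of $\Xi^{l'}$, combined with uniqueness of $l$, forces $l=l'$. Carrying out this sandwich step requires a delicate case analysis exploiting the directed-acyclic structure of $\mc{G}(\xi)$ from Proposition~\ref{main prop on UBS's}, reducedness of $\Xi^i$ (so that a diverging chain in $\Xi^i$ is almost contained in $\mf{h}$ and hence in $\mf{j}$), and Proposition~\ref{trivial chains of halfspaces} to rule out transversality or reversed containment between $\mf{k}_m$ and $\mf{j}$.
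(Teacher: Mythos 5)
Your proposal sets up the right preliminaries (the uniform bound $D$ from part~3 of Proposition~\ref{main prop on UBS's}, pairwise boundedness of $\Xi^i\cap\Xi^j$, and uniqueness of the class $\Xi^l$ containing $\mf{j}$), but the reduction of the whole lemma to ``show $l=l'$, then use monotonicity of $\alpha$'' is not a technical detail left to fill in; it is where the argument goes wrong, and the paper's proof takes a genuinely different and more structured route.

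The issue is the direction of the monotonicity. Writing $\mf{h}\cu\mf{j}\cu\mf{k}$ with $\mf{h}\in\Xi^i\setminus\Xi^i_{c^{(i)}}$ and $\mf{k}\in\Xi^{l'}\setminus\Xi^{l'}_{c^{(l')}}$, the map $\alpha_{\Xi^l}$ is \emph{decreasing} along inclusions. Your step works only if $\mf{j}$ lands in $\Xi^{l'}$, the class of the \emph{larger} halfspace $\mf{k}$: then $\alpha_{\Xi^{l'}}(\mf{j})\geq\alpha_{\Xi^{l'}}(\mf{k})>c^{(l')}$. But the paper's own proof establishes the opposite. After choosing $\Xi^k$ to be a source of the directed graph $\mc{G}(\xi)$ among the relevant vertices, the contradiction case always has the smaller halfspace in $\Xi^k$ (the index $u$ equals $k$, $v\leq k-1$), and the transversality/reducedness argument forces $\mf{j}\in\Xi^k=\Xi^u$. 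In your notation this means $l=i$, not $l=l'$; and then $\alpha_{\Xi^i}(\mf{j})\leq\alpha_{\Xi^i}(\mf{h})$ gives no lower bound whatsoever. The sandwich step you sketch (showing a diverging chain of $\Xi^{l'}$ is cofinally contained in $\mf{j}$) is trying to prove something that fails: $\mf{j}$ contains a diverging chain of $\Xi^i$ (via reducedness and $\mf{h}\cu\mf{j}$), not of $\Xi^{l'}$, and there is no mechanism to put $\mf{j}$ below an element of $\Xi^{l'}$ other than $\mf{k}$ itself.

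To close the case $\mf{j}\in\Xi^i$, the paper does not use $\alpha$-monotonicity at all; it uses a carefully coupled choice of constants, fixed in a specific \emph{order}, so that a distance bound $d(x,\mf{j})>M$ already forces $\mf{j}\in\Xi^k\setminus\Xi^k_{c^{(k)}}$: one first picks a source vertex $\Xi^k$, then $c^{(k)}$, then $M$, then runs an induction on the remaining $k-1$ classes to obtain $c^{(i)}$ ($i\leq k-1$) such that the corresponding tails lie beyond distance $M$. Your construction chooses a single constant $D$ uniformly and independently of any $c^{(i)}$, so this compatibility is not available. Finally, the treatment of the case $\s_i\leq D$ by ``passing to a strictly larger representative of $[\Xi^i]$'' is not viable as stated: $\s_i=\wh{\nu}(\Xi^i)$ is determined by the given data, and the lemma is precisely about the given $\Xi^i$; moreover, nothing in the bounded-exceptional-set estimate guarantees a representative with strictly larger measure. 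The induction and DAG ordering in the paper's proof are not cosmetic --- they are what lets the distance bound substitute for the failed $\alpha$-monotonicity on the source class.
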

\begin{proof}
We proceed by induction on $k$. If $k=1$, the lemma is immediate. Suppose that $k\geq 2$; without loss of generality, we can assume that $\Xi^k$ corresponds to a vertex with no incoming edges in the full subgraph of $\mc{G}(\xi)$ with vertices $[\Xi^1],...,[\Xi^k]$. Fix $\eps>0$; we will construct $c^{(i)}>\s_i-\eps$ satisfying the inseparability condition.

Pick a diverging chain $(\mf{h}_n^{(i)})_{n\geq 0}$ in each $\Xi^i$; up to replacing $(\mf{h}_n^{(k)})_{n\geq 0}$ with a cofinite subchain, we can assume that, for each $i\leq k-1$ and each $m\geq 0$, the halfspace $\mf{h}_m^{(k)}$ is transverse to $\mf{h}_n^{(i)}$ for almost every $n$. By Lemma~\ref{chains in small UBS's}, there exists $c^{(k)}>\s_k-\eps$ such that $\Xi^k\setminus\Xi^k_{c^{(k)}}$ is contained in the inseparable closure of $\{\mf{h}_n^{(k)}\}_{n\geq 0}$. As a consequence, for every $\mf{h}\in\Xi^k\setminus\Xi^k_{c^{(k)}}$ and every $i\leq k-1$, the halfspaces $\mf{h}$ and $\mf{h}_n^{(i)}$ are transverse for almost every $n$.

Halfspaces lying in the inseparable closure of $\Xi^1\cup ... \cup\Xi^k$, but in neither of the $\Xi^i$, are at uniformly bounded distance from $x$, by part~3 of Proposition~\ref{main prop on UBS's}; say that these distances are bounded above by $M<+\infty$. Enlarge $M$ so that all $\mf{j}\in\Xi^k$ with $d(x,\mf{j})>M$ lie in $\Xi^k\setminus\Xi^k_{c^{(k)}}$. By part~4 of Proposition~\ref{main prop on UBS's} there exists a UBS contained in $\Xi$ such that $[\Xi^1],...,[\Xi^{k-1}]$ are all the equivalence classes of minimal UBS's almost contained in $\Xi$. The inductive hypothesis and Lemma~\ref{chains in small UBS's} imply that we can find $c^{(i)}>\s_i-\eps$ so that $\left(\Xi^1\setminus\Xi^1_{c^{(1)}}\right)\cup ... \cup\left(\Xi^{k-1}\setminus\Xi^{k-1}_{c^{(k-1)}}\right)$ is inseparable and $d(x,\mf{h})>M$ for all $\mf{h}\in\Xi^i\setminus\Xi^i_{c^{(i)}}$ with $i\leq k-1$.

Now, if $\left(\Xi^1\setminus\Xi^1_{c^{(1)}}\right)\cup ... \cup\left(\Xi^k\setminus\Xi^k_{c^{(k)}}\right)$ were not inseparable, there would exist $\mf{j}\in\mscr{H}$ such that $\mf{k}_u\cu\mf{j}\cu\mf{k}_v$, for halfspaces $\mf{k}_u\in\left(\Xi^u\setminus\Xi^u_{c^{(u)}}\right)$ and $\mf{k}_v\in\left(\Xi^v\setminus\Xi^v_{c^{(v)}}\right)$, but $\mf{j}$ would not lie in any of the $\Xi^i\setminus\Xi^i_{c^{(i)}}$. In particular, $u\neq v$ and $k\in\{u,v\}$; observe that $v\neq k$, otherwise $\mf{k}_v\in\Xi^k\setminus\Xi^k_{c^{(k)}}$ would not be transverse to diverging chains in $\Xi^u$. Thus, $u=k$; moreover, for all $i\leq k-1$, the halfspace $\mf{j}$ must be transverse to $\mf{h}_n^{(i)}$ for almost every $n$, since $v\leq k-1$ and $\mf{j}$ does not lie in $\left(\Xi^1\setminus\Xi^1_{c^{(1)}}\right)\cup ... \cup\left(\Xi^{k-1}\setminus\Xi^{k-1}_{c^{(k-1)}}\right)$, which is inseparable. Since $d(x,\mf{j})\geq d(x,\mf{k}_v)>M$, we have $\mf{j}\in\Xi^1\cup ...\cup\Xi^k$; the fact that each $\Xi_i$ is reduced implies that $\mf{j}\in\Xi^k$. By our choice of $M$, we have $\mf{j}\in\Xi^k\setminus\Xi^k_{c^{(k)}}$, a contradiction.
\end{proof}

\begin{lem}\label{UBS's vs points}
There exists a UBS $\Xi_{\xi}\cu\s_{\xi}\setminus\s_x$ such that every UBS $\Xi\cu\Xi_{\xi}$ with $\Xi\sim\Xi_{\xi}$ is of the form $\s_{\xi}\setminus\s_y$ for some $y\in X$, up to a null set.
\end{lem}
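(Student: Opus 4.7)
My approach is to simply take $\Xi_\xi := \s_\xi \setminus \s_x$ itself and, for every equivalent sub-UBS $\Xi$, build a candidate ultrafilter by ``flipping $\s_\xi$ precisely on $\Xi$''. Specifically, given $\Xi \cu \s_\xi \setminus \s_x$ with $\Xi \sim \s_\xi \setminus \s_x$, I would set
\[ \s := (\s_\xi \setminus \Xi) \cup \{\mf{h}^* : \mf{h} \in \Xi\}. \]
A wall-by-wall check (using $\Xi \cu \s_\xi \setminus \s_x$) shows that $\s$ selects exactly one side of each wall. A direct computation then gives $\wh{\nu}(\s \triangle \s_x) = 2\wh{\nu}(\Xi_\xi \setminus \Xi)$; since $\Xi_\xi \setminus \Xi$ lies at uniformly bounded distance from $x$ and $\Xi_\xi$ is inseparable, Lemma~\ref{properties of Om_c}(2) (whose proof uses only inseparability and finite rank) bounds this measure by $r\cdot D<+\infty$. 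If $\s$ is an ultrafilter, Proposition~\ref{M(X)=X} then supplies $y \in X$ with $\wh{\nu}(\s \triangle \s_y) = 0$; and unfolding the definition of $\s$ shows $\s_\xi \setminus \s = \Xi$ as subsets of $\mscr{H}$, whence $\wh{\nu}((\s_\xi \setminus \s_y) \triangle \Xi) = 0$, proving the lemma.

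The only real obstacle is therefore to verify that $\s$ is an ultrafilter, i.e.\ that no two of its elements are disjoint. The nontrivial case is a pair $\mf{h} \in \s_\xi \setminus \Xi$ and $\mf{k}^* \in \Xi$ with $\mf{h} \cap \mf{k} = \emptyset$, equivalently $\mf{h} \cu \mf{k}^*$. If $x \in \mf{h}$, then $\mf{k}^* \in \Xi \cu \s_\xi \setminus \s_x$ forces $x \in \mf{k} \cu \mf{h}^*$, contradicting $x \in \mf{h}$; so $\mf{h} \in \Xi_\xi \setminus \Xi$. I would then produce some $\mf{a} \in \Xi$ with $\mf{a} \cu \mf{h}$, so that inseparability of $\Xi$ applied to $\mf{a} \cu \mf{h} \cu \mf{k}^*$ gives $\mf{h} \in \Xi$, contradicting $\mf{h} \in \Xi_\xi \setminus \Xi$.

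Producing $\mf{a}$ is the heart of the argument. I would first observe that $\mscr{H}(\mf{h}^*|\xi) = \{\mf{j} \in \s_\xi : \mf{j} \cu \mf{h}\}$ is itself a UBS: it is inseparable by direct check, and a diverging chain exists inside $\mf{h}$ because $\xi \in \partial X$ accumulates points of $\mf{h}$ at unbounded distance from $x$. Proposition~\ref{main prop on UBS's}(2) then gives that $[\mscr{H}(\mf{h}^*|\xi)]$ dominates at least one minimal class $[\Om^{i_0}] \in \mc{G}(\xi)^{(0)}$, and any representative $\Om^{i_0}$ is almost contained in $\mscr{H}(\mf{h}^*|\xi)$, i.e.\ cofinitely inside $\mf{h}$. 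On the other hand, $\Xi \sim \s_\xi \setminus \s_x$ means that $[\Xi]$ corresponds to the full inseparable set $\mc{G}(\xi)^{(0)}$, so by Proposition~\ref{main prop on UBS's}(3) the UBS $\Xi$ differs from a union of representatives of \emph{all} minimal classes by a bounded set; in particular $\Xi$ contains a diverging chain in $\Om^{i_0}$, and cofinitely many of its elements lie inside $\mf{h}$, yielding the required $\mf{a} \in \Xi$ with $\mf{a} \cu \mf{h}$.
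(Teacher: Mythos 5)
Your choice $\Xi_\xi := \s_\xi\setminus\s_x$ is too large, and the step you flag as ``the heart of the argument'' does not go through: the set $\mscr{H}(\mf{h}^*|\xi)=\{\mf{j}\in\s_\xi\mid\mf{j}\cu\mf{h}\}$ is \emph{not} always a UBS for $\mf{h}\in\s_\xi\setminus\s_x$. A halfspace $\mf{h}$ can lie in a ``bounded'' direction transverse to every diverging chain, in which case nothing in $\s_\xi\setminus\s_x$ deep inside $\mf{h}$ exists. For instance, take $X=T\times\R$ (a tree times a line), let $\xi$ be a point of $\partial T\times\{1\}$ and $x=(v_0,0)$; then for $\mf{h}=T\times(\tfrac12,\infty)\in\s_\xi\setminus\s_x$ the set $\mscr{H}(\mf{h}^*|\xi)$ consists only of the $\R$-halfspaces at heights in $[\tfrac12,1)$, which has finite measure and contains no diverging chain. (The remark before Lemma~\ref{aux lemma 1} in the paper is imprecisely quantified; in the paper it is applied only to halfspaces $\mf{h}_v$ lying along diverging chains, where the claim holds.) Once $\mscr{H}(\mf{h}^*|\xi)$ fails to be a UBS, Proposition~\ref{main prop on UBS's}(2) no longer applies and you cannot produce $\mf{a}\in\Xi$ with $\mf{a}\cu\mf{h}$. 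And the conclusion genuinely fails: in the rank-two space $X=\bigl([0,1]\times[0,\infty)\bigr)\cup\bigl([0,2]\times[-1,0]\bigr)\cu(\R^2,\ell^1)$ with $x=(2,-1)$ and $\xi=(0,+\infty)$, the unique minimal class is the $v$-chain, while the $u$-halfspaces lie within distance $2$ of $x$. The set $\Xi$ consisting of all $\{v>c\}$, $\{v\ge c\}$ with $c\ge 0$ together with all halfspaces in $\s_\xi\setminus\s_x$ containing $\{u\le 1\}$ is an inseparable UBS equivalent to $\s_\xi\setminus\s_x$, yet $\s=(\s_\xi\setminus\Xi)\cup\Xi^*$ contains the disjoint halfspaces $\{u<1\}$ and $\{u>1\}$, and $\s$ differs from every $\s_y$ by a set of positive measure, so $\Xi$ is not of the form $\s_\xi\setminus\s_y$ even up to a null set.

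The paper's choice of $\Xi_\xi$ is precisely designed to eliminate such $\mf{h}$. It fixes reduced minimal UBS's $\Xi_1,\ldots,\Xi_k$ representing all minimal classes, picks $M$ so that every halfspace of $\s_\xi\setminus\s_x$ transverse to a diverging chain in each $\Xi_i$ has $d(x,\cdot)\le M$ (using Proposition~\ref{main prop on UBS's}(3)), and sets $\Xi_\xi:=\{\mf{h}\in\s_\xi\setminus\s_x\mid d(x,\mf{h})>M\}$. The key observation is then proved \emph{without} appealing to $\mscr{H}(\mf{h}^*|\xi)$ being a UBS: if $\mf{h}\cu\mf{k}\in\Xi$ with $\xi\in\wt{\mf{h}}$ and $\mf{h}\notin\Xi$, then inseparability forces $\mf{h}$ to contain no halfspace of $\Xi$, hence $\mf{h}$ is transverse to a diverging chain in each $\Xi_i$, hence $d(x,\mf{h})\le M$, so $d(x,\mf{k})\le d(x,\mf{h})\le M$ — contradicting $\mf{k}\in\Xi\cu\Xi_\xi$. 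Your ultrafilter computation, measure estimate, and invocation of Proposition~\ref{M(X)=X} are all fine; the gap is entirely in the choice of $\Xi_\xi$ and the $\mf{a}$-production step, which must be replaced by the distance cut-off argument.
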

\begin{proof}
Let $\Xi_1,...,\Xi_k\cu\s_{\xi}\setminus\s_x$ be pairwise inequivalent minimal UBS's representing all minimal elements of $\overline{\mc{U}}(\xi)$; we can assume that they are all reduced by Lemma~\ref{reduced and strongly reduced}. Halfspaces in $\s_{\xi}\setminus\s_x$ that are transverse to a diverging chain in each $\Xi_i$ lie at uniformly bounded distance from $x$, by part~3 of Proposition~\ref{main prop on UBS's}; say that these distances are bounded above by $M<+\infty$. Let $\Xi_{\xi}$ consist of all $\mf{h}\in\s_{\xi}\setminus\s_x$ with $d(x,\mf{h})>M$; it is a UBS equivalent to $\s_{\xi}\setminus\s_x$. 

Observe that, if $\Xi\cu\Xi_{\xi}$ is a UBS equivalent to $\Xi_{\xi}$ and $\xi\in\wt{\mf{h}}$ for halfspaces ${\mf{h}\cu\mf{k}\in\Xi}$, then $\mf{h}\in\Xi$. Indeed, otherwise $\mf{h}$ would not contain any halfspace of $\Xi$, by inseparability, and it would therefore be transverse to a diverging chain in each of the $\Xi_i$; hence $d(x,\mf{k})\leq d(x,\mf{h})\leq M$, a contradiction. 

Now, given $\Xi\cu\Xi_{\xi}$, consider the set $\s:=(\s_{\xi}\setminus\Xi)\sqcup\Xi^*\cu\mscr{H}$. Observe that $\s$ is an ultrafilter. Indeed, since $\s_{\xi}\setminus\Xi\cu\s_{\xi}$ and $\Xi^*\cu\s_x$, it suffices to check that $\mf{h}\cap\mf{k}\neq\emptyset$ whenever $\mf{h}\in\s_{\xi}\setminus\Xi$ and $\mf{k}\in\Xi^*$. If such halfspaces were disjoint, we would have $\xi\in\wt{\mf{h}}$ and $\mf{h}\cu\mf{k}^*\in\Xi$, contradicting the observation we made above since $\mf{h}\not\in\Xi$.

Finally, by Lemma~4.3 in \cite{Fioravanti2}, we can decompose $\s_{\xi}\setminus\s_x=\Xi\sqcup\Sigma$ for some set $\Sigma$ with $\wh{\nu}(\Sigma)<+\infty$; in particular, $\s_x\setminus\s_{\xi}=\Xi^*\sqcup\Sigma^*$. Since $\Xi$ and $\s_x$ are disjoint, we have $\s_x\setminus\s=\s_x\setminus\left(\s_{\xi}\cup\Xi^*\right)=\Sigma^*$, hence $\wh{\nu}(\s_x\triangle\s)<+\infty$. Proposition~\ref{M(X)=X} implies that there exists $y\in X$ such that $\s\triangle\s_y$ is null; thus, up to measure zero, $\s_{\xi}\setminus\s_y=\s_{\xi}\setminus\s=\Xi$.
\end{proof}

We are now ready to prove Proposition~\ref{point x_S}.

\begin{proof}[Proof of Proposition~\ref{point x_S}.]
Let $\Xi^1, ..., \Xi^k\cu\s_{\xi}\setminus\s_x$ be pairwise inequivalent UBS's representing all minimal elements of the poset $\left(\overline{\mc{U}}(\xi),\preceq\right)$; by Lem\-ma~\ref{reduced and strongly reduced}, we can assume that each $\Xi^i$ is strongly reduced. Up to replacing each $\Xi^i$ with a smaller UBS, part~2 of Lemma~\ref{C_1} guarantees that we can assume that $\Xi^i\cap\Xi^j=\emptyset$ and $\Xi^i\cap g\Xi^j$ for all $i\neq j$ and $g\in K$. By part~2 of Lemma~\ref{properties of Om_c 2}, we have ${g(\Xi^i\setminus\Xi^i_c)\cu\Xi^i\setminus\Xi^i_c}$ for all ${C_5(\Xi^i,K)\leq c<\wh{\nu}(\Xi^i)}$ and all $g\in K$ with $\chi_{\Xi^i}(g)\geq 0$, while we have ${g(\Xi^i\setminus\Xi^i_c)\supseteq\Xi^i\setminus\Xi^i_c}$ if $\chi_{\Xi^i}(g)\leq 0$. Lemma~\ref{saving the day} provides constants ${C_5(\Xi^i,K)\leq c_i<\wh{\nu}(\Xi^i)}$ such that ${\Om_K:=\left(\Xi^1\setminus\Xi^1_{c_1}\right)\cup ... \cup\left(\Xi^k\setminus\Xi^k_{c_k}\right)}$ is inseparable. Thus $\Om_K$ is a UBS equivalent to $\s_{\xi}\setminus\s_x$ by part~3 of Proposition~\ref{main prop on UBS's}. We conclude by Lem\-ma~\ref{UBS's vs points}, enlarging the constants $c_i$ if necessary, so that $\Om_K\cu\Xi_{\xi}$; this is possible by Lemma~\ref{chains in small UBS's}.
\end{proof}

\bibliography{mybib}

\newcommand{\etalchar}[1]{$^{#1}$}
\begin{thebibliography}{ANWZ18}

\bibitem[ANWZ18]{ANWZ}
Goulnara Arzhantseva, Graham~A. Niblo, Nick Wright, and Jiawen Zhang.
\newblock A characterization for asymptotic dimension growth.
\newblock {\em Algebr. Geom. Topol.}, 18(1):493--524, 2018.

\bibitem[BCG{\etalchar{+}}09]{BCGNW}
Jacek Brodzki, Sarah~J. Campbell, Erik Guentner, Graham~A. Niblo, and Nick~J.
  Wright.
\newblock Property {A} and {$\rm CAT(0)$} cube complexes.
\newblock {\em J. Funct. Anal.}, 256(5):1408--1431, 2009.

\bibitem[BdlHV08]{Bekka-Harpe-Valette}
Bachir Bekka, Pierre de~la Harpe, and Alain Valette.
\newblock {\em Kazhdan's property ({T})}, volume~11 of {\em New Mathematical
  Monographs}.
\newblock Cambridge University Press, Cambridge, 2008.

\bibitem[BDS11a]{Behrstock-Drutu-Sapir2}
Jason Behrstock, Cornelia Dru\c{t}u, and Mark Sapir.
\newblock Addendum: {M}edian structures on asymptotic cones and homomorphisms
  into mapping class groups [mr2783135].
\newblock {\em Proc. Lond. Math. Soc. (3)}, 102(3):555--562, 2011.

\bibitem[BDS11b]{Behrstock-Drutu-Sapir}
Jason Behrstock, Cornelia Dru\c{t}u, and Mark Sapir.
\newblock Median structures on asymptotic cones and homomorphisms into mapping
  class groups.
\newblock {\em Proc. Lond. Math. Soc. (3)}, 102(3):503--554, 2011.

\bibitem[Bes88]{Bestvina}
Mladen Bestvina.
\newblock Degenerations of the hyperbolic space.
\newblock {\em Duke Math. J.}, 56(1):143--161, 1988.

\bibitem[BF95]{Bestvina-Feighn}
Mladen Bestvina and Mark Feighn.
\newblock Stable actions of groups on real trees.
\newblock {\em Invent. Math.}, 121(2):287--321, 1995.

\bibitem[BF14]{Bader-Furman}
Uri Bader and Alex Furman.
\newblock Boundaries, rigidity of representations, and {L}yapunov exponents.
\newblock In {\em Proceedings of the {I}nternational {C}ongress of
  {M}athematicians---{S}eoul 2014. {V}ol. {III}}, pages 71--96. Kyung Moon Sa,
  Seoul, 2014.

\bibitem[BGM12]{Bader-Gelander-Monod}
Uri Bader, Tsachik Gelander, and Nicolas Monod.
\newblock A fixed point theorem for {$L^1$} spaces.
\newblock {\em Invent. Math.}, 189(1):143--148, 2012.

\bibitem[BH99]{BH}
Martin~R. Bridson and Andr\'e Haefliger.
\newblock {\em Metric spaces of non-positive curvature}, volume 319 of {\em
  Grundlehren der Mathematischen Wissenschaften [Fundamental Principles of
  Mathematical Sciences]}.
\newblock Springer-Verlag, Berlin, 1999.

\bibitem[BHS17a]{HHS}
Jason Behrstock, Mark~F. Hagen, and Alessandro Sisto.
\newblock Hierarchically hyperbolic spaces, {I}: {C}urve complexes for cubical
  groups.
\newblock {\em Geom. Topol.}, 21(3):1731--1804, 2017.

\bibitem[BHS17b]{HHS3}
Jason Behrstock, Mark~F. Hagen, and Alessandro Sisto.
\newblock Quasiflats in hierarchically hyperbolic spaces.
\newblock {\em arXiv:1704.04271v1}, 2017.

\bibitem[BHS19]{HHS2}
Jason Behrstock, Mark Hagen, and Alessandro Sisto.
\newblock Hierarchically hyperbolic spaces {II}: {C}ombination theorems and the
  distance formula.
\newblock {\em Pacific J. Math.}, 299(2):257--338, 2019.

\bibitem[BJ06]{Borel-Ji}
Armand Borel and Lizhen Ji.
\newblock {\em Compactifications of symmetric and locally symmetric spaces}.
\newblock Mathematics: Theory \& Applications. Birkh\"{a}user Boston, Inc.,
  Boston, MA, 2006.

\bibitem[BJ07]{Borel-Ji2}
Armand Borel and Lizhen Ji.
\newblock Compactifications of symmetric spaces.
\newblock {\em J. Differential Geom.}, 75(1):1--56, 2007.

\bibitem[BM00a]{Burger-Mozes(A)}
Marc Burger and Shahar Mozes.
\newblock Groups acting on trees: from local to global structure.
\newblock {\em Inst. Hautes \'Etudes Sci. Publ. Math.}, (92):113--150 (2001),
  2000.

\bibitem[BM00b]{Burger-Mozes(B)}
Marc Burger and Shahar Mozes.
\newblock Lattices in product of trees.
\newblock {\em Inst. Hautes \'Etudes Sci. Publ. Math.}, (92):151--194 (2001),
  2000.

\bibitem[BM02]{Burger-Monod}
Marc Burger and Nicolas Monod.
\newblock Continuous bounded cohomology and applications to rigidity theory.
\newblock {\em Geom. Funct. Anal.}, 12(2):219--280, 2002.

\bibitem[Bow13a]{Bow1}
Brian~H. Bowditch.
\newblock Coarse median spaces and groups.
\newblock {\em Pacific J. Math.}, 261(1):53--93, 2013.

\bibitem[Bow13b]{Bow6}
Brian~H. Bowditch.
\newblock Invariance of coarse median spaces under relative hyperbolicity.
\newblock {\em Math. Proc. Cambridge Philos. Soc.}, 154(1):85--95, 2013.

\bibitem[Bow16]{Bow4}
Brian~H. Bowditch.
\newblock Some properties of median metric spaces.
\newblock {\em Groups Geom. Dyn.}, 10(1):279--317, 2016.

\bibitem[Bow18]{Bow3}
Brian~H. Bowditch.
\newblock Large-scale rigidity properties of the mapping class groups.
\newblock {\em Pacific J. Math.}, 293(1):1--73, 2018.

\bibitem[Cap09]{Caprace-fine}
Pierre-Emmanuel Caprace.
\newblock Amenable groups and {H}adamard spaces with a totally disconnected
  isometry group.
\newblock {\em Comment. Math. Helv.}, 84(2):437--455, 2009.

\bibitem[CD17]{Chatterji-Drutu}
Indira Chatterji and Cornelia Dru\c{t}u.
\newblock Median geometry for spaces with measured walls and for groups.
\newblock {\em arXiv:1708.00254v1}, 2017.

\bibitem[CDH10]{CDH}
Indira Chatterji, Cornelia Dru\c{t}u, and Fr\'ed\'eric Haglund.
\newblock Kazhdan and {H}aagerup properties from the median viewpoint.
\newblock {\em Adv. Math.}, 225(2):882--921, 2010.

\bibitem[CdlH16]{Cornulier-DeLaHarpe}
Yves Cornulier and Pierre de~la Harpe.
\newblock {\em Metric geometry of locally compact groups}, volume~25 of {\em
  EMS Tracts in Mathematics}.
\newblock European Mathematical Society (EMS), Z\"urich, 2016.
\newblock Winner of the 2016 EMS Monograph Award.

\bibitem[CFI16]{CFI}
Indira Chatterji, Talia Fern\'os, and Alessandra Iozzi.
\newblock The median class and superrigidity of actions on {$\rm CAT(0)$} cube
  complexes.
\newblock {\em J. Topol.}, 9(2):349--400, 2016.
\newblock With an appendix by Pierre-Emmanuel Caprace.

\bibitem[CL10]{CL}
Pierre-Emmanuel Caprace and Alexander Lytchak.
\newblock At infinity of finite-dimensional {${\rm CAT}(0)$} spaces.
\newblock {\em Math. Ann.}, 346(1):1--21, 2010.

\bibitem[CM09]{Caprace-Monod(A)}
Pierre-Emmanuel Caprace and Nicolas Monod.
\newblock Isometry groups of non-positively curved spaces: structure theory.
\newblock {\em J. Topol.}, 2(4):661--700, 2009.

\bibitem[CMV04]{Cherix-Martin-Valette}
Pierre-Alain Cherix, Florian Martin, and Alain Valette.
\newblock Spaces with measured walls, the {H}aagerup property and property
  ({T}).
\newblock {\em Ergodic Theory Dynam. Systems}, 24(6):1895--1908, 2004.

\bibitem[CN05]{Chatterji-Niblo}
Indira Chatterji and Graham Niblo.
\newblock From wall spaces to {$\rm CAT(0)$} cube complexes.
\newblock {\em Internat. J. Algebra Comput.}, 15(5-6):875--885, 2005.

\bibitem[Cor13]{Cor}
Yves Cornulier.
\newblock Group actions with commensurated subsets, wallings and cubings.
\newblock {\em arXiv:1302.5982v2}, 2013.

\bibitem[CR09]{Caprace-Remy09}
Pierre-Emmanuel Caprace and Bertrand R\'emy.
\newblock Simplicity and superrigidity of twin building lattices.
\newblock {\em Invent. Math.}, 176(1):169--221, 2009.

\bibitem[CR10]{Caprace-Remy10}
Pierre-Emmanuel Caprace and Bertrand R\'emy.
\newblock Non-distortion of twin building lattices.
\newblock {\em Geom. Dedicata}, 147:397--408, 2010.

\bibitem[CRK15]{CasalsRuiz-Kazachkov}
Montserrat Casals-Ruiz and Ilya Kazachkov.
\newblock Limit groups over partially commutative groups and group actions on
  real cubings.
\newblock {\em Geom. Topol.}, 19(2):725--852, 2015.

\bibitem[CS11]{CS}
Pierre-Emmanuel Caprace and Michah Sageev.
\newblock Rank rigidity for {${\rm CAT}(0)$} cube complexes.
\newblock {\em Geom. Funct. Anal.}, 21(4):851--891, 2011.

\bibitem[dCTV07]{CTV}
Yves de~Cornulier, Romain Tessera, and Alain Valette.
\newblock Isometric group actions on {H}ilbert spaces: growth of cocycles.
\newblock {\em Geom. Funct. Anal.}, 17(3):770--792, 2007.

\bibitem[dCTV08]{Cornulier-Tessera-Valette}
Yves de~Cornulier, Romain Tessera, and Alain Valette.
\newblock Isometric group actions on {B}anach spaces and representations
  vanishing at infinity.
\newblock {\em Transform. Groups}, 13(1):125--147, 2008.

\bibitem[Del77]{Delorme}
Patrick Delorme.
\newblock {$1$}-cohomologie des repr\'esentations unitaires des groupes de
  {L}ie semi-simples et r\'esolubles. {P}roduits tensoriels continus de
  repr\'esentations.
\newblock {\em Bull. Soc. Math. France}, 105(3):281--336, 1977.

\bibitem[Dil50]{Dilworth}
Robert~P. Dilworth.
\newblock A decomposition theorem for partially ordered sets.
\newblock {\em Ann. of Math. (2)}, 51:161--166, 1950.

\bibitem[DP16]{Delzant-Py}
Thomas Delzant and Pierre Py.
\newblock Cubulable {K}\"ahler groups.
\newblock {\em arXiv:1609.08474v1}, 2016.

\bibitem[EO18]{Erschler-Ozawa}
Anna Erschler and Narutaka Ozawa.
\newblock Finite-dimensional representations constructed from random walks.
\newblock {\em Comment. Math. Helv.}, 93(3):555--586, 2018.

\bibitem[Fer18]{Fernos}
Talia Fern\'{o}s.
\newblock The {F}urstenberg-{P}oisson boundary and {${\rm CAT}(0)$} cube
  complexes.
\newblock {\em Ergodic Theory Dynam. Systems}, 38(6):2180--2223, 2018.

\bibitem[Fio17]{Fioravanti1}
Elia Fioravanti.
\newblock Roller boundaries for median spaces and algebras.
\newblock {\em arXiv:1708.01005v2}, 2017.

\bibitem[Fio18]{Fioravanti2}
Elia Fioravanti.
\newblock The {T}its alternative for finite rank median spaces.
\newblock {\em Enseign. Math.}, 64(1-2):89--126, 2018.

\bibitem[FLM18]{Fernos-Lecureux-Matheus}
Talia Fern\'{o}s, Jean L\'{e}cureux, and Fr\'{e}d\'{e}ric Math\'{e}us.
\newblock Random walks and boundaries of {$\rm CAT(0)$} cubical complexes.
\newblock {\em Comment. Math. Helv.}, 93(2):291--333, 2018.

\bibitem[FV17]{Fernos-Valette}
Talia Fern\'{o}s and Alain Valette.
\newblock The {M}ayer-{V}ietoris sequence for graphs of groups, property ({T}),
  and the first {$\ell^2$}-{B}etti number.
\newblock {\em Homology Homotopy Appl.}, 19(2):251--274, 2017.

\bibitem[Gen19]{Gen-V}
Anthony Genevois.
\newblock Hyperbolic and cubical rigidities of {T}hompson's group {$V$}.
\newblock {\em J. Group Theory}, 22(2):313--345, 2019.

\bibitem[Ger97]{Gerasimov}
Victor~N. Gerasimov.
\newblock Semi-splittings of groups and actions on cubings.
\newblock In {\em Algebra, geometry, analysis and mathematical physics
  ({R}ussian) ({N}ovosibirsk, 1996)}, pages 91--109, 190. Izdat. Ross. Akad.
  Nauk Sib. Otd. Inst. Mat., Novosibirsk, 1997.

\bibitem[GH10]{Guentner-Higson}
Erik Guentner and Nigel Higson.
\newblock Weak amenability of {$\rm CAT(0)$}-cubical groups.
\newblock {\em Geom. Dedicata}, 148:137--156, 2010.

\bibitem[Gro87]{HypGps}
Mikhael Gromov.
\newblock Hyperbolic groups.
\newblock In {\em Essays in group theory}, volume~8 of {\em Math. Sci. Res.
  Inst. Publ.}, pages 75--263. Springer, New York, 1987.

\bibitem[Gro93]{asinv}
Mikhael Gromov.
\newblock Asymptotic invariants of infinite groups.
\newblock In {\em Geometric group theory, {V}ol.\ 2 ({S}ussex, 1991)}, volume
  182 of {\em London Math. Soc. Lecture Note Ser.}, pages 1--295. Cambridge
  Univ. Press, Cambridge, 1993.

\bibitem[Gui72]{Guichardet}
Alain Guichardet.
\newblock Sur la cohomologie des groupes topologiques. {II}.
\newblock {\em Bull. Sci. Math. (2)}, 96:305--332, 1972.

\bibitem[Gui05]{Guirardel}
Vincent Guirardel.
\newblock C\oe ur et nombre d'intersection pour les actions de groupes sur les
  arbres.
\newblock {\em Ann. Sci. \'{E}cole Norm. Sup. (4)}, 38(6):847--888, 2005.

\bibitem[Hae16a]{Haettel1}
Thomas Haettel.
\newblock Higher rank lattices are not coarse median.
\newblock {\em Algebr. Geom. Topol.}, 16(5):2895--2910, 2016.

\bibitem[Hae16b]{Haettel2}
Thomas Haettel.
\newblock Hyperbolic rigidity of higher rank lattices.
\newblock {\em arXiv:1607.02004v2}, 2016.

\bibitem[Hag07]{Haglund}
Fr\'ed\'eric Haglund.
\newblock Isometries of {${\rm CAT}(0)$} cube complexes are semi-simple.
\newblock {\em arXiv:0705.3386v1}, 2007.

\bibitem[Hag13]{Hagen}
Mark~F. Hagen.
\newblock The simplicial boundary of a {${\rm CAT}(0)$} cube complex.
\newblock {\em Algebr. Geom. Topol.}, 13(3):1299--1367, 2013.

\bibitem[Hag17]{corrigendum}
Mark~F. Hagen.
\newblock Corrigendum to ``{T}he simplicial boundary of a {${\rm CAT}(0)$} cube
  complex''.
\newblock 2017.

\bibitem[HP98]{Haglund-Paulin}
Fr\'ed\'eric Haglund and Fr\'ed\'eric Paulin.
\newblock Simplicit\'e de groupes d'automorphismes d'espaces \`a courbure
  n\'egative.
\newblock In {\em The {E}pstein birthday schrift}, volume~1 of {\em Geom.
  Topol. Monogr.}, pages 181--248. Geom. Topol. Publ., Coventry, 1998.

\bibitem[Kat18]{Motoko}
Motoko Kato.
\newblock On groups whose actions on finite-dimensional {$\rm CAT(0)$} spaces
  have global fixed points.
\newblock {\em arXiv:1804.10506v1}, 2018.

\bibitem[KK13]{Kotowski-Kotowski}
Marcin Kotowski and Micha\l{} Kotowski.
\newblock Random groups and property {$(T)$}: {\.Z}uk's theorem revisited.
\newblock {\em J. Lond. Math. Soc. (2)}, 88(2):396--416, 2013.

\bibitem[Kle10]{Kleiner}
Bruce Kleiner.
\newblock A new proof of {G}romov's theorem on groups of polynomial growth.
\newblock {\em J. Amer. Math. Soc.}, 23(3):815--829, 2010.

\bibitem[KS16]{Kar-Sageev}
Aditi Kar and Michah Sageev.
\newblock Ping pong on {$\rm CAT(0)$} cube complexes.
\newblock {\em Comment. Math. Helv.}, 91(3):543--561, 2016.

\bibitem[Lee00]{Leeb-fine}
Bernhard Leeb.
\newblock {\em A characterization of irreducible symmetric spaces and
  {E}uclidean buildings of higher rank by their asymptotic geometry}, volume
  326 of {\em Bonner Mathematische Schriften [Bonn Mathematical Publications]}.
\newblock Universit\"at Bonn, Mathematisches Institut, Bonn, 2000.

\bibitem[Mar91]{Margulis}
Grigorii~A. Margulis.
\newblock {\em Discrete subgroups of semisimple {L}ie groups}, volume~17 of
  {\em Ergebnisse der Mathematik und ihrer Grenzgebiete (3) [Results in
  Mathematics and Related Areas (3)]}.
\newblock Springer-Verlag, Berlin, 1991.

\bibitem[Mar06]{Martin}
Florian Martin.
\newblock Reduced 1-cohomology of connected locally compact groups and
  applications.
\newblock {\em J. Lie Theory}, 16(2):311--328, 2006.

\bibitem[Min16]{Minasyan}
Ashot Minasyan.
\newblock New examples of groups acting on real trees.
\newblock {\em J. Topol.}, 9(1):192--214, 2016.

\bibitem[Mon06]{Monod}
Nicolas Monod.
\newblock Superrigidity for irreducible lattices and geometric splitting.
\newblock {\em J. Amer. Math. Soc.}, 19(4):781--814, 2006.

\bibitem[Nic04]{Nica}
Bogdan Nica.
\newblock Cubulating spaces with walls.
\newblock {\em Algebr. Geom. Topol.}, 4:297--309, 2004.

\bibitem[Nic08]{Nica-thesis}
Bogdan Nica.
\newblock Group actions on median spaces.
\newblock {\em arXiv:0809.4099v1}, 2008.

\bibitem[NS13]{Nevo-Sageev}
Amos Nevo and Michah Sageev.
\newblock The {P}oisson boundary of {${\rm CAT}(0)$} cube complex groups.
\newblock {\em Groups Geom. Dyn.}, 7(3):653--695, 2013.

\bibitem[NWZ18a]{NWZ2}
Graham~A. Niblo, Nick Wright, and Jiawen Zhang.
\newblock Coarse median algebras: {T}he intrinsic geometry of coarse median
  spaces and their intervals.
\newblock {\em arXiv:1802.02499v2}, 2018.

\bibitem[NWZ18b]{NWZ}
Graham~A. Niblo, Nick Wright, and Jiawen Zhang.
\newblock A four point characterisation for coarse median spaces.
\newblock {\em arXiv:1708.06960v2}, 2018.

\bibitem[Oll04]{Ollivier}
Yann Ollivier.
\newblock Sharp phase transition theorems for hyperbolicity of random groups.
\newblock {\em Geom. Funct. Anal.}, 14(3):595--679, 2004.

\bibitem[OW11]{Ollivier-Wise}
Yann Ollivier and Daniel~T. Wise.
\newblock Cubulating random groups at density less than {$1/6$}.
\newblock {\em Trans. Amer. Math. Soc.}, 363(9):4701--4733, 2011.

\bibitem[Oza18]{Ozawa}
Narutaka Ozawa.
\newblock A functional analysis proof of {G}romov's polynomial growth theorem.
\newblock {\em Ann. Sci. \'{E}c. Norm. Sup\'{e}r. (4)}, 51(3):549--556, 2018.

\bibitem[Pau88]{Paulin}
Fr\'{e}d\'{e}ric Paulin.
\newblock Topologie de {G}romov \'{e}quivariante, structures hyperboliques et
  arbres r\'{e}els.
\newblock {\em Invent. Math.}, 94(1):53--80, 1988.

\bibitem[Pau91]{Paulin-Arb}
Fr\'{e}d\'{e}ric Paulin.
\newblock Outer automorphisms of hyperbolic groups and small actions on {${\bf
  R}$}-trees.
\newblock In {\em Arboreal group theory ({B}erkeley, {CA}, 1988)}, volume~19 of
  {\em Math. Sci. Res. Inst. Publ.}, pages 331--343. Springer, New York, 1991.

\bibitem[R{\'e}m99]{Remy99}
Bertrand R{\'e}my.
\newblock Construction de r\'eseaux en th\'eorie de {K}ac-{M}oody.
\newblock {\em C. R. Acad. Sci. Paris S\'er. I Math.}, 329(6):475--478, 1999.

\bibitem[R{\'e}m05]{Remy05}
Bertrand R{\'e}my.
\newblock Integrability of induction cocycles for {K}ac-{M}oody groups.
\newblock {\em Math. Ann.}, 333(1):29--43, 2005.

\bibitem[Rol98]{Roller}
Martin~A. Roller.
\newblock Poc sets, median algebras and group actions. {A}n extended study of
  {D}unwoody's construction and {S}ageev's theorem.
\newblock Preprint, University of Southampton, 1998.

\bibitem[Sag95]{Sageev}
Michah Sageev.
\newblock Ends of group pairs and non-positively curved cube complexes.
\newblock {\em Proc. London Math. Soc. (3)}, 71(3):585--617, 1995.

\bibitem[Sha00]{Shalom}
Yehuda Shalom.
\newblock Rigidity of commensurators and irreducible lattices.
\newblock {\em Invent. Math.}, 141(1):1--54, 2000.

\bibitem[Sha04]{Shalom2}
Yehuda Shalom.
\newblock Harmonic analysis, cohomology, and the large-scale geometry of
  amenable groups.
\newblock {\em Acta Math.}, 192(2):119--185, 2004.

\bibitem[{\v{S}}W17]{Spakula-Wright}
J\'{a}n {\v{S}}pakula and Nick Wright.
\newblock Coarse medians and property {A}.
\newblock {\em Algebr. Geom. Topol.}, 17(4):2481--2498, 2017.

\bibitem[\.Z03]{Zuk}
Andrzey \.Zuk.
\newblock Property ({T}) and {K}azhdan constants for discrete groups.
\newblock {\em Geom. Funct. Anal.}, 13(3):643--670, 2003.

\bibitem[Zei16]{Zeidler}
Rudolf Zeidler.
\newblock Coarse median structures and homomorphisms from {K}azhdan groups.
\newblock {\em Geom. Dedicata}, 180:49--68, 2016.

\end{thebibliography}
\bibliographystyle{alpha}

\end{document}